\DeclareMathOperator{\GL}{GL}
\DeclareMathOperator{\Sp}{Sp}
\DeclareMathOperator{\SL}{SL}
\DeclareMathOperator{\EE}{E}
\DeclareMathOperator{\Lie}{Lie}
\DeclareMathOperator{\Cent}{Cent}
\DeclareMathOperator{\rad}{rad}
\DeclareMathOperator{\Norm}{Norm}
\DeclareMathOperator{\Hom}{Hom}
\DeclareMathOperator{\Spec}{Spec}
\DeclareMathOperator{\Gm}{{\mathbf G}_m}
\newcommand{\id}{\text{\rm id}}
\DeclareMathOperator{\coker}{coker\,}
\DeclareMathOperator{\im}{im\,}
\DeclareMathOperator{\Aut}{Aut\,}
\DeclareMathOperator{\Gal}{Gal}
\newcommand{\Aff}{\mathbb {A}}
\newcommand{\ad}{{ad}}
\newcommand{\scl}{{sc}}
\newcommand{\ha}{{\widetilde{\alpha}}}
\DeclareMathOperator{\ZZ}{{\mathbb Z}}
\DeclareMathOperator{\NN}{{\mathbb N}}
\DeclareMathOperator{\PP}{{\mathbb P}}
\DeclareMathOperator{\fppf}{\text{\it fppf}}
\DeclareMathOperator{\FF}{{\mathbb F}}
\newtheorem{lem}{Lemma}[section]
\newtheorem*{lem*}{Lemma}
\newtheorem*{thm*}{Theorem}
\newtheorem{thm}[lem]{Theorem}
\newtheorem{cor}[lem]{Corollary}
\newtheorem{defn}{Definition}
\let\l\left
\let\r\right
\newcommand{\eps}{\varepsilon}
\newcommand{\st}{\scriptstyle}
\newcommand{\ee}{\hbox{${\bf (E)}$}}
\begin{document}

\title{Homotopy invariance of non-stable $K_1$-functors}
\author{A. Stavrova}\thanks{The author was supported by the J.E. Marsden postdoctoral fellowship of the Fields Institute,
by the RFBR grants 12-01-33057, 12-01-31100,
10-01-90016, 10-01-00551, and by the research program 6.38.74.2011 ``Structure theory and geometry
of algebraic groups and their applications in representation theory and algebraic K-theory'' at St.
Petersburg State University.}
\address{Fields Institute for Research in Mathematical Sciences\\
Toronto, Canada\\ and Department of Mathematics and Mechanics\\
St. Petersburg State University\\Russia}
\email{anastasia.stavrova@gmail.com}

\selectlanguage{english}
\maketitle

\begin{abstract}
Let $G$ be a reductive algebraic group over a field $k$, such that every semisimple normal subgroup of $G$
has isotropic rank $\ge 2$, i.e. contains $(\Gm)^2$. Let $K_1^G$ be the non-stable $K_1$-functor associated to $G$,
also called the Whitehead group of $G$. We show that $K_1^G(k)=K_1^G(k[X_1,\ldots,X_n])$ for any $n\ge 1$.
If $k$ is perfect, this implies that $K_1^G(R)=K_1^G(R[X])$ for any regular $k$-algebra $R$.
If $k$ is infinite perfect, one also deduces that $K_1^G(R)\to K_1^G(K)$ is injective for
any local regular $k$-algebra $R$ with the fraction field $K$.
\end{abstract}

\section{Introduction}
Let $A$ be a unital commutative ring. The elementary subgroup $E_l(A)$ of $\GL_l(A)$ is the subgroup generated by the
elementary transvections $e+te_{ij}$, $1\le i\neq j\le l$, $t\in A$.
For any reductive group scheme $G$ over $A$, satisfying a suitable isotropy condition,
one defines an analogous elementary subgroup $E(A)$ of the group of $A$-points $G(A)$, as the subgroup
generated by the $A$-points of unipotent radicals of parabolic subgroups of $G$; see \S~\ref{ssec:eldef} or~\cite{PS}
for a formal definition. In particular, if $A=k$ is a field, $E(k)$ is nothing but the group $G(k)^+$ introduced
by J. Tits~\cite{Tits64}. If $G$ is a Chevalley group with root system $\Phi$, then $E(A)=E(\Phi,A)$ is the group
generated by the elementary root subgroups $x_{\alpha}(A)$, $\alpha\in\Phi$.

The study of the quotient $K_1^G(A)=G(A)/E(A)$ is one of the central problems in the theory of reductive
algebraic groups; in the field case it is called the Kneser--Tits problem, see~\cite{Gil} and references therein.
The functor $K_1^G(-)=G(-)/E(-)$ on the category of commutative $A$-algebras
is called the non-stable, or unstable, $K_1$-functor associated to $G$, or
the Whitehead group of $G$. The name is
due to the fact that the functors $K_1^G$ are similar in many aspects to the algebraic $K_1$-functor
$K_1(A)=\lim\limits_{l}\GL_l(A)/E_l(A)$ of H. Bass.
The study of $K_1^{G}$ goes back to Bass' founding paper~\cite{Bass}, where $K_1^{\GL_l}$ was considered
in relation to stabilization problems.
The principal bibliography on the subsequent study of $K_1^G$ for different reductive groups $G$
can be found in~\cite{Abe,Gil,GMV-Sp,BHV,PS,W10}.

In particular, recently F. Morel~\cite{Mo} and
M. Wendt~\cite{W10} showed that if $G$ is a Chevalley group of rank $\ge 2$, then $K_1^G$
is the $0$-th $\Aff^1$-homotopy group of $G$ in the sense of $\Aff^1$-homotopy theory of Morel--Voevodsky.
The proof relies on the $\Aff^1$-invarince of $K_1^G$ on the category of regular algebras
over a field, that is, on the isomorphism $K_1^G(A)\cong K_1^G(A[X])$, proved by A. Suslin and E. Abe in~\cite{Sus,Abe}.
The present paper is devoted to generalization of the work of Suslin and Abe to isotropic reductive groups.
Its results are expected to allow extension of~\cite{W10} to this context, see~\cite{VW}.

Our first result is the following theorem.
It means, essentially, that $K_1^G$ satisfies the gluing property for the standard covering of $\PP^1$
by two copies of $\Aff^1$ over an affine base. Note that $K_1^G(\PP^1_A)\cong K_1^G(A)$ for any commutative ring $A$.

\begin{thm}\label{th:P^1}
Let $A$ be a commutative ring, and let $G$ be a reductive group scheme over $A$, such that every semisimple normal subgroup
of $G$ is isotropic. Assume moreover that for any maximal ideal $m\subseteq A$,
 every semisimple normal subgroup of $G_{A_m}$ has isotropic rank $\ge 2$, i.e. contains $(\Gm)^2$.
Then the sequence of pointed sets
$$
1\longrightarrow K_1^G(A)\xrightarrow{\st g\mapsto (g,g)} K_1^G(A[X])\times K_1^G(A[X^{-1}])
\xrightarrow{\st (g_1,g_2)\mapsto g_1{g_2}^{-1}} K_1^G(A[X,X^{-1}])
$$
is exact.
\end{thm}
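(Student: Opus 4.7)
The plan is to split the exactness claim into the two standard parts: injectivity of the first map and exactness at the middle term. The injective part will be handled by a specialization argument, while the exactness at the middle reduces to a factorization statement for $E(A[X,X^{-1}])$ combined with an affine-gluing principle for $G$ itself.

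\textbf{Injectivity.} Suppose $g\in G(A)$ maps to the trivial class in $K_1^G(A[X])$, that is, $g\in E(A[X])$. Applying the $A$-algebra homomorphism $\operatorname{ev}_{X=0}\colon A[X]\to A$, $X\mapsto 0$, we obtain a group homomorphism $G(A[X])\to G(A)$ which sends $E(A[X])$ into $E(A)$ by functoriality of the construction of $E$ (generators of $E(A[X])$, being points of unipotent radicals of parabolic subgroups, are sent to such points over $A$). Since $g$ is constant in $X$, $\operatorname{ev}_{X=0}(g)=g$, so $g\in E(A)$; the same argument with $X^{-1}\mapsto 0$ handles the other factor. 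This shows the first map is injective.

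\textbf{Exactness at the middle.} Let $g_1\in G(A[X])$, $g_2\in G(A[X^{-1}])$ with $g_1g_2^{-1}\in E(A[X,X^{-1}])$. The key technical step will be to establish the factorization
\[
E(A[X,X^{-1}])\;=\;E(A[X])\cdot E(A[X^{-1}]).
\]
Granting this, write $g_1g_2^{-1}=e_+e_-$ with $e_+\in E(A[X])$ and $e_-\in E(A[X^{-1}])$, and set $g:=e_+^{-1}g_1=e_-g_2$, an equality taking place in $G(A[X,X^{-1}])$. The left-hand side realizes $g$ as a morphism $\Spec A[X]\to G$, the right-hand side as a morphism $\Spec A[X^{-1}]\to G$; since $G$ is affine and $A=A[X]\cap A[X^{-1}]$ inside $A[X,X^{-1}]$, the two maps glue to a morphism $\Spec A\to G$, i.e. to an element $g\in G(A)$. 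By construction $g_1\equiv g\pmod{E(A[X])}$ and $g_2\equiv g\pmod{E(A[X^{-1}])}$, which is exactness.

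\textbf{The main obstacle and how I would address it.} The factorization displayed above is the substantive content. To prove it, I would use that under the isotropy hypothesis $E(A[X,X^{-1}])$ is generated by $A[X,X^{-1}]$-points of unipotent radicals $U_P$ of parabolic subgroups of $G$, and that inside such a $U_P$ one has a descending filtration whose subquotients are relative root subgroups $U_\alpha$ admitting additive parametrizations $x_\alpha\colon V_\alpha\to U_\alpha$ by $A$-modules $V_\alpha$, with Chevalley-type commutator formulas (as developed in \cite{PS}). For each generator $x_\alpha(v)$ with $v\in V_\alpha\otimes_A A[X,X^{-1}]$, decompose $v=v_++v_-$ according to non-negative versus strictly negative powers of $X$, so that $v_+\in V_\alpha\otimes_A A[X]$ and $v_-\in V_\alpha\otimes_A X^{-1}A[X^{-1}]$. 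When $U_\alpha$ is abelian one has $x_\alpha(v)=x_\alpha(v_+)x_\alpha(v_-)$ directly; in general an induction on the filtration, absorbing cross-terms via the commutator formulas, produces a factorization $x_\alpha(v)=u_+u_-$ with $u_\pm\in E(A[X^{\pm 1}])$. Inducting on the length of a word in these generators, and using that $E(A[X])$ and $E(A[X^{-1}])$ are closed under conjugation by products of elementary generators and that their intersection with the opposite factor can be pushed across using the analogous commutator identities, yields the sought factorization. The delicate point — and what I expect to be the hard part — is controlling the cross-commutators in non-abelian cases (in particular for groups of types $\mathrm{BC}$ and $\mathrm{F}_4$) and verifying that the iterated splitting terminates, which will require the full strength of the normality of $E$ in $G$ and the commutator relations of \cite{PS}.
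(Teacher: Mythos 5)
Your reduction of exactness at the middle term to a factorization of $E(A[X,X^{-1}])$, followed by affine gluing over $A[X]\cap A[X^{-1}]=A$, is structurally the same as the endgame of the paper's proof, and your treatment of injectivity via evaluation at $X=0$ is fine. The problem is the factorization itself: you assert
$E(A[X,X^{-1}])=E(A[X])\cdot E(A[X^{-1}])$
over an arbitrary base, and this is not what the paper proves, nor is it within reach by the argument you sketch. The paper's Theorem~\ref{th:E+-+} establishes only the \emph{triple} product $E(A[X,X^{-1}])=E(A[X])E(A[X^{-1}])E(A[X])$, and only over a \emph{local} ring $A$; a two-fold decomposition is obtained only for the congruence subgroups modulo the maximal ideal $I$, namely $E^*(A[X,X^{-1}],I\cdot A[X,X^{-1}])=E^*(A[X],I\cdot A[X])E^*(A[X^{-1}],I\cdot A[X^{-1}])$ (Corollary~\ref{cor:XX^-1}, via Lemmas~\ref{lem:M_+^*} and~\ref{lem:M*}). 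Note that a triple product does not suffice for your gluing step: from $g_1g_2^{-1}=e_+e_-e_+'$ one cannot isolate an element of $G(A[X])\cap G(A[X^{-1}])$, which is precisely why the paper must first force $g_1g_2^{-1}$ into the congruence subgroup. That reduction uses two ingredients absent from your proposal: Suslin's local-global principle (Lemma~\ref{lem:PS-17}) to pass to local $A$, and the Margaux--Soul\'e theorem (Theorem~\ref{th:MS-red}) over the residue field to replace $x$ and $y$ by elements congruent to $1$ modulo $I$.

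Your sketch of how the factorization would be proved also glosses over the actual difficulty. Splitting a generator as $X_\alpha(v)=X_\alpha(v_+)X_\alpha(v_-)\cdot(\text{higher terms})$ is the easy part; the hard part is that $E(A[X])E(A[X^{-1}])$ is merely a subset, not a subgroup, so one must show it is stable under left multiplication by $X_\alpha(v_-)$ with $v_-$ of negative degree, i.e.\ that such elements can be pushed through the $E(A[X])$ factor. Conjugation of positive-degree root elements by negative-degree ones produces terms of mixed degree that the Chevalley commutator formulas alone do not tame; controlling them is the content of the entire Section~\ref{sec:prove-xx} of the paper (the automorphisms $\sigma$ and $\tau_\alpha$, the case analysis for $A_2$, $B_2$, $G_2$ and $BC_l$, and Lemmas~\ref{lem:sigma(EX)}--\ref{lem:sigma-Z}), and even that machinery yields only the triple product with an extra defect term $X_{\mp\ha}(X^{-1}V_{\mp\ha})$. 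As it stands, the essential content of the theorem is asserted rather than proved.
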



This theorem was first proved by
A. Suslin~\cite[Theorem 5.1]{Sus} for $G=\GL_l$  ($l\ge 3$), and, using similar methods, by A. Suslin and V. Kopeiko
for $O_{2l}$ ($l\ge 3$)~\cite[Theorem 6.8]{Sus-K-O1} and $\Sp_{2l}$ ($l\ge 2$)~\cite[Theorem 3.9]{K-stab}. Later
E. Abe~\cite[Theorem 2.16]{Abe} generalized it to almost all Chevalley groups. More
precisely, Abe proved the same statement for all split simply connected absolutely almost simple groups of rank $\ge 2$
and any commutative ring $A$, excluding
the groups of type $C_l$ over $A$ with $2\not\in A^\times$, and all groups of type $B_l$ and $G_2$. Some steps
of our proof are inspired by Abe's work, but we do not rely on his results. In particular, we remove the above restrictions
on type in the split case.

The next theorem is our main result.
On one hand, it generalizes the equality
$G(k[X_1,\ldots,X_n])=E(k[X_1,\ldots,X_n])$ that was proved by
E. Abe for all simply connected Chevalley groups with the same exceptions as above~\cite[Theorem 3.5]{Abe}.
For $G=\GL_l,\SL_l,\Sp_{2l}$ somewhat stronger versions of this result were previously obtained in the above-mentioned
works of Suslin and Kopeiko, and in~\cite{GMV-Sp,K-SL-Laur,K-Sp-Laur,K-lett}. On the other hand,
it generalizes a theorem of C. Soul\'e~\cite{Sou} and B. Margaux~\cite{M}
stating that $G(k[X])=G(k)E(k[X])$ for any isotropic simply connected absolutely almost simple group $G$ over $k$.

\begin{thm}\label{th:whitehead-field}
Let $G$ be a reductive
group scheme over a field $k$, such that every semisimple normal subgroup of $G$ contains $(\Gm)^2$.
Then
$$G(k[X_1,\ldots,X_n])=G(k)E(k[X_1,\ldots,X_n])\qquad\mbox{for any}\quad n\ge 1.
$$ That is,
$K_1^G(k)\cong K_1^G(k[X_1,\ldots,X_n])$.
\end{thm}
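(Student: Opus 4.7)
The plan is to induct on $n$. The base case $n=1$ is the equality $G(k[X]) = G(k) E(k[X])$, a form of the Soulé--Margaux theorem extended from isotropic absolutely almost simple simply connected groups to the present reductive setting (via reduction to almost simple factors, handling of the radical torus, and central isogenies); this extension should be established in earlier sections of the paper.

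For the inductive step, set $A = k[X_1,\ldots,X_{n-1}]$ and view $g = g(X_n) \in G(A[X_n])$. By the inductive hypothesis applied to $g(0) \in G(A)$, we have $g(0) = g_0 e_0$ with $g_0 \in G(k)$ and $e_0 \in E(A) \subseteq E(A[X_n])$; replacing $g$ by $g_0^{-1} g e_0^{-1}$, we may assume $g(0) = 1$, and it remains to show $g \in E(A[X_n])$. Applying Theorem~\ref{th:P^1} to $A$ gives the exact sequence
\begin{equation*}
1 \longrightarrow K_1^G(A) \longrightarrow K_1^G(A[X_n]) \times K_1^G(A[X_n^{-1}]) \longrightarrow K_1^G(A[X_n, X_n^{-1}]).
\end{equation*}
If we can show $g(X_n)$ is trivial in $K_1^G(A[X_n, X_n^{-1}])$, then the pair $(g, 1)$ lifts to some $h \in K_1^G(A)$; matching the second coordinate against $1$ forces $h = 1$, whence $g \in E(A[X_n])$.

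The inductive step thus reduces to the claim that $g(X_n) \in E(A[X_n, X_n^{-1}])$ whenever $g \in G(A[X_n])$ satisfies $g(0) = 1$, and this is the main obstacle. To attack it, the natural approach is a dilation argument: introduce a new variable $T$ and consider the family $g(T X_n) \in G(A[X_n, X_n^{-1}][T])$, which specializes to $1$ at $T = 0$, to $g(X_n)$ at $T = 1$, and to the constant $g(1) \in G(A)$ at $T = X_n^{-1}$ (which makes sense in the Laurent ring because $X_n$ is invertible). By combining these specializations with the inductive hypothesis applied to subrings with strictly fewer polynomial generators, and with further applications of Theorem~\ref{th:P^1}, one aims to write $g(X_n)$ as a finite product of elementary generators in $E(A[X_n, X_n^{-1}])$. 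The isotropic rank $\ge 2$ hypothesis is expected to be used here, providing enough root subgroups to realize the required factorizations in the spirit of Suslin, Kopeiko, and Abe.
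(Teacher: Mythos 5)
Your skeleton (induction on $n$, Margaux--Soul\'e for $n=1$, Theorem~\ref{th:P^1} for the inductive step) matches the paper's, and the formal reduction you perform is sound: by Theorem~\ref{th:P^1} applied to the pair $(g,1)$, together with $E(A[X_n^{-1}])\cap G(A)=E(A)$ (Lemma~\ref{lem:Abe-sect}), an element $g\in G(A[X_n])$ with $g(0)=1$ lies in $E(A[X_n])$ as soon as it lies in $E(A[X_n,X_n^{-1}])$. But the remaining claim --- that $g(0)=1$ forces $g\in E(A[X_n,X_n^{-1}])$ --- is not a smaller problem: it is equivalent to the inductive step itself (if $G(A[X_n])=G(k)E(A[X_n])$, then $g(0)=1$ already gives $g\in E(A[X_n])\subseteq E(A[X_n,X_n^{-1}])$, and conversely by your own reduction). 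The dilation argument you sketch does not close this gap: to conclude anything about the class of $g(X_n)=g(TX_n)|_{T=1}$ from the specializations of the family $g(TX_n)$ you would need $\Aff^1$-invariance of $K_1^G$ at $A[X_n,X_n^{-1}]$, which is (a strengthening of) the statement being proved; the specialization $T=X_n^{-1}$ by itself carries no information about the class at $T=1$. So the heart of the proof is missing.

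What the paper actually does at this point is different in two essential ways, and you use neither ingredient. First, it passes to the generic fibre: writing $A=k[X_3,\dots,X_n]$ and working with the element $y(X_1,X_2)=x(X_1,X_2)x(0,X_2)^{-1}x(0,0)x(X_1,0)^{-1}$, which vanishes on both coordinate axes, one applies the induction hypothesis over the \emph{field} $k(X_1)$ (with $n-1$ variables) together with Gille's theorem $G(k(X_1))=G(k)E(k(X_1))$ of \cite[Th\'eor\`eme 5.8]{Gil} to obtain $y\in E(A[X_2][X_1]_f)$ for some \emph{monic} $f\in k[X_1]$. Second, it invokes the monic-inversion Lemma~\ref{lem:E_f} (the analogue of Suslin's Cor.~5.7 and Abe's Prop.~3.3, proved from Lemma~\ref{lem:A_fg} and Theorem~\ref{th:P^1}): for monic $f$, an element of $G(A[X],XA[X])$ that becomes elementary over $A[X]_f$ is already elementary over $A[X]$. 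Note that the localization there is at a monic polynomial in one of the \emph{old} variables, not at the new variable $X_n$ as in your reduction. Without Gille's rational triviality $K_1^G(k)\cong K_1^G(k(X))$ and without Lemma~\ref{lem:E_f} for a general monic $f$, the induction cannot be completed along the lines you propose.
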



The proof of Theorem~\ref{th:whitehead-field} goes by induction on $n$. The case $n=1$ is covered
by Margaux-- Soul\'e theorem (see also Theorem~\ref{th:MS-red} below).
The inductive step uses Theorem~\ref{th:P^1}, and the isomorphism $K_1^G(k)\xrightarrow{\cong} K_1^G(k(X))$ of~\cite[Th. 5.8]{Gil},
which holds if $G$ is a simply connected semisimple
group. For simply connected groups we actually prove a stronger isomorphism
$$
K_1^G(k)\cong K_1^G\bigl(k[X_1,\ldots,X_n,Y_1,Y_1^{-1},\ldots,Y_m,Y_m^{-1}]\bigr)\qquad\mbox{for any}\quad m,n\ge 1;
$$
see Corollary~\ref{cor:Laurent}.

If the base field $k$ is perfect, Theorem~\ref{th:whitehead-field} is part of the following more general result.

\begin{thm}\label{th:whitehead-geometric}
Let $k$ be a perfect field, and let $G$ be a reductive group scheme over $k$,
such that every semisimple normal subgroup of $G$ contains $(\Gm)^2$.
Let $A$ be a regular ring containing $k$. Then there is a natural isomorphism
$K_1^G(A)\cong K_1^G(A[X])$.
\end{thm}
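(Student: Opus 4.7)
The plan is to deduce Theorem~\ref{th:whitehead-geometric} from Theorem~\ref{th:whitehead-field} by the Quillen--Lindel--Popescu reduction scheme familiar from Bass--Quillen-type problems. Injectivity of $K_1^G(A)\to K_1^G(A[X])$ is immediate from evaluation at $X=0$, so the content is surjectivity, namely that every $g(X)\in G(A[X])$ with $g(0)=1$ lies in $E(A[X])$.

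I would first reduce to the case of a finitely generated smooth $k$-algebra. Both $G$ and $E$ are of finite presentation over $k$, so $K_1^G$ commutes with filtered colimits of rings; combined with Popescu's theorem, for which perfectness of $k$ is essential, this writes an arbitrary regular $k$-algebra $A$ as a filtered colimit of smooth finitely generated $k$-algebras and reduces the proof to such $A$. Next I would apply a Quillen-type local--global principle for $K_1^G$ in the polynomial direction: an element $g(X)\in G(A[X])$ with $g(0)=1$ belongs to $E(A[X])$ as soon as its image lies in $E(A_{\mathfrak m}[X])$ for every maximal ideal $\mathfrak m\subset A$. This further reduces the problem to $A$ a regular local $k$-algebra, essentially smooth over $k$.

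In this local setting, Lindel's lemma — using perfectness of $k$ — provides a coefficient field $k'\subset A$ with $k'\cong A/\mathfrak m$ and a regular system of parameters $t_1,\dots,t_n\in A$ such that $B:=k'[t_1,\dots,t_n]_{(t_1,\dots,t_n)}\hookrightarrow A$ is essentially \'etale with trivial residue field extension. A standard Lindel-style descent along this residually trivial \'etale extension reduces the question for $A[X]$ to the same question for $B[X]$. Now $B$ is a filtered colimit of rings $k'[t_1,\dots,t_n]_f$ for $f\notin(t_1,\dots,t_n)$, so by colimit commutation any $g(X)\in G(B[X])$ with $g(0)=1$ comes from an element of $G(k'[t_1,\dots,t_n,X]_f)$, and an additional Quillen-patching argument over $k'[t_1,\dots,t_n]_f$ moves the question to the genuine polynomial ring $k'[t_1,\dots,t_n,X]$. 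Applying Theorem~\ref{th:whitehead-field} over $k'$ in $n+1$ variables and evaluating at $X=0$ then forces the $K_1^G$-class to be trivial, completing the proof.

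The principal obstacle is setting up the two patching ingredients — the Quillen local--global principle for $K_1^G(A[X])$ and the \'etale descent implicit in Lindel's step — for a general isotropic reductive group scheme $G$. For $\GL_n$ and Chevalley groups these are classical, but here they must be formulated and proved purely in terms of $A$-points of unipotent radicals of parabolic subgroups of $G$, along the lines of~\cite{PS}. A secondary subtlety is that Theorem~\ref{th:whitehead-field} is stated only for honest polynomial rings, whereas the Lindel step naturally produces localizations thereof; this gap is bridged by applying Quillen patching once more over $k'[t_1,\dots,t_n]$, which is why the auxiliary patching result is genuinely indispensable rather than a mere formality.
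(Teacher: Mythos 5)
Your overall strategy is the one the paper follows (it is Vorst's scheme): Popescu plus finite presentation of $G$ and $E$ to reduce to algebras essentially of finite type, Suslin's local--global principle (Lemma~\ref{lem:PS-17}) to reduce to the local case, Lindel's lemma to produce an \'etale neighbourhood $B\subseteq A$ which is a localization of a polynomial ring together with $h\in B$ satisfying $Ah+B=A$ and $Ah\cap B=Bh$, a patching step along $h$, and finally Theorem~\ref{th:whitehead-field}. However, two of the steps, as you describe them, would not work in the form stated.

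First, there is no direct ``descent along the residually trivial \'etale extension'' from $A[X]$ to $B[X]$. The patching lemma (Lemma~\ref{lem:Abe-3.7}(i), applied to $B[X]\subseteq A[X]$) only factors an element of $E(A_h[X])$ as $F_h(y)z$ with $y\in E(A[X])$ and $z\in G(B_h[X])\cap G(A[X])=G(B[X])$; so before you can invoke it you must already know that your element $a(X)$, with $a(0)=1$, lies in $E(A_h[X])$. This is supplied by an induction on $\dim A$, since $\dim A_h<\dim A$ and $A_h$ is again regular and essentially of finite type over $k$. The induction is the engine of the whole argument and has to be set up explicitly; it is not a cosmetic omission, because without it the ``descent'' has no input.

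Second, the passage from $G\bigl(k'[t_1,\dots,t_n]_f[X]\bigr)$ to the genuine polynomial ring cannot be achieved by ``Quillen patching once more'': Suslin's local--global principle over $C_f$ (with $C=k'[t_1,\dots,t_n]$) only reduces the question to the local rings $C_{\mathfrak p}[X]$, which are again localizations of polynomial rings of the same dimension, so the argument becomes circular. What is needed is the localization lemma (Lemma~\ref{lem:Abe-3.6}, extending Abe and Vorst): if $G(C[X])=G(C)E(C[X])$ then $G(C_S[X])=G(C_S)E(C_S[X])$ for any multiplicative set $S$. Its proof is not a patching argument but a substitution trick (for $g(0)=1$ one rescales $X\mapsto sX$ to clear denominators, applies the result over $C$, and then substitutes $X\mapsto s^{-1}X$, which is legitimate over $C_S$). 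With the induction on dimension made explicit and the localization lemma in place of the second patching step, your argument coincides with the paper's proof of Lemma~\ref{lem:ess-fin} and Theorem~\ref{th:whitehead-geometric}.
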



To prove this theorem, we use Popescu's theorem to reduce to the case of
a regular $k$-algebra essentially of finite type. In this generality, the result follows from the previous statements
and Lindel's lemma on \'etale neighbourhoods~\cite{L}. This approach is due to T. Vorst~\cite{Vo},
who considered the case of $\GL_l$; the same argument was used by Abe~\cite{Abe} for Chevalley groups
under the same restrictions on type as mentioned above, and for $\Sp_{2l}$ by V. Kopeiko in~\cite{K-reg,K-lett}.
Note that in these three cases the ground field $k$ was not supposed to be perfect, since for split groups
the case of a regular algebra essentially of finite type over a non-perfect field reduces to the case of that
over $\FF_p$~\cite[Proof of Th. 3.3]{Vo}.
M. Wendt~\cite[Prop. 4.8]{W10} suggested a way to extend Abe's result
to Chevalley groups of types $B_l$, $C_l$ and $G_2$, using stabilization results of E. Plotkin (actually, even in case of an excellent Dedekind ring $k$),
but his proof is known to be incomplete~\cite{Step}.

Note that the isomorphism $K_1^G(R)\xrightarrow{\cong}K_1^G(R[X])$ implies that $K_1^G(R)$ coincides also with the first
Karoubi---Villamayor $K$-group of $R$ with respect to $G$, as defined by J.F. Jardine~\cite{J} following S.M. Gersten~\cite{Ge}; see~\cite{W10}
or Lemma~\ref{lem:KV}.

Our last result is deduced from Theorem~\ref{th:whitehead-field} by means of a general theorem of J.-L. Colliot-Th\'el\`ene and M.
Ojanguren~\cite[Th\'eor\`eme 1.1]{CTO}.

\begin{thm}\label{th:inj}
Let $k$ be an infinite field, let $G$ be a reductive group scheme over $k$,
such that every semisimple normal subgroup of $G$ contains $(\Gm)^2$.  Let $A$ be a local regular ring containing $k$,
and let $K$ be the field of fractions of $A$.
Assume that $k$ is perfect or that $A$ is a local ring of a smooth algebraic variety over $k$.
Then the natural homomorphism $K_1^G(A)\to K_1^G(K)$ is injective.
\end{thm}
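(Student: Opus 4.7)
The strategy is to apply the general injectivity theorem of Colliot-Th\'el\`ene and Ojanguren \cite[Th\'eor\`eme 1.1]{CTO}, which converts homotopy invariance at the field level (together with mild formal hypotheses) into the injectivity $F(A) \hookrightarrow F(K)$ for $A$ a local ring of a smooth $k$-variety. First I would reduce to the geometric case where $A=\mathcal{O}_{X,x}$ with $X$ a smooth affine $k$-variety and $x\in X$: this is automatic under the second clause of the hypothesis; under the first (namely $k$ perfect), one invokes Popescu's theorem to write the local regular $k$-algebra $A$ as a filtered colimit of smooth $k$-algebras, and uses that $K_1^G$ commutes with filtered colimits (since $G$ is finitely presented over $k$) to descend the desired injectivity from the geometric case, injectivity of maps of pointed sets being preserved under filtered colimits.

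Next I would verify the hypotheses of \cite[Th\'eor\`eme 1.1]{CTO} for the functor $F=K_1^G$. The principal input is homotopy invariance of $F$ over fields: for every finitely generated field extension $L/k$, the map $K_1^G(L)\to K_1^G(L[X_1,\ldots,X_n])$ must be bijective. Since the assumption that every semisimple normal subgroup of $G$ contains $(\Gm)^2$ is stable under arbitrary field extension, this is precisely Theorem~\ref{th:whitehead-field} applied to $G_L$ over $L$. The remaining axioms of \cite{CTO} concern commutation with filtered colimits --- which holds because $G$ is finitely presented --- and a weak-excision / Zariski patching property, for which Theorem~\ref{th:P^1} together with the standard Mayer--Vietoris arguments for localizations of smooth $k$-algebras should suffice.

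The main anticipated obstacle is not analytic but rather a careful verification that $K_1^G$ fits into the precise axiomatic format of \cite[Th\'eor\`eme 1.1]{CTO}; in particular, one has to check that the weak-excision and localization axioms used there are valid for our pointed-set-valued functor, and that the patching of Theorem~\ref{th:P^1} can be rephrased in the form needed to invoke the \emph{loc.~cit.} machinery. Once this verification is completed --- and the genuinely hard content is already present in Theorems~\ref{th:P^1} and~\ref{th:whitehead-field} --- the CTO theorem yields the injectivity $K_1^G(\mathcal{O}_{X,x})\to K_1^G(k(X))$, and combined with the Popescu reduction above this completes the proof.
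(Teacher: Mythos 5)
Your overall strategy (Popescu reduction to the geometric case, then verification of the axioms of \cite[Th\'eor\`eme 1.1]{CTO}) is exactly the one the paper follows, and the colimit axiom and the reduction step are handled correctly. However, there is a genuine gap in your treatment of the key axiom. The condition (P2) of \emph{loc.~cit.} is not ``$K_1^G(L)\to K_1^G(L[X_1,\ldots,X_n])$ is bijective for all fields $L\supseteq k$''; it is that the map $K_1^G(L[X_1,\ldots,X_n])\to K_1^G(L(X_1,\ldots,X_n))$ has \emph{trivial kernel}, i.e.\ one must pass from the polynomial ring all the way to its fraction field. Theorem~\ref{th:whitehead-field} (applied over $L$, together with induction on $n$) only reduces this to the following statement, which your proposal never addresses: if $g\in G(L)$ becomes elementary in $G(L(X))$, then $g\in E(L)$. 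This is where the hypothesis that $k$ is infinite enters --- a hypothesis your argument never uses, which is a reliable sign that a step is missing. The paper settles it by a specialization argument: $g\in E(L[X]_f)$ for some $f\in L[X]$, and since $L\supseteq k$ is infinite one can choose $a\in L$ with $f(a)\neq 0$ and evaluate at $X=a$ to conclude $g\in E(L)$.

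A secondary, less serious issue: for the patching axiom (P3) you point to Theorem~\ref{th:P^1} and ``standard Mayer--Vietoris arguments,'' but Theorem~\ref{th:P^1} concerns the covering of $\PP^1$ by two affine lines, which is not the kind of square \cite{CTO} requires. The relevant statement is the excision/gluing result of Lemma~\ref{lem:Abe-3.7}~(ii) (equivalently Corollary~\ref{cor:Nis-square}), which is exactly the form of patching the CTO machinery needs; citing it directly closes this part of the argument without any further Mayer--Vietoris manipulation.
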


It should be possible to extend all results of the present paper to isotropic simply connected absolutely almost simple groups $G$
which are defined over a semilocal regular ring $R$ containing a (perfect, infinite) field $k$, and not
over $k$ itself, by means of the techniques employed in~\cite{PaSV}. We plan to address this question in the near future.

\medskip

The author thanks Ch. Weibel and Rutgers University for the hospitality during her visits in 2011 and 2012,
when a part of this paper was written. She is also grateful to M. Wendt for pointing out some misprints in a previous version
of this text.

\section{Elementary subgroup of an isotropic reductive group}

Let $G$ be a reductive algebraic group over a commutative ring $A$.

\subsection{Definition of the elementary subgroup and Suslin's local-global principle}\label{ssec:eldef}

Let $P$ be a parabolic subgroup of $G$.
Since the base $\Spec A$ is affine, the group $P$ has a Levi subgroup $L_P$~\cite[Exp.~XXVI Cor.~2.3]{SGA}.
There is a unique parabolic subgroup $P^-$ in $G$ which is opposite to $P$ with respect to $L_P$,
that is $P^-\cap P=L_P$, cf.~\cite[Exp. XXVI Th. 4.3.2]{SGA}.  We denote by $U_P$ and $U_{P^-}$ the unipotent
radicals of $P$ and $P^-$ respectively. Note that $L_P$ normalizes  $U_P$ and $U_{P^-}$.

\begin{defn}
The \emph{elementary subgroup} $E_P(A)$ corresponding to $P$ is the subgroup of $G(A)$
generated as an abstract group by $U_P(A)$ and $U_{P^-}(A)$.
\end{defn}

Note that if $L'_P$ is another Levi subgroup of $P$,
then $L'_P$ and $L_P$ are conjugate by some element $u\in U_P(A)$~\cite[Exp. XXVI Cor. 1.8]{SGA}, hence
$\EE_P(A)$ does not depend on the choice of a Levi subgroup or of an opposite subgroup
$P^-$, respectively. We suppress the particular choice of $L_P$ in this context, and sometimes even write
$U_P^-$ instead of $U_{P^-}$.

\begin{defn}
A parabolic subgroup $P$ in $G$ is called
\emph{strictly proper}, if it intersects properly every normal semisimple subgroup of $G$.
\end{defn}

Equivalently, $P$ is strictly proper, if for every maximal ideal $m$ in $A$ the image of $P_{A_m}$ in
$G_i$ under the projection map is a proper subgroup in $G_i$, where $G^{ad}_{A_m}=\prod_i G_i$
is the decomposition of the adjoint semisimple group $G_{A_m}^{ad}$ into a product of absolutely almost simple groups.

\begin{defn}
We say that $G$ over $A$ satisfies the assumption \ee, if $G$ contains
a strictly proper parabolic $A$-subgroup $P$, and
 for any maximal ideal $m$ in $A$, the group $G_{A_m}$ contains at least two different strictly proper
parabolic $A_m$-subgroups $P_1\le P_2$.
\end{defn}

Note that the second part of the above definition can be restated as follows:
every normal semisimple subgroup of $G_{A_m}$ contains $(\Gm)^2$, or, equivalently,
all irreducible components of the
root system of $G_{A_m}$ relative to a maximal split subtorus in the sense of~\cite[Exp. XXVI, \S 7]{SGA} are of rank $\ge 2$.

The main result of~\cite{PS} says that if $G$ satisfies \ee,
then $E(A)=E_P(A)$ is independent of the choice of a strictly proper parabolic subgroup $P$, and
in particular, is normal in $G$.

\begin{thm}\label{th:PS-normality}\cite[Lemma 12, Theorem 1; SGA3]{PS},
Let $G$ be a reductive algebraic group over a commutative ring $A$, and let $R$ be an arbitrary commutative $A$-algebra.

(i) If $A$ is a semilocal ring, then the subgroup $\EE_P(R)$ of $G(R)$ is the same for any
minimal parabolic $A$-subgroup $P$ of $G$. If, moreover, $G$ contains a strictly proper parabolic $A$-subgroup,
the subgroup $\EE_P(R)$ is the same for any strictly proper parabolic $A$-subgroup $P$.

(ii) If $G$ satisfies \ee~ over $A$, then the subgroup $\EE_P(R)$ of $G(R)=G_R(R)$ is the same for any
strictly proper parabolic $R$-subgroup $P$ of $G_R$.

In all these cases $\EE(A)=\EE_P(A)$ is normal in $G(A)$.
\end{thm}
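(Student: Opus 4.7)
The plan is to prove independence of $\EE_P(R)$ from the choice of strictly proper parabolic $P$ first in the semilocal case (part (i)), then globalize via a Suslin-type local-global principle (part (ii)), and deduce normality as a formal consequence.

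For the semilocal case, I would fix a maximal split subtorus $S$ of $G$, which exists over any semilocal base and gives rise to a relative root system $\Phi$ with relative root subgroups $U_\alpha$, $\alpha\in\Phi$, in the sense of~\cite[Exp.~XXVI, \S 7]{SGA}. Minimal parabolics of $G$ containing $S$ correspond bijectively to Weyl chambers of $\Phi$, and any two minimal parabolics of $G$ over $A$ are $G(A)$-conjugate. The crucial step is to realize representatives of the relative Weyl group inside the subgroup generated by the $U_{\pm\alpha}(A)$; conjugation by such representatives then permutes the generators of each $\EE_P(R)$ for $P$ containing $S$, yielding equality $\EE_{P_1}(R)=\EE_{P_2}(R)$ for any two minimal parabolics. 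To extend this to a strictly proper $P$, pick a minimal parabolic $Q\subseteq P$, so that $U_P\subseteq U_Q$ and $U_{P^-}\subseteq U_{Q^-}$, giving $\EE_P(R)\subseteq\EE_Q(R)$. The reverse inclusion is the technical heart: the elements of $U_Q(R)\cap L_P$ and $U_{Q^-}(R)\cap L_P$ must be expressed in terms of $U_P(R)$ and $U_{P^-}(R)$, and this is precisely where strict properness — namely, isotropy of every normal semisimple factor of $L_P$ — is used, via Chevalley-style commutator identities for relative root subgroups.

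For part (ii), the key tool is a Suslin-type local-global principle: an element $g\in G(R)$ lies in $\EE_P(R)$ if and only if its image in $G(R_m)$ lies in $\EE_P(R_m)$ for every maximal ideal $m\subseteq R$. This principle is proved by a patching argument that exploits the fact that $U_P$ and $U_{P^-}$ are affine spaces over $\Spec A$, allowing one to interpolate $R[t]$-valued elementary elements. Applied to generators of $\EE_{P'}(R)$ for another strictly proper $R$-parabolic $P'$, it reduces the equality $\EE_{P'}(R)=\EE_P(R)$ to the semilocal statement from part (i).

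Normality is then essentially formal. Given $g\in G(A)$ and $u\in U_P(A)$, the conjugate $gPg^{-1}$ is again a strictly proper parabolic, since inner automorphisms of $G$ preserve its normal semisimple subgroups. Hence $gug^{-1}\in U_{gPg^{-1}}(A)\subseteq\EE_{gPg^{-1}}(A)=\EE_P(A)$ by the independence established above, and similarly for elements of $U_{P^-}(A)$. The main obstacle throughout is the passage from minimal to strictly proper parabolics in the semilocal case, which requires a careful analysis of relative root subgroups — particularly when the relative root system is non-reduced or the root subgroups are non-abelian.
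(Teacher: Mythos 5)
Your overall architecture---semilocal case first, globalization by a local-global principle, normality as a formal corollary---matches the paper and the structure of the results it cites. The paper itself, however, disposes of the theorem by quotation: part (i) combines \cite[Lemma 12]{PS} (if one strictly proper parabolic contains another, the elementary subgroups coincide over any base ring) with the fact, extracted from the Gauss decomposition \cite[Exp.~XXVI, Th.~5.1]{SGA}, that any two minimal parabolic $A$-subgroups are conjugate by an element of $\EE_{P}(A)$ itself; part (ii) is \cite[Theorem 1]{PS} verbatim; normality then follows exactly as you argue. So you are reconstructing the cited proofs rather than the paper's one-paragraph reduction to them, and two steps of the reconstruction do not hold up as written.

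First, the local-global principle you state---$g\in G(R)$ lies in $\EE_P(R)$ if and only if its image lies in $\EE_P(R_m)$ for every maximal ideal $m$---is false: over a Dedekind domain $R$ with nontrivial $SK_1$, every element of $\SL_n(R)$ becomes elementary in each local ring $\SL_n(R_m)$, yet not every element is elementary. The correct principle, which is what \cite[Lemma 17]{PS} proves and this paper uses, is the polynomial version: if $g(X)\in G(R[X])$ satisfies $g(0)\in E(R)$ and $F_m(g(X))\in E(R_m[X])$ for all $m$, then $g(X)\in E(R[X])$. It applies to your problem because a generator $u\in U_{P'}(R)$ sits on a curve $u(X)$ with $u(0)=1$, $u(1)=u$ (scale by $X$ using the vector-group structure of the unipotent radical); your remark about interpolating $R[t]$-valued elements gestures at this, but the principle must be formulated on $G(R[X])$, not on $G(R)$. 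Second, in the semilocal case your Weyl-group argument identifies $\EE_{P_1}(R)$ and $\EE_{P_2}(R)$ only when $P_1,P_2$ contain a common maximal split torus; for two arbitrary minimal parabolics you fall back on $G(A)$-conjugacy, which yields only $\EE_{P_2}(R)=g\,\EE_{P_1}(R)\,g^{-1}$, and concluding equality from this requires the normality you are in the process of proving. The paper closes this circle by using the Gauss decomposition over the semilocal base to replace the conjugating element by one lying in $\EE_{P_1}(A)$.
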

\begin{proof}
To show (i), recall that by~\cite[Cor. 5.7]{SGA} every parabolic $A$-subgroup $P$ of $G$ contains
a minimal prabolic $A$-subgroup $P_{min}$. If $P$ is strictly proper, then $P_{min}$ is
strictly proper as well. By~\cite[Lemma 12]{PS}, over any commutative ring,
if one strictly proper parabolic subgroup contains another, then the respective elementary subgroups coincide;
hence $E_P(R)=E_{P_{min}}(R)$. By~\cite[Cor. 5.2 and Cor. 5.7]{SGA}, any other minimal
parabolic $A$-subgroup $Q_{min}$ of $G$ is conjugate to $P_{min}$ by an element of $E_{P_{min}}(A)$.
Consequntly, $E_{P_{min}}(R)$ is independent of the choice of a minimal parabolic $A$-subgroup $P_{min}$
of $G$.

The statement (ii) is precisely~\cite[Theorem 1]{PS}, taking into account that if $G$ satisfies \ee~over $A$,
it also satisfies \ee~over $R$.

Since any conjugate of a minimal or strictly proper parabolic subgroup
is, respectively, minimal or strictly proper, both (i) and (ii) imply that $\EE_P(A)$ is normal in $G(A)$.
\end{proof}

The facts from~\cite{SGA}, used in the first part of Theorem~\ref{th:PS-normality}, are corollaries of the
so-called Gauss decomposition in isotropic reductive groups over semi-local rings~\cite[Exp. XXVI, Th\'eor\`eme 5.1]{SGA}.
Namely, for any reductive group $G$ over a semi-local ring $A$, and any pair of opposite parabolic subgroups $P^\pm$
of $G$ with a common Levi subgroup $L_P$, one has
$$
G(A)=U_{P^+}(A)U_{P^-}(A)L_P(A)U_{P^+}(A).
$$

Given this result, the proof of~\cite[Theorem 1]{PS} consists in reducing the case of
a general commutative ring $A$ to the case of a local ring. This is done by showing that
$G$ satisfies what we call Suslin's local-global principle. It was proved for $G=\GL_l$, $n\ge 3$, in~\cite[Th. 3.1]{Sus}.
We denote by $F_m:A\to A_m$ the localization maps.

\begin{defn}{\bf (Suslin's local-global principle)}
Let $G$ be a reductive group scheme over a commutative ring $A$, and let $E(A)$ be the elementary subgroup of
$G(A)$. We say that $G$ satisfies \emph{Suslin's local-global principle}, if for any $g(X)\in G(A[X])$ such that $g(0)\in E(A)$ and
$F_m(g(X))\in E(A_m[X])$ for all maximal ideals $m$ of $A$, one has $g(X)\in E(A[X])$.
\end{defn}

Note that Suslin based his proof of the above statement for $\GL_l$ on the ideas of Quillen from~\cite{Q} (e.g.~\cite[Lemma 1]{Q}).
For the case of split semisimple (=Chevalley) groups the same result was obtained by Abe in~\cite[Th. 1.15]{Abe}.
R. Basu has treated certain isotropic reductive groups of classical type under the assumption
that they are locally split (\cite{Basu}; see also~\cite{BBR}).

The most general known result for reductive groups is as follows:

\begin{lem}\label{lem:PS-17}\cite[Lemma 17]{PS}
Let $G$ be a reductive group scheme over a commutative ring $A$, satisfying the condition \ee. Then Suslin's local-global
principle holds for $G$.
\end{lem}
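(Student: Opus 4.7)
The plan is to adapt Suslin's original strategy from~\cite[Theorem~3.1]{Sus}, combining the normality of $\EE$ in $G$ (Theorem~\ref{th:PS-normality}(ii) applied to $A[X]$, which inherits the condition~\ee) with a Quillen-type denominator-clearing lemma for products of elements of $U_{P^\pm}(A_s[X])$.

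First, since $g(0) \in \EE(A) \subseteq \EE(A[X])$, replacing $g(X)$ by $g(X)g(0)^{-1}$ we may assume $g(0) = 1$. I would then introduce an auxiliary variable $Y$ and consider
$$
J = \bigl\{\, a \in A \,:\, g(X + aY)\,g(X)^{-1} \in \EE(A[X,Y]) \,\bigr\}.
$$
A short calculation using the telescoping identity
$$
g(X + (a+b)Y)g(X)^{-1} = \bigl[g(X+aY+bY)g(X+aY)^{-1}\bigr]\cdot\bigl[g(X+aY)g(X)^{-1}\bigr],
$$
together with the substitution $X \mapsto X+aY$ inside $\EE(A[X,Y])$ and the normality of $\EE$ in $G$, shows that $J$ is an ideal of $A$. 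Thus it suffices to prove $J \not\subseteq m$ for every maximal ideal $m \subseteq A$.

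For each such $m$, fix a factorization of $F_m(g(X))$ as a product of finitely many unipotents in $U_{P^\pm}(A_m[X])$. Clearing denominators among the finitely many coefficients involved, one finds $s \in A \setminus m$ and an element $\tilde g \in \EE(A_s[X])$ whose image in $G(A_m[X])$ is $F_m(g)$. The core technical assertion is then: \emph{there exists $N \geq 0$ such that $\tilde g(X + s^N Y)\,\tilde g(X)^{-1} \in G(A_s[X,Y])$ comes from an element of $\EE(A[X,Y])$}. This is obtained by telescoping the product as factors of the form $u_i(p_i(X+s^N Y))u_i(p_i(X))^{-1}$ conjugated by partial products, where $u_i \in U_{P^\pm}$ and $p_i$ are polynomials with coefficients in $A_s$; for $N$ large enough to absorb all denominators in the $p_i$, each such factor visibly lies in $U_{P^\pm}(A[X,Y])$, and normality lets us rearrange the conjugators to see the product inside $\EE(A[X,Y])$. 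Thus $s^N \in J$, hence $J \not\subseteq m$. Since this holds for all $m$, $J = A$, and in particular $1 \in J$. Specializing $X \mapsto 0$ in $g(X+Y)g(X)^{-1} \in \EE(A[X,Y])$ gives $g(Y) \in \EE(A[Y])$, proving the lemma.

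The main obstacle is the core technical assertion above. In Suslin's $\GL_l$ case the computation with the $e_{ij}$'s is transparent, but in the present generality one must work with the polynomial structure of $U_{P^\pm}$ as $A$-group schemes (controlled via the relative root subgroups developed in~\cite{PS}) and carefully track how polynomial substitutions and denominator clearing interact with their commutator relations. Theorem~\ref{th:PS-normality}, i.e.\ normality of $\EE$, is used throughout to collapse products of conjugates back into the elementary subgroup, and it is precisely here that the hypothesis~\ee\ enters.
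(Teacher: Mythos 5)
The paper does not reprove this lemma; it is quoted from~\cite[Lemma 17]{PS}, whose proof is exactly the Quillen--Suslin patching scheme you outline (reduce to $g(0)=1$, show the set $J$ of admissible dilation parameters is an ideal, and verify $J\not\subseteq m$ via a denominator-clearing dilation lemma). Your reduction steps, the verification that $J$ is an ideal (which, by the way, needs only the substitutions $X\mapsto X+aY$ and $Y\mapsto cY$, not normality), and the final specialization $X\mapsto 0$ are all fine. The problem is your justification of the ``core technical assertion,'' which is precisely where all the work lies.

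You telescope $\tilde g(X+s^NY)\tilde g(X)^{-1}$ into factors $u_i(p_i(X+s^NY))u_i(p_i(X))^{-1}$ conjugated by the partial products $u_1(p_1(X))\cdots u_{i-1}(p_{i-1}(X))$, observe that each bare factor lies in $U_{P^\pm}(A[X,Y])$ for $N$ large, and then say ``normality lets us rearrange the conjugators to see the product inside $E(A[X,Y])$.'' This does not work: the conjugators have coefficients in $A_s$, so they are elements of $E(A_s[X])$ and not of $G(A[X,Y])$, and normality of $E(A[X,Y])$ in $G(A[X,Y])$ (Theorem~\ref{th:PS-normality}) says nothing about conjugation by elements with denominators. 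One must show directly that ${}^{h}X_\alpha(s^NYv)\in F_s\bigl(E(A[X,Y],YA[X,Y])\bigr)$ when $h$ has denominators bounded by a fixed power of $s$ and $N$ is large. This is the content of \cite[Lemma 15]{PS} (reproduced in strengthened form as Lemma~\ref{lem:PS-15'} of the present paper), and its proof is where the isotropic rank $\ge 2$ part of \ee\ genuinely enters: one decomposes $X_\alpha(s^NYv)$ into commutators $[X_\beta(\cdot),X_\gamma(\cdot)]$ with $\beta+\gamma=\alpha$ using the surjectivity of $N_{\beta\gamma 11}$ from \cite[Lemma 2]{LS} --- which requires a second independent root direction --- distributes the powers of $s$ among the commutator entries, and controls each conjugated factor via the generalized Chevalley commutator formula. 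Attributing the use of \ee\ to ``normality collapsing products of conjugates,'' as you do in your last paragraph, misses this; in rank $1$ the decomposition is unavailable and the principle fails (cf.\ Cohn's example for $\SL_2$ cited in the paper). A secondary imprecision: your core assertion only puts $g(X+s^NY)g(X)^{-1}$ into the \emph{image} $F_s\bigl(E(A[X,Y])\bigr)$ inside $G(A_s[X,Y])$, whereas membership $s^N\in J$ requires the global element to lie in $E(A[X,Y])$; since $F_s$ need not be injective on $G(A[X,Y])$, this step needs an extra argument (compare how Lemma~\ref{lem:PS-16'} is applied in the proof of Lemma~\ref{lem:A_fg}, where only membership in the image is ever claimed).
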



\subsection{Relative roots and relative root subschemes}\label{ssec:rel}
The results of~\cite{PS} were obtained using the calculus of relative roots and relative roots subschemes
of isotropic reductive groups. Here we recall their main properties proved in~\cite{PS,LS}.
They will be freely used in the rest of the paper.


Let $A$ be a commutative ring. Let $G$ be an isotropic reductive group scheme over $A$. Let
$P=P^+$ and $P^-$ be two opposite strictly proper
parabolic $A$-subgroups of $G$, with the common Levi subgroup $L_P=P\cap P^-$.
Relative root subschemes of $G$ with respect to $P=P^+$, actually,
depend on the choice of $P^-$ or $L_P$, but their essential properties stay the same,
so we will usually omit $P^-$ from the notation.

It was shown in~\cite{PS} that we can represent $\Spec A$ as a finite disjoint union
$$
\Spec A=\coprod\limits_{i=1}^m\Spec A_i,
$$ so that
the following conditions hold for $i=1,\ldots, m$:\\
\indent$\bullet$  the root system of $G_{\overline{k(s)}}$ is the same for all $s\in\Spec A_i$;\\
\indent$\bullet$  the type of the parabolic subgroup $P_{\overline{k(s)}}$ of
$G_{\overline{k(s)}}$ is the same for all $s\in\Spec A_i$;\\
\indent$\bullet$ if $S_i/A_i$ is a Galois extension of rings such that $G_{S_i}$ is of inner type,
then for any $s\in\Spec A_i$ the Galois group $\Gal(S_i/A_i)$ acts on the Dynkin diagram $D_i$ of
$G_{\overline{k(s)}}$ via the same subgroup of $\Aut(D_i)$.

The relative roots and relative root subschemes of $G$ are correctly defined over each $A_i$, $1\le i\le m$, that is, only locally
in the Zariski topology on $\Spec A$. However, since
$$
E_P(A)=\l<U_P(A),U_{P^-}(A)\r>=\prod\limits_{i=1}^m E_P(A_i),
$$
all properties of the elementary subgroup proved using relative root subschemes are usually easy to extend to
the general case. From here until the end of this section, assume that $A=A_i$ for some $i$.

Denote by $\Phi$ the root system of $G$, by $\Pi$ a set of simple roots of $\Phi$, by $D$
the corresponding Dynkin diagram. Then the $*$-action on $D$ is determined by a subgroup $\Gamma$
of $\Aut D$. Let $J$ be the subset of $\Pi$ such that $\Pi\setminus J$ is the type
of $P_{\overline{k(s)}}$ (that is, the set of simple roots of the Levi sugroup $L_{\overline{k(s)}}$).
Then $J$ is $\Gamma$-invariant.
Consider the projection
$$
\pi=\pi_{J,\Gamma}\colon\ZZ \Phi\longrightarrow \ZZ\Phi/\l<\Pi\setminus J;\ \alpha-\sigma(\alpha),\ \alpha\in J,\ \sigma\in\Gamma\r>.
$$
\begin{defn}
The set $\Phi_P=\pi(\Phi)\setminus\{0\}$ is called the system of \emph{relative roots} with respect to the parabolic
subgroup $P$. The \emph{rank} of $\Phi_P$ is the rank of $\pi(\ZZ \Phi)$ as a free abelian group.
\end{defn}

If $A$ is a local ring and $P$ is a minimal parabolic subgroup of $G$,
then $\Phi_P$ can be identified with the relative root system of $G$ in the sense of~\cite[Exp. XXVI \S 7]{SGA}
or~\cite{BoTi} in the field case, see also~\cite{PS,S-thes}.

In~\cite{PS}, we associated to any relative root $\alpha\in\Phi_P$
a finitely generated projective $A$-module $V_\alpha$ and a closed embedding
$$
X_\alpha:W(V_\alpha)\to G,
$$
where $W(V_\alpha)$ is the affine group scheme over $A$ defined by $V_\alpha$,
which is called a {\it relative root subscheme} of $G$.
These subschemes possess several nice properties
similar to that of elementary root subgroups of a split group, which we summarize
below.

\begin{thm*}\cite[Theorem 2, Lemma 9]{PS}
Let $\alpha\in\Phi_P$.

(i) There exist degree $i$ homogeneous polynomial maps $q^i_\alpha:V_\alpha\oplus V_\alpha\to V_{i\alpha}$, i>1,
such that for any $A$-algebra $A'$ and for any
$v,w\in V_\alpha\otimes_A A'$ one has
\begin{equation}\label{eq:sum}
X_\alpha(v)X_\alpha(w)=X_\alpha(v+w)\prod_{i>1}X_{i\alpha}\l(q^i_\alpha(v,w)\r).
\end{equation}

(ii) For any $g\in L_P(A)$, there exist degree $i$ homogeneous polynomial maps
$\varphi^i_{g,\alpha}\colon V_\alpha\to V_{i\alpha}$, $i\ge 1$, such that for any $A$-algebra $A'$ and for any
$v\in V_\alpha\otimes_A A'$ one has
$$
gX_\alpha(v)g^{-1}=\prod_{i\ge 1}X_{i\alpha}\l(\varphi^i_{g,\alpha}(v)\r).
$$
If $g$ is contained
in the central subtorus $\rad(L)(A)$, then $\varphi^1_{g,\alpha}$ is multiplication by a scalar, and
all $\varphi^i_{g,\alpha}$, $i>1$, are trivial.

(iii) \emph{(generalized Chevalley commutator formula)} For any $\alpha,\beta\in\Phi_P$ such that $m\alpha\neq -k\beta$ for
any $m,k\ge 1$,
there exist polynomial maps
$$
N_{\alpha\beta ij}\colon V_\alpha\times V_\beta\to V_{i\alpha+j\beta},\ i,j>0,
$$
homogeneous of degree $i$ in the first variable and of degree $j$ in the second
variable, such that for any $A$-algebra $A'$ and for any
for any $u\in V_\alpha\otimes_A A'$, $v\in V_\beta\otimes_A A'$ one has
\begin{equation}\label{eq:Chev}
[X_\alpha(u),X_\beta(v)]=\prod_{i,j>0}X_{i\alpha+j\beta}\bigl(N_{\alpha\beta ij}(u,v)\bigr)
\end{equation}
\end{thm*}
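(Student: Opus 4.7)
My plan is to derive all three assertions from the weight decomposition of $\Lie(U_P)$ under the action of a maximal split central subtorus of the Levi $L_P$, combined with the representability of the relevant group-theoretic operations as morphisms of affine $A$-schemes.

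First I fix the geometric picture. After replacing $A$ by one of the pieces $A_i$ of the Zariski decomposition recalled just above, the relative roots in $\Phi_P$ appear as weights of a maximal split subtorus $S \subseteq \rad(L_P)$ acting on $\Lie(U_P)$, yielding a decomposition $\Lie(U_P) = \bigoplus_{\alpha \in \Phi_P^+} V_\alpha$ into finitely generated projective $A$-modules. The closed embedding $X_\alpha\colon W(V_\alpha) \to G$ is then obtained by lifting the inclusion $V_\alpha \hookrightarrow \Lie(U_P)$ using the filtration of $U_P$ by the normal closed subgroups cut out by relative roots of height $\ge k$, for any chosen linear order refining the $P$-induced partial order on $\Phi_P^+$. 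By construction $X_\alpha$ is $S$-equivariant through the character $\alpha$, and the multiplication map $\prod_\alpha W(V_\alpha) \to U_P$, taken in any fixed order, is an isomorphism of $A$-schemes.

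Statement (ii) is an equivariance calculation. For $g \in \rad(L_P)(A)$, the adjoint action preserves each $V_\alpha$ and acts by the scalar $\alpha(g)$; uniqueness of $X_\alpha$ relative to the filtration of $U_P$ then forces $g X_\alpha(v) g^{-1} = X_\alpha(\alpha(g) v)$. For general $g \in L_P(A)$, conjugation preserves $U_P$ and its height filtration, hence maps $X_\alpha(V_\alpha)$ into the closed subgroup generated by $\{X_{i\alpha}\}_{i\ge 1}$; the $\varphi^i_{g,\alpha}$ are the resulting component maps, polynomial by representability and homogeneous of degree $i$ in $v$ by $S$-equivariance. Statements (i) and (iii) share the same structural origin. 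Both $X_\alpha(v) X_\alpha(w)$ and $[X_\alpha(u), X_\beta(v)]$ lie inside closed subgroups of $U_P$ whose factors are indexed by positive integer combinations of $\alpha$ and $\beta$. Modulo the next step of the height filtration, the leading term is $v + w$ in case (i), since the associated graded in a single weight is abelian, and vanishes in case (iii) by the non-proportionality hypothesis $m\alpha \neq -k\beta$. The higher corrections $q^i_\alpha(v,w)$ and $N_{\alpha\beta ij}(u,v)$ are then extracted component by component; they are polynomial by representability, and their bi-homogeneity follows from $S$-equivariance applied to each variable separately.

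The main obstacle, in my view, is the construction of $X_\alpha$ itself together with the verification that the output maps are polynomial of the stated degrees. In the split case $X_\alpha$ is simply the exponential of a one-dimensional root subgroup and the whole picture is transparent; in the relative case one must descend from an \'etale cover trivializing the $*$-action of $\Gamma$, invoking the SGA3 theory of reductive group schemes over rings. A subsidiary but delicate point is keeping track of which multiples $i\alpha$, respectively $i\alpha + j\beta$, actually lie in $\Phi_P$, and handling the non-reduced case $2\alpha \in \Phi_P$: it is precisely this phenomenon that forces the nontrivial correction terms $q^i_\alpha$ with $i>1$ in (i) and that distinguishes the relative Chevalley commutator formula from its split counterpart.
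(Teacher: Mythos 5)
The paper does not actually prove this statement: it is quoted, with the citation \cite[Theorem 2, Lemma 9]{PS}, from the reference where the modules $V_\alpha$ and the closed embeddings $X_\alpha$ are constructed, so there is no in-paper argument to compare yours against. Your outline does follow the strategy of that reference --- weight decomposition of $\Lie(U_P)$ under a split subtorus $S$ central in $L_P$, the scheme-theoretic isomorphism $\prod_\alpha W(V_\alpha)\cong U_P$, polynomiality of the component maps from representability, homogeneity from $S$-equivariance --- and you correctly identify the construction of $X_\alpha$ by descent from the split case as the real content. But as written this is a plan rather than a proof: the two facts that carry all the weight (that the multiplication map $\prod_\alpha W(V_\alpha)\to U_P$ is an isomorphism of schemes in any order, and that $X_\alpha$ is $S$-equivariant through the character $\alpha$) are asserted ``by construction'' and never established; they are precisely the content of the cited results.

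Two of your local justifications would not survive being made precise. In (ii) you argue that conjugation by $g\in L_P(A)$ preserves $U_P$ and its height filtration and hence maps $X_\alpha(V_\alpha)$ into the subgroup generated by the $X_{i\alpha}$; preserving the height filtration only places the image in the part of $U_P$ of height at least that of $\alpha$, which may contain root subschemes for many other relative roots. The argument you need is that $S$ is central in $L_P$, so conjugation by $g$ commutes with the $S$-action, preserves the weight decomposition, and therefore normalizes $U_{(\alpha)}=\prod_{i\ge 1}X_{i\alpha}$, the part of $U_P$ whose $S$-weights are positive multiples of $\alpha$. In (iii), the absence of the pure terms $X_{i\alpha}$ and $X_{j\beta}$ is not a matter of a ``leading term vanishing by non-proportionality'': the hypothesis $m\alpha\neq -k\beta$ is what guarantees that the relative roots $i\alpha+j\beta$ with $i,j\ge 0$ form a closed set, so that the product of the corresponding root subschemes is a unipotent subgroup scheme containing the commutator; the vanishing of the components with $i=0$ (respectively $j=0$) then follows by specializing $u=0$ (respectively $v=0$) and invoking the bi-homogeneity you already derived from $S$-equivariance.
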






Apart from the above properties of relative root subschemes, we will use the following Lemma, which appeared
first as~\cite[Lemma 10]{PS}, and was strengthened in~\cite{LS}.

\begin{lem*}\cite[Lemma 2]{LS}
Consider $\alpha,\beta\in\Phi_P$ satisfying $\alpha+\beta\in\Phi_P$ and $m\alpha\neq -k\beta$ for any $m,k\ge 1$.
Denote by $\Phi^0$ an irreducible component of $\Phi$ such that $\alpha,\beta\in\pi(\Phi^0)$.

{\rm (1)} In each of the following cases

\quad {\rm (a)} structure constants of $\Phi^0$ are invertible in the base ring $A$ (for example,
if $\Phi^0$ is simply laced);

\quad {\rm (b)} $\alpha\neq \beta$ and $\alpha-\beta\not\in\Phi_P$;

\quad {\rm (c)} $\Phi^0$ is of type $B_l$, $C_l$, or $F_4$,
and $\pi^{-1}(\alpha+\beta)$ consists of short roots;

\quad {\rm (d)} $\Phi^0$ is of type $B_l$, $C_l$, or $F_4$, and there exist long roots $\alpha\in\pi^{-1}(\alpha)$, $\beta\in\pi^{-1}(\beta)$
such that $\alpha+\beta$ is a root;

the map $\ N_{\alpha\beta 11}: V_\alpha\times V_\beta\to V_{\alpha+\beta}\ $
is surjective.

{\rm (2)} If $\alpha-\beta\in\Phi_P$ and $\Phi^0\neq G_2$, then
$$
\im N_{\alpha\beta 11}+\im N_{\alpha-\beta,2\beta,1,1}+\sum_{v\in V_\beta}\im (N_{\alpha-\beta,\beta,1,2}(-,v))=V_{\alpha+\beta},
$$
where $\im N_{\alpha-\beta,2\beta,1,1}=0$ if $2\beta\not\in\Phi_P$.
\end{lem*}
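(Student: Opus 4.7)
The plan is to descend the statement to a computation with absolute (Chevalley) root subgroups over a splitting extension and then verify the required surjectivity componentwise. By the reduction in §~\ref{ssec:rel} we may work Zariski-locally on $\Spec A$, and choose an étale, generically Galois extension $S/A$ such that $G_S$ is split with $P_S$ a standard parabolic compatible with a pinning. Then $V_\alpha\otimes_A S \simeq \bigoplus_{\tilde\alpha\in\pi^{-1}(\alpha)} S$ and $X_\alpha|_S$ is a product of absolute root-subgroup maps $x_{\tilde\alpha}$ in a fixed order. Expanding $[X_\alpha(u),X_\beta(v)]$ over $S$ by the absolute Chevalley commutator formula and collecting the bilinear $(u,v)$-terms in each absolute root subgroup $U_{\tilde\gamma}$ with $\tilde\gamma\in\pi^{-1}(\alpha+\beta)$ identifies
$$
N_{\alpha\beta 11}\otimes_A S \;=\; \bigoplus_{\tilde\gamma\in\pi^{-1}(\alpha+\beta)}\;\sum_{\substack{\tilde\alpha+\tilde\beta=\tilde\gamma\\ \tilde\alpha\in\pi^{-1}(\alpha),\,\tilde\beta\in\pi^{-1}(\beta)}} N_{\tilde\alpha\tilde\beta}\,(-)_{\tilde\alpha}(-)_{\tilde\beta},
$$
where $N_{\tilde\alpha\tilde\beta}\in\{\pm 1,\pm 2,\pm 3\}$ are the ordinary Chevalley structure constants. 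By faithfully flat descent it is enough to prove surjectivity over $S$ componentwise in $\tilde\gamma$.

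For part (1), for each $\tilde\gamma\in\pi^{-1}(\alpha+\beta)$ we must produce $\tilde\alpha\in\pi^{-1}(\alpha)$ and $\tilde\beta\in\pi^{-1}(\beta)$ with $\tilde\alpha+\tilde\beta=\tilde\gamma$ and $N_{\tilde\alpha\tilde\beta}\in A^\times$. Existence of \emph{some} decomposition is a standard root-system fact, so case (a) is automatic. Case (b) follows from the classical formula $N_{\tilde\alpha\tilde\beta}=\pm(p+1)$, where $p$ is the largest integer with $\tilde\beta-p\tilde\alpha\in\Phi^0$: the assumption $\alpha-\beta\notin\Phi_P$ forces every such $\tilde\alpha$-string through $\tilde\beta$ to have length $\le 2$, so $N_{\tilde\alpha\tilde\beta}=\pm 1$. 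Cases (c) and (d) reduce to a finite inspection of long/short root combinatorics in $B_l$, $C_l$, $F_4$: the hypotheses guarantee a decomposition whose Chevalley constant is $\pm 1$ (in (c) because both summands may be taken short; in (d) by the existence of the long pair summing to a root).

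For part (2), when $\alpha-\beta\in\Phi_P$ some $\tilde\gamma$ may be accessible only through pairs with Chevalley constant divisible by $2$, so $N_{\alpha\beta 11}$ alone need not be surjective on that summand. The fix is to expand the two auxiliary commutators $[X_{\alpha-\beta}(u),X_\beta(v)]$ and, when $2\beta\in\Phi_P$, $[X_{\alpha-\beta}(u),X_{2\beta}(w)]$ over $S$ and project onto $U_{\tilde\gamma}$; this contributes to $\im N_{\alpha-\beta,\beta,1,2}(-,v)$ and $\im N_{\alpha-\beta,2\beta,1,1}$ additional coefficients of the form $\frac12 N_{\tilde\delta,\tilde\beta}N_{\tilde\delta+\tilde\beta,\tilde\beta}$ and $N_{\tilde\delta,2\tilde\beta}$ respectively. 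A finite verification in types $A_l,\ldots,F_4$ then shows that for every $\tilde\gamma$ at least one of the three candidates is a unit, so the three images together span $V_{\alpha+\beta}$ summandwise. The $G_2$ exclusion is genuine, since there $\alpha+\beta$ can be thrice a short root with all relevant constants divisible by $3$. The main obstacle is the bookkeeping in (2): one must describe $\sum_{v}\im N_{\alpha-\beta,\beta,1,2}(-,v)$ as a bona fide submodule, match it with the Galois-equivariant direct sum decomposition of $V_{\alpha+\beta}$, and handle uniformly the non-simply-laced types where $2\notin A^\times$.
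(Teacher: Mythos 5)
This lemma is not actually proved in the paper you are reading: it is quoted verbatim from \cite[Lemma 2]{LS} and used as a black box, so there is no internal proof to compare against. Your general framework --- split $G$ over an \'etale faithfully flat extension $S/A$, identify $V_\alpha\otimes_A S$ with $\bigoplus_{\tilde\alpha\in\pi^{-1}(\alpha)}S$, and read off $N_{\alpha\beta 11}$ from the absolute Chevalley commutator formula --- is indeed the setting in which these maps are constructed in \cite{PS}, so the point of departure is reasonable. But the two reduction steps on which your whole argument rests are both unjustified, and they are where the actual content of the lemma lives.

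First, $N_{\alpha\beta 11}$ is bilinear, not linear; its set-theoretic image is not a submodule, and set-theoretic surjectivity does \emph{not} descend along a faithfully flat extension. (A quadratic form over $\FF_2$ or $\mathbb R$ that factors over the algebraic closure but not over the base already shows the phenomenon for a bilinear multiplication map.) A preimage of $w\in V_{\alpha+\beta}$ produced coordinatewise over $S$ need not be Galois-equivariant, hence need not come from a pair in $V_\alpha\times V_\beta$ over $A$ --- and producing preimages over $A$ itself is exactly what the lemma asserts and what the paper uses (e.g.\ writing $X_\alpha(bX^2)$ as a \emph{single} commutator in the proof of Lemma~\ref{lem:Abe-tauX}). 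Second, even over $S$ the reduction ``componentwise in $\tilde\gamma$'' is invalid: the output components share the input coordinates $u_{\tilde\alpha},v_{\tilde\beta}$, so exhibiting, for each $\tilde\gamma$, one decomposition $\tilde\gamma=\tilde\alpha+\tilde\beta$ with $N_{\tilde\alpha\tilde\beta}$ a unit does not solve the resulting system of bilinear equations simultaneously; in particular case (a) is not ``automatic''. The standard way to close both gaps at once is to exhibit an element $u\in V_\alpha$ defined over $A$ (typically a Galois-invariant sum of weight vectors) for which the \emph{linear} map $N_{\alpha\beta 11}(u,-)\colon V_\beta\to V_{\alpha+\beta}$ is already surjective, since surjectivity of a linear map of finitely generated modules can legitimately be checked after faithfully flat base change; your sketch never produces such a $u$. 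Finally, cases (c), (d) and all of part (2) are deferred to an unspecified ``finite inspection''; part (2) is precisely where the halved products of structure constants interact with $2\notin A^\times$ and where the $G_2$ exclusion comes from, so the hardest verification is the one left undone.
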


Assume that $G$ satisfies \ee over $A$, and $P$ is strictly proper.
In a strict analogy with the split case, for any $A$-algebra $A'$ we have
$$
E(A')=E_P(A')=\l<U_P(A),U_{P^-}(A)\r>=\l<X_\alpha(V_\alpha\otimes_A A'),\ \alpha\in\Phi_{P}\r>.
$$
(see~\cite[Lemma 6]{PS}).

For any $\alpha\in\Phi_P$, we denote by $U_{(\alpha)}$ the closed subscheme $\prod\limits_{k\ge 1}X_{k\alpha}$ of $G$
so that we have
$$
U_{(\alpha)}(A')=\l<X_{k\alpha}(V_{k\alpha}\otimes_A A'),\ k\ge 1\r>
$$
for any $A'/A$. Here $X_{k\alpha}$ is assumed to be trivial if $k\alpha\not\in\Phi_P$. This notation
coincides with that of~\cite{BoTi} in case of isotropic reductive groups over a field.

\begin{defn}\label{def:E(A,I)}
Let $I$ be any ideal of the ring $A$.
We denote
$$
\begin{array}{l}
G(A,I)=\ker \bigl(G(A)\to G(A/I)\bigr),\\
E_P^*(A,I)=G(A,I)\cap E_P(A),\\
E_P(I)=\l<X_\alpha(IV_\alpha),\ \alpha\in\Phi_P\r>=\l<X_\alpha(V_\alpha\otimes_A I),\ \alpha\in\Phi_P\r>,\\
E_P(A,I)=E_P(I)^{E_P(A)}=\mbox{ the normal closure of $E_P(I)$ in }E_P(A).
\end{array}
$$
If \ee~is satisfied, we denote the same groups by $E^*(A,I)$, $E(I)$, and $E(A,I)$ respectively.
\end{defn}

For any $\alpha\in\Phi_P$ there exists by~\cite[Exp. XXVI Prop. 6.1]{SGA} a closed connected smooth subgroup $G_\alpha$ of $G$
such that for any $s\in\Spec A$, $(G_\alpha)_{\overline{k(s)}}$ is the standard reductive subgroup of $G_{\overline{k(s)}}$
corresponding to the  root subsystem $\pi^{-1}(\{\pm\alpha\}\cup\{0\})\cap\Phi$. The group $G_\alpha$ is an isotropic reductive
group ``of isotropic rank 1'', having two opposite parabolic subgroups $L_P\cdot U_{(\alpha)}$ and $L_P\cdot U_{(-\alpha)}$.

We denote by $E_\alpha(A)$ the subgroup of $G(A)$
generated by $U_{(\alpha)}(A)$ and $U_{(-\alpha)}(A)$. Note that we don't know if $E_\alpha(A)$ is normal
in $G_\alpha(A)$, and, generally speaking, it depends on the choice of the initial parabolic subgroup $P$ of $G$.

\subsection{Factorization lemma for the elementary subgroup}\label{ssec:E_fg}

Suslin's local-global principle is closely related to the following factorization lemma, first
proved in~\cite[Lemma 3.7]{Sus} for $\GL_l$, and in~\cite[Lemma 3.2]{Abe} for split groups.
It was originally inspired by yet another step in the proof of
Quillen's local-global principle for projective modules~\cite[Theorem 1]{Q}.

\begin{lem}\label{lem:A_fg}
Let $G$ be an isotropic reductive group over a commutative ring $A$, and let $P$ be a strictly proper parabolic subgroup
of $G$. Assume that the relative root
subschemes with respect to $P$ are correctly defined over $A$, as in subsection~\ref{ssec:rel} above,
and that all irreducible components of $\Phi_P$ are of rank at least $2$; in particular, $G$ satisfies \ee. 

Let $f,g\in A$ be such that $fA+gA=A$. If $x\in E(A_{fg})$, then there exist
$x_1\in E(A_f)$ and $x_2\in E(A_g)$ such that $x=F_g(x_1)F_f(x_2)$.
\end{lem}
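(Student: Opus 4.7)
The strategy is a Quillen-type patching argument, adapted to non-abelian elementary subgroups by Suslin~\cite{Sus} and later by Abe~\cite{Abe}. The plan is an induction on the minimal number $k$ of generators $X_{\alpha_j}(v_j)$ ($\alpha_j\in\Phi_P$, $v_j\in V_{\alpha_j}\otimes_A A_{fg}$) needed to express $x$. I first pass to the Zariski decomposition of $\Spec A$ from subsection~\ref{ssec:rel}, after which the relative roots and root subschemes are correctly defined over $A$, and I use the hypothesis $fA+gA=A$ in the form $1=af^N+bg^N$ for any prescribed $N\ge 1$.

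For the base case $k=1$ with $x=X_\alpha(v)$, I would write $v=w/(fg)^N$ for some $w\in V_\alpha\otimes A$ and sufficiently large $N$, and set $v''=bw/f^N\in V_\alpha\otimes A_f$ and $v'=aw/g^N\in V_\alpha\otimes A_g$, so that $v=v''+v'$ in $V_\alpha\otimes A_{fg}$. The sum formula~$(\ref{eq:sum})$ then gives
$$
X_\alpha(v) = X_\alpha(v'')\, X_\alpha(v')\, \prod_{i>1} X_{i\alpha}\bigl(q^i_\alpha(v'',v')\bigr)^{-1},
$$
whose first two factors already sit in $F_g(E(A_f))$ and $F_f(E(A_g))$ respectively, while the correction factors live in $U_{(\alpha)}(A_{fg})$ and must be factored recursively. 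I would handle them by descending induction along the central series filtration of the nilpotent group $U_{(\alpha)}$: each single $X_{i\alpha}$-generator is split by the same procedure, and the reordering needed to collect the pieces into the prescribed $F_g(\cdot)F_f(\cdot)$ form creates commutator corrections of strictly higher filtration degree via the Chevalley formula~$(\ref{eq:Chev})$. Since $U_{(\alpha)}$ has finite nilpotency class and $\Phi_P$ is finite, the recursion terminates.

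For the inductive step, decompose $x=y\cdot X_\alpha(v)$ with $y$ of length $k-1$. The induction hypothesis and the base case supply $y=F_g(y_1)F_f(y_2)$ and $X_\alpha(v)=F_g(\xi_1)F_f(\xi_2)$, so that
$$
x = F_g(y_1)\,\bigl(F_f(y_2)\,F_g(\xi_1)\bigr)\,F_f(\xi_2).
$$
The crux, which I expect to be the main obstacle, is to rewrite any product of the form $F_f(u)\,F_g(w)$ with $u\in E(A_g)$ and $w\in E(A_f)$ in the form $F_g(w')\,F_f(u')$. To this end I would expand $u$ and $w$ as products of root generators and apply~$(\ref{eq:Chev})$ together with the surjectivity lemma from~\cite{LS} recalled above, so that each commutator $\bigl[F_f(X_\beta(\cdot)),\,F_g(X_\gamma(\cdot))\bigr]$ becomes a product of factors $X_{i\beta+j\gamma}(\cdot)$, each of which is then split via the base case. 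The assumption that every irreducible component of $\Phi_P$ has rank $\ge 2$ is precisely what guarantees enough relative roots $i\beta+j\gamma\in\Phi_P$ to make these rewritings possible; without it, commutators of opposite-localization elements would remain confined to a single root subgroup and the swap would fail. The most delicate point is the bookkeeping of denominators: one must fix $N$ large enough at the outset so that every intermediate rewriting stays inside the correct localizations, and check that the finitely many commutator expansions do not exhaust this budget.
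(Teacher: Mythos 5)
There is a genuine gap, and it sits exactly where you suspect: the ``swap'' step that rewrites $F_f(u)\,F_g(w)$ as $F_g(w')\,F_f(u')$. Two things go wrong. First, the generalized Chevalley commutator formula~\eqref{eq:Chev} is only available when the two roots $\beta,\gamma$ satisfy $m\beta\neq -k\gamma$ for all $m,k\ge 1$; but $u\in E(A_g)$ and $w\in E(A_f)$ are arbitrary words in root elements, so in expanding $F_f(u)F_g(w)$ you inevitably face a pair $X_\beta(\cdot)X_{-\beta}(\cdot)$ with mixed denominators, for which no commutator identity exists and the rank~$\ge 2$ hypothesis gives no direct remedy. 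Second, even for the admissible pairs, each commutator produces factors $X_{i\beta+j\gamma}(N(u,v))$ whose arguments again have denominators in both $f$ and $g$; splitting them requires a fresh use of $1=a'f^{N'}+b'g^{N'}$ and then a fresh reordering, and there is no decreasing invariant that makes this recursion terminate (the filtration of the single unipotent group $U_{(\alpha)}$ controls only the base case, not the global reordering). In effect, the statement ``$F_f(E(A_g))\,F_g(E(A_f))\subseteq F_g(E(A_f))\,F_f(E(A_g))$'' that your induction needs is essentially equivalent to the lemma itself, so the argument is circular at its crux.

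The missing idea is Quillen's homotopy/dilation trick, which is how the paper (following Suslin and Abe) proceeds. By~\cite[Lemma 8]{PS} one chooses $x(X)\in E(A_{fg}[X])$ with $x(0)=1$ and $x(1)=x$; the real content is then Lemma~\ref{lem:PS-16'} (resting on Lemma~\ref{lem:PS-15'}), which says that for $a\equiv b\pmod{f^k}$ (resp.\ $\pmod{g^k}$) with $k$ large, the element $x(aX)x(bX)^{-1}$ lies in $F_f(E(A_g[X]))$ (resp.\ $F_g(E(A_f[X]))$). Writing $1=f^ks+g^kt$ and
\begin{equation*}
x(X)=x\bigl((f^ks+g^kt)X\bigr)x(g^ktX)^{-1}\cdot x(g^ktX)x(0\cdot X)^{-1}
\end{equation*}
gives the factorization at $X=1$ in one stroke, with all the commutator bookkeeping absorbed into the proof of Lemma~\ref{lem:PS-15'} over the polynomial ring. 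Your additive splitting $v=v''+v'$ via $1=af^N+bg^N$ is the right first move for a single root element, but without the polynomial variable and the dilation $X\mapsto aX$ there is no mechanism to control the cross terms.
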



To prove this lemma, we need the following extensions of~\cite[Lemmas 15 and 16]{PS}.

\begin{lem}\label{lem:PS-15'}
Fix $s\in A$, and let $F_s:G(A[Z])\to G(A_s[Z])$ be the localization homomorphism. For any $g(Z)\in E(A_s[Z],ZA_s[Z])$ there
exist such $h(Z)\in E(A[Z],ZA[Z])$ and $k\ge 0$ that $F_s(h(Z))=g(s^kZ)$.
\end{lem}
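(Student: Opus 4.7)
The plan is to unravel the definition of $E(A_s[Z], ZA_s[Z])$ as the normal closure of $E(ZA_s[Z])$ in $E(A_s[Z])$, apply the substitution $Z \mapsto s^kZ$ for sufficiently large $k$, and show that each resulting defining conjugate lifts already to $E(ZA[Z]) \subseteq E(A[Z], ZA[Z])$. Concretely, I first write
$$
g(Z) = \prod_{l=1}^N e_l(Z)\,X_{\alpha_l}\!\bigl(Zv_l(Z)\bigr)\,e_l(Z)^{-1}
$$
with $e_l(Z) = \prod_j X_{\beta_{l,j}}(w_{l,j}(Z)) \in E(A_s[Z])$ and $v_l(Z) \in V_{\alpha_l}\otimes A_s[Z]$, and fix $M$ so that the finitely many coefficients (as polynomials in $Z$) of all the $v_l$'s and $w_{l,j}$'s have $s$-denominators bounded by $s^M$. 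For $k \ge M$, the inner argument $s^kZ\cdot v_l(s^kZ)$ already lies in $ZA[Z]\otimes V_{\alpha_l}$.

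The central step is a commutator-expansion of each conjugate $e_l(s^kZ)\,X_{\alpha_l}(s^kZ\,v_l(s^kZ))\,e_l(s^kZ)^{-1}$. Iterated application of the identities $aba^{-1}=[a,b]\,b$ and $[ab,c]=a[b,c]a^{-1}\cdot[a,c]$, combined with the generalized Chevalley commutator formula and the sum formula for equi-directional collisions, rewrites the conjugate as a finite product $\prod_\delta X_\delta(P_\delta)$ in $E(A_s[Z])$, where each $P_\delta$ is a polynomial expression homogeneous of some degree $d_\delta\ge 1$ in the inner argument $s^kZ\,v_l(s^kZ)$; the degree-$\ge 1$ property is preserved by every non-antiparallel commutator because $N_{\beta\alpha_l ij}$ is homogeneous of degree $j\ge 1$ in its second entry. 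For an antiparallel encounter $\beta_{l,j}\in\{-\alpha_l,\pm 2\alpha_l,\ldots\}$, where the commutator formula from~\cite{PS} does not apply directly, I would invoke the standing hypothesis that every irreducible component of $\Phi_P$ has rank $\ge 2$ to pick a relative root $\gamma$ with $\gamma,\alpha_l-\gamma\in\Phi_P$; by part~(1) of the surjectivity lemma from~\cite{LS} quoted earlier, rewrite $X_{\alpha_l}(s^kZ\,v_l(s^kZ))$ as a product of commutators $[X_{\alpha_l-\gamma}(\cdot),X_\gamma(\cdot)]$ (absorbing the higher-weight corrections from the sum formula) in which the $\gamma$-entries carry the entire $s^kZ$-prefactor, and then apply a Hall--Witt expansion of the triple commutator to replace the antiparallel encounter by a bounded number of non-antiparallel ones.

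By homogeneity, each resulting $P_\delta$ has the shape
$$
P_\delta = (s^kZ)^{d_\delta}\,Q_\delta\bigl(v_l(s^kZ),\,w_{l,\bullet}(s^kZ)\bigr),
$$
and the $s$-denominators appearing in the coefficients of $Q_\delta$ are bounded by a constant $M'$ depending on $M$, on the word-length of the $e_l$'s, and on finitely many structure constants of $\Phi_P$, but independent of $k$. Choosing $k$ so large that $kd_\delta\ge M'$ for every $\delta$ arising in any of the finitely many expansions, the factor $s^{kd_\delta}$ absorbs every denominator, and each $P_\delta$ lies in $ZA[Z]\otimes V_\delta$. Taking $h(Z)$ to be the product, in the same order, of the corresponding elements $X_\delta(P_\delta)\in E(A[Z])$ yields $h(Z)\in E(ZA[Z])\subseteq E(A[Z],ZA[Z])$ with $F_s(h(Z))=g(s^kZ)$. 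The main obstacle will be the antiparallel step: making the rewriting of $X_{\alpha_l}$ as a commutator of non-antiparallel-root elements explicit enough to preserve the degree-$\ge 1$ dependence on the inner argument after Hall--Witt expansion, and checking that the procedure terminates in finitely many iterations so that the denominator bound $M'$ remains uniform in $k$.
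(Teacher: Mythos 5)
Your argument is essentially the paper's own proof: the paper disposes of this lemma by observing that the proof of \cite[Lemma 15]{PS} goes through verbatim because the only use of locality there was the existence of a parabolic whose relative root system has all components of rank $\ge 2$, which here is assumed over $A$ itself; your dilation $Z\mapsto s^kZ$ combined with commutator expansion and denominator bookkeeping is precisely that argument. The ``main obstacle'' you flag --- conjugation by antiparallel root elements, resolved via the rank-$\ge 2$ hypothesis and the surjectivity lemma of \cite{LS} --- is exactly the point where that hypothesis enters, and the bookkeeping you sketch is carried out in the paper's Lemmas~\ref{lem:Abe-z(a,u)} and~\ref{lem:Abe-1.5} (reduction to generators $Z_\alpha(a,u_1,\ldots,u_{m_\alpha})$ with $a\in E_\alpha$, and degree-tracking under non-collinear conjugation), so no new idea is needed to close it.
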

\begin{proof}
Let $S=\{s^k,\ k\ge 0\}$ be the set of all powers of $s$ in $A$. One can prove exactly as in~\cite[Lemma 15]{PS}, that
for any $g(Z)\in E(A_S[Z],ZA_S[Z])$ there exist such
$f(Z)\in E(A[Z],ZA[Z])$ and $s^k\in S$ that $F_h(f(Z))=g(s^kZ)$. Indeed, in that Lemma, the localization was taken with respect
to the subset $S$ of the base ring $A$ which was a complement of a maximal ideal, and not a set of powers of one element.
However, the only use of the fact that $A_S$ was a local ring was that $G_{A_S}$ contained a parabolic subgroup
whose relative root system was of rank $\ge 2$. In our current case, such a parabolic subgroup exists already over $A$.
\end{proof}

\begin{lem}\label{lem:PS-16'}
Fix $s\in A$. For any $g(X)\in E(A_s[X])$ there exists $k\ge 0$ such that $g(aX)g(bX)^{-1}\in F_s(E(A[X]))$
for any $a,b\in A$ satisfying $a\equiv b\pmod{s^k}$.
\end{lem}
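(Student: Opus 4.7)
The strategy is to reduce to Lemma~\ref{lem:PS-15'} by studying the two-variable element
$$
\psi(T,Z)=g(T+Z)\,g(T)^{-1}\in E(A_s[T,Z]),
$$
where $T,Z$ are fresh indeterminates. The crucial point is the claim
$$
\psi(T,Z)\in E\bigl(A_s[T,Z],\,ZA_s[T,Z]\bigr).
$$
Granting this, apply Lemma~\ref{lem:PS-15'} with the base ring $A[T]$ in place of $A$, the indeterminate $Z$, and the same element $s$ (the hypothesis \ee{} and the set-up of relative root subschemes are preserved under the base change $A\to A[T]$). This produces $k\ge 0$ and $h(T,Z)\in E\bigl(A[T,Z],\,ZA[T,Z]\bigr)$ satisfying
$$
F_s\bigl(h(T,Z)\bigr)=\psi(T,s^kZ).
$$
Given $a,b\in A$ with $a-b=s^kc$, the specialization $T\mapsto bX$, $Z\mapsto cX$ turns the right-hand side into
$$
\psi(bX,s^kcX)=g\bigl((b+s^kc)X\bigr)\,g(bX)^{-1}=g(aX)\,g(bX)^{-1},
$$
while the left-hand side lies in $F_s(E(A[X]))$ because $h(bX,cX)\in E(A[X])$. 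This yields the required inclusion with the $k$ furnished by Lemma~\ref{lem:PS-15'}.

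To establish the claim, decompose $g(X)=\prod_{i=1}^{n}X_{\alpha_i}(v_i(X))$ with $\alpha_i\in\Phi_P$ and $v_i(X)\in V_{\alpha_i}\otimes_{A_s}A_s[X]$. Since $v_i(T+Z)-v_i(T)$ is manifestly divisible by $Z$, write $v_i(T+Z)=v_i(T)+Zw_i(T,Z)$. The sum formula~(\ref{eq:sum}), together with the observation that $q^j_{\alpha_i}(v,0)=0$ (which follows by specializing~(\ref{eq:sum}) at $w=0$, so that $q^j_{\alpha_i}(v_i(T),Zw_i)$ is divisible by $Z$), gives
$$
X_{\alpha_i}\bigl(v_i(T+Z)\bigr)=X_{\alpha_i}\bigl(v_i(T)\bigr)\cdot \tilde e_i,\qquad \tilde e_i\in E\bigl(ZA_s[T,Z]\bigr).
$$
Setting $e_i=X_{\alpha_i}(v_i(T))$ and $c_k=e_1\cdots e_k$, a routine rearrangement yields
$$
\psi(T,Z)=e_1\tilde e_1 e_2\tilde e_2\cdots e_n\tilde e_n\cdot(e_1\cdots e_n)^{-1}=\prod_{k=1}^{n}c_k\tilde e_k c_k^{-1},
$$
and each factor is the conjugate of an element of $E(ZA_s[T,Z])$ by an element of $E(A_s[T,Z])$, so lies in the normal closure $E(A_s[T,Z],ZA_s[T,Z])$, as desired.

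\textbf{Main obstacle.} The principal technical point is the passage from the formal identity $v_i(T+Z)=v_i(T)+Zw_i$ to the factorization $X_{\alpha_i}(v_i(T+Z))=X_{\alpha_i}(v_i(T))\cdot\tilde e_i$ with $\tilde e_i$ genuinely in $E(ZA_s[T,Z])$; this rests on the higher correction terms $q^j_{\alpha_i}$ and the fact that they vanish when one argument is $0$. Once this is in hand, both the rearrangement into conjugates and the final specialization step are formal, and everything else is a direct appeal to Lemma~\ref{lem:PS-15'}.
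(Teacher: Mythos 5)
Your proof is correct and takes essentially the same route as the paper: the paper forms $f(Z)=g(X(Y+Z))g(XY)^{-1}$ in three variables, observes that $f(0)=1$ so that $f(Z)\in E(A_s[X,Y,Z],ZA_s[X,Y,Z])$, applies Lemma~\ref{lem:PS-15'} over the base $A[X,Y]$, and specializes $Y=b$, $Z=t$; your $\psi(T,Z)$ is the same element after the substitution $T=XY$. The only cosmetic difference is that you write out the verification that $\psi(T,Z)$ lies in the relative elementary subgroup, which the paper gets for free from $f(0)=1$ together with Lemma~\ref{lem:Abe-sect}\,(i).
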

\begin{proof}
Consider the element $f(Z)=g(X(Y+Z))g(XY)^{-1}$ of the group $E(A_s[X,Y,Z])$, where $X,Y,Z$ are three variables.
Then $f(0)=1$, so
$$
f(Z)\in E(A_s[X,Y,Z],ZA_s[X,Y,Z]).
$$
By Lemma~\ref{lem:PS-15'}
there exist $h(Z)\in E(A[X,Y,Z],ZA[X,Y,Z])$ and $k\ge 0$ such that $F_s\l(h(Z)\r)=f(s^kZ)$. We have
$$
f(s^kZ)=g\l(X(Y+s^kZ)\r)g(XY)^{-1}.
$$
If $a-b=s^kt$, $t\in A$, then setting $Y=b$, $Z=t$, we deduce the claim of the Lemma.
\end{proof}

\begin{proof}[Proof of Lemma~\ref{lem:A_fg}]
By~\cite[Lemma 8]{PS} we can find such $x(X)\in E(A_{fg}[X])$ that $x(0)=1$ and $x(1)=x$.
Then it is enough to find
$x_1(X)\in E(A_f[X])$ and $x_2(X)\in E(A_g[X])$ such that $x(X)=x_1(X)x_2(X)$. Since $x(0)=1$,
by Lemma~\ref{lem:PS-16'} there exists such $k\ge 0$ that for any $a,b\in A_{fg}$ such that
$a\equiv b\pmod {f^k}$, we have $x(aX)x(bX)^{-1}\in F_f(E(A_g[X]))$; and for any
$a,b\in A_{fg}$ such that
$a\equiv b\pmod {g^k}$, we have $x(aX)x(bX)^{-1}\in F_g(E(A_f[X]))$. Since $fA+gA=A$, we have $f^kA+g^kA=A$ as well.
Hence $1=f^ks+g^kt$ for some $s,t\in A$.
Then we have
$$
x(X)=x\bigl((f^ks+g^kt)X\bigr)x(g^ktX)^{-1}x(g^ktX)x(0\cdot X)^{-1}.
$$
Therefore, $x\bigl((f^ks+g^kt)X\bigr)x(g^ktX)^{-1}$ belongs to $F_f(E(A_g[X]))$, and $x(g^ktX)x(0\cdot X)^{-1}$ belongs to $F_g(E(A_f[X]))$.
\end{proof}

\section{The functor $K_1^G$}



\subsection{Definition and basic properties of $K_1^G$}

Let $G$ be a reductive group over a commutative ring $A$ satisfying the condition \ee.

\begin{defn}
The functor $K_1^G$ on the category of commutative $A$-algebras $R$, given by $K_1^G(R)=G(R)/E(R)$,
is called the \emph{non-stable $K_1$-functor} associated to $G$.
\end{defn}
The normality of the elementary subgroup implies that $K_1^G$ is a group-valued functor.
Following Bass, define $NK_1^G(R)$ as the kernel of the map $K_1^G(R[X])\to K_1^G(R)$ induced by evaluation at
$x=0$, and denote by $\max(R)$ the maximal spectrum of $R$. Then by Suslin's local-global principle,
for any $A$-algebra $R$ the natural map
$$
NK_1^G(R)\xrightarrow{\prod F_m}\prod_{m\in\max(R)}NK_1^G(R_m)
$$
is injective.
In other words, $NK_1^G$ is a separated presheaf for the Zariski topology on the category of affine $A$-schemes.


\begin{defn} We say that $K_1^G$ \emph{is $\Aff^1$-invariant at $R$}, if the map $K_1^G(R)\to K_1^G(R[X])$ induced by
the inclusion of $R$ into $R[X]$ is an isomorphism, or, equivalently, if
$$G(R[X])=G(R)E(R[X]).$$
\end{defn}

 In Theorem~\ref{th:whitehead-geometric}
we show that $K_1^G$ is $\Aff^1$-invariant at $R$, if $G$
is a reductive algebraic group over a perfect field $k$ that satisfies \ee, and $R$ is a regular $k$-algebra.

\smallskip
{\bf Remark.} Note that if a reductive group $G$ over a commutative ring $R$ is isotropic, i.e. contains a proper parabolic subgroup $P$
(it is reasonable to assume that $P$ is strictly proper), but $G$ does not necessarily satisfy \ee, one can still consider
the quotient
$$
K_1^{G,P}(R)=G(R)/E_P(R).
$$
If $R$ is a semilocal ring, one knows that $E_P(R)$ is also independent of the choice of $P$ and normal in
$G(R)$ by Theorem~\ref{th:PS-normality} (i). However, if $R$ is not semilocal, $E_P(R)$ is not in general normal
in $G(R)$, and Suslin's local-global principle is not true.
Also, the classical example of Cohn~\cite{Cohn} says that $SL_2(k[X_1,X_2])\neq E_2(k[X_1,X_2])$. Since
$SL_2(k[X_1])=E_2(k[X_1])$, this implies that $K_1^{\SL_2,P_1}$ is not $\Aff^1$-invariant at $k[X_1]$.
One may ask if the subgroup $\hat E(R)$ of $G(R)$ generated by all $E_P(R)$, $P$ a parabolic subgroup of $G$, provides a better definition
of $K_1^G$ if \ee\ is not satisfied. Unfortunately, if $k$ is a finite field, then again
$\SL_2(k[X_1,X_2])\neq \hat E_2(k[X_1,X_2])$, see~\cite[Th. 1.4]{GMV-SL2} and~\cite{KrMc}.

These are the reasons why in the present paper we mostly restrict our attention to groups of isotropic rank $\ge 2$.

\subsection{Margaux---Soul\'e theorem.}

The following theorem is an essential ingredient in the proof of our main theorem, Theorem~\ref{th:whitehead-geometric}.

\begin{thm}[Margaux--Soul\'e]\label{th:MS-red}
Let $G$ be a reductive algebraic group over a field $k$, such that every normal semisimple subgroup of the algebraic derived
group $[G,G]$ is isotropic.
Then
$$
\begin{array}{rl}
G(k[X])&=G(k)\cdot\l<U_P(k[X]),\ P\mbox{ a minimal parabolic $k$-subgroup of }G\r>\\
&= G(k)\cdot E_Q(k[X])
\end{array}
$$ for any strictly proper parabolic $k$-subgroup $Q$ of $G$.
\end{thm}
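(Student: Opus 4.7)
My plan is to reduce the theorem in stages to the case of an absolutely almost simple simply connected isotropic group over a field, where it is precisely the theorem of Soul\'e (for infinite $k$) combined with Margaux (for arbitrary $k$), and then to identify the two right-hand descriptions. First I reduce to the semisimple case. The quotient $T = G/[G,G]$ is a $k$-torus, and $T(k[X]) = T(k)$: this holds for $\Gm$ because $k[X]^\times = k^\times$, and for arbitrary tori by Galois descent. For any $g \in G(k[X])$ the elements $g$ and $g(0)$ thus have the same image in $T(k[X])$, so $g(0)^{-1}g \in [G,G](k[X])$; it therefore suffices to prove $[G,G](k[X]) \subseteq [G,G](k) \cdot E_Q(k[X])$.

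Next I pass to the simply connected cover $\pi \colon H \to [G,G]$, a central isogeny with kernel a finite commutative $k$-group scheme $\mu$ of multiplicative type. The connecting map $\delta \colon [G,G](R) \to H^1_{\fppf}(R, \mu)$ of this central extension is a group homomorphism, as a local \v{C}ech cocycle computation shows $\delta(gh) = \delta(g) + \delta(h)$ when $\mu$ is central in $H$. Evaluation at $X = 0$ induces an isomorphism $H^1_{\fppf}(k[X], \mu) \xrightarrow{\cong} H^1_{\fppf}(k, \mu)$, which for $\mu = \mu_n$ is Kummer theory and reduces to that in general by embedding $\mu$ in a torus. For $h \in [G,G](k[X])$ this forces $\delta(h \cdot h(0)^{-1}) = 0$, so $h \cdot h(0)^{-1}$ lifts to some $\tilde h \in H(k[X])$; the problem reduces to showing $\pi(H(k[X])) \subseteq [G,G](k) \cdot E_Q(k[X])$.

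For simply connected $H$, I use the standard decomposition $H = \prod_i \mathrm{Res}_{K_i/k} H_i$ where each $H_i$ is absolutely almost simple simply connected over a finite separable extension $K_i/k$ and isotropic (inherited from the hypothesis on $[G,G]$). The Soul\'e--Margaux theorem gives $H_i(K_i[X]) = H_i(K_i) \cdot E(K_i[X])$ with $E$ the elementary subgroup of $H_i$ w.r.t.\ a proper parabolic $K_i$-subgroup. Weil restriction transfers this to each factor over $k[X]$, using $(\mathrm{Res}_{K_i/k} H_i)(k[X]) = H_i(K_i[X])$ and the fact that Weil restriction sends unipotent radicals of parabolic subgroups of $H_i$ to unipotent radicals of the corresponding parabolics of $\mathrm{Res}_{K_i/k} H_i$. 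Taking the product yields $H(k[X]) = H(k) \cdot E(k[X])$ for some strictly proper parabolic $k$-subgroup of $H$; applying $\pi$ sends this $E(k[X])$ onto the elementary subgroup of $[G,G]$ w.r.t.\ a strictly proper parabolic, which coincides with $E_Q(k[X]) \cap [G,G](k[X])$ by Theorem~\ref{th:PS-normality}(i) applied to the semilocal base $A = k$.

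For the equivalence of the two right-hand descriptions, under the hypothesis every minimal parabolic $P$ of $G$ is strictly proper (it intersects each isotropic simple factor of $[G,G]$ in a proper parabolic), all minimal parabolics are $G(k)$-conjugate, and opposite minimal parabolics are $G(k)$-conjugate via a representative of the longest element of the relative Weyl group. The subgroup generated by $U_P(k[X])$ over all minimal $P$ is therefore the $G(k)$-normal closure of $U_{P_0}(k[X])$ for a fixed $P_0$; it contains $U_{P_0^-}(k[X])$ and hence $E_{P_0}(k[X]) = E_Q(k[X])$ (Lemma~12 of~\cite{PS}, cf.\ the proof of Theorem~\ref{th:PS-normality}), while Theorem~\ref{th:PS-normality}(i) conversely shows $E_Q(k[X])$ is invariant under $G(k)$-conjugation of $Q$, so it absorbs the full normal closure. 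The main obstacle I anticipate is the precise $\Aff^1$-invariance statement for $H^1_{\fppf}(-, \mu)$ with $\mu$ of multiplicative type and possibly non-smooth (in positive characteristic): this is the technical input that validates the passage from $[G,G]$ to its simply connected cover and must be invoked carefully from the literature.
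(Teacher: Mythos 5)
Your proposal is correct and follows essentially the same route as the paper: reduce to the semisimple case via the torus quotient and $T(k[X])=T(k)$, pass to the simply connected cover using the central isogeny and the homotopy invariance of $H^1_{\fppf}(-,C)$ for the multiplicative-type kernel (the paper cites~\cite[Lemma 2.4]{CTS} for exactly the input you flag as the delicate point), decompose the simply connected group into Weil restrictions of absolutely almost simple groups and apply Margaux--Soul\'e, and identify the two right-hand descriptions via Theorem~\ref{th:PS-normality}~(i). The only differences are cosmetic (the order of the two reductions and your unnecessary but harmless remark that the connecting map is a group homomorphism).
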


The second equality of this theorem is equivalent to the first by Theorem~\ref{th:PS-normality} (i).
The first equality was obtained by B. Margaux~\cite[Corollary 3.6]{M} for any isotropic simply connected absolutely
almost simple group $G$ over a field $k$, extending the respective result of C. Soul\'e for Chevalley groups~\cite{Sou}.
This statement in the above papers was, actually, a corollary of a more general
result about buildings associated to such groups $G$; we will not need it in full generality.


The following standard lemma allows to extend Margaux's result to isotropic reductive groups.

\begin{lem}\label{lem:sc-red}
Let $G$ be a reductive algebraic group over a field $k$.
Let $G^{\scl}$ be the simply connected semisimple group
isogenous to the algebraic derived subgroup $[G,G]$ of $G$. Let $A$ be a normal Noetherian integral domain containing $k$.
If one has
$$
G^{\scl}(A[X])=G^{\scl}(A)E_P(A[X])
$$
for some minimal parabolic $k$-subgroup $P$ of $G^{\scl}$,
then
$$
G(A[X])=G(A)E_Q(A[X])
$$
for any minimal parabolic $k$-subgroup $Q$ of $G$.
\end{lem}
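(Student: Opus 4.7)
The strategy is to transfer the splitting from the simply connected cover to $G$ via the central isogeny $\pi:G^{\scl}\to[G,G]\hookrightarrow G$, handling the coradical torus $T=G/[G,G]$ and the central kernel $\mu=\ker\pi$ separately.

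Given $g\in G(A[X])$, I would first replace $g$ by $h:=g(0)^{-1}g$; since $g(0)\in G(A)$, it suffices to show $h\in G(A)E_Q(A[X])$, and by construction $h(0)=1$. Projecting to the coradical torus $T=G/[G,G]$, the equality $T(A[X])=T(A)$ holds: choose a finite Galois extension $L/k$ splitting $T$; since $L/k$ is separable and $A$ is a normal Noetherian domain, $A\otimes_k L$ is finite \'etale over $A$, hence a finite product of normal Noetherian domains $B_i$, each satisfying $B_i[X]^{\times}=B_i^{\times}$. Taking $\Gal(L/k)$-invariants of $T_L((A\otimes_k L)[X])=T_L(A\otimes_k L)$ yields the claim. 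Thus the image of $h$ in $T(A[X])=T(A)$ equals $\overline{h(0)}=1$, so $h\in [G,G](A[X])$.

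Next, I would lift $h$ along $\pi$ to some $\tilde h\in G^{\scl}(A[X])$. The obstruction lies in $H^1_{\fppf}(A[X],\mu)$ and restricts under $X\mapsto 0$ to the trivial obstruction attached to $h(0)=1$. Since $A\hookrightarrow A[X]$ splits evaluation at $X=0$, the map $H^1_{\fppf}(A[X],\mu)\to H^1_{\fppf}(A,\mu)$ is split surjective, so its kernel vanishes iff the two groups agree. After passing to a Galois extension of $k$ splitting $\mu$ into a product of $\mu_{n_i}$'s, the Kummer sequence reduces the question to the two identities
\[
A[X]^{\times}=A^{\times}\qquad\text{and}\qquad\mathrm{Pic}(A[X])=\mathrm{Pic}(A).
\]
The first is standard for normal domains, and the second is the Traverso--Swan theorem applied to $A$, which is seminormal because it is normal. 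Hence $h$ lifts to $\tilde h$.

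Finally, the hypothesis supplies $\tilde h=\tilde g_0\tilde e$ with $\tilde g_0\in G^{\scl}(A)$ and $\tilde e\in E_P(A[X])$. Set $Q_0:=\pi(P)\cdot\rad(G)$, which is a minimal parabolic $k$-subgroup of $G$; because $\pi$ restricts to an isomorphism on unipotent radicals (the kernel $\mu$ being of multiplicative type), one has $\pi(E_P(R))=E_{Q_0}(R)$ for every $A$-algebra $R$, so applying $\pi$ gives $h\in G(A)\cdot E_{Q_0}(A[X])$. For an arbitrary minimal parabolic $k$-subgroup $Q$, the Borel--Tits conjugacy theorem furnishes $h'\in G(k)\subseteq G(A)$ with $Q=h'Q_0(h')^{-1}$, and normality of $E_{Q_0}(A[X])$ in $G(A[X])$ by Theorem~\ref{th:PS-normality} then yields $G(A)E_{Q_0}(A[X])=G(A)E_Q(A[X])$, finishing the reduction. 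The main obstacle is verifying the injectivity of $H^1_{\fppf}(A[X],\mu)\to H^1_{\fppf}(A,\mu)$ uniformly in the characteristic; in positive characteristic $\mu$ may be non-reduced (e.g.\ $\mu_p$), which forces the use of fppf rather than \'etale cohomology and makes the Traverso--Swan input on seminormality indispensable.
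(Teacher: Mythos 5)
Your proposal follows essentially the same route as the paper's proof: reduce to $[G,G]$ via the coradical torus using $T(A[X])=T(A)$, lift along the central isogeny $G^{\scl}\to [G,G]$ by showing the obstruction in $H^1_{\fppf}(A[X],\mu)$ dies, apply the hypothesis upstairs and push it down through $\pi$, and finally pass between minimal parabolic $k$-subgroups by conjugacy/normality (which the paper packages as Theorem~\ref{th:PS-normality}~(i)). The only real difference is that where you re-derive the homotopy invariance of $H^1_{\fppf}(-,\mu)$ by hand (Kummer sequence, $A[X]^{\times}=A^{\times}$, invariance of the Picard group for seminormal rings, plus a Galois descent step you leave implicit), the paper simply invokes \cite[Lemma 2.4]{CTS}.
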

\begin{proof}
Let $Q$ be a minimal parabolic $k$-subgroup of $G$.
It is enough to show that any element $g\in \ker\bigl(G(A[X])\xrightarrow{X\mapsto 0}G(A)\bigr)$ belongs to $E_Q(A[X])$.

First, let $G$ be a possibly non-simply connected semisimple group over $k$, satisfying the conditions of the Lemma.
There is a short exact sequence of algebraic groups
$$
1\to C\xrightarrow{i} G^{\scl}\xrightarrow{\pi} G\to 1,
$$
where $C$ is a group of multiplicative type over $k$, central in $G^{\scl}$. Write the respective ``long'' exact sequences over $A[X]$ and $A$
with respect to fppf topology. Adding the maps induced by the homomorphism $\rho:A[X]\to A$, $X\mapsto 0$,
we obtain a commutative diagram
\begin{equation*}
\xymatrix@R=15pt@C=20pt{
1\ar[r]\ar@{=}[d]&C(A[X])\ar[d]^\rho\ar[r]^{i}&G^{\scl}(A[X])\ar[d]^\rho\ar[r]^\pi &G(A[X])\ar[d]^\rho\ar[r]^{\hspace{-15pt}\delta}&H^1_{\fppf}(A[X],C)\ar[d]^{\cong}\\
1\ar[r]&C(A)\ar[r]^{i}&G^{\scl}(A)\ar[r]^\pi&G(A)\ar[r]^{\hspace{-15pt}\delta}&H^1_{\fppf}(A,C)\\
}
\end{equation*}
Here the rightmost vertical arrow is an isomorphism by~\cite[Lemma 2.4]{CTS}. Take any
$g\in\ker\bigl(\rho:G(A[X])\to G(A)\bigr)$. Then $\delta(g)=1$, hence there is $\tilde g\in G^{\scl}(A[X])$ with
$\pi(\tilde g)=g$. Clearly, $\rho(\tilde g)\in C(A)$, and hence
$$
\tilde g\in C(A)\cdot \ker\bigl(\rho:G^{\scl}(A[X])\to G(A)\bigr)\subseteq C(A)E_P(A[X]).
$$
Since
$\pi(E_P(A[X]))=E_Q(A[X])$ by Theorem~\ref{th:PS-normality} (i), this proves the claim for $G$.

Now let $G$ be any reductive group over $k$ satisfying the conditions of the Lemma. Then
there is a short exact sequence
$$
1\to [G,G]\to G\to T\to 1,
$$
for a $k$-torus $T$. Here the group $[G,G]$ is the algebraic derived subgroup of $G$; it is a semisimple group that
satisfies the isotropy conditions of the Lemma, if $G$ does. Moreover, it contains the unipotent radicals of all parabolic subgroups
of $G$, hence the subgroups $E_Q(A[X])$ are the same for $[G,G]$ and $G$. Since
$T(A[X])\cong T(A)$ (e.g. by \'etale descent),
the exact sequence
$$
1\to [G,G](A[X])\to G(A[X])\to T(A[X])
$$
finishes the proof.
\end{proof}

\begin{proof}[Proof of Theorem~\ref{th:MS-red}]
By Lemma~\ref{lem:sc-red}
it is enough to show that the claim holds if $G$ is a simply connected semisimple group.
By the results of~\cite[Exp. XXIV Prop. 5.10 and \S 5.3]{SGA},
any such $G$ is isomorphic to a finite direct product of simply connected semisimple $k$-groups of the form
$R_{k'/k}(H)$, where $k'$ is a finite separable field extension of $k$, $R_{k'/k}$ denotes the Weil
restriction functor, and $H$ is a simply connected absolutely almost simple group over $k'$. Clearly, any $H$ involved
in the decomposition of $G$ has to be isotropic, and any minimal parabolic subgroup of $R_{k'/k}(H)$
is a Weil restriction $R_{k'/k}(Q)$ of a minimal parabolic subgroup $Q$ of $H$. Also, $U_{R_{k'/k}(Q)}=R_{k'/k}(U_Q)$.
By~\cite[Corollary 3.6]{M} one has
$$
H(k'[X])=H(k')\cdot\l<U_Q(k'[X]),\ Q\mbox{ a minimal parabolic $k'$-subgroup of }H\r>.
$$
By Theorem~\ref{th:PS-normality} (i),
it is enough to consider just one pair of opposite minimal parabolic subgroups $Q$ and $Q^-$ of $H$.
Then we have
$$
\begin{array}{rl}
R_{k'/k}(H)(k[X])&=H(k'[X])=H(k')\l<U_Q(k'[X]),U_{Q^-}(k'[X])\r>\\
&=R_{k'/k}(H)(k)\l<U_{R_{k'/k}(Q)}(k[X]),U_{R_{k'/k}(Q^-)}(k[X])\r>\\
&=R_{k'/k}(H)(k)\cdot E_{R_{k'/k}(Q)}(k[X]).
\end{array}
$$
This implies the claim of the theorem.
\end{proof}


\subsection{Relation to the Karoubi--Villamayor $K$-theory.}

For any reductive group $G$ over a commutative ring $A$, let $KV_1^G(A)$ denote the first Karoubi--Villamayor
$K$-group of the functor $G$, as defined by Jardine in~\cite[\S 3]{J} following Gersten~\cite{Ge}.
Note that Jardine denotes the Karoubi---Villamayor $K$-functor
by $K_1^G$, while we reserve this notation for our $K_1$-functor. The following result
is a straightforward extension to isotropic reductive groups of~\cite[Lemma 2.4]{W10} for Chevalley groups.
Note that even for Chevalley groups, the groups $K_1^G(A)$ are in general non-abelian (\cite{vdK}, see also~\cite{BHV}).

\begin{lem}\label{lem:KV}
Let $G$ be an isotropic reductive group over a unital commutative ring $A$ satisfying~\ee. There is an
exact sequence (a coequalizer)
$$
K_1^G(A[X])\xrightarrow{g\mapsto g(1)g(0)^{-1}}K_1^G(A)\to KV_1^G(A)\to 1,
$$
where the first map is a map of pointed sets, while the second one is a group homomorphism.

In particular, if $K_1^G$ is $\Aff^1$-invariant at $A$, then $K_1^G(A)\cong KV_1^G(A)$ as groups.
\end{lem}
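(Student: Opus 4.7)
The plan is to unpack Jardine's definition of $KV_1^G(A)$ as $\pi_0$ of the simplicial group $G(A[\Delta^\bullet])$, where $A[\Delta^n]\cong A[X_1,\ldots,X_n]$, and match it directly against a quotient of $K_1^G(A)$. Since $\pi_0$ of a simplicial group is the coequalizer in groups of the two face maps $d_0,d_1\colon G(A[X])\rightrightarrows G(A)$ (evaluation at $X=0$ and $X=1$), one gets $KV_1^G(A)=G(A)/N(A)$, where $N(A)$ is the normal subgroup generated by the elements $g(1)g(0)^{-1}$, $g\in G(A[X])$. The claim then reduces to verifying that (i) $N(A)\supseteq E(A)$, so that the quotient factors through $K_1^G(A)$, and (ii) the image of the first map in the statement generates $N(A)/E(A)$ as a normal subgroup.

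For (i), I would use the standard polynomial-homotopy trick: for any $\alpha\in\Phi_P$ and $v\in V_\alpha$, the polynomial curve $X_\alpha(Xv)\in G(A[X])$ takes value $1$ at $X=0$ and $X_\alpha(v)$ at $X=1$, so every generator of $E(A)$ lies in $N(A)$, hence $E(A)\subseteq N(A)$. Because evaluation at $X=0,1$ are ring homomorphisms, functoriality of the construction of $E$ (via the relative root subschemes $X_\alpha$) forces $E(A[X])$ into $E(A)$ under either evaluation, so the formula $g\mapsto g(1)g(0)^{-1}$ descends to a well-defined map of pointed sets $K_1^G(A[X])\to K_1^G(A)$. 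For (ii), I would observe that the set $S(A)=\{g(1)g(0)^{-1}:g\in G(A[X])\}$ is already a normal subgroup of $G(A)$: every element of $S(A)$ has the form $h(1)$ with $h(X)=g(X)g(0)^{-1}\in\ker\bigl(G(A[X])\xrightarrow{X=0} G(A)\bigr)$, and this kernel is a subgroup of $G(A[X])$ stable under conjugation by constants, so its image under evaluation at $X=1$ is a normal subgroup of $G(A)$. Hence $S(A)=N(A)$, and the image of the first map equals $N(A)/E(A)$, giving the coequalizer $K_1^G(A)/(N(A)/E(A))=G(A)/N(A)=KV_1^G(A)$.

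For the last assertion, assume $K_1^G$ is $\Aff^1$-invariant at $A$. Then any $g(X)\in G(A[X])$ can be written as $g(X)=g(0)e(X)$ with $e(X)\in E(A[X])$, so $g(1)g(0)^{-1}=g(0)e(1)g(0)^{-1}\in E(A)$ by normality of $E(A)$ in $G(A)$ (Theorem~\ref{th:PS-normality}). Thus the first map is identically trivial and the surjection $K_1^G(A)\twoheadrightarrow KV_1^G(A)$ is an isomorphism of groups.

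I do not expect any serious obstacle: the heart of the argument is the purely formal observation that the set of boundary differences $g(1)g(0)^{-1}$ is automatically a normal subgroup of $G(A)$, together with functoriality of $E$ under the evaluation homomorphisms $A[X]\to A$; both are immediate from the definitions given in \S\ref{ssec:eldef}--\ref{ssec:rel}.
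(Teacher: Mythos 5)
Your proposal is correct and follows essentially the same route as the paper: unwind Jardine's definition of $KV_1^G(A)$ as a cokernel, identify $\tilde G(EA)$ with $\ker\bigl(p:G(A[X])\to G(A)\bigr)$, show $E(A)$ lies in the image of evaluation at $X=1$ via polynomial paths (the paper cites~\cite[Lemma 8]{PS} where you use the curves $X_\alpha(Xv)$ on generators), and match the kernel of $K_1^G(A)\to KV_1^G(A)$ with the image of $g\mapsto g(1)g(0)^{-1}$. The one genuine (if small) difference is your observation that the set $S(A)=\{g(1)g(0)^{-1}\}=\eps\bigl(\ker p\bigr)$ is already a normal subgroup of $G(A)$, so that $KV_1^G(A)=G(A)/S(A)$ on the nose; this lets you verify exactness at the middle term directly, whereas the paper routes the same verification through Jardine's homotopy invariance $KV_1^G(A)\cong KV_1^G(A[X])$. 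Your version is marginally more self-contained; both arguments are sound.
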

\begin{proof}
We first recall the necessary notation from~\cite{J}.
Let $p$ denote both  maps $A[X]\to A$ and $G(A[X])\to G(A)$ induced by $X\mapsto 0$, and $\eps$
denote both maps $A[X]\to A$ and $G(A[X])\to G(A)$ induced by $X\mapsto 1$. Set
$EA=\ker(p:A[X]\to A)$. Let $\tilde G$ be the extension of the functor $G$ to the category of not necessarily unital
commutative $A$-algebras $R$, defined by
$$\tilde G(R)=\ker\bigl(pr_A:G(A\oplus R)\to G(A)\bigr).
$$
Here the unital algebra $A\oplus R$ is the direct sum of additive groups with multiplication given
by $(\alpha,a)\cdot(\beta, b)=(\alpha\beta, \alpha b+\beta a+ab)$. By definition,
$$
KV_1^G(A)=\coker\bigl(\eps:\tilde G(EA)\to\tilde G(A)\bigr).
$$

Now we establish the existence of a surjective homomorphism $K_1^G(A)\to KV_1^G(A)$. Thus, there is a canonical group homomorphism
$G(A)\cong \tilde G(A)\to KV_1^G(A)$. We have $E(A)\subseteq \eps(\tilde G(EA))$, where $\tilde G(EA)$ is identified with
its image in $\tilde G(A)$. Indeed, $\tilde G(EA)=\ker\bigl(G(A\oplus EA)\to G(A)\bigr)$; we have $A\oplus EA\cong A[X]$,
hence $\tilde G(EA)$ is identified with the kernel of $p:G(A[X])\to G(A)$. By~\cite[Lemma 8]{PS} for any $g\in E(A)$ there is
$g(X)\in E(A[X])\subseteq G(A[X])$ such that $g(0)=1$ and $g(1)=g$. Hence $E(A)\subseteq \eps\bigl(\ker(G(A[X])\to G(A)\bigr)$.
Summing up, there is a correctly defined map $K_1^G(A)=G(A)/E(A)\to  KV_1^G(A)$. Clearly, it is surjective.

Now we show the exactness at the $K_1^G(A)$ term. By~\cite[Lemma 3.5]{J} the inclusion $A\to A[X]$
 induces an isomorphism between $KV_1^G(A)$ and $KV_1^G(A[X])$. Consider the image of $g(1)g(0)^{-1}\in K_1^G(A)$
in $K_1^G(A[X])$ under the inclusion map. One readily sees that
$$
g(1)g(0)^{-1}=\l(g(Y)g(0)^{-1}\r)|_{Y=1}\in \eps_Y\bigl(ker\l(p_Y:G(A[X,Y])\to G(A[X])\r)\bigr),$$
 where $\eps_Y$, $p_Y$ are the same as $\eps$, $p$ with respect to the free
variable $Y$. Therefore, the image of $g(1)g(0)^{-1}$ in $KV_1^G(A[X])$ is trivial, which implies that
it is in $\ker(K_1^G(A)\to KV_1^G(A))$. Now let $g\in G(A)$ be an element whose image under
$$
G(A)\to K_1^G(A)\to KV_1^G(A)
$$ is trivial. Then $g$ belongs to $\eps\bigl(\ker (p:G(A[X])\to A)\bigr)$. This means that there is $g(X)\in G(A[X])$
such that $g(0)=1$ and $g(1)=g$. Then $g=g(1)g(0)^{-1}$ belongs to the image of the map
$K_1^G(A[X])\to K_1^G(A)$ in our exact sequence.
\end{proof}

\subsection{Nisnevich gluing for $K_1^G$.}

We prove here a gluing property of $K_1^G$, 
which looks like a segment of the Mayer-Vietoris sequence for a distinguished Nisnevich square.
It is a straightforward extension
of~\cite[Lemma 2.4]{Vo} and~\cite[Lemma 3.7]{Abe} for split groups; we only replace the usual split root subgroups
by relative root subschemes. 

\begin{lem}\label{lem:Abe-3.7}
Let $G$ be an isotropic reductive group over a commutative ring $B$.
Let $P$ be a strictly proper
parabolic subgroup of $G$, such that all irreducible components of the relative root system $\Phi_P$
are of rank $\ge 2$ everywhere on $\Spec B$.
Assume moreover that $B$ is a subring of a commutative ring $A$, and let
$h\in B$ be a non-nilpotent element. Denote by $F_h:G(A)\to G(A_h)$ the localization homomorphism.

(i) If $Ah+B=A$, i.e. the natural map $B\to A/Ah$ is surjective, then for any $x\in E(A_h)$ there exist $y\in E(A)$ and $z\in E(B_h)$ such that
$x=F_h(y)z$.

(ii) If moreover $Ah\cap B=Bh$, i.e. $B/Bh\to A/Ah$ is an isomorphism, and $h$ is not a zero divizor in $A$, then
the sequence of pointed sets
$$
K_1^G(B)\xrightarrow{\st g\mapsto (F_h(g),g)} K_1^G(B_h)\times K_1^G(A)\xrightarrow{\st (g_1,g_2)\mapsto g_1F_h(g_2)^{-1}}
K_1^G(A_h)
$$
is exact.
\end{lem}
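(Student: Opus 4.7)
The plan is to establish (i) via a generator-wise rewriting using the addition and commutator formulas for relative root subschemes, and then to deduce (ii) from (i) by a Milnor-type patching. The essential input for (i) is the additive decomposition $A_h=A+B_h$ inside $A_h$: given $a/h^n\in A_h$, write $a=ha_1+b_1$ with $a_1\in A$, $b_1\in B$ (using $Ah+B=A$), so $a/h^n=a_1/h^{n-1}+b_1/h^n$, and recurse on the denominator. Because each $V_\alpha$ is a finitely generated projective (hence flat) $B$-module, tensoring yields
$$
(V_\alpha\otimes_B A)+(V_\alpha\otimes_B B_h)=V_\alpha\otimes_B A_h,
$$
so every $u\in V_\alpha\otimes_B A_h$ splits as $u=a+b$ with $a\in V_\alpha\otimes_B A$, $b\in V_\alpha\otimes_B B_h$.

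For (i), I would prove by induction on the word length of $x\in E(A_h)$ in the generators $X_\alpha(v)$ that $x=F_h(y)z$ with $y\in E(A)$, $z\in E(B_h)$. The base case $x=X_\alpha(u)$ is handled inside the unipotent subgroup $U_{(\alpha)}(A_h)$, filtered by $U_{(\alpha),k}=\prod_{i\ge k}X_{i\alpha}$. Modulo $U_{(\alpha),k+1}$ the addition formula~\eqref{eq:sum} degenerates to ordinary addition in $V_{k\alpha}$, and commutators $[X_{i\alpha},X_{j\alpha}]$ land in $U_{(\alpha),i+j}$ by~\eqref{eq:Chev}, so a finite induction on $k$ up to the largest $r$ with $r\alpha\in\Phi_P$ rewrites $X_\alpha(u)=\tilde y\tilde z$ with $\tilde y\in F_h(U_{(\alpha)}(A))$ and $\tilde z\in U_{(\alpha)}(B_h)$. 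For the inductive step one has $x=yz\cdot X_\alpha(u)=yz\cdot \tilde y\tilde z$, and commuting $\tilde y$ past $z$ via~\eqref{eq:Chev} replaces each pair $X_\alpha(a),X_\beta(b)$ by their Chevalley commutator, a product of generators at the strictly higher roots $i\alpha+j\beta$. Splitting each of these via the base-case procedure and iterating, the process terminates once the newly produced root combinations exit $\Phi_P$, leaving the required factorization $x=F_h(y')z'$.

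For (ii), the hypotheses ``$Ah\cap B=Bh$'' and ``$h$ is a non-zero-divisor in $A$'' make the commutative square of rings
$$
\xymatrix@R=12pt@C=18pt{B\ar[r]\ar[d] & A\ar[d]\\ B_h\ar[r] & A_h}
$$
Cartesian: injectivity of $B\to A\times B_h$ uses non-zero-divisorhood of $h$ in $A\supseteq B$, while surjectivity onto the fibre product uses both ``$Ah\cap B=Bh$'' and ``$Ah+B=A$''. Since $G$ is representable as an affine $B$-scheme, the Cartesian square descends to the groups: $G(B)=G(B_h)\times_{G(A_h)}G(A)$. Now let $(g_1,g_2)\in K_1^G(B_h)\times K_1^G(A)$ satisfy $g_1 F_h(g_2)^{-1}\in E(A_h)$; by (i), write $g_1 F_h(g_2)^{-1}=F_h(y)z$ with $y\in E(A)$, $z\in E(B_h)$. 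Then $g_1 z^{-1}\in G(B_h)$ and $g_2 y\in G(A)$ have equal images $F_h(g_2 y)=g_1 z^{-1}$ in $G(A_h)$, so the Cartesian property produces $g\in G(B)$ mapping to $(g_1 z^{-1},g_2 y)$; its class in $K_1^G(B)$ is the desired preimage of $(g_1,g_2)$.

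The hard part is the bookkeeping in the inductive step of (i): one must argue that the splittings of arguments via~\eqref{eq:sum} and the rearrangements via~\eqref{eq:Chev} jointly terminate and cooperate cleanly. They do, because the multiplication chains $\{k\alpha\}\cap\Phi_P$ are finite (giving the base case), while each iterated commutator strictly increases the total height in $\Phi_P$, whose height filtration is bounded, so after finitely many applications all newly produced root combinations fall outside $\Phi_P$ and disappear.
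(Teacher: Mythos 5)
Part (ii) of your argument is fine (and essentially what the paper does, phrased as a Milnor patching square), but part (i) has a genuine gap at the inductive step. You write $x=yz\cdot\tilde y\tilde z$ and propose to move $\tilde y\in F_h(E(A))$ leftward past $z\in E(B_h)$ using the generalized Chevalley commutator formula, with termination guaranteed because ``iterated commutators strictly increase height.'' This fails for two reasons. First, $z$ is an arbitrary word in root elements $X_\gamma(d)$ for \emph{all} $\gamma\in\Phi_P$, including negative roots; the commutator terms $X_{i\alpha+j\gamma}$ then land at roots of arbitrary --- possibly smaller --- height, so the height filtration does not bound the process, and when $m\alpha=-k\gamma$ the formula~\eqref{eq:Chev} does not even apply. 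Second, even where the formula applies, the new arguments $N_{\alpha\gamma ij}(a,d)$ lie in $V_{i\alpha+j\gamma}\otimes_B A_h$ with denominators inherited from $d$, so they must themselves be re-split into an $A$-part and a $B_h$-part and the $A$-part re-commuted past the accumulated $B_h$-factors; nothing in your scheme prevents this from cycling.

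The paper's proof avoids conjugation-by-$z$ bookkeeping entirely by making the $A$-part of the splitting \emph{divisible by a high power of $h$ chosen after $z$}: since $Ah+B=A$ implies $Ah^N+B=A$ for all $N$, one writes $c=ah^N+h^{-M}b$ with $a\in V_\beta\otimes_B A$, $b\in V_\beta$, and then invokes the dilation Lemma~\ref{lem:PS-15'} (a Quillen--Suslin type statement) to get $y(Z)\in E(A[Z],ZA[Z])$ with $F_h(y(Z))=zX_\beta(h^NZ)z^{-1}$, so that the entire conjugate $zX_\beta(ah^N)z^{-1}$ lies in $F_h(E(A))$ in one stroke; the leftover terms $X_{i\beta}(q^i_\beta(h^Na,h^{-M}b))$ are multiples of the \emph{same} root $\beta$, and only there does a (descending) induction on height legitimately apply. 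Your plain splitting $u=a+b$ without the $h^N$ factor, together with commutator shuffling, cannot substitute for this dilation input.
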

\begin{proof}

(i) One has $x=\prod\limits_{i=1}^mX_{\beta_i}(c_i)$, $c_i\in A_h\otimes_k V_{\beta_i}$, $\beta_i\in\Phi_P$.
 We need to show that $x\in F_h(E(A))E(B_h)$. Clearly, it is enough to show that
\begin{equation}\label{eq:EBh}
E(B_h)X_\beta(c)\subseteq F_h(E(A))E(B_h)
\end{equation}
for any $\beta\in\Phi_P$ and $c\in V_\beta\otimes_B A_h$.
We can assume that $\beta$ is a positive relative root without loss of generality.
We prove the inclusion~\eqref{eq:EBh} by descending induction on the height of $\beta$.

Take $z\in E(B_h)$.  By Lemma~\ref{lem:PS-15'}, there exist $N\ge 0$ and $y(Z)\in E(A[Z],ZA[Z])$ such that
$F_h(y(Z))=zX_{\beta}(h^NZ)z^{-1}$.
On the other hand, note that $Ah+B=A$ implies $Ah^n+B=A$ for any $n\ge 1$. Let $M\ge 0$ be such that
$h^Mc\in V_\beta\otimes_B A$.
Then one can find
$a\in V_{\beta}\otimes_B A$ and $b\in V_{\beta}$ such that
$$
c=ah^N+h^{-M}b.
$$
By the multiplication formula for relative root elements~\eqref{eq:sum} we have
$$
X_{\beta}(c)=X_{\beta}(ah^{N})X_{\beta}(h^{-M}b)\prod\limits_{i\ge 2}X_{i\beta}\l(q^i_\beta(h^Na,h^{-M}b)\r).
$$
By the choice of $N$, one has
$$
zX_{\beta}(ah^{N})X_{\beta}(h^{-M}b)=\l(zX_{\beta}(ah^{N})z^{-1}\r)zX_{\beta}(h^{-M}b)\in F_h\l(E(A)\r)E(B_h).
$$
Since the height of the relative roots $i\beta$, $i\ge 2$, is larger than that of $\beta$, the inductive
hypothesis implies $zX_\beta(c)\in F_h(E(A))E(B_h)$. This proves~\eqref{eq:EBh}.

(ii)
Take $x_1\in G(B_h)$, $x_2\in G(A)$ such that $x_1F_h(x_2)^{-1}\in E(A_h)$. Then by~(i) we have
$y_1x_1=F_h(y_2x_2)$ for some $y_1\in E(B_h)$, $y_2\in E(A)$.
By assumption, $Ah^n\cap B=Bh^n$ for any $n\ge 0$, and hence $F_h(A)\cap B_h=F_h(B)$ in $A_h$.
Since $F_h$ is injective, we have $y_2x_2\in G(B)$. The claim follows.
\end{proof}

\begin{cor}\label{cor:Nis-square}
Let $G$ be an isotropic reductive group over a commutative ring $B$. Let $P$ be a strictly proper
parabolic subgroup of $G$, such that all irreducible components of the relative root system $\Phi_P$
are of rank $\ge 2$ everywhere on $\Spec B$.

Let $\phi:B\to A$ be a homomorphism of commutative rings, and $h\in B$, $f\in A$ non-nilpotent
elements such that $\phi(h)\in fA^\times$ and $\phi:B/Bh\to A/Af$ is an isomorphism.
Assume moreover that the commutative square
\begin{equation*}
\xymatrix@R=15pt@C=20pt{
\Spec A_f\ar[r]^{F_f}\ar[d]^{\phi}&\Spec A\ar[d]^{\phi}\\
\Spec B_h\ar[r]^{F_h}&\Spec B\\
}
\end{equation*}
is a distinguished Nisnevich square.
Then the sequence of pointed sets
$$
K_1^G(B)\xrightarrow{\st (F_h,\phi)} K_1^G(B_h)\times K_1^G(A)\xrightarrow{\st (g_1,g_2)\mapsto \phi(g_1)F_f(g_2)^{-1}}K_1^G(A_f)
$$
is exact.
\end{cor}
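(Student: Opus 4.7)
The plan is to reduce to Lemma~\ref{lem:Abe-3.7}(i) by passing to the image subring $B' = \phi(B) \subseteq A$ and the element $h' = \phi(h) \in B'$. The assumption $\phi(h) \in fA^\times$ gives $Ah' = Af$ and hence $A_{h'} = A_f$, while the assumption that $\phi \colon B/Bh \to A/Af$ is an isomorphism gives $Ah' + B' = A$. Thus the genuine inclusion $B' \hookrightarrow A$ with $h' \in B'$ satisfies the hypotheses of Lemma~\ref{lem:Abe-3.7}(i), where we apply the lemma to the reductive group $G_{B'}$ with parabolic $P_{B'}$ obtained by base change along the surjection $B \twoheadrightarrow B'$ (the strict properness and the rank $\ge 2$ condition on $\Phi_P$ descend, since maximal ideals of $B'$ are images of maximal ideals of $B$).

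Now take $(g_1, g_2) \in G(B_h) \times G(A)$ with $x := \phi(g_1) F_f(g_2)^{-1} \in E(A_f)$, and apply Lemma~\ref{lem:Abe-3.7}(i) to $x^{-1}$: this yields a decomposition $x^{-1} = F_f(\bar y)\, \bar z$ with $\bar y \in E(A)$ and $\bar z \in E(B'_{h'})$. Since $B \twoheadrightarrow B'$ localizes to a surjection $B_h \twoheadrightarrow B'_{h'}$, and since the relative root subschemes $X_\alpha$ are parametrized by the finitely generated projective $B$-modules $V_\alpha$, right-exactness of tensor product shows that each map $V_\alpha \otimes_B B_h \to V_\alpha \otimes_B B'_{h'}$ is surjective, so the induced map $E(B_h) \to E(B'_{h'})$ is surjective. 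Lift $\bar z$ termwise to some $z \in E(B_h)$ with $\phi(z) = \bar z$ in $G(A_f)$.

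The identity $x = \bar z^{-1} F_f(\bar y)^{-1}$ then rearranges to
\begin{equation*}
\phi(z g_1) = F_f(\bar y^{-1} g_2) \quad\text{in}\quad G(A_f).
\end{equation*}
A standard property of distinguished Nisnevich squares gives the Mayer--Vietoris identification $B \cong B_h \times_{A_f} A$; since $G$ is represented by an affine $B$-scheme, taking $B$-points commutes with this fiber product, so $G(B) \cong G(B_h) \times_{G(A_f)} G(A)$. Consequently there exists $g \in G(B)$ with $F_h(g) = z g_1$ and $\phi(g) = \bar y^{-1} g_2$, and its class $[g] \in K_1^G(B)$ maps to $([g_1], [g_2]) \in K_1^G(B_h) \times K_1^G(A)$ because $z \in E(B_h)$ and $\bar y \in E(A)$.

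The main obstacle is the very first step: Lemma~\ref{lem:Abe-3.7}(i) is stated for a genuine subring inclusion, whereas in the Nisnevich setting $\phi$ need not be injective. This is handled by the detour through $B' = \phi(B)$, at the cost of the lifting argument in Step~2, where the projectivity of the modules $V_\alpha$ ensures that the elementary subgroup of $B'_{h'}$ is reached by the elementary subgroup of $B_h$. Everything else is formal from the Nisnevich cartesian property and the fact that $E(A_f) = E(A_{h'})$.
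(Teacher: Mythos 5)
Your proof is correct and follows essentially the same route as the paper's: reduce to the genuine subring $\phi(B)\subseteq A$ with $\phi(h)$ playing the role of $h$, apply Lemma~\ref{lem:Abe-3.7}(i) there, use surjectivity of $E(B_h)\to E(\phi(B)_{\phi(h)})$ to pull the correction factor back to $B_h$, and glue via the Nisnevich (fiber product) property of the representable functor $G$. The only cosmetic difference is that you apply the lemma to $x^{-1}$ rather than stating the containment $E(A_f)\subseteq F_f(E(A))\cdot\phi(E(B_h))$ first, which changes nothing of substance.
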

\begin{proof}
Since $A_f=A_{\phi(h)}$, we can assume that $f=\phi(h)$ from the start.
We have the following commutative diagram.
\begin{equation*}
\xymatrix@R=15pt@C=20pt{
B\ar[r]^{\phi}\ar[d]^{F_h}&\phi(B)\ar[d]^{F_f}\ar@{^{(}->}[r]&A\ar[d]^{F_f}\\
B_h\ar[r]&\phi(B_h)=\phi(B)_f\ar@{^{(}->}[r]&A_f\\
}
\end{equation*}
Since $E(A_f)\subseteq F_f(E(A))E(\phi(B)_f)$ by Lemma~\ref{lem:Abe-3.7} (i), and
$\phi:E(B_h)\to E(\phi(B)_f)$ is surjective, we have
\begin{equation}\label{eq:EAf}
E(A_f)\subseteq F_f(E(A))\cdot \phi(E(B_h)).
\end{equation}
Take $x_1\in G(B_h)$, $x_2\in G(A)$ such that $\phi(x_1)F_f(x_2)^{-1}\in E(A_f)$. Then by~\eqref{eq:EAf} we have
$\phi(y_1x_1)=F_f(y_2x_2)$ for some $y_1\in E(B_h)$, $y_2\in E(A)$. Since $G$ is a sheaf in the Nisnevich topology,
there is $z\in G(B)$ such that $\phi(z)=y_2x_2$, $F_h(z)=y_1x_1$. This implies the claim of the Lemma.
\end{proof}

\section{Supplementary lemmas}

In this section we prove a few more lemmas on the structure of the elementary subgroup of a
reductive group. We use the notation of \S\ref{ssec:rel}, in particular, Definition~\ref{def:E(A,I)}.

\subsection{Elementary subgroup over a polynomial ring}

The following lemma extends~\cite[Prop. 1.6, Prop. 1.8, Cor. 1.9]{Abe}.

\begin{lem}\label{lem:Abe-sect}
Let $A$ be a commutative ring, and let $I$ be an ideal of $A$ such that the projection $\pi:A\to A/I$ has a section
$i:A/I\to A$, i.e. $i$ is a homomorphism such that $\pi\circ i=\id$. Set $B=i(A/I)\subseteq A$.
Let $G$ be a reductive group scheme over $A$, and let $P$ be a proper parabolic subgroup of $G$.
Then

(i) $E_P^*(A,I)=E_P(A,I)=E_P(I)^{E_P(B)}$; in particular,
$$
E_P^*(A[X],XA[X])=E_P(A[X],XA[X]).
$$

(ii) $E_P(A)\cap G(B)=E_P(B)$.
\end{lem}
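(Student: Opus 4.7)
The plan is to exploit the splitting $A = B + I$ induced by the section $i$, which refines to a splitting $V_\alpha = BV_\alpha \oplus IV_\alpha$ of each relative root module, and to reduce both assertions to the structural claim $(\ast)$: every $g \in E_P(A)$ can be written as $g = b \cdot n$ with $b \in E_P(B)$ and $n \in E_P(I)^{E_P(B)}$.

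Granting $(\ast)$, both parts fall out easily once one observes the chain $E_P(I)^{E_P(B)} \subseteq E_P(I)^{E_P(A)} = E_P(A,I) \subseteq E_P^*(A,I)$, the last inclusion because $E_P(I) \subseteq E_P^*(A,I)$ and $E_P^*(A,I) = \ker\bigl(E_P(A) \to G(A/I)\bigr)$ is normal in $E_P(A)$. For (i), take $e \in E_P^*(A,I)$ and decompose $e = b \cdot n$; applying $\pi_*\colon E_P(A) \to E_P(A/I)$ kills $n$ (since $n$ is a product of $E_P(B)$-conjugates of elements $X_\alpha(IV_\alpha)$), so $\pi_*(b) = 1$, and since $\pi|_B$ is a ring isomorphism, $\pi_*$ restricts to an isomorphism on $E_P(B)$, forcing $b = 1$ and $e = n \in E_P(I)^{E_P(B)}$. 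The special case $A[X] \supseteq XA[X]$ is an instance since $A[X] = A \oplus XA[X]$. For (ii), if $g \in E_P(A) \cap G(B)$ and $g = b \cdot n$, then $n = b^{-1}g$ lies in $G(B) \cap E_P^*(A,I) \subseteq G(B) \cap G(A,I)$, which is trivial via $B \cong A/I$, hence $g = b \in E_P(B)$.

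To prove $(\ast)$, I write $g = \prod_{k=1}^m X_{\alpha_k}(v_k)$ and decompose $v_k = v_{k,B} + v_{k,I}$. The sum formula~\eqref{eq:sum} rewrites each factor as $X_{\alpha_k}(v_k) = X_{\alpha_k}(v_{k,B}) \cdot h_k$, where $h_k := X_{\alpha_k}(v_{k,I}) \cdot u_k$ and $u_k \in U_{(\alpha_k)}(A)$ is a product of higher terms $X_{j\alpha_k}(c_j)$ with $c_j \in IV_{j\alpha_k}$. Indeed, setting $w = 0$ in~\eqref{eq:sum} and using that the $X_{j\alpha}$ are closed embeddings forces $q^j_\alpha(v,0) = 0$, so $q^j_{\alpha_k}(v_{k,B}, v_{k,I}) \in IV_{j\alpha_k}$ and $h_k \in E_P(I)$. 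Now iteratively shuffle each $B$-factor past the $E_P(I)$-pieces on its right via $h \cdot b = b \cdot (b^{-1} h b)$: since $b^{-1} h b$ stays in $E_P(I)^{E_P(B)}$ at each step, induction yields $g = b \cdot n$ with $b = \prod_k X_{\alpha_k}(v_{k,B}) \in E_P(B)$ and $n \in E_P(I)^{E_P(B)}$. The main obstacle I anticipate is this shuffling step and the clean verification of $q^j_\alpha(v,0) = 0$; both rest only on the closed-embedding property of the relative root subschemes and the normal-closure definition of $E_P(A,I)$.
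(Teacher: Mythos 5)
Your proof is correct and follows essentially the same route as the paper's: decompose each $X_{\alpha_k}(v_k)$ via the section into a $B$-part times an element of $E_P(I)$ using the sum formula (observing that the correction terms $q^j_{\alpha_k}(v_{k,B},v_{k,I})$ lie in $IV_{j\alpha_k}$), then collect the $B$-factors by conjugation. Your intermediate factorization $E_P(A)=E_P(B)\cdot E_P(I)^{E_P(B)}$ is a mild repackaging of the paper's identity $g=\prod_k g_k h_k g_k^{-1}$, and your justification via $q^j_\alpha(v,0)=0$ is, if anything, slightly more careful than the paper's appeal to homogeneity.
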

\begin{proof}
We can assume that the relative roots and root subschemes with respect to $P$ are correctly defined
over $A$, and hence over $B$. Let $\Phi_P$ be the relative root
system of $G$ with respect to $P$ over $A$.

To prove (i), assume that $g=\prod\limits_{i=1}^mX_{\beta_i}(u_i)\in E_P^*(A,I)$, where $\beta_i\in\Phi_P$, $u_i\in V_{\beta_i}$.
Here $V_{\beta_i}$ are the respective finitely generated projective $A$-modules. Since
$$
\ker(\pi:V_{\beta_i}\to V_{\beta_i}\otimes_A A/I)=V_{\beta_i}\otimes_A I,
$$
we have $u_i=t_i+v_i$, where $t_i=i(\pi(u_i))\in V_{\beta_i}\otimes_A B$ and
$v_i=u_i-t_i\in V_{\beta_i}\otimes_A I\subseteq V_{\beta_i}$. By~\eqref{eq:sum}, we have
$$
X_{\beta_i}(u_i)=X_{\beta_i}(t_i)X_{\beta_i}(v_i)\prod_{k>1}X_{k\beta_i}(w_{i,k}),
$$
where each $w_{i,k}=q_{\beta_i}^k(t_i,v_i)\in V_{k\beta_i}\otimes_A I$, since $q^k_{\beta_i}$ is
a homogeneous map of degree $k>0$. Therefore, $X_{\beta_i}(u_i)=X_{\beta_i}(t_i)h_i$, for some $h_i\in E_P(I)$.
Set $g_k=\prod\limits_{i=1}^kX_{\beta_i}(t_i)$, $1\le k\le m$. We have $g_m=1$, since $\pi(g)=1$. Then
$$
g=\prod_{k=1}^mg_k h_k g_k^{-1}\in E_P(I)^{E_P(B)}\subseteq E_P(A,I).
$$
This settles (i).

To prove (ii), note that $\pi:E_P(A)\to E_P(B)$ is surjective, hence $E_P(A)=E_P(B)E_P^*(A,I)$. Since
$G(B)\cap G(A,I)$ is trivial, we deduce $E_P(A)\cap G(B)=E_P(A)$.
\end{proof}

The following lemma extends~\cite[Lemma 3.6]{Abe} and~\cite[Lemma 2.1]{Vo}.

\begin{lem}\label{lem:Abe-3.6}
Let $A$ be a commutative ring, $S$ a multiplicative subset of $A$. Let $G$ be a reductive group scheme over $A$,
and P a proper parabolic subgroup of $G$. If
$G(A[X_1,\ldots,X_n])=G(A)E_P(A[X_1,\ldots,X_n])$ for some $n\ge 1$, then
$G(A_S[X_1,\ldots,X_n])=G(A_S)E_P(A_S[X_1,\ldots,X_n])$ as well.
\end{lem}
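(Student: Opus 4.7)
The plan is a two-step reduction. First I reduce to the case where $S$ consists of powers of a single element $s\in A$; then I use the substitution $X_i\mapsto s^kX_i$ to transfer the problem from $A_s[X]$ down to $A[X]$, where the hypothesis applies directly. For the first step, since $G$ is finitely presented and the relative root modules $V_\alpha$ are finitely generated projective, both $G(-)$ and $E_P(-)$ commute with filtered colimits of $A$-algebras. Writing $A_S=\varinjlim_{s\in S}A_s$, every element of $G(A_S[X_1,\ldots,X_n])$ and every product of root elements in $E_P(A_S[X_1,\ldots,X_n])$ is already defined over $A_s[X_1,\ldots,X_n]$ for a single $s\in S$, so it suffices to treat the case $S=\{s^n:n\ge 0\}$.

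Now take $g\in G(A_s[X_1,\ldots,X_n])$ and put $h(X)=g(0)^{-1}g(X)$, so $h(0)=1$. It is enough to show $h\in E_P(A_s[X])$, since then $g=g(0)\cdot h\in G(A_s)E_P(A_s[X])$. The key claim is that for $k$ sufficiently large there exists a lift $\hat h\in G(A[X])$ mapping to $h(s^kX_1,\ldots,s^kX_n)$ under the localization $G(A[X])\to G(A_s[X])$. Indeed, embedding $G$ as a closed subscheme of some $\Aff^N$ via its finitely presented coordinate ring $A[G]=A[Y_1,\ldots,Y_N]/(f_1,\ldots,f_M)$ and expanding each coordinate as $h_i(X)=\sum_I c_{i,I}X^I$ with $c_{i,I}\in A_s$, the constant terms $c_{i,0}=h_i(0)$ lie in the image of $A\to A_s$ because $h(0)=1\in G(A)$, and for $k$ large each coefficient $c_{i,I}s^{k|I|}$ with $|I|\ge 1$ also comes from $A$. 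Lifting coefficients to $A$ produces the required $\hat h$, possibly after enlarging $k$ further to absorb any $s$-torsion that might appear in the defining equations $f_j(\hat h)\in A[X]$.

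The lift $\hat h$ need not satisfy $\hat h(0)=1$, but $\hat h(0)\in G(A)$ does map to $h(0)=1$ in $G(A_s)$. Replacing $\hat h$ by $\hat h':=\hat h(0)^{-1}\hat h\in G(A[X])$, I have $\hat h'(0)=1$ while $\hat h'$ still maps to $h(s^kX)$ in $G(A_s[X])$. By the hypothesis I can write $\hat h'=g_0 e$ with $g_0\in G(A)$ and $e\in E_P(A[X])$; evaluating at $X=0$ then forces $g_0=e(0)^{-1}\in E_P(A)$, so $\hat h'=e(0)^{-1}e\in E_P(A[X])$. Mapping to $G(A_s[X])$ gives $h(s^kX)\in E_P(A_s[X])$. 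Since $s\in A_s^{\times}$, the ring automorphism $X_i\mapsto s^{-k}X_i$ of $A_s[X]$ preserves the elementary subgroup and takes $h(s^kX)$ back to $h(X)$, which finishes the proof.

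The main obstacle is the lifting step in the middle paragraph: guaranteeing that the coordinate-wise lift really defines an $A$-point of $G$, rather than an $A$-point of the ambient affine space satisfying the defining equations only modulo $s$-torsion. In the typical applications where $A$ is a regular domain (so $s$ is a non-zero-divisor) this is automatic; in general it can be arranged by enlarging $k$ or by exploiting the freedom in the choice of lifts modulo the $s$-torsion of $A[X]$.
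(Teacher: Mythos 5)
Your argument is correct and is essentially the dilation-and-lift argument that the paper invokes by citing Abe's Lemma 3.6 (and Vorst's Lemma 2.1): reduce to a single $s$ by writing $A_S$ as a filtered colimit, substitute $X_i\mapsto s^kX_i$ to lift into $G(A[X_1,\ldots,X_n])$, apply the hypothesis over $A$, evaluate at $0$ to absorb the $G(A)$-factor into $E_P$, and undo the substitution over $A_S$ where $s$ is invertible. The one step you leave informal, absorbing the $s$-torsion in the $f_j(\hat h)$, does work exactly as you claim: choosing the lift so that $\hat h(0)$ is the identity section makes each $f_j(\hat h)$ a polynomial with zero constant term whose finitely many coefficients are killed by a fixed power $s^m$, so the further substitution $X\mapsto s^mX$ multiplies the coefficient of $X^I$ by $s^{m|I|}$ and annihilates it.
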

\begin{proof}
Exactly as~\cite[Lemma 3.6]{Abe}, using the relative root subschemes with respect to $P$ instead
of the usual root subgroups of Chevalley groups.
\end{proof}

\subsection{Generators of the congruence subgroup $E(A,I)$}

Let $A$ be any commutative ring, $G$ an isotropic reductive group over $A$, $P$ a parabolic subgroup of $G$.
We assume that the system of relative roots $\Phi_P$ and the respective relative root subschemes
are defined over $A$.

Let $\alpha\in\Phi_P$ be a relative root, we will denote by $m_\alpha$ the positive integer satisfying
$\Phi_P\cap\ZZ\alpha=\{\pm\alpha,\pm2\alpha,\ldots,\pm m_\alpha\alpha\}$. For $a\in E_\alpha(A)$,
$u_i\in V_{i\alpha}$, $1\le i\le m_\alpha$, we define
$$
Z_\alpha(a,u_1,\ldots,u_{m_\alpha})=a\left(\prod_{i=1}^{m_\alpha} X_{i\alpha}(u_i)\right)a^{-1}.
$$

The following lemma extends~\cite[Prop. 1.4]{Abe}.

\begin{lem}\label{lem:Abe-z(a,u)}
Let $A$, $G$, $P$ be as above. For any ideal $I$ of $A$, the group $E_P(A,I)$ is generated by $Z_\alpha(a,u_1,\ldots,u_{m_\alpha})$ for all
$\alpha\in\Phi_P$,
$u_i\in IV_{i\alpha}$ $(1\le i\le m_\alpha)$, and $a\in E_\alpha(A)$.
\end{lem}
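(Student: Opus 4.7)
Let $H \subseteq E_P(A)$ be the subgroup generated by the proposed elements $Z_\alpha(a, u_1, \ldots, u_{m_\alpha})$. The inclusion $H \subseteq E_P(A, I)$ is immediate from the definition in~\ref{def:E(A,I)}: setting $h := \prod_{i=1}^{m_\alpha} X_{i\alpha}(u_i) \in U_{(\alpha)}(I) \subseteq E_P(I)$ and observing that $a \in E_\alpha(A) \subseteq E_P(A)$, each generator $Z_\alpha(a, \vec u) = a h a^{-1}$ lies in the normal closure $E_P(I)^{E_P(A)} = E_P(A, I)$. Taking $a = 1$ and $(u_1, 0, \ldots, 0)$ shows that every $X_\alpha(u_1)$ with $u_1 \in IV_\alpha$ already lies in $H$, so $E_P(I) \subseteq H$.

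For the reverse inclusion $E_P(A, I) \subseteq H$, I use that $E_P(A, I)$ is by definition the normal closure of $E_P(I)$ in $E_P(A)$, and that $E_P(A)$ is generated by the root elements $X_\beta(v)$, $\beta \in \Phi_P$, $v \in V_\beta$. It therefore suffices to show that $H$ is stable under conjugation by every such $X_\beta(v)$. Fix a generator $Z_\alpha(a, \vec u) = a h a^{-1}$ of $H$ and write
$$
X_\beta(v) \, Z_\alpha(a, \vec u) \, X_\beta(-v) = \bigl(X_\beta(v)\,a\bigr) \, h \, \bigl(X_\beta(v)\,a\bigr)^{-1}.
$$
If $\beta$ is a nonzero integer multiple of $\alpha$, say $\beta = \pm k\alpha$, then $X_\beta(v) \in U_{(\pm\alpha)}(A) \subseteq E_\alpha(A)$, so $X_\beta(v)\,a \in E_\alpha(A)$ and the expression is exactly $Z_\alpha(X_\beta(v)\,a, \vec u) \in H$.

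The remaining case $\beta \notin \mathbb{Z}\alpha$ is the technical core. Here the generalized Chevalley commutator formula~\eqref{eq:Chev} yields two key facts: first, $X_\beta(v)\,h\,X_\beta(-v) \in h \cdot E_P(I)$, because each $u_i \in IV_{i\alpha}$ and the maps $N_{\beta, i\alpha, j, k}$ are homogeneous of positive degree in the $u_i$-variable, so the resulting commutator terms lie in $X_\gamma(IV_\gamma)$ for appropriate $\gamma$; and second, for each elementary factor $X_{\pm k\alpha}(s)$ in a word decomposition of $a$, the commutator $[X_\beta(v), X_{\pm k\alpha}(s)]$ is a product of elements $X_{j\beta \pm \ell k\alpha}(\cdot)$ with $j, \ell \geq 1$. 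Expanding $a$ as an explicit word in the $X_{\pm k\alpha}$'s and inducting on its length, together with repeated use of~\eqref{eq:sum}, one rewrites the conjugate as a product of factors, each of which is either an element of $E_P(I) \subseteq H$ or a conjugate of an element of $U_{(\gamma)}(I)$ by an element of $E_\gamma(A)$, i.e., a $Z_\gamma$-generator. The main obstacle is the bookkeeping in this rewriting: one must check that at every intermediate stage the surviving commutator residues still have arguments in $IV_\gamma$ (so they can be absorbed into $Z$-generators with trivial $a$-component), and that the non-abelian nature of $U_{(\alpha)}$ when $m_\alpha > 1$ does not obstruct the collection of terms, both of which follow from the homogeneity properties of the polynomial maps $q^i_\alpha$ and $N_{\alpha\beta ij}$ recorded in the PS Theorem.
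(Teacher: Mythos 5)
Your overall strategy is the same as the paper's: reduce the claim to showing that the subgroup $H$ generated by the elements $Z_\alpha(a,u_1,\ldots,u_{m_\alpha})$ is normalized by every root element $X_\beta(v)$, $v\in V_\beta$ (your first paragraph and the reduction to conjugation-stability are correct). The genuine gap is in your case division. You split into ``$\beta\in\ZZ\alpha$'' versus ``$\beta\notin\ZZ\alpha$'', but the relevant dichotomy is \emph{collinear} versus \emph{non-collinear}. If $\Phi_P$ is of type $BC_n$ and $\alpha=2\gamma$ is a divisible root, then $\beta=\pm\gamma$ is collinear to $\alpha$ without being an integer multiple of it, so it lands in your second branch; there you invoke the generalized Chevalley commutator formula~\eqref{eq:Chev} for the pairs $(\beta,\pm k\alpha)$, but its hypothesis ($m\beta\neq -k'(\pm k\alpha)$ for all $m,k'\ge 1$) fails, e.g.\ $2\cdot\gamma=-1\cdot(-2\gamma)$, and the commutator $[X_{\gamma}(v),X_{-2\gamma}(s)]$ is a genuine rank-one commutator not controlled by~\eqref{eq:Chev}. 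The paper avoids this by treating all collinear $\beta$ at once: writing $\alpha=k\gamma$, $\beta=l\gamma$ for a common root $\gamma$, it re-indexes $Z_\alpha(a,\vec u)$ as $Z_\gamma(a,\vec v)$ (with $v_i=u_{i/k}$ when $k\mid i$ and $v_i=0$ otherwise), and then $X_\beta(c)\in U_{(\pm\gamma)}(A)$ is simply absorbed into the conjugating element, using $E_\alpha(A)\subseteq E_\gamma(A)$.

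In the genuinely non-collinear case your two ``key facts'' are the right ingredients, but as stated they do not yield the conclusion: the commutators $[X_\beta(v),X_{\pm k\alpha}(s)]$ arising from the word for $a$ have arguments that are \emph{not} in $I$, so the factors they contribute are neither elements of $E_P(I)$ nor $Z$-generators, and an unstructured collection of terms leaves them stranded. What makes the argument close (this is the paper's Lemma~\ref{lem:Abe-1.5}) is a specific rearrangement: with $c=[a^{-1},X_\beta(v)]$ one writes $X_\beta(v)\,aha^{-1}\,X_\beta(v)^{-1}=a\,c\,(hy_1)\,c^{-1}\,a^{-1}$ where $y_1\in E_P(I)$, and then $c\,hy_1\,c^{-1}=h\,[h^{-1},c]\,(cy_1c^{-1})$, so the non-$I$ element $c$ enters only through commutators with $I$-supported elements; homogeneity of the $N_{\alpha\beta ij}$ keeps those in $E_P(I)$, and the total becomes $Z_\alpha(a,\vec u)\cdot axa^{-1}$ with $axa^{-1}=[a,x]x\in E_P(I)$. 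Your proposed induction on the word length of $a$ is also delicate, because the residues it produces must in turn be conjugated by root elements, which is the statement being proved. So the plan is sound, but both the divisible-root case and the cancellation mechanism need to be supplied explicitly.
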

\begin{proof}
Take any $\beta\in\Phi_P$, $c\in V_\beta$, and $\alpha\in\Phi_P$. It is enough to show that for any
$a\in E_\alpha(A)$, $u_i\in IV_{i\alpha}$, $1\le i\le m_\alpha$,
$$
x=X_\beta(c)Z_\alpha(a,u_1,\ldots,u_{m_\alpha})X_\beta(c)^{-1}
$$
is a product of elements $Z_\gamma(b,v_1,\ldots,v_{m_\gamma})$, where $\gamma\in\Phi_P$, $b\in E_\gamma(A)$, and $v_i\in IV_{i\gamma}$ for all
$1\le i\le m_\gamma$. If $\beta$ is collinear to $\alpha$, then, since both $\alpha$ and $\beta$ are integral
linear combinations of simple relative roots, there is a root $\gamma\in\Phi_P$ such that
$\alpha=k\gamma$, $\beta=l\gamma$, and $k,l\in\ZZ$. In this case we have
$x=Z_\gamma\l(X_\beta(c)a,v_1,\ldots,v_{m_\gamma}\r),$
where $v_i=u_{\frac ik}$ if $k$ divides $i$, and $v_i=0$ otherwise. If $\beta$ is non-collinear to $\alpha$, then
by Lemma~\ref{lem:Abe-1.5} below we have $x\in Z_\alpha(a,u_1,\ldots,u_{m_\alpha})\cdot E_P(I)$.
\end{proof}

\begin{lem}\label{lem:Abe-1.5}
Let $\alpha,\beta\in\Phi_P$ be two non-collinear relative roots, $I,J$ two ideals of $A$; let $A'$ be a
commutative $A$-algebra.
Let $a\in E_\alpha(A)$,
$t\in A'$, $u_i\in IV_{i\alpha}$  $(1\le i\le m_\alpha)$, and $v\in tJV_\beta\subseteq JV_\beta\otimes_A A'$.
Then
$$
X_\beta(v)Z_\alpha(a,u_1,\ldots,u_{m_\alpha})X_\beta(v)^{-1}=Z_\alpha(a,u_1,\ldots,u_{m_\alpha})y,
$$
where $y$ is a product of elements of the form $X_\gamma(w)$ with $\gamma=i\alpha+j\beta\in\Phi_P$, $i,j\in\ZZ$, $j>0$ and
$w\in t^jJ^jIV_\gamma\subseteq V_\gamma\otimes_A A'$.
\end{lem}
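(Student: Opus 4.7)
The plan is to introduce two auxiliary subgroups of $G(A')$ that encode both the allowed root directions and the required coefficient weights, and then deduce the lemma from their closure properties under conjugation, which follow directly from the generalized Chevalley commutator formula~\eqref{eq:Chev}. Set
\begin{align*}
N_v &= \bigl\langle X_\gamma(w) : \gamma = i\alpha + j\beta \in \Phi_P,\ j \ge 1,\ w \in t^j J^j V_\gamma\bigr\rangle,\\
N_u &= \bigl\langle X_\gamma(w) : \gamma = i\alpha + j\beta \in \Phi_P,\ j \ge 1,\ w \in t^j J^j I V_\gamma\bigr\rangle.
\end{align*}
The conclusion of the lemma is equivalent to $y := Z^{-1} X_\beta(v) Z X_\beta(v)^{-1} \in N_u$, where $Z = Z_\alpha(a, u_1, \ldots, u_{m_\alpha})$.

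First I would establish three closure properties, each a direct consequence of~\eqref{eq:Chev} combined with the homogeneity of the structure maps $N_{\alpha\beta ij}$ and $q^i_\alpha$: (a) both $N_v$ and $N_u$ are normalized by every $X_{\pm k\alpha}(c)$ with $c \in V_{\pm k\alpha}$, hence by $E_\alpha(A')$ (which contains the image of $a$); (b) $N_v$ normalizes $N_u$; and (c) for any generator $X_\gamma(w)$ of $N_v$ and any $u \in IV_{k\alpha}$ with $k \ge 1$, the commutator $[X_\gamma(w), X_{k\alpha}(u)]$ lies in $N_u$. The underlying weight-count is this: the commutator $[X_{\pm k\alpha}(c), X_\gamma(w)]$ consists of factors at roots $\pm pk\alpha + q\gamma = (\pm pk + qi)\alpha + qj\beta$, whose $\beta$-component $qj\ge j\ge 1$ is preserved, with coefficients of degree $q$ in $w$ and thus weight $(t^j J^j)^q = t^{qj} J^{qj}$ matching the new $\beta$-component; the extra factor $I$ in (b) and (c) is carried through via $I^q \subseteq I$ for $q\ge 1$.

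Next, since $v \in tJV_\beta$ we have $X_\beta(v) \in N_v$. Setting $b = a^{-1}X_\beta(v)a$ and $U = \prod_i X_{i\alpha}(u_i)$, property (a) forces $b \in N_v$, and a short manipulation of the definition of $y$ yields
$$y = a\cdot [U^{-1}, b] \cdot a^{-1}.$$
By (a) it suffices to show $[U^{-1}, b] \in N_u$. Writing $b$ as a product of generators of $N_v$ and $U$ as the product $\prod_i X_{i\alpha}(u_i)$, one expands the commutator via the standard identities $[g, xy] = [g, x]\cdot x[g, y]x^{-1}$ and $[xy, g] = x[y, g]x^{-1}\cdot [x, g]$. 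Every intermediate conjugating factor is either an element of $N_v$ (absorbed by (b)) or of $E_\alpha(A')$ (absorbed by (a)), so the expansion reduces to the pairwise commutators $[X_{k\alpha}(u_k), X_{\gamma_l}(w_l)]$, which lie in $N_u$ by (c).

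The main obstacle is not conceptual but combinatorial bookkeeping: one must verify carefully that every commutator expansion keeps the $\beta$-component of each new root strictly positive and produces coefficients in the prescribed $t^j J^j I$-weight submodule. The key input that makes the weights track correctly is precisely the homogeneity of $N_{\alpha\beta ij}$ and $q^i_\alpha$: weights in $t$ and $J$ are \emph{multiplied} (not merely added) under commutator expansion, so they stay exactly in step with the $\beta$-component of every new root.
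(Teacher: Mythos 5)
Your proof is correct and follows essentially the same route as the paper's: both arguments reduce everything to the generalized Chevalley commutator formula together with the homogeneity of the maps $N_{\alpha\beta ij}$, which makes the $t^jJ^j$-weight of each coefficient track the $\beta$-component of the corresponding root, and both use that any two roots arising in the process are never anti-proportional so the formula always applies. The only difference is packaging: you encapsulate the bookkeeping in the subgroups $N_v$, $N_u$ and their closure properties, whereas the paper carries out the same commutator reordering explicitly.
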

\begin{proof}
For any  $k\in\ZZ\setminus\{0\}$ and $w\in V_{k\alpha}$ we deduce, using the formula~\eqref{eq:sum}
and the generalized Chevalley commutator formula, that
$$
\begin{array}{l}
X_\beta(v)X_{k\alpha}(w)=X_{k\alpha}(w)[X_{k\alpha}(w)^{-1},
X_\beta(v)]X_\beta(v)\\
=X_{k\alpha}(w)\cdot\prod\limits_{i,j>0}X_{k i\alpha+j\beta}(v_{ij})\cdot X_\beta(v),\quad v_{ij}\in
t^jJ^jV_{k i\alpha+j\beta}.\\
\end{array}
$$
Moreover, if $w\in IV_{k\alpha}$, then also $v_{ij}\in t^jJ^jI^iV_{k i\alpha+j\beta}$.
Note that for any $k,k'\in\ZZ\setminus \{0\}$, $i\ge 0$, $j>0$,
$i'>0$, and $j'\ge 0$,  the roots $\delta=k i\alpha+j\beta$ and
$\delta'=k' i'\alpha+j'\beta$ do not satisfy $m\delta=-n\delta'$ for any $m,n\ge 1$, and hence
the pair of root subschemes $X_\delta$, $X_{\delta'}$ is subject to the generalized Chevalley commutator
formula. Moreover, positive linear combinations of $\delta$ and $\delta'$ lie in the
set $\ZZ\alpha+\NN\beta$, and hence the corresponding relative root subschemes are also subject to the
generalized Chevalley commutator formula.
This way, commuting such root subschemes several times, we deduce
$$
[a^{-1},X_\beta(v)]=\prod\limits_{
\begin{array}{c}
\vspace{-3pt}\st i\in\ZZ,\\
\st j>0
\end{array}}X_{i\alpha+j\beta}(w_{ij}),\quad w_{ij}\in
t^jJ^jV_{i\alpha+j\beta},
$$
as well as
$$
\l[\l(\prod\limits_{i=1}^{m_\alpha}X_{i\alpha}(u_i)\r)^{-1}\hspace{-3pt},\,X_\beta(v)\r]=\prod\limits_{\begin{array}{c}
\vspace{-3pt}\st i\in\ZZ,\\
\st j>0
\end{array}}X_{i\alpha+j\beta}(s_{ij}),\quad s_{ij}\in
t^jJ^jIV_{i\alpha+j\beta}.
$$
Then we have
$$
\begin{array}{l}
X_\beta(v)Z_\alpha(a,u_1,\ldots,u_{m_\alpha})X_\beta(v)^{-1}=X_\beta(v)a\cdot\prod\limits_{i=1}^{m_\alpha}X_{i\alpha}(u_i)\cdot a^{-1}X_\beta(v)^{-1}\\
=
a\l[a^{-1},X_\beta(v)\r]X_\beta(v)\cdot\prod\limits_{i=1}^{m_\alpha}X_{i\alpha}(u_i)\cdot X_\beta(v)^{-1}
\l[X_\beta(v),a^{-1}\r]a^{-1}\\
=
a[a^{-1},X_\beta(v)]\cdot \prod\limits_{i=1}^{m_\alpha}X_{i\alpha}(u_i)\cdot
\biggl[\l(\prod\limits_{i=1}^{m_\alpha}X_{i\alpha}(u_i)\r)^{-1}\hspace{-2pt},\,X_\beta(v)\biggr]\cdot
\l[a^{-1},X_\beta(v)\r]^{-1}a^{-1}\vspace{3pt}\\
=
a\cdot\prod\limits_{i=1}^{m_\alpha}X_{i\alpha}(u_i)\cdot\biggl[\l(\prod\limits_{i=1}^{m_\alpha}X_{i\alpha}(u_i)\r)^{-1}\hspace{-2pt},
\prod\limits_{\begin{array}{c}
\vspace{-3pt}\st i\in\ZZ,\\
\st j>0
\end{array}}\hspace{-5pt} X_{i\alpha+j\beta}(w_{ij})\biggr]\cdot \\
\quad\cdot\Bigl[\prod\limits_{\begin{array}{c}
\vspace{-3pt}\st i\in\ZZ,\\
\st j>0
\end{array}}\hspace{-5pt} X_{i\alpha+j\beta}(w_{ij}),
\prod\limits_{\begin{array}{c}
\vspace{-3pt}\st i\in\ZZ,\\
\st j>0
\end{array}}\hspace{-5pt} X_{i\alpha+j\beta}(s_{ij})\Bigr]
\cdot\prod\limits_{\begin{array}{c}
\vspace{-3pt}\st i\in\ZZ,\\
\st j>0
\end{array}}\hspace{-5pt} X_{i\alpha+j\beta}(s_{ij})\cdot a^{-1}\\
=Z_\alpha(a,u_1,\ldots,u_{m_\alpha}) axa^{-1},
\end{array}
$$
where $x=\prod\limits_{\begin{array}{c}
\vspace{-3pt}\st i\in\ZZ,\\
\st j>0
\end{array}}\hspace{-5pt}X_{i\alpha+j\beta}(r_{ij}),\quad r_{ij}\in
t^jJ^jIV_{i\alpha+j\beta}$.
Applying the generalized Chevalley commutator formula to $axa^{-1}=[a,x]x$, one deduces the claim of the lemma.
\end{proof}


The following lemma is an analogue of~\cite[Cor. 2.7]{Abe}.

\begin{lem}\label{lem:a-nea}
Let $A$, $G$, $P$ be as above. Let $I$ be an ideal of $A$. Let $\alpha\in\Phi_P$ be a non-divisible relative root (i.e. all relative roots collinear to $\alpha$
are its integral multiples).
Then any element $x\in E_P(A,I)$ can be presented as a product $x=x_1x_2$, where $x_1\in E_\alpha(A,I)$, and
$x_2$ is a product of elements
of the form $Z_{\beta}(a,u_1,\ldots,u_{m_\beta})$, where
$\beta$ is non-collinear to $\alpha$, $u_i\in IV_{i\beta}$, $1\le i\le m_\beta$, and $a\in E_\beta(A)$.
\end{lem}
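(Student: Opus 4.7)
The plan is to apply Lemma~\ref{lem:Abe-z(a,u)} to write $x$ as a product of standard generators $Z_\beta(a,\vec u)$, classify them according to whether $\beta$ is collinear to $\alpha$ or not, and then use the commutation formulas of~\S\ref{ssec:rel} to push the collinear generators to the left. First I would write $x=g_1g_2\cdots g_n$ with $g_k=Z_{\beta_k}(a_k,u_{k,1},\ldots,u_{k,m_{\beta_k}})$, where $\beta_k\in\Phi_P$, $a_k\in E_{\beta_k}(A)$, and $u_{k,i}\in IV_{i\beta_k}$. Since $\alpha$ is non-divisible, every $\beta_k$ collinear to $\alpha$ has the form $l\alpha$ with $1\le|l|\le m_\alpha$, so $E_{\beta_k}(A)\subseteq E_\alpha(A)$ and each $X_{il\alpha}(u_{k,i})\in E_\alpha(I)$; thus every such ``collinear'' generator already lies in $E_\alpha(I)^{E_\alpha(A)}=E_\alpha(A,I)$.

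Let $S$ denote the set of products $y_1y_2$ with $y_1\in E_\alpha(A,I)$ and $y_2$ a product of non-collinear $Z$-generators. Since $1\in S$, every $Z$-generator lies in $S$, and $S$ is clearly closed under right-multiplication by non-collinear $Z$-generators, it suffices to check that $S$ is closed under right-multiplication by a collinear $Z$-generator $k$. Writing an element of $S$ as $y_1y_2$, this reduces to $y_2k\in S$, and then by induction on the number of factors of $y_2$ to the key statement: for a single non-collinear $Z$-generator $h$ and a single collinear $Z$-generator $k$, one has $hkh^{-1}\in S$ (from which $hk=(hkh^{-1})h\in S$ follows).

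I would prove the key statement by an explicit computation extending Lemma~\ref{lem:Abe-1.5} from a single $X_\beta(v)$ to a full $h=Z_\beta(b,v_1,\ldots,v_{m_\beta})=b\bigl(\prod_jX_{j\beta}(v_j)\bigr)b^{-1}$. Expanding also $k=a\bigl(\prod_iX_{il\alpha}(u_i)\bigr)a^{-1}$, I would iterate Lemma~\ref{lem:Abe-1.5} together with the generalized Chevalley commutator formula~\eqref{eq:Chev} to commute each $X_{\pm k'\beta}(\cdot)$ past each $X_{il\alpha}(u_i)$. Every such commutator is a product of root elements $X_\gamma(w)$ with $\gamma=i'l\alpha+j'\beta$ for some $j'\ne 0$ and, by homogeneity of the $N_{\cdots}$ polynomials in the $u_i$, with $w\in IV_\gamma$. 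The conjugating elements $a\in E_{l\alpha}(A)\subseteq E_\alpha(A)$ and $b\in E_\beta(A)$ only reshuffle these factors while preserving this ``$I$-small'' invariant, so one arrives at $hkh^{-1}=k'h'$ with $k'\in E_\alpha(A,I)$ absorbing all $\alpha$-collinear contributions and $h'$ a product of non-collinear $Z$-generators.

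The main obstacle is the combinatorial bookkeeping in the last step: one must verify that whenever the root $\gamma=i'l\alpha+j'\beta$ produced by a commutator collapses to a multiple of $\alpha$ (which can occur when the $\pm\beta$-contributions cancel during conjugation by $b\in E_\beta(A)$, since $b$ involves $X_{\pm k'\beta}$ of mixed signs), the corresponding coefficient $w\in IV_\gamma$ still guarantees $X_\gamma(w)\in E_\alpha(I)\subseteq E_\alpha(A,I)$. Meanwhile every genuinely non-collinear $X_\gamma(w)$ produced can be rewritten as the non-collinear $Z$-generator $Z_{\gamma_0}(1,0,\ldots,w,\ldots,0)$, where $\gamma_0$ is the unique non-divisible relative root collinear to $\gamma$. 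Assembling all the pieces yields the desired decomposition $x=x_1x_2$.
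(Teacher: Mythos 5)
Your proposal coincides with the paper's own argument: the paper's entire proof reads ``Follows by induction from Lemmas~\ref{lem:Abe-z(a,u)} and~\ref{lem:Abe-1.5}'', i.e., decompose $x$ into $Z$-generators via Lemma~\ref{lem:Abe-z(a,u)} and push the $\alpha$-collinear ones to the left using Lemma~\ref{lem:Abe-1.5} together with the generalized Chevalley commutator formula. You in fact supply more detail than the paper, correctly isolating the reduction to the single conjugation $hkh^{-1}$ and the key point that any root $i\alpha+j\beta$ collapsing back to a multiple of $\alpha$ still carries a coefficient in $I$ and hence contributes only an element of $E_\alpha(I)\subseteq E_\alpha(A,I)$.
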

\begin{proof}
Follows by induction
 from Lemmas~\ref{lem:Abe-z(a,u)} and~\ref{lem:Abe-1.5}.
\end{proof}

We write down one more easy lemma for future reference.

\begin{lem}\label{lem:E*(A,I)}
Let $A$ be a local ring, $I$ the maximal ideal of $A$.
For any isotropic reductive group $G$ over $A$ with
two opposite parabolic subgroups $P=P^+$ and $P^-$ defined over $A$, having the common Levi subgroup
$L_P=P^+\cap P^-$ and unipotent radicals $U_{P^\pm}$, we have
$$
G(A,I)=U_{P^+}(I)\cdot L_P(A,I)\cdot U_{P^-}(I)=U_{P^-}(I)\cdot L_P(A,I)\cdot U_{P^+}(I).
$$
In particular, $G(A,I)=E_P(A,I)L_P(A,I)$.
\end{lem}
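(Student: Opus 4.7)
The plan is to exploit the Gauss (big cell) decomposition of a reductive group scheme. Recall from~\cite[Exp.~XXII, \S 4; Exp.~XXVI Prop.~1.6]{SGA} that the ``big cell'' $\Omega=U_{P^-}\cdot L_P\cdot U_{P^+}$ is an open subscheme of $G$, and the multiplication morphism
\[
\mu\colon U_{P^-}\times L_P\times U_{P^+}\longrightarrow \Omega
\]
is an isomorphism of $A$-schemes. Consequently $\Omega(A)=U_{P^-}(A)\cdot L_P(A)\cdot U_{P^+}(A)$ with \emph{uniquely} determined factors.

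The first step is to show that any $g\in G(A,I)$ lies in $\Omega(A)$. Since $A$ is local with residue field $k=A/I$, a point $g\in G(A)$ lies in the open subscheme $\Omega$ if and only if its image $\bar g\in G(k)$ lies in $\Omega(k)$; this is the standard characterization of $A$-points of an open subscheme of an affine scheme over a local ring. For $g\in G(A,I)$ we have $\bar g=1\in\Omega(k)$, so indeed $g\in\Omega(A)$. Hence we may write $g=u_-\,l\,u_+$ uniquely with $u_\pm\in U_{P^\pm}(A)$ and $l\in L_P(A)$.

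The second step is to identify the factors as congruence elements. Reducing the identity $g=u_-lu_+$ modulo $I$, we obtain $1=\bar u_-\,\bar l\,\bar u_+$ in $\Omega(k)$; by the uniqueness of decomposition in $\Omega(k)$, this forces $\bar u_-=\bar l=\bar u_+=1$. Hence $u_\pm\in\ker\bigl(U_{P^\pm}(A)\to U_{P^\pm}(A/I)\bigr)=U_{P^\pm}(I)$ (the last equality holds because $U_{P^\pm}$ is a smooth affine scheme isomorphic to a vector bundle over $\Spec A$, for which $A$-points with trivial reduction mod $I$ are exactly those with coordinates in $I$), and $l\in L_P(A,I)$. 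This proves the first equality. For the second equality one repeats the argument with the roles of $P^+$ and $P^-$ exchanged, using the opposite big cell $U_{P^+}\cdot L_P\cdot U_{P^-}$.

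Finally, for the ``in particular'' clause, observe that $U_{P^+}(I)\cdot U_{P^-}(I)\subseteq E_P(I)\subseteq E_P(A,I)$ directly from Definition~\ref{def:E(A,I)}, since $U_{P^+}(I)$ and $U_{P^-}(I)$ are generated by $X_\alpha(IV_\alpha)$ for positive, resp.\ negative, relative roots $\alpha\in\Phi_P$. Combining with either of the two decompositions just established gives
\[
G(A,I)=U_{P^-}(I)\cdot L_P(A,I)\cdot U_{P^+}(I)\subseteq E_P(A,I)\cdot L_P(A,I),
\]
and the reverse inclusion is obvious since $E_P(A,I), L_P(A,I)\subseteq G(A,I)$. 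There is no real obstacle here; the only subtle point to verify carefully is the passage from $\bar g\in\Omega(k)$ to $g\in\Omega(A)$, which is standard but worth spelling out.
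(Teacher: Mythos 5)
Your argument is correct and coincides with the paper's own proof: both use that the big cell $\Omega=U_{P^\pm}\cdot L_P\cdot U_{P^\mp}$ is open in $G$, that over a local ring an $A$-point lands in $\Omega$ as soon as its reduction does, and then identify the factors as congruence elements. The only elided detail is that the final inclusion $U_{P^-}(I)L_P(A,I)U_{P^+}(I)\subseteq E_P(A,I)L_P(A,I)$ uses that $L_P$ normalizes $U_{P^+}$ to move the Levi factor to the right, a point the paper states explicitly.
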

\begin{proof}
Let $\rho:A\to A/I$ be the quotient map. The product $\Omega=U_{P^+}\times L_P\times U_{P^-}$ embeds into $G$ as an open subscheme via the multiplication
morphism, e.g.~\cite[Exp. XXVI, 4.3.6]{SGA}. If for $g\in G(A)$
one has $\rho(g)\in\Omega(A/I)$, then, since $I$ is the maximal ideal of the local ring $A$, we have
$g\in\Omega(A)=U_{P^+}(A)L_P(A)U_{P^-}(A)$. If, moreover, $\rho(g)=1$, then $g$ belongs to
$$
\l(\ker(\rho)\cap U_{P^+}(A)\r)\cdot \l(\strut\ker(\rho)\cap L_P(A)\r)\cdot \l(\ker(\rho)\cap U_{P^-}(A)\r)=
U_{P^+}(I)L_P(A,I)U_{P^-}(I).
$$
Since $L_P$ normalizes $U_{P^\pm}$, we deduce the equality $G(A,I)=E_P(A,I)L_P(A,I)$.
\end{proof}

\subsection{Construction of certain automorphisms.}

We will use later the following two kinds of automorphisms of an isotropic reductive group over a
connected semilocal ring. The proofs of their existence basically consist in putting together some observations from~\cite[Exp. XXVI \S 7]{SGA}.

\begin{lem}\label{lem:sigma-def}
Let $G$ be a reductive group scheme over a connected semilocal ring $R$, $P$ a minimal proper parabolic subgroup of $G$, $L$
a Levi subgroup of $P$, $\Phi_P$ the respective system of relative roots, $\alpha_i\in\Phi_P$ a simple relative root.
Let $G^{\ad}=G/\Cent(G)\subseteq\Aut(G)$ be the corresponding adjoint group.

There is a closed embedding $\chi:\Gm\to G^{\ad}$, $t\mapsto\chi_t$, over $R$, such that for any $R$-algebra $R'$, and any
$t\in\Gm(R')=(R')^\times$, one has

1) $\chi_t(X_\alpha(u))=X_\alpha(t^{m_i(\alpha)}\cdot u)$ for any $\alpha\in\Phi_P$, $u\in V_\alpha\otimes_R R'$;

2) $\chi_t|_L=\id_L$.
\end{lem}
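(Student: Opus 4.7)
The plan is to produce $\chi$ as the $i$-th fundamental relative coweight of the adjoint group. First I would invoke~\cite[Exp.~XXVI, \S 7]{SGA} to choose a maximal split subtorus $S\subseteq L$ whose centralizer in $G$ is $L$; such an $S$ exists because $R$ is connected semilocal and $P$ is a minimal strictly proper parabolic. The relative roots $\Phi_P$ then embed in $X^*(S)$, and in the adjoint quotient $G^{\ad}$ the image $S^{\ad}$ of $S$ is a maximal split torus whose character lattice is precisely the $\ZZ$-lattice spanned by $\Phi_P$, so that the simple relative roots $\alpha_1,\dots,\alpha_r$ form a $\ZZ$-basis of $X^*(S^{\ad})$. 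This is the relative analogue of the familiar fact that in the split adjoint case the simple roots form a basis of the character lattice of the maximal torus.

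Next, I would take the dual basis $\{\omega_j^\vee\}\subseteq X_*(S^{\ad})$ and define $\chi:=\omega_i^\vee\colon\Gm\to S^{\ad}\hookrightarrow G^{\ad}$. Being a primitive cocharacter of a split torus, $\chi$ is automatically a closed immersion. Property~(2) is then immediate: since $L$ is the centralizer of $S$, the subtorus $S$ sits in the center of $L$, so the image $S^{\ad}$ in $G^{\ad}=G/\Cent(G)$ acts trivially on $L$ by conjugation, and therefore so does $\chi_t$ for every $t$.

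For property~(1), I would appeal to part~(ii) of the theorem of~\cite[Theorem~2]{PS} reproduced in \S\ref{ssec:rel}: for any element $g$ of the central split torus $\rad(L)^{\ad}$, the conjugation $g X_\alpha(v) g^{-1}$ is given by $X_\alpha$ applied to a scalar multiple of $v$, with all higher components $\varphi^i_{g,\alpha}$ ($i>1$) vanishing. Taking $g=\chi_t$ and writing $\alpha=\sum_j m_j(\alpha)\alpha_j$ in the basis of simple relative roots, the scalar is $\alpha(\chi_t)=t^{\langle\alpha,\omega_i^\vee\rangle}=t^{m_i(\alpha)}$ by the defining pairing, which is exactly the desired formula.

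The hard part will be the integral statement that the simple relative roots form a $\ZZ$-basis of $X^*(S^{\ad})$ over the connected semilocal ring $R$, as opposed to merely a rational basis. Over a field this is part of the standard theory of anisotropic kernels after Borel--Tits~\cite{BoTi}, and the scheme-theoretic version is encoded in the description of the relative root datum for adjoint groups in~\cite[Exp.~XXVI, \S 7]{SGA}; the connectedness of $\Spec R$ ensures that this relative root datum is locally constant, hence constant, so that bases over stalks glue into a basis over $R$.
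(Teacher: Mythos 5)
Your proposal follows essentially the same route as the paper: both pass to the adjoint group, use \cite[Exp.~XXVI, Th.~7.4 and Th.~7.13]{SGA} to identify the character lattice of the maximal split torus of the adjoint Levi with the lattice generated by the relative roots, define $\chi_t$ as the cocharacter pairing to $t$ against $\alpha_i$ and to $1$ against the other simple relative roots, and invoke \cite[Theorem~2]{PS} for the action on the relative root subschemes. The only cosmetic difference is that you make the verification of property~(2) and the vanishing of the higher components $\varphi^i_{\chi_t,\alpha}$ explicit, which the paper leaves implicit.
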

\begin{proof}
Let $P'$, $L'$ be the parabolic subgroup and the Levi subgroup of $G^\ad$ which are the images of $P$, $L$
under $G\to G^\ad$. Denote $\Phi_P$ by $\Psi$.
We use the results of~\cite[Exp. XXVI \S 7]{SGA}. Let $S$ be the maximal split subtorus of $L'$, $\Lambda=\Hom(S,\Gm)$,
$\Lambda^*=\Hom(\Gm,S)$. By~\cite[Exp. XXVI Th. 7.4]{SGA} there is a map $\Psi\to \Lambda^*$
which defines a root datum $(\Lambda,\Lambda^*,\Psi,\Psi^*)$. Since $G^\ad$ is a group of adjoint type,
by the compatibility with the absolute root datum~\cite[Exp. XXVI Th. 7.13]{SGA} the lattice $\Lambda$ is generated by
$\Psi$. For any $t\in R'$, let $\chi_t\in \Lambda^*$ be such that $\chi_t(\alpha_i)=t$ and $\chi_t(\alpha)=1$ for any
other simple root $\alpha$ of $\Psi$. Then $\chi:\Gm\to S$, $t\mapsto\chi_t$, is the desired embedding.
The action on the relative root
subschemes $X_\alpha$ is the desired one by~\cite[Th. 2]{PS}.
\end{proof}

\begin{lem}\label{lem:n_alpha}
Let $G$ be a reductive group scheme over a connected semilocal ring $R$, $P=P^+$ a minimal proper parabolic subgroup of $G$, $L$
a Levi subgroup of $P$, $P^-$ the corresponding opposite parabolic subgroup, $U_P$ and $U_P^-$ the unipotent radicals of $P$ and $P^-$.
There exists an element $n_P\in E_P(R)$ such that
$$
n_PLn_P^{-1}=L,\qquad n_PU_Pn_P^{-1}=U_{P^-},\qquad n_PU_{P^-}n_P^{-1}=U_{P}.
$$
\end{lem}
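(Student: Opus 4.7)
The plan is to produce $n_P \in E_P(R)$ whose conjugation action sends $P$ to $P^-$ and normalizes $L$; the required action on the two unipotent radicals will then follow from general structure theory.

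First, I would observe that $P^-$ is itself a minimal proper parabolic $R$-subgroup of $G$: it shares the Levi $L$ with $P$, and in terms of the relative root system $\Phi_P$ over the connected semilocal base $R$, the pair $(P,P^-)$ corresponds to a pair of opposite Weyl chambers, both of which are minimal. Hence Theorem~\ref{th:PS-normality}(i)---or equivalently the underlying \cite[Exp.~XXVI Cor.~5.2]{SGA}, together with the Gauss decomposition \cite[Exp.~XXVI Th.~5.1]{SGA} cited just before Lemma~\ref{lem:PS-17}---yields some $g \in E_P(R)$ with $g P g^{-1} = P^-$.

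Next, I would correct $g$ so as to also normalize the common Levi $L$. The subgroup $g L g^{-1}$ is a Levi subgroup of $g P g^{-1} = P^-$, as is $L$. By \cite[Exp.~XXVI Cor.~1.8]{SGA} (used already in \S\ref{ssec:eldef}), any two Levi subgroups of $P^-$ are conjugate by a unique element $u \in U_{P^-}(R)$; one has $u g L g^{-1} u^{-1} = L$. Set $n_P := u g$. Since $U_{P^-}(R) \subseteq E_P(R)$, we get $n_P \in E_P(R)$. By construction $n_P L n_P^{-1} = L$, and $n_P P n_P^{-1} = u P^- u^{-1} = P^-$ because $u \in U_{P^-}\subset P^-$ normalizes $P^-$. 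Taking unipotent radicals of both sides of the last equality gives $n_P U_P n_P^{-1} = U_{P^-}$.

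Finally, for $n_P U_{P^-} n_P^{-1} = U_P$, I would use that the formation of the opposite parabolic with respect to a fixed Levi is natural under automorphisms that preserve that Levi: since $n_P$ normalizes $L$ and carries $P$ to $P^-$, it must carry $P^-$, the parabolic opposite to $P$ with respect to $L$, to the parabolic opposite to $P^-$ with respect to $L$, which is $P$. Passing to unipotent radicals gives the last identity. No serious difficulty is expected; the one substantive input is the $E_P(R)$-conjugacy of $P$ and $P^-$, and that is exactly what the semilocal results of SGA3~XXVI (5.1, 5.2) and the proof of Theorem~\ref{th:PS-normality}(i) supply.
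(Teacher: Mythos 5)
Your argument is correct, but it is organized quite differently from the paper's. The paper starts from \cite[Exp.~XXVI, 7.2]{SGA}, which directly supplies an element $n_P\in\Norm_G(L)(R)=\Norm_G(S)(R)$ conjugating $P$ to $P^-$; the identity $n_PLn_P^{-1}=L$ is then automatic, $n_PU_{P^-}n_P^{-1}=U_P$ is read off from the fact that the characters of the maximal split torus $S$ on $\Lie(P^-)$ are the negatives of those on $\Lie(P)$, and only at the very end is the Gauss decomposition $G(R)=U_P(R)U_{P^-}(R)U_P(R)L(R)$ used to replace $n_P$ by its unipotent part, which lies in $E_P(R)$ and has the same conjugation action since the discarded factor is in $L(R)$. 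You instead build $n_P$ inside $E_P(R)$ from the outset: conjugacy of minimal parabolics over a connected semilocal ring by elements of $E_P(R)$ (the same \cite[Exp.~XXVI, Cor.~5.2 and 5.7]{SGA} input as in the proof of Theorem~\ref{th:PS-normality}(i)) gives $g$ with $gPg^{-1}=P^-$, the conjugacy of Levi subgroups of $P^-$ under $U_{P^-}(R)$ \cite[Exp.~XXVI Cor.~1.8]{SGA} corrects $g$ to normalize $L$, and the uniqueness of the opposite parabolic with respect to a fixed Levi \cite[Exp.~XXVI Th.~4.3.2]{SGA} replaces the Lie-algebra weight computation in the last step. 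Your route avoids invoking 7.2 and the relative root datum altogether, at the cost of needing that $P^-$ is itself minimal (true, e.g.\ because a minimal parabolic contained in $P^-$ is conjugate to $P$, hence of the same dimension as $P^-$, hence equal to it) and of the extra Levi-correction step; the paper's route is shorter once 7.2 is granted, since the normalizer element comes normalizing $L$ for free. Both crucially use the semilocal hypothesis, yours through the conjugacy of minimal parabolics and the paper's through the Gauss decomposition.
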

\begin{proof}
Let $S\subseteq L$ be a maximal split subtorus of $G$. By~\cite[Exp. XXVI, 7.2]{SGA} the subgroups
$P$ and $P^-$ are conjugate by an element $n_P\in \Norm_G(L)(R)=\Norm_G(S)(R)$, hence $n_PLn_P^{-1}=L$ and $n_PU_Pn_P^{-1}=U_{P^-}$.
Since the characters of $S$ on $\Lie(P^-)$ are opposite to those on $\Lie(P)$, we also have
$n_PU_{P^-}n_P^{-1}=U_P$. Since one has the Gauss decomposition $G(R)=U_P(R)U_{P^-}(R)U_P(R)L(R)$ by~\cite[Exp. XXVI, Th. 5.1]{SGA},
we can assume that $n_P\in E_P(R)$.
\end{proof}

\section{Decomposition of the elementary subgroup over Laurent polynomials}\label{sec:prove-xx}

\subsection{Decomposition theorem and a corollary}
The goal of this section is to prove the following theorem.

\begin{thm}\label{th:E+-+}
Let $G$ be an isotropic reductive algebraic group over a local ring $A$, satisfying the condition \ee.
Then
$$
E(A[X,X^{-1}])=E(A[X])E(A[X^{-1}])E(A[X]).
$$
\end{thm}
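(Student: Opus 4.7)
My plan is to adapt the Suslin--Abe decomposition argument (\cite[Th.~5.1]{Sus} for $\GL_l$, \cite[Th.~2.16]{Abe} for Chevalley groups) to isotropic reductive groups using the relative root subschemes of~\S\ref{ssec:rel}. Since $A$ is local and $G$ satisfies \ee, fix a minimal strictly proper parabolic $P \subseteq G$; by Theorem~\ref{th:PS-normality}(i), $E(A') = E_P(A')$ for every $A$-algebra $A'$, and $E_P(A[X,X^{-1}])$ is generated by the $X_\alpha(u)$ with $\alpha \in \Phi_P$, $u \in V_\alpha \otimes_A A[X,X^{-1}]$. Write $\tilde E := E(A[X])\,E(A[X^{-1}])\,E(A[X])$. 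Since $1 \in \tilde E$, it suffices to prove that $\tilde E$ is closed under right multiplication by each generator $X_\alpha(u)$. The case $u \in V_\alpha \otimes A[X]$ is immediate from $\tilde E\cdot E(A[X])=\tilde E$, and the remaining case $u \in V_\alpha \otimes X^{-1}A[X^{-1}]$ reduces, via an inductive use of~\eqref{eq:sum}, to showing that for every $\alpha \in \Phi_P$, $v \in V_\alpha$ and $k \geq 1$,
$$
E(A[X]) \cdot X_\alpha(vX^{-k}) \subseteq E(A[X])\,E(A[X^{-1}])\,E(A[X]).
$$

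The core calculation is then to commute $X_\alpha(vX^{-k})$ past an individual generator $X_\beta(w) \in E(A[X])$, $w \in V_\beta \otimes A[X]$, keeping the result in $\tilde E$. When $\beta$ is non-collinear to $\alpha$, the generalized Chevalley commutator formula~\eqref{eq:Chev} gives
$$
[X_\beta(w),\,X_\alpha(vX^{-k})] = \prod_{i,j>0} X_{i\beta + j\alpha}\bigl(N_{\beta\alpha ij}(w,\,vX^{-k})\bigr),
$$
and each $N_{\beta\alpha ij}(w, vX^{-k}) \in V_{i\beta+j\alpha} \otimes A[X,X^{-1}]$ splits by~\eqref{eq:sum} into a non-negative $A[X]$-part and a strictly negative $A[X^{-1}]$-part, modulo residual corrections at roots $i\beta+j\alpha$ of strictly greater height. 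A descending induction on the height of $\alpha$ in $\Phi_P$, whose base case consists of the roots of maximal height (for which no $i\beta+j\alpha$ with $i,j\geq 1$ lies in $\Phi_P$, so the commutator is trivial), handles these corrections. The assumption that every irreducible component of $\Phi_P$ has rank $\geq 2$ is exactly what guarantees, via Lemma~\ref{lem:Abe-1.5} and the Lemma of~\cite{LS} recalled in~\S\ref{ssec:rel}, that suitable non-collinear $\beta$ exist for each $\alpha$; the collinear-root cases (in particular those involving $2\alpha\in\Phi_P$ in non-reduced components) are treated separately using Lemma~\ref{lem:Abe-z(a,u)} and Lemma~\ref{lem:a-nea}.

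The main obstacle I anticipate is the combinatorial bookkeeping of the splitting of the $N_{\beta\alpha ij}$-arguments. Because $w$ may have arbitrarily large $X$-degree, each iteration of the commutator calculation produces mixed Laurent-polynomial inputs, and one must verify that at every step the height of the remaining relative roots strictly increases so that the descending induction terminates in the finite system $\Phi_P$. Lemma~\ref{lem:n_alpha} and Lemma~\ref{lem:sigma-def} should be useful to streamline the setup: conjugation by $n_P$ swaps $U_P$ and $U_{P^-}$, which allows one to replace a potentially awkward $X^{-k}$ by $X^k$, while the cocharacter $\chi_t$ of Lemma~\ref{lem:sigma-def} applied at $t \in A[X,X^{-1}]^\times$ rescales $X_\alpha(u)$ by powers of $t$, offering a way to reduce from general $k\geq 1$ to $k = 1$. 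Once these reductions and the recursive height-induction are in place, the closure $\tilde E \cdot X_\alpha(vX^{-k}) \subseteq \tilde E$ follows, yielding $E(A[X,X^{-1}]) = \tilde E$.
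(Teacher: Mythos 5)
There is a genuine gap, and it sits exactly where the paper concentrates all of its effort. Your core calculation commutes $X_\alpha(vX^{-k})$ past a generator $X_\beta(w)$ of $E(A[X])$ via the generalized Chevalley commutator formula~\eqref{eq:Chev}, but that formula is only available when $m\alpha\neq -k\beta$ for all $m,k\ge 1$; it says nothing about $\beta$ collinear (in particular opposite) to $\alpha$, and the configuration ${}^{X_{-\alpha}(bX^{-1})}X_\alpha(X^2f)$ is unavoidable: writing elements of $E(A[X],XA[X])$ as products of $Z_\alpha(a,Xf)$ with $a\in E_\alpha(A)$ and applying Gauss decomposition to $a$ inevitably produces factors $X_{-\alpha}(bX^{-1})$ after dilation. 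Relegating this to ``the collinear-root cases are treated separately using Lemma~\ref{lem:Abe-z(a,u)} and Lemma~\ref{lem:a-nea}'' does not close the gap: those lemmas only normalize the list of generators; they do not show that ${}^{X_{-\alpha}(bX^{-1})}X_\alpha(X^2f)$ lies in $E(A[X])$. That single inclusion is the real content of the paper's Lemma~\ref{lem:Abe-tauX} (supplemented by Lemmas~\ref{lem:BC-1}--\ref{lem:BC-tauX} for $BC_l$ and Lemma~\ref{lem:tau^2} for $G_2,F_4,E_8$), and its proof requires embedding $\alpha$ into a rank-$2$ subsystem and invoking the surjectivity statements of~\cite[Lemma 2]{LS} to rewrite $X_\alpha(X^2f)$ as a product of commutators at roots non-collinear to $\alpha$. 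This is where the isotropic-rank-$\ge 2$ hypothesis actually enters --- not merely in guaranteeing that ``suitable non-collinear $\beta$ exist.''

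Two further structural problems. First, your reduction is not closed: knowing $E(A[X])X_\alpha(vX^{-k})\subseteq \tilde E$ only yields $\tilde E\cdot X_\alpha(vX^{-k})\subseteq E(A[X])E(A[X^{-1}])E(A[X])E(A[X^{-1}])E(A[X])$, a five-fold product which is not known to collapse to $\tilde E$ (that it does is essentially the theorem itself). Second, the descending induction on the height of $\alpha$ does not terminate: for $\beta$ a negative root the roots $i\beta+j\alpha$ can have strictly smaller height than $\alpha$, and even your claimed base case fails, since for the highest root $\ha$ and a negative non-collinear $\beta$ the sum $\ha+\beta$ is very often a root. The paper avoids both difficulties by making the dilation automorphism $\sigma$ of Lemma~\ref{lem:sigma-def} --- which you mention only as an optional rescaling device --- the organizing principle: one proves the single inclusion $\sigma^{\pm 1}(Z)\subseteq Z$ for $Z=E(A[X])E(A[X^{-1}])E(A[X])$ (Lemma~\ref{lem:sigma-Z}, resting on the computation of $\sigma^{\pm 1}$ on $E(A[X],XA[X])$ in Lemma~\ref{lem:sigma(EX)} and on $E(A)$ via Gauss decomposition with respect to the extraspecial parabolic $P_1$), and then $X_\alpha(u)Z=\sigma^{-k}\bigl(X_\alpha(X^{km_1(\alpha)}u)Z\bigr)\subseteq\sigma^{-k}(Z)\subseteq Z$ disposes of all negative powers of $X$ at once. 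You would also need the preliminary reduction to the absolutely almost simple case via Weil restriction, which the proposal omits.
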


In the next section we deduce the main results of the paper from the following corollary of Theorem~\ref{th:E+-+}.

\begin{cor}\label{cor:XX^-1}
Under the assumptions of Theorem~\ref{th:E+-+}, let $I$ be the maximal ideal of $A$. Then
$$
E^*(A[X,X^{-1}],I\cdot A[X,X^{-1}])=E^*(A[X],I\cdot A[X])E^*(A[X^{-1}],I\cdot A[X^{-1}]).
$$
\end{cor}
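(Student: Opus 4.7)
The inclusion $E^*(A[X], IA[X]) \cdot E^*(A[X^{-1}], IA[X^{-1}]) \subseteq E^*(A[X,X^{-1}], IA[X,X^{-1}])$ is immediate, so the plan is to establish the reverse one. Given $g \in E^*(A[X,X^{-1}], IA[X,X^{-1}])$, the first step is to invoke Theorem~\ref{th:E+-+} to write $g = \alpha\beta\gamma$ with $\alpha, \gamma \in E(A[X])$ and $\beta \in E(A[X^{-1}])$. Setting $k = A/I$ and denoting reductions with overbars, the hypothesis $\bar g = 1$ forces $\bar\alpha\bar\beta\bar\gamma = 1$ in $E(k[X,X^{-1}])$, and hence $\bar\beta = \bar\alpha^{-1}\bar\gamma^{-1}$ lies in both $E(k[X])$ and $E(k[X^{-1}])$. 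I would then check that $E(k[X]) \cap E(k[X^{-1}]) = E(k)$ inside $E(k[X,X^{-1}])$: the intersection is contained in $G(k[X]) \cap G(k[X^{-1}]) = G(k)$ (regular functions on $\PP^1_k$ are constants), and the evaluation $X \mapsto 0$ provides a retraction $E(k[X]) \to E(k)$ that is the identity on constant elements. This forces $\bar\beta \in E(k)$.

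The next step is to lift $\bar\beta$ to some $\ell \in E(A)$, which is possible because $A$ is local (so every $V_\alpha$ is free over $A$) and the elementary generators of $E(k)$ lift to those of $E(A)$. With this $\ell$ in hand, set
\[
\tilde\beta \;=\; \ell^{-1}\beta \;\in\; E(A[X^{-1}]), \qquad \tau \;=\; \alpha\ell\gamma \;\in\; E(A[X]).
\]
Their reductions are trivial ($\bar{\tilde\beta} = 1$ and $\bar\tau = \bar\alpha\bar\beta\bar\gamma = 1$), so $\tilde\beta \in E^*(A[X^{-1}], IA[X^{-1}])$ and $\tau \in E^*(A[X], IA[X])$. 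Substituting $\beta = \ell\tilde\beta$ and rearranging would yield the identity
\[
g \;=\; \tau \cdot \sigma, \qquad \sigma \;:=\; \gamma^{-1}\tilde\beta\gamma,
\]
and $\sigma$ itself lies in $E^*(A[X,X^{-1}], IA[X,X^{-1}])$ by normality of the congruence subgroup in $G(A[X,X^{-1}])$.

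What remains is to decompose the conjugate $\sigma$ as $\sigma_1\sigma_2$ with $\sigma_1 \in E^*(A[X], IA[X])$ and $\sigma_2 \in E^*(A[X^{-1}], IA[X^{-1}])$; the desired factorization of $g$ is then $(\tau\sigma_1)\sigma_2$. This final step is the main obstacle. Conjugation of $\tilde\beta \in E(A[X^{-1}])$ by $\gamma \in E(A[X])$ typically introduces positive powers of $X$ into the expansion via the generalized Chevalley commutator formula~\eqref{eq:Chev}, so $\sigma$ does not a priori lie in $E(A[X^{-1}])$ itself. The plan for this step is a Gauss-style decomposition of $\sigma$ adapted to the congruence subgroup, in the spirit of Lemma~\ref{lem:E*(A,I)}: because $A$ is local and $\sigma \equiv 1 \pmod{I}$, the ``diagonal'' components of $\sigma$ should be forced to be units of $A$, which should allow one to collect the non-negative-$X$-degree contributions of the Chevalley expansion of $\gamma^{-1}\tilde\beta\gamma$ into an element of $E^*(A[X], IA[X])$ and the non-positive-$X$-degree contributions into an element of $E^*(A[X^{-1}], IA[X^{-1}])$. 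For $G = \SL_2$ this reduces to a classical Gauss decomposition of a matrix $\sigma$ with $\sigma_{11} \in 1 + I$ a unit; the technical work lies in making the positive/non-positive-degree separation functorial in an arbitrary isotropic reductive group via the relative root subschemes and commutator formulas of~\S\ref{ssec:rel}.
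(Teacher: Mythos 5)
Your opening moves track the paper's own argument for Lemma~\ref{lem:M*}: decompose $g=\alpha\beta\gamma$ via Theorem~\ref{th:E+-+}, observe that $\bar\beta\in E(k[X])\cap E(k[X^{-1}])=E(k)$, lift to $\ell\in E(A)$, and reduce everything to showing that a conjugate $\gamma^{-1}\tilde\beta\gamma$ with $\tilde\beta\in M_-^*:=E^*(A[X^{-1}],IA[X^{-1}])$ and $\gamma\in E(A[X])$ lands in $M_+^*M_-^*$. That is exactly the paper's Lemma~\ref{lem:M_+^*} ($M_-^*E(A[X])\subseteq E(A[X])M_-^*$), and it is the entire difficulty of the corollary. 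Your plan for this step, however, would not work as described. A ``Gauss-style decomposition in the spirit of Lemma~\ref{lem:E*(A,I)}'' is unavailable here: that lemma uses that $A$ is local with maximal ideal $I$, whereas $A[X,X^{-1}]$ is not local and $IA[X,X^{-1}]$ is not a maximal ideal, so an element of $G(A[X,X^{-1}],IA[X,X^{-1}])$ need not lie in the big cell. Already for $G=\SL_2$ your claim that ``$\sigma_{11}\in 1+I$ is a unit'' fails: the entry lies in $1+IA[X,X^{-1}]$, and an element such as $1+aX$ with $a\in I$ nonzero and non-nilpotent is not invertible in $A[X,X^{-1}]$, so no Gauss decomposition of $\sigma$ over $A[X,X^{-1}]$ exists. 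More fundamentally, a clean separation into ``non-negative-degree'' and ``non-positive-degree'' elementary factors cannot hold for arbitrary elements of $E(A[X,X^{-1}])$ --- this is precisely why Theorem~\ref{th:E+-+} needs three factors rather than two --- and the reason two factors suffice on the congruence subgroup is exactly the content of the lemma you are trying to prove.

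The paper closes this gap by a genuinely different mechanism: an induction on the $X$-degree of the conjugating root elements $X_\alpha(X^ku)$, driven by the dilation automorphism $\sigma$ of Lemma~\ref{lem:sigma-def} and the invariance $\sigma(M_+^*M_-^*)=M_+^*M_-^*$ of Lemma~\ref{lem:sigma-M}, which in turn rests on the fine control of $\sigma^{\pm1}(E(A[X],XA[X]))$ modulo the highest-root factor $X_{\mp\ha}(X^{-1}V_{\mp\ha})$ established in Lemmas~\ref{lem:Abe-tauX}--\ref{lem:sigma(EX)}. None of this is replaceable by a degree-splitting argument. A secondary omission: the statement concerns any $G$ satisfying \ee\ over the local ring $A$, while the whole $\sigma$-machinery is set up for $G$ absolutely almost simple; the paper's proof of the corollary therefore first passes to the simply connected cover and handles the lift through the center using $\Cent(G^{\scl})(l[X,X^{-1}])=\Cent(G^{\scl})(l)$, a reduction your proposal does not address.
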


\begin{proof}[Proof of Theorem~\ref{th:E+-+}.]
Lemma~\ref{lem:EXX-1} below provides the claim of Theorem~\ref{th:E+-+} in case where $G$ is an absolutely almost simple reductive group.
To prove the general case, we can right away assume that $G$ is semisimple, since the elementary subgroup
is contained in the derived subgroup of $G$. Clearly, we can also assume that $G$ is simply connected.
Then the group $G$ is a direct product of Weil restrictions of absolutely almost simple groups defined over finite
\'etale extensions of $R$ by~\cite[Exp. XIV Prop. 5.10]{SGA}. This implies the claim.
\end{proof}

\begin{proof}[Proof of Corollary~\ref{cor:XX^-1}.]
Lemma~\ref{lem:M*} below provides the claim of Corollary~\ref{cor:XX^-1} for $G$ absolutely almost simple. In general,
as in the proof of Theorem~\ref{th:E+-+}, we can assume that $G$ is a semisimple reductive group, and the
claim holds for the simply connected covering $G^{\scl}$ of $G$. Any $g\in E^*(A[X,X^{-1}],I\cdot A[X,X^{-1}])$ can
be lifted to an element $\tilde g\in E^{\scl}(A[X,X^{-1}])$, where $E^{\scl}(-)$ denotes the elementary
subgroup of $G^{\scl}(-)$. Then
$$
\rho(\tilde g)\in \Cent(G^{\scl})(l[X,X^{-1}])\cap E^{\scl}(l[X,X^{-1}]),
$$
where $\rho:A\to A/I=l$
is the residue homomorphism. Since $\Cent(G^{\scl})$ is an \'etale twisted form of a direct product of groups of
the form $\mu_n$, $n\ge 2$, one has $\Cent(G^{\scl})(l[X,X^{-1}])=\Cent(G^{\scl})(l)$. Therefore,
$\rho(\tilde g)$ belongs to $E^{\scl}(l)$, and lifts to an element of $E^{\scl}(A)$. Hence we can
assume that
$$
\tilde g\in E^{\scl}(A){E^{\scl}}^*(A[X,X^{-1}],I\cdot A[X,X^{-1}]).
$$
Then by Lemma~\ref{lem:M*} we have
$$
\tilde g\in E^{\scl}(A){E^{\scl}}^*(A[X],I\cdot A[X]){E^{\scl}}^*(A[X^{-1}],I\cdot A[X^{-1}]),
$$
which implies that $g$ belongs to ${E}^*(A[X],I\cdot A[X])\cdot {E}^*(A[X^{-1}],I\cdot A[X^{-1}])$,
as required.
\end{proof}

The rest of this section is devoted to the proof of Theorem~\ref{th:E+-+} and Corollary~\ref{cor:XX^-1}
for an absolutely almost simple group $G$, that is, of Lemma~\ref{lem:EXX-1} and Lemma~\ref{lem:M*}, respectively.

Let us sketch the proof of Lemma~\ref{lem:EXX-1}.
Let $\Psi=\Phi_P$ be the system of relative roots of $G$ with respect to a minimal parabolic subgroup $P$.
Set
$$
Z=E(A[X])E(A[X^{-1}])E(A[X]).
$$ To show that $E(A[X,X^{-1}])=Z$, we need to prove that
\begin{equation}\label{eq:XZ-claim}
X_\alpha(V_\alpha\otimes_A A[X,X^{-1}])Z\subseteq Z\quad\mbox{for any relative root $\alpha\in\Psi$.}
\end{equation}
By Lemma~\ref{lem:sigma-def} there exists an automorphism $\sigma$ of the group
$G(A[X,X^{-1}])$ such that
$$
\sigma(X_\alpha(u))=X_\alpha(X^{m_1(\alpha)}u)\quad\mbox{for any}\quad u\in V_\alpha\otimes_A A[X,X^{-1}].
$$
Here $m_1(\alpha)$ denotes the coefficient
of a suitably chosen simple relative root $\alpha_1$ in the decomposition of $\alpha$ into a sum
of simple roots. The inclusion
\eqref{eq:XZ-claim} can be proved by applying a high power of $\sigma$ or $\sigma^{-1}$, once we show
that $\sigma^{\pm 1}(Z)\subseteq Z$.

We show the inclusion $\sigma^{\pm 1}(Z)\subseteq Z$ in Lemma~\ref{lem:sigma-Z}, after several preliminary
steps. We write
$$
E(A[X])=E(A)E(A[X],XA[X]),\quad E(A[X^{-1}])=E(A)E(A[X^{-1}],X^{-1}A[X^{-1}]),
$$
and compute the images of factors under $\sigma$. Note that by symmetry between $X$ and $X^{-1}$,
the group $\sigma\bigl(E(A[X^{-1}],X^{-1}A[X^{-1}])\bigr)$ is the same as $\sigma^{-1}\bigl(E(A[X],XA[X])\bigr)$ with $X$ substituted by $X^{-1}$.

Lemma~\ref{lem:sigma(EX)} shows that $\sigma^{\pm 1}(E(A[X],XA[X]))$ is not too far from $E(A[X],XA[X])$,
namely,
$$
\sigma^{\pm 1}\l(E(A[X],XA[X])\r)\subseteq E(A[X])X_{\mp\ha}(X^{-1}V_{\mp\ha}),
$$
where $\ha$ is the positive relative root of maximal height. This  statement is proved in
subsection~\ref{ssec:sigma}, Lemmas~\ref{lem:sigma(EX)-1}--\ref{lem:sigma(EX)}. The proof
relies on an almost case-by-case study of the action of $\sigma$ on the groups
$E_\alpha(A[X])$,
performed in subsection \S\ref{ssec:E_alpha}, Lemmas~\ref{lem:Abe-tauX}--\ref{lem:BC-tauX}. In fact, instead of $\sigma$ we consider
auxiliary automorphisms $\tau_\alpha$ of the groups $G_\alpha(A[X,X^{-1}])$, which satisfy
$$
\sigma|_{E_\alpha(A[X])}=(\tau_\alpha)^{m_1(\alpha)}.
$$
This allows to work in suitable rank 2 root subsystems of $\Psi$ instead of the whole $\Psi$. The main
technical tool here is the generalized Chevalley commutator formula.

The computation of  $\sigma(E(A))$ is much easier and follows from the Gauss
decomposition in $G(A)$
with respect to the maximal parabolic subgroup $P_1$ corresponding to the simple root $\alpha_1$.
After that, we show that the factors $\sigma\bigl(E(A[X],XA[X])\bigr)$, $\sigma(E(A))$, and
$\sigma\bigl(E(A[X^{-1}],X^{-1}A[X^{-1}])\bigr)$ in the product
representing $\sigma(Z)$ can be reordered to the effect that $\sigma(Z)\subseteq Z$. This last step
just uses the fact that $E(A)$ normalizes $E(A[X],XA[X])$
and $E(A[X^{-1}],X^{-1}A[X^{-1}])$, and, in some cases, the technical Lemma~\ref{lem:ha-comm}.

Lemmas~\ref{lem:sigma-M} and~\ref{lem:M_+^*} use the automorphism $\sigma$ to obtain some properties of
the group $E^*(A[X],I\cdot A[X])$ necessary to deduce Lemma~\ref{lem:M*} from Lemma~\ref{lem:EXX-1}.

\subsection{The setting.}

From now on, we fix the following notation. Let $A$ be a {\bf local} ring  with the maximal ideal $I$
and residue field $l=A/I$, and let $\rho:A\to l$ be the natural map.
Let $G$ be a absolutely almost simple group scheme over $A$, satisfying \ee.

 $P=P^+$ a minimal parabolic subgroup of $G$, $P^-$ an opposite subgroup, $L=P^+\cap P^-$ their common Levi subgroup,
$U^\pm$ their unipotent radicals. By~\cite[\S 7]{SGA}, there is a maximal split subtorus $S$ of $G$
such that $L=\Cent_G(S)$.

Let $\Phi$ be the absolute root system of $G$, and let $\Psi=\Phi_P$ be the root system with respect to $P$, $S$. We consider
relative root subschemes $X_\alpha(V_\alpha)$, $\alpha\in\Psi$, defined as in~\cite{PS}.
Let $\Psi'$ be the set of non-multipliable roots in $\Psi$, i.e. the set of such $\alpha\in\Psi$ that $2\alpha\not\in\Psi$.


Let $\Pi=\{\alpha_1,\ldots,\alpha_n\}$ be a system of simple roots of $\Psi$. We write
$\alpha=\sum\limits_{i=1}^n m_i(\alpha)\alpha_i$, where
$m_i(\alpha)\in \ZZ$, for any $\alpha\in\Psi$. We denote by $\ha$ the highest positive root of $\Psi$. We assume that the numbering of $\Pi$ is chosen so that
$\alpha_1$ is a terminal vertex on the Dynkin diagram of $\Psi$, and $m_1(\ha)=1$; or, if such a vertex does not
exist, $m_1(\ha)=2$ and $\alpha_1$ is the unique
root adjacent to $-\ha$ in the extended Dynkin diagram of $\Psi$. Note that in the latter case $\ha$ is the only
positive root with $m_1(\ha)=2$; the respective standard maximal parabolic subgroup is called extraspecial.
If $\Psi$ has no multipliable roots, $\alpha_1$ is a long root; if $\Psi=BC_n$, then $\alpha_1$ is a root of middle
length (hence, non-multipliable), and $\{\alpha_1,\ldots,\alpha_{n-1},2\alpha_n\}$ is a system of simple roots for $\Psi'$.

We denote by $P_1^\pm$ the opposite standard maximal parabolic subgroups of $G$ corresponding to $\alpha_1$,
by $L_1$ their common Levi subgroup, and by $U_1^\pm$ their unipotent radicals.
Note that since $G$ satisfies \ee~over $A$, for any $A$-algebra $A'$ we
have
$$
E_P(A')=E_{P_1}(A')=E(A').
$$

Consider $G$ as a group over the ring of Laurent polynomials $A[X,X^{-1}]$.
\begin{defn}
Let $\sigma$ be a group automorphism of $G(A[X,X^{-1}])$ satisfying\\
\indent$\bullet$ $\sigma|_{L_1}=\id$;\\
\indent$\bullet$ $\sigma(X_\alpha(u))=X_\alpha(X^{m_1(\alpha)}u)$ for any $\alpha\in\Psi$, $u\in V_\alpha$.

Such an automorphism exists by Lemma~\ref{lem:sigma-def}.
\end{defn}


As in~\cite{Abe}, we denote
$$
\begin{array}{l}
M_+^*=E^*(A[X],I\cdot A[X])=G(A[X],I\cdot A[X])\cap E(A[X]),\\
M_-^*=E^*(A[X^{-1}],I\cdot A[X^{-1}])=G(A[X^{-1}],I\cdot A[X^{-1}])\cap E(A[X^{-1}]),\\
M^*=E^*(A[X,X^{-1}],I\cdot A[X,X^{-1}])=G(A[X,X^{-1}],I\cdot A[X,X^{-1}])\cap E(A[X,X^{-1}]).
\end{array}
$$

Recall that by Lemma~\ref{lem:Abe-sect} we have $E^*(A[X],XA[X])=E(A[X],XA[X])$.
By Lemmas~\ref{lem:Abe-z(a,u)} and~\ref{lem:Abe-sect}, this group is generated
by its subgroups $E_\alpha(A[X],XA[X])$, $\alpha\in\Psi$. The same results
also hold for $X^{-1}$ instead of $X$. From now on, we will use these facts without  a reference.

We will also use the following abbreviation:
$$
EL_1(A)=L_1(A)\cap E(A).
$$



\subsection{The automorphisms $\tau_\alpha$ and their action on $E_\alpha(A[X],XA[X])$.}\label{ssec:E_alpha}

\begin{lem}\label{lem:G_alpha}
Let $\alpha\in\Psi$ be a relative root. The group scheme $G$ contains an
$A$-subgroup $G_\alpha'$, which is either absolutely almost simple, or a Weil restriction of an absolutely almost simple group over a
finite \'etale extension
of $A$, such that the intersections $P^\pm\cap G_\alpha'$ are two opposite minimal parabolic subgroups of
$G_\alpha'$, and $U_{(\pm \alpha)}=\prod_{i\ge 1}X_{i\alpha}$ are the unipotent radicals of these subgroups.

If $\Psi=G_2,F_4,E_8$, and $\alpha$ is a long root, then there exists a root subsystem $\Theta_\alpha$
of $\Psi$ of type $A_2$ containing $\alpha$, and a split absolutely almost simple algebraic subgroup $G_{\Theta_\alpha}$ of $G$
over $A$ of type $A_2$, such that $X_\beta$, $\beta\in\Theta_\alpha$,
are root subgroups of $G_{\Theta_\alpha}$. In this case $G_\alpha'$ is the split absolutely almost simple subgroup
of $G_{\Theta_\alpha}$ of type $A_1$, corresponding to the root subsystem $\{\pm\alpha\}$.
\end{lem}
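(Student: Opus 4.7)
\smallskip

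The plan is to construct $G'_\alpha$ as a semisimple $A$-subgroup associated to a codimension-one subtorus of the maximal split torus $S\subseteq L$, and then to verify the claimed properties using the structure theory of SGA3 together with the calculus of relative root subschemes from~\cite{PS,LS}. Concretely, I would let $H_\alpha\subseteq S$ be the identity component of the kernel of $\alpha|_S$. Its centralizer $\Cent_G(H_\alpha)$ is a closed reductive $A$-subgroup of $G$ (see~\cite[Exp.~XIX, Cor.~2.8 and Exp.~XXVI, Prop.~6.1]{SGA}) containing $L$ and all $X_{k\alpha}$ with $k\neq 0$; relative to $S$ its root system is exactly $\ZZ\alpha\cap\Psi$. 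I would then define $G'_\alpha$ to be the derived subgroup $[\Cent_G(H_\alpha),\Cent_G(H_\alpha)]$, which is semisimple of relative rank one.

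To show that $G'_\alpha$ is either absolutely almost simple or a Weil restriction thereof along a finite \'etale extension of $A$, I would pass to the geometric fibres and analyse the preimage $\Phi_\alpha=\pi^{-1}(\ZZ\alpha)\cap\Phi$ in the absolute root system. Since $\alpha$ is a single relative root, its $\pi$-fibre constitutes a single orbit of the $*$-action, and so the $*$-action (equivalently the Galois action on the Dynkin diagram in the inner case, and more generally on the $\ast$-Dynkin diagram as in \S\ref{ssec:rel}) permutes the irreducible components of $\Phi_\alpha$ transitively. If this action fixes each component, $G'_\alpha$ is absolutely almost simple; if not, it is a Weil restriction along the fixed \'etale cover of $\Spec A$ of degree equal to the number of components. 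By the Gauss decomposition in isotropic reductive groups over a local ring~\cite[Exp.~XXVI, Th.~5.1]{SGA}, the products $U_{(\pm\alpha)}\cdot(L\cap G'_\alpha)$ are two opposite minimal parabolic subgroups of $G'_\alpha$ with common Levi $L\cap G'_\alpha$, and these clearly coincide with $P^\pm\cap G'_\alpha$; their unipotent radicals are $U_{(\pm\alpha)}$ by construction.

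For the last assertion, I would argue separately for each of $G_2,F_4,E_8$. Since no non-trivial Galois/$*$-action on these Dynkin diagrams exists, an isotropic group of relative type $G_2,F_4$ or $E_8$ over a local ring is automatically split, so we may identify $G$ with the corresponding Chevalley group and $X_\beta$ with the usual one-dimensional root subgroups for the long roots $\beta$. In $G_2$ the long roots themselves form an $A_2$-subsystem $\Theta_\alpha$; in $F_4$ the long roots form a $D_4$-subsystem which contains $\alpha$ in an $A_2$-subsystem $\Theta_\alpha$ of long roots; in $E_8$ (which is simply laced, so all roots are ``long'') any two non-orthogonal roots generate such a $\Theta_\alpha$. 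In all three cases the Chevalley structure constants within $\Theta_\alpha$ are $\pm 1$, so the generalised Chevalley commutator formula~\eqref{eq:Chev} shows that $\langle X_\beta(A):\beta\in\Theta_\alpha\rangle$ together with the corresponding one-dimensional torus generates a split absolutely almost simple $A$-subgroup $G_{\Theta_\alpha}$ of type $A_2$, inside which the subgroup generated by $X_{\pm\alpha}$ is visibly a split $A_1$ — and this is the desired $G'_\alpha$.

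The main obstacle is the second step: carefully tracking the $*$-action on $\Phi_\alpha$ to conclude that $G'_\alpha$ is geometrically a product of isomorphic absolutely simple factors permuted transitively, so that \'etale descent yields the Weil restriction structure. Once this is set up correctly, the parabolic/Levi structure of $P^\pm\cap G'_\alpha$ follows formally from SGA3, and the split $A_2$-subgroup in the exceptional cases is a direct consequence of the fact that $G_2,F_4,E_8$ admit no non-trivial forms at the level of long-root data.
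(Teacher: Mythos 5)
There are two genuine gaps. First, your $G'_\alpha$ is too big. The centralizer $\Cent_G(H_\alpha)$ of the subtorus $H_\alpha=(\ker\alpha|_S)^\circ$ has absolute root system $\pi^{-1}(\ZZ\alpha)\cap\Phi$ \emph{including} the fibre $\pi^{-1}(0)\cap\Phi$, i.e.\ all the roots of the anisotropic part of the Levi $L$. Its derived subgroup therefore contains, as direct factors, almost simple groups lying entirely inside $L$, and these are in general neither isomorphic to, nor permuted by the $*$-action with, the factors that actually meet $U_{(\pm\alpha)}$. The paper's own example (type ${}^1D_{12}$, $P$ of type $\{4,8\}$, $\Psi=BC_2$) gives a derived subgroup of type $D_4+A_3+D_4$ for $\alpha=\ha$: this is an inner form, nothing permutes the $A_3$ with the $D_4$'s, and the whole group is not a Weil restriction of an absolutely almost simple group. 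The correct definition (which is what the paper uses) is the direct factor of $[\,\Cent_G(H_\alpha),\Cent_G(H_\alpha)\,]$ spanned by exactly those connected components of its Dynkin diagram that contain roots projecting onto $\alpha$. Your supporting claim that $\pi^{-1}(\alpha)$ is a single $*$-orbit is also false (already for split $\SL_3$ with $P$ the stabilizer of a line, $\pi^{-1}(\alpha)=\{\alpha_1,\alpha_1+\alpha_2\}$ with trivial $*$-action), so transitivity of the $*$-action on the components cannot be derived this way; it is only the components meeting $\pi^{-1}(\ZZ\alpha\setminus\{0\})$ that one controls. Finally, to speak of ``direct factors'' at all one should first reduce to $G$ simply connected (and, when $\alpha$ can be taken simple, realize $G_\alpha$ as a Levi of a parabolic, whose derived subgroup is again simply connected); the paper does this reduction and you omit it.

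Second, the argument for the exceptional cases rests on the assertion that an isotropic group with relative root system $G_2$, $F_4$ or $E_8$ over a local ring is automatically split. This is not true: $\Psi$ here is the \emph{relative} system, and the quasi-split trialitarian forms ${}^3D_{4,2}^2$, ${}^6D_{4,2}^2$ have $\Psi=G_2$, while the indices $E_{7,4}^{9}$, $E_{8,4}^{28}$, ${}^2E_{6,4}^{2}$ have $\Psi=F_4$; none of these is split. (Even for absolute type $G_2$, $F_4$, $E_8$, ``inner form'' does not imply ``split''.) So you cannot identify $X_\beta$ with one-dimensional Chevalley root subgroups by fiat. What the paper actually uses is the classification of Tits indices~\cite{PS-tind}: for a \emph{long} root $\alpha$ in these $\Psi$ the module $V_\alpha$ is one-dimensional (equivalently, the unique simple absolute root over $\alpha$ is not adjacent to the anisotropic kernel), and only then do $X_{\pm\beta}$, $\beta\in\Theta_\alpha$, become honest root subgroups of a split $A_2$-subgroup constructed via~\cite[Exp.~XXVI, Prop.~6.1]{SGA}. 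Your construction of $G_{\Theta_\alpha}$ from the commutator formula would go through once this one-dimensionality is established, but it must be established, not assumed via a false splitness claim.
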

\begin{proof}
We will construct the desired subgroups in the case where $G$ is a simply connected absolutely almost simple group.
If $G$ is not simply connected, they are constructed as images of the respective subgroups
in the simply connected covering $G^{\scl}$ of $G$ under the natural homomorphism $G^{\scl}\to G$.

Recall that we have defined in section~\ref{ssec:rel} the reductive $A$-subgroup $G_\alpha$ of $G$. This subgroup
is determined by the fact that its Lie algebra has the form
$$
\Lie(G_\alpha)=\Lie(L)\oplus\bigoplus_{i\in\ZZ}\Lie(G)_{i\alpha},
$$
where $\Lie(G)_{i\alpha}$ is the submodule of $\Lie(G)$ corresponding to the character $i\alpha$ of $S$;
see~\cite[Prop. 6.1]{SGA}.
The derived subgroup $[G_\alpha,G_\alpha]$ of $G_\alpha$ is a semisimple group.

We can choose a basis $\Pi$ of simple roots in $\Psi$ so that $\alpha$ is either a simple root $\alpha_i$, or equals $-\ha$,
if $\Psi=BC_n$. In the first case one readily sees that $G_\alpha$ is the Levi subgroup
of the standard parabolic subgroup of $G$ corresponding to the set of simple roots
$\Pi\setminus\{\alpha\}$.  It is well-known that the
derived subgroup of a Levi subgroup of
a parabolic subgroup of $G$  is simply connected, if the ambient group $G$ is. Let $G_\alpha'$ be the semisimple
direct factor in
$[G_\alpha,G_\alpha]$, that contains $U_{(\pm\alpha)}$ and does not have proper normal semisimple subgroups.
In other words,  the Dynkin diagram of $G_\alpha'$ is the union of connected subdiagrams in the Dynkin diagram of $G_\alpha$,
containing roots projected onto the simple relative root $\alpha$. If there is only one such subdiagram,
$G_\alpha'$ is absolutely almost simple.
If $G$ is of outer type, the Dynkin diagram of $G_\alpha'$ may consist of several connected components permuted by
the $*$-action. Then $G_\alpha'$ is a Weil restriction of an absolutely almost simple group.

Moreover, if $\Psi=G_2,F_4,E_8$, the classification of Tits
indices~\cite{PS-tind} shows that for any long root $\alpha\in\Psi$ the module $V_\alpha$ is 1-dimensional, and,
clearly, there is a root subsystem $\Theta_\alpha\subseteq\Psi$ of type $A_2$ containing $\alpha$. Hence the
relative root subgroups $X_{\pm\alpha}$
are the usual root subgroups of the split group $G$ after a splitting base extension $R'/R$.
Acting as above, we construct, using~\cite[Exp. XXVI, Prop. 6.1]{SGA}, first a reductive subgroup of
$G$ of type $\Theta_\alpha$, and then consider its semisimple derived subgroup and an absolutely almost simple subgroup
$G_{\Theta_\alpha}$
of type $\Theta_\alpha$ therein. Note that the module $V_\alpha$ being 1-dimensional is equivalent to the fact
that the only simple root of the root system $\Phi$ projected onto $\alpha$ is not adjacent on the Dynkin diagram of $G$ to
any simple root of the Levi subgroup $L$.
Then the subgroup $G_\alpha'$ constructed above has type $A_1$ and root system $\{\pm\alpha\}$. Clearly,
it is contained in $G_{\Theta_\alpha}$.

If $\Psi=BC_n$, we can visualize the type of $G_\alpha=G_\ha$ if we draw the extended absolute Dynkin diagram of $G$, and
throw away all vertices corresponding to simple roots not belonging to $L$.
In this case $G_\ha$ may not be a Levi subgroup of any parabolic subgroup. However, one readily sees that
in the groups of classical type, the connected component of the Dynkin diagram $D$
of $G_\ha$, containing $-\ha$, is the type of an absolutely almost simple factor of a Levi subgroup of a parabolic subgroup of $G$ after
a splitting base extension; hence it is the type of a simply connected absolutely almost simple group.
See the list of possible Tits indices in~\cite{PS-tind}. In the exceptional groups,
this connected component always consists of one vertex, hence $X_{\pm\ha}$ are 1-dimensional, and hence contained
in a group of type $\SL_2$ after a splitting base extension. We choose the respective subgroup to be
$G_\alpha'$.
\end{proof}

{\bf Example.} Assume that $G$ over $A$ is of type ${}^1D_{12}$, and the parabolic subgroup $P$ is of type $\{4,8\}$,
which is represented by circled vertices on the Dynkin diagram below; we use standard Bourbaki numbering.
Then $\Psi=BC_2$, and $G_{\ha}$ is a reductive group
of type $D_4+A_3+D_4$. The group $G_\ha'$ is its absolutely almost simple subgroup of type $D_4$,
containing $X_{\pm\ha}$. After a splitting base extension, $G_\ha'$ is contained in the Levi subgroup of the parabolic subgroup
of $G$ of type $\{4,8\}$, but different from $P$; it is standard with respect to the basis of simple roots
obtained by adding $\ha$ to the original one, and removing, say, the 12th root.
\begin{equation*}
\xymatrix@R=5pt@C=20pt{
\ar@/^/@{{}{-}{}}[rrd]^{\displaystyle G_{\ha}'}&&&&&&&&&&\\
*+[o][F--]{\bullet}\ar@{--}[rd]&\hspace{-20pt}\ha&&&&&&&&&\bullet\hbox to 0pt{\quad 12}\\
&\bullet\ar@{-}[r]&\bullet\ar@{-}[r]&*+[o][F]{\bullet}\ar@{-}[r]&\bullet\ar@{-}[r]&\bullet\ar@{-}[r]&
\bullet\ar@{-}[r]&*+[o][F]{\bullet}\ar@{-}[r]&\bullet
\ar@{-}[r]&\bullet\ar@{-}[ru]\ar@{-}[rd]\\
\bullet\ar@{-}[ru]&\hspace{-45pt}\hbox to 0pt{1}\hspace{45pt}2&3&4,\ \alpha_1&5&6&7&8,\ \alpha_2&9&10&\bullet\hbox to 0pt{\quad 11}\\
&&&&&&&&&&
}
\end{equation*}

Return to our general setting. Let $\alpha\in\Psi$ be a relative root. Lemma~\ref{lem:G_alpha} tells, in particular, that $P^\pm\cap G'_\alpha$ are the minimal
parabolic subgroups of $G'_\alpha$. Clearly, the respective relative root system of $G'_\alpha$
identifies with the subset $\{\pm\alpha,\pm 2\alpha,\ldots,\pm m_\alpha\alpha\}$ of $\Psi$.

\begin{defn}
Denote by $\tau_\alpha$ the group automorphism of
$G'_\alpha(A[X,X^{-1}])$, such that $\tau_\alpha|_{L\cap G'_\alpha}=\id_{L\cap G'_\alpha}$, and
$$
\tau_\alpha(X_{k\alpha}(u))=X_{k\alpha}(X^ku)
$$
for any $k\ge 1$ and $u\in V_{k\alpha}\otimes_A A[X,X^{-1}]$.
\end{defn}
The existence of $\tau_\alpha$ is guaranteed by Lemma~\ref{lem:sigma-def}.
Note that if $m_1(\alpha)=k$, then $\sigma|_{E_\alpha(A[X,X^{-1}])}=(\tau_\alpha)^k$.

\begin{lem}\label{lem:Abe-tauX}
Let $\alpha\in\Psi'$ be a non-multipliable relative root. Then we have
$$
\tau_\alpha^{\pm 1}\l(E_\alpha(A[X],XA[X])\r)\subseteq G'_\alpha(A[X])\cap E(A[X]).
$$
\end{lem}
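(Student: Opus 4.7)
The plan is: via Lemma~\ref{lem:Abe-sect}(i), reduce to elements of the form $a z a^{-1}$ with $a\in E_\alpha(A)$ and $z\in E_\alpha(XA[X])$; then analyze $\tau_\alpha^{\pm 1}$ of such elements, reducing further to a single delicate commutator which is dispatched using the generalized Chevalley commutator formula in a rank-$2$ subsystem of $\Psi$ containing~$\alpha$.

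First I apply Lemma~\ref{lem:Abe-sect}(i) to $G'_\alpha$ with respect to the retraction $X\mapsto 0\colon A[X]\twoheadrightarrow A$, yielding
$$
E_\alpha(A[X],XA[X])=E_\alpha(XA[X])^{E_\alpha(A)}.
$$
Since $\alpha$ is non-multipliable, $2\alpha\notin\Psi$, so the polynomial maps $q_{\pm\alpha}^{i}$ in~\eqref{eq:sum} vanish for $i\ge 2$ and $U_{(\pm\alpha)}$ is abelian: $X_{\pm\alpha}(u)X_{\pm\alpha}(v)=X_{\pm\alpha}(u+v)$. Direct computation on the generators of $E_\alpha(XA[X])$ gives
$$
\tau_\alpha(X_\alpha(Xv))=X_\alpha(X^2 v),\qquad \tau_\alpha(X_{-\alpha}(Xv))=X_{-\alpha}(v),
$$
and symmetrically for $\tau_\alpha^{-1}$; both images lie in $E_\alpha(A[X])\subseteq E(A[X])\cap G'_\alpha(A[X])$, so $\tau_\alpha^{\pm 1}(z)$ is already in the target subset.

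The whole difficulty therefore concentrates in the conjugation by $\tau_\alpha(a)$ for $a\in E_\alpha(A)$. Writing $a$ as a word in the generators $X_{\pm\alpha}(V_{\pm\alpha})$, I argue by induction on word length. The letters $X_\alpha(u)$ contribute $\tau_\alpha(X_\alpha(u))=X_\alpha(Xu)\in E_\alpha(A[X])$, conjugation by which preserves the target tautologically; and, by the abelianness above, letters $X_{-\alpha}(w)$ give $\tau_\alpha(X_{-\alpha}(w))=X_{-\alpha}(X^{-1}w)$ which commutes with the $X_{-\alpha}$-pieces of $\tau_\alpha(z)$. What remains is the essential case $a=X_{-\alpha}(w)$ acting on a factor $X_\alpha(X^2v)$, producing
$$
N:=X_{-\alpha}(X^{-1}w)\,X_\alpha(X^2v)\,X_{-\alpha}(-X^{-1}w)=\bigl[X_{-\alpha}(X^{-1}w),X_\alpha(X^2v)\bigr]\cdot X_\alpha(X^2v);
$$
since the rightmost factor already lies in $E_\alpha(A[X])$, the task collapses to showing that $C:=\bigl[X_{-\alpha}(X^{-1}w),X_\alpha(X^2v)\bigr]$ lies in $G'_\alpha(A[X])\cap E(A[X])$.

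The hard step will be this $C$, since the commutator formula~\eqref{eq:Chev} is inapplicable to the opposite pair $(-\alpha,\alpha)$. To circumvent this, I will exploit the rank-$\ge 2$ hypothesis on the component of $\Psi$ containing $\alpha$: pick a non-collinear relative root $\beta\in\Psi$ with $\gamma:=-\alpha-\beta\in\Psi$ and use the surjectivity statement of the Lemma from~\S\ref{ssec:rel} to factor $X_{-\alpha}(X^{-1}w)$ as a suitable product of commutators $[X_\beta(X^{-1}s),X_\gamma(t)]$, placing all the $X^{-1}$ into the $X_\beta$-entry. The generalized Chevalley commutator formula~\eqref{eq:Chev} then applies to $[X_\beta(X^{-1}s),X_\alpha(X^2v)]$ and $[X_\gamma(t),X_\alpha(X^2v)]$ separately; the arguments of the resulting root elements $X_{i\beta+j\alpha}$ and $X_{i\gamma+j\alpha}$ acquire enough powers of $X$ for the $X^{-1}$'s to be absorbed into $A[X]$-valued data. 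The main obstacle is the bookkeeping: ensuring that the non-$\alpha$-collinear factors produced by this rewriting pair off and recombine into an element of $G'_\alpha(A[X])$ while the total product remains manifestly a word in $A[X]$-valued relative root elements of~$G$, hence in~$E(A[X])$. The statement for $\tau_\alpha^{-1}$ follows by the symmetry $\alpha\leftrightarrow-\alpha$.
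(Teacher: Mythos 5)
Your reduction matches the paper's strategy in outline: both proofs pass to the generators $aza^{-1}$ with $a\in E_\alpha(A)$, $z\in E_\alpha(XA[X])$, observe that $\tau_\alpha^{\pm1}(z)$ is harmless, and isolate a single opposite-pair conjugation ${}^{X_{-\alpha}(X^{-1}w)}X_\alpha(X^2v)$ as the crux. But there are two genuine gaps. First, your ``induction on word length'' for $a$ does not close: after one conjugation by a letter $\tau_\alpha(X_{-\alpha}(w))=X_{-\alpha}(X^{-1}w)$, the result is merely some element of $G'_\alpha(A[X])\cap E(A[X])$, and the inductive hypothesis ``conjugation by $X_{-\alpha}(X^{-1}w')$ preserves $G'_\alpha(A[X])\cap E(A[X])$'' is false for arbitrary elements of that group (e.g.\ ${}^{X_{-\alpha}(X^{-1}w')}X_\alpha(u)$ with $u\in V_\alpha$ constant already escapes $G(A[X])$). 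The paper avoids this by invoking the Gauss decomposition $a=lX_\alpha(a_1)X_{-\alpha}(b)X_\alpha(a_2)$ in $G'_\alpha(A)$, which produces a normal form containing exactly one negative letter, so the hard conjugation is applied exactly once and directly to the root element $X_\alpha(X^2f)$. You need this (or an equivalent normal form), not an unstructured word.

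Second, the crux itself is only sketched, and the sketch points in a direction that does not obviously terminate. You propose to decompose the \emph{conjugating} element $X_{-\alpha}(X^{-1}w)$ as a product of commutators $[X_\beta(X^{-1}s),X_\gamma(t)]$; but then computing ${}^{X_{-\alpha}(X^{-1}w)}X_\alpha(X^2v)$ letter by letter produces intermediate factors $X_{\pm\beta}(\cdot)$, $X_{\pm\gamma}(\cdot)$ which must in turn be conjugated by $X_\beta(X^{-1}s)^{\pm1}$ and $X_\gamma(t)^{\pm1}$ --- and among these one again encounters opposite-pair conjugations (e.g.\ ${}^{X_\beta(X^{-1}s)}X_{-\beta}(X^2c)$) of exactly the original difficulty. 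The paper instead decomposes the \emph{conjugated} element $X_\alpha(X^2f)$ via~\cite[Lemma 2]{LS} into commutators of root elements non-collinear with $\alpha$, each carrying a single power of $X$, so that conjugation by $X_{-\alpha}(bX^{-1})$ distributes over the factors and the generalized Chevalley formula applies to every resulting pair. Moreover, this step is genuinely case-dependent: for $A_2$ it is a one-line computation, but for $\alpha$ long or short in a $B_2$-subsystem and for $\alpha$ short in $G_2$ one must use part (2) of~\cite[Lemma 2]{LS} with its correction terms, and explicitly cancel stray factors such as $X_{-2\alpha-\beta}(c_2X^{-1})$ and $X_{-3\alpha-\beta}(X^{-1}c_4)$ against their inverses after checking they commute with everything in between. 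None of this bookkeeping is addressed in your proposal, and it is where the actual content of the lemma lies.
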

\begin{proof}
We consider the automorphism $\tau_\alpha$, the case of $\tau_\alpha^{-1}$ is symmetric.
By Lemma~\ref{lem:Abe-sect}, any $x\in E_\alpha(A[X],XA[X])$ is a product of $Z_{\pm\alpha}(a,Xf)$, where $a\in E_\alpha(A)$ and $f\in V_{\pm\alpha}\otimes_A A[X]$.
By Lemma~\ref{lem:n_alpha} there is $n_\alpha\in E_\alpha(A)$ such that $n_\alpha U_{(\alpha)}n_\alpha^{-1}\subseteq U_{(-\alpha)}$
and vice versa.
Hence
$$
Z_{-\alpha}(a,Xf)=an_\alpha^{-1}(n_\alpha X_{-\alpha}(Xf)n_\alpha^{-1})n_\alpha a^{-1}=an_\alpha^{-1}X_{\alpha}(Xf')n_\alpha
a^{-1}=
Z_\alpha(an_\alpha^{-1},Xf'),
$$
for some $f'\in V_\alpha\otimes_A A[X]$.
Therefore, we only need to check that $\tau_\alpha (Z_{\alpha}(a,Xf))$ belongs to $E(A[X])$ for any
$a\in E_\alpha(A)$, $f\in V_\alpha\otimes_A A[X]$.

By Gauss decomposition in $G'_\alpha(A)$~\cite[Exp. XXVI, Th\'eor\`eme 5.1]{SGA} we have
$$
a=lX_\alpha(a_1)X_{-\alpha}(b)X_\alpha(a_2),
$$
$a_1,a_2,b\in A$, $l\in L_\alpha(A)$. Then
$\tau_\alpha(a)=lX_\alpha(a_1X)X_{-\alpha}(bX^{-1})X_\alpha(a_2X)$. Then
$$
\tau_\alpha\l(Z_{\alpha}(a,Xf)\r)=\tau_\alpha\l({}^aX_\alpha(Xf)\r)={}^{\tau_\alpha(a)}X_\alpha(X^2f)
$$
belongs to $E(A[X])$ if and only if
$$
X_{-\alpha}(bX^{-1})X_\alpha(a_2X)X_\alpha(X^2f)\l(X_{-\alpha}(bX^{-1})X_\alpha(a_2X)\r)^{-1}=
{}^{X_{-\alpha}(bX^{-1})}X_\alpha(X^2f)
$$
does. In the rest of the proof, we show that
\begin{equation}\label{eq:tauX-main}
{}^{X_{-\alpha}(bX^{-1})}X_\alpha(X^2f)\in E(A[X]).
\end{equation}
Note that $\alpha$ belongs to a root subsystem of $\Psi$ of type $A_2$, $B_2$, or is a short root in $G_2$.

{\bf Case 1.} Assume that $\alpha$ belongs to a root
subsystem of type $A_2$. Then
$$
X_{\alpha}(bX^2)=[X_\beta(uX),X_\gamma(vX)],
$$
where $u\in V_\beta$,
$v\in V_\gamma$, $\beta+\gamma=\alpha$, $\beta,\gamma$ non-collinear to $\alpha$, by~\cite[Lemma 2]{LS}. Then by the generalized
Chevalley commutator formula,
$$
\begin{array}{rcl}
{}^{X_{-\alpha}(bX^{-1})}\l(X_\beta(uX)^{\pm 1}\r)&=&[X_{-\alpha}(bX^{-1}),X_\beta(\pm uX)]X_\beta(\pm uX)\\
&=&X_{-\gamma}\bigl(N_{-\alpha,\beta,1,1}(bX^{-1},\pm uX)\bigr)X_\beta(\pm uX)\\
&=&X_{-\gamma}\bigl(N_{-\alpha,\beta,1,1}(b,\pm u)\bigr)X_\beta(\pm uX)
\end{array}
$$
belongs to $E(A[X])$. Similarly, ${}^{X_{-\alpha}(bX^{-1})}\l(X_\gamma(vX)^{\pm 1}\r)$ belongs
to $E(A[X])$. Therefore, ${}^{X_{-\alpha}(bX^{-1})}X_\alpha(X^2f)$ belongs to $E(A[X])$.

{\bf Case 2.} Assume that $\alpha$ is a long root in a subsystem of type $B_2$. Let $\beta$ be a short root such that $\alpha,\beta$ is a system of simple roots
for $B_2$. By~\cite[Lemma 2 (2)]{LS} there are $u\in V_{\alpha+\beta}\otimes_A A[X]$, $v\in V_{-\beta}\otimes_A A[X]$,
and $c_i\in V_{\alpha+2\beta}\otimes_A A[X]$, $d_i\in V_{-\beta}\otimes_A A[X]$, $1\le i\le k$, such that
$$
f=N_{\alpha+\beta,-\beta,1,1}(u,v)+\sum\limits_{i=1}^kN_{\alpha+2\beta,-\beta,1,2}(c_i,d_i).
$$
By the generalized Chevalley commutator formula, this means that $X_\alpha(X^2f)$ equals
$$
\l[X_{\alpha+\beta}(Xu),X_{-\beta}(Xv)\r]\prod\limits_{i=1}^k\Bigl([X_{\alpha+2\beta}(c_i),X_{-\beta}(Xd_i)]
X_{\alpha+\beta}\bigl(-XN_{\alpha+2\beta,-\beta,1,1}(c_i,d_i)\bigr)\Bigr).
$$
We conjugate this expression by $X_{-\alpha}(bX^{-1})$.
Since a long root in $B_2$
cannot be added to another root more than once, by the generalized Chevalley commutator formula,
the $X_{-\alpha}(bX^{-1})$-conjugates of the factors $X_{\alpha+\beta}(Xu)$, $X_{-\beta}(Xv)$, $X_{-\beta}(Xd_i)$,
and $X_{\alpha+\beta}\bigl(-XN_{\alpha+2\beta,-\beta,1,1}(c_i,d_i)\bigr)$ belong to $E(A[X])$.
Since $(-\alpha)+(\alpha+2\beta)$ is not a root, the $X_{-\alpha}(bX^{-1})$-conjugate of $X_{\alpha+2\beta}(c_i)$
equals $X_{\alpha+2\beta}(c_i)$, and hence also belongs to $E(A[X])$. Consequently,
we again have ${}^{X_{-\alpha}(bX^{-1})}X_\alpha(X^2f)\in E(A[X])$.

{\bf Case 3.} Assume that $\alpha$ is a short root in a subsystem of type $B_2$. Let $\beta$ be a long root in this $B_2$ such that
$\alpha,\beta$ form a system of simple roots. By~\cite[Lemma 2 (1b)]{LS}, since $(-\beta)-(\alpha+\beta)$ is not a root,
we can write
$$
X_{\alpha}(X^2f)=[X_{-\beta}(uX),X_{\alpha+\beta}(vX)]X_{2\alpha+\beta}(wX^3),
$$
 for some $u\in V_{-\beta}$,
$v\in V_{\alpha+\beta}$, $w\in V_{2\alpha+\beta}$. By the generalized Chevalley commutator formula,
${}^{X_{-\alpha}(bX^{-1})}X_{2\alpha+\beta}(wX^3)\in E(A[X])$. On the other hand,
\begin{equation}\label{eq:tauX-B2}
\begin{array}{l}
{}^{X_{-\alpha}(bX^{-1})}[X_{-\beta}(uX),X_{\alpha+\beta}(vX)]=
\bigl[{}^{X_{-\alpha}(bX^{-1})}X_{-\beta}(uX),\,{}^{X_{-\alpha}(bX^{-1})}X_{\alpha+\beta}(vX)\bigr]\\
=\bigl[X_{-\alpha-\beta}(c_1)X_{-2\alpha-\beta}(c_2X^{-1})X_{-\beta}(uX),\,
X_\beta(c_3)X_{\alpha+\beta}(vX)\bigr],
\end{array}
\end{equation}
for some $c_1\in V_{-\alpha-\beta}$, $c_2\in V_{-2\alpha-\beta}$, $c_3\in V_\beta$. Note that $X_{-2\alpha-\beta}(c_2X^{-1})$
commutes with all other root factors involved in the last expression of~\eqref{eq:tauX-B2}, except for $X_{\alpha+\beta}(vX)$, and the commutator
with the latter is equal to
$$
[X_{-2\alpha-\beta}(c_2X^{-1}),X_{\alpha+\beta}(vX)]=X_{-\alpha}(c_4)X_\beta(c_5X),
$$
for some $c_4\in V_{-\alpha}$, $c_5\in V_\beta$. Thus, we can safely cancel out the factor
$X_{-2\alpha-\beta}(c_2X^{-1})$ of~\eqref{eq:tauX-B2}, which is the only one involving $X^{-1}$, with its inverse. Therefore,
$$
{}^{X_{-\alpha}(bX^{-1})}X_{\alpha}(X^2f)={}^{X_{-\alpha}(bX^{-1})}[X_{-\beta}(uX),X_{\alpha+\beta}(vX)]
{}^{X_{-\alpha}(bX^{-1})}X_{2\alpha+\beta}(wX^3).
$$
belongs to $E(A[X])$.

{\bf Case 4.} Assume that $\alpha$ is a short root in a subsystem of type $G_2$. Let $\beta$ denote a long root in this $G_2$ such that
$\alpha,\beta$ form a system of simple roots. Since $(\alpha+\beta)-(-\beta)$ is not a root,
by Lemma~\cite[Lemma 2 (1b)]{LS} and the generalized Chevalley commutator formula, we can write
$$
X_\alpha(X^2f)=[X_{\alpha+\beta}(u),X_{-\beta}(X^2v)]X_{2\alpha+\beta}(X^2w_1)X_{3\alpha+2\beta}(X^2w_2)
X_{3\alpha+\beta}(X^4w_3),
$$
for some $u\in V_{\alpha+\beta}\otimes_A A[X]$, $v\in V_{-\beta}\otimes_A A[X]$, etc. One readily sees that
by the generalized Chevalley commutator formula ${}^{X_{-\alpha}(X^{-1}b)}X_{3\alpha+2\beta}(X^2w_2)
=X_{3\alpha+2\beta}(X^2w_2)$, as well as  ${}^{X_{-\alpha}(X^{-1}b)}X_{2\alpha+\beta}(X^2w_1)$ and
${}^{X_{-\alpha}(X^{-1}b)}X_{3\alpha+\beta}(X^4w_3)$, all belong to $E(A[X])$. On the other hand, we have
\begin{multline}\label{eq:G2}
{}^{X_{-\alpha}(X^{-1}b)}\l[X_{\alpha+\beta}(u),X_{-\beta}(X^2v)\r]=\\
=\bigl[X_{\alpha+\beta}(u)X_\beta(X^{-1}c_1),X_{-\beta}(X^2v)X_{-\alpha-\beta}(Xc_2)X_{-2\alpha-\beta}(c_3)\cdot\\
\cdot X_{-3\alpha-\beta}(X^{-1}c_4)X_{-3\alpha-2\beta}(Xc_5)\bigr],
\end{multline}
where $c_1\in V_\beta\otimes_A A[X]$ etc. Note that $X_{-3\alpha-\beta}(X^{-1}c_4)$ commutes with
 $X_{\alpha+\beta}(- u)$ and $X_{\beta}(-X^{-1}c_1)$, since the sums of respective roots are not roots;
hence we can cancel out the factor $X_{-3\alpha-\beta}(X^{-1}c_4)$ with its inverse in~\eqref{eq:G2} without
modifying anything else. Then we can also eliminate $X_\beta(X^{-1}c_1)$. Indeed, by the $A_1$ case considered above,
we have ${}^{X_\beta(X^{-1}c_1)}X_{-\beta}(X^2v)\in E(A[X])$; and by the generalized Chevalley commutator formula,
$X_\beta(X^{-1}c_1)$ commutes with $X_{-2\alpha-\beta}(c_3)$, and its commutators with $X_{-\alpha-\beta}(Xc_2)$
and $X_{-3\alpha-2\beta}(Xc_5)$ belong to $E(A[X])$. This implies that
the expression~\eqref{eq:G2} belongs to $E(A[X])$, and hence ${}^{X_{-\alpha}(X^{-1}b)}X_\alpha(X^2f)$
belongs to $E(A[X])$.
\end{proof}

Observe that Lemma~\ref{lem:Abe-tauX} does not cover the case where $\Psi$ is of type $BC_l$ and $\alpha$ is an extra-short root. To treat this
case, we prove two preparatory lemmas. The following digram reflects the position of simple roots in $BC_2$.

\vspace{5pt}
\begin{center}
\begin{tikzpicture}[>=latex]

\pgfmathsetmacro\ax{2*cos(45)}
\draw[->] (0,0) -- (0,1);
\draw[->] (0,0) -- (0,2);
\draw[->] (0,0) -- (0,-1);
\draw[->] (0,0) -- (0,-2);
\draw[->] (0,0) -- (1,0) node[below left] {$\scriptstyle\alpha$};
\draw[->] (0,0) -- (2,0) node[right] {$\scriptstyle2\alpha$};
\draw[->] (0,0) -- (-1,0);
\draw[->] (0,0) -- (-2,0)  node[left] {\hbox to 35pt{$BC_2$}};
\draw[->] (0,0) -- (1,1);
\draw[->] (0,0) -- (-1,1) node[left] {$\scriptstyle\beta$};
\draw[->] (0,0) -- (1,-1) ;
\draw[->] (0,0) -- (-1,-1);
\draw[dashed] (0,2) -- (2,0);
\draw[dashed] (0,2) -- (-2,0);
\draw[dashed] (0,-2) -- (2,0);
\draw[dashed] (0,-2) -- (-2,0);
\draw[dashed] (1,1) -- (1,-1);
\draw[dashed] (1,1) -- (-1,1);
\draw[dashed] (-1,1) -- (-1,-1);
\draw[dashed] (1,-1) -- (-1,-1);
\end{tikzpicture}
\end{center}

\begin{lem}\label{lem:BC-1}
Assume that $\Psi=BC_l$, $l\ge 2$. Let $\alpha,\beta\in\Psi$ be two simple roots in a root subsystem of
type $BC_2$, with $\alpha$ extra-short.
Consider the subgroup $Y_{\alpha,\beta}^+$ of $E(A[X])$ given by
$$
\begin{array}{rcl}
Y_{\alpha,\beta}^+&=&\bigl<X_{-\beta}(V_{-\beta}\otimes_AXA[X]),\ X_{\pm(\alpha+\beta)}(V_{\pm(\alpha+\beta)}\otimes_A A[X]),\\
&&\ X_{\pm2(\alpha+\beta)}(V_{\pm2(\alpha+\beta)}\otimes_A A[X]),\ X_\beta(V_\beta\otimes_A A[X]),\\
&&\ X_{-2\alpha-\beta}(V_{-2\alpha-\beta}\otimes_A A[X]),\ X_{2\alpha+\beta}(V_{2\alpha+\beta}\otimes_A XA[X])\bigr>.
\end{array}
$$
Then

(i) $Y_{\alpha,\beta}^+$ contains $X_{2\alpha}(V_{2\alpha}\otimes_A X^2A[X])$, $X_{-2\alpha}(V_{-2\alpha}\otimes_A A[X])$,
$X_\alpha(V_\alpha\otimes_AXA[X])$, and $X_{-\alpha}(V_{-\alpha}\otimes_A A[X])$.

(ii) $Y_{\alpha,\beta}^+$ is normalized by
$X_{-2\alpha}(V_{-2\alpha}\otimes_AX^{-1}A[X])$.
\end{lem}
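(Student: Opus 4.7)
The approach is a direct calculation inside the relative root subsystem of type $BC_2$ generated by $\alpha$ and $\beta$, whose twelve roots are $\pm\alpha,\pm2\alpha,\pm\beta,\pm(\alpha+\beta),\pm(2\alpha+\beta),\pm2(\alpha+\beta)$. Every step consists in applying the generalized Chevalley commutator formula~\eqref{eq:Chev} to a pair of generators of $Y_{\alpha,\beta}^+$ and tracking the exponents of $X$ that appear, combined with the surjectivity assertions of \cite[Lemma~2]{LS}.

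For~(i), I would produce each of the four new families as a product of commutators of generators. Starting with $[X_{-2\alpha-\beta}(u),X_\beta(v)]$, only the pair $(i,j)=(1,1)$ gives a root of $BC_2$, so the commutator equals $X_{-2\alpha}\bigl(N_{-2\alpha-\beta,\beta,1,1}(u,v)\bigr)$; since $(-2\alpha-\beta)-\beta=-2(\alpha+\beta)$ is again a root, case~(1)(b) of \cite[Lemma~2]{LS} does not apply, so I would pair this with $[X_{-2(\alpha+\beta)}(u'),X_\beta(v)]$, whose only $BC_2$-contributions are at $(1,1)$ (producing a factor in $X_{-2\alpha-\beta}$, already a generator) and at $(1,2)$ (producing a further $X_{-2\alpha}$-factor). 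Part~(2) of \cite[Lemma~2]{LS} then fills $V_{-2\alpha}\otimes A[X]$. The same computation, mirrored under $X\leftrightarrow X^{-1}$, applied to $[X_{2\alpha+\beta}(Xu),X_{-\beta}(Xv)]$ and $[X_{2(\alpha+\beta)}(u'),X_{-\beta}(Xv)]$, delivers $X_{2\alpha}(V_{2\alpha}\otimes X^2A[X])\subseteq Y_{\alpha,\beta}^+$. Next, $[X_{\alpha+\beta}(u),X_{-\beta}(Xv)]$ produces $X_\alpha\bigl(XN_{\alpha+\beta,-\beta,1,1}(u,v)\bigr)$ together with side factors in $X_{2\alpha+\beta}(V\otimes XA[X])$ (a generator) and $X_{2\alpha}(V\otimes X^2A[X])$ (just obtained); because $(\alpha+\beta)-(-\beta)=\alpha+2\beta$ is not a root of $BC_2$, case~(1)(b) of \cite[Lemma~2]{LS} gives surjectivity of $N_{\alpha+\beta,-\beta,1,1}$, yielding $X_\alpha(V_\alpha\otimes XA[X])$. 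The inclusion $X_{-\alpha}(V_{-\alpha}\otimes A[X])\subseteq Y_{\alpha,\beta}^+$ is obtained by the symmetric commutator $[X_{-\alpha-\beta}(u),X_\beta(v)]$.

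For~(ii), I would run through each of the eight generator families $X_\gamma(\,\cdot\,)$ of $Y_{\alpha,\beta}^+$ and evaluate the commutator with $X_{-2\alpha}(X^{-1}c)$ via~\eqref{eq:Chev}. For six of the families, namely $X_{\pm(\alpha+\beta)}$, $X_{\pm2(\alpha+\beta)}$, $X_\beta$ and $X_{-2\alpha-\beta}$, no combination $i(-2\alpha)+j\gamma$ with $i,j\geq1$ is a root of $BC_2$, so these generators commute with $X_{-2\alpha}(X^{-1}c)$. Only two cases give non-trivial output. For $\gamma=-\beta$ (with its $X$-degree $\geq1$ on the generator), the pairs $(i,j)=(1,1)$ and $(1,2)$ produce factors in $X_{-2\alpha-\beta}$ of $X$-degree $0$ and in $X_{-2(\alpha+\beta)}$ of $X$-degree $+1$; for $\gamma=2\alpha+\beta$ (again with $X$-degree $\geq1$ on the generator), the same pairs produce factors in $X_\beta$ of $X$-degree $0$ and in $X_{2(\alpha+\beta)}$ of $X$-degree $+1$. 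All four output factors lie among the generators of $Y_{\alpha,\beta}^+$, so $Y_{\alpha,\beta}^+$ is normalized by $X_{-2\alpha}(V_{-2\alpha}\otimes X^{-1}A[X])$.

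The main obstacle is purely combinatorial: one must enumerate all pairs $(i,j)$ for which $i\gamma_1+j\gamma_2$ is a root of $BC_2$ and check, case by case, that the attached $X$-degree is compatible with the generator list of $Y_{\alpha,\beta}^+$. The one subtle point in~(i) is that the single-commutator image does not generically equal the full module $V_{\pm2\alpha}$, because differences like $2\alpha+2\beta$ are roots of $BC_2$; this is precisely where part~(2) of \cite[Lemma~2]{LS} is needed, and it forces the pairing of two commutator computations in order to exhaust the target.
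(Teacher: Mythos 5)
Your argument is correct and follows essentially the same route as the paper: part (i) is obtained from the very same commutators $[X_{\pm(2\alpha+\beta)}(\cdot),X_{\mp\beta}(\cdot)]$, $[X_{\pm2(\alpha+\beta)}(\cdot),X_{\mp\beta}(\cdot)]$ and $[X_{\pm(\alpha+\beta)}(\cdot),X_{\mp\beta}(\cdot)]$ together with parts (2) and (1)(b) of \cite[Lemma~2]{LS}, with the same tracking of $X$-degrees (the paper writes out the positive roots and invokes symmetry for the negative ones, while you do the reverse). For part (ii) the paper merely asserts that the claim follows from the generalized Chevalley commutator formula; your explicit enumeration of the pairs $(i,j)$ for each generator family is exactly the verification intended there.
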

\begin{proof}
(i) By the generalized Chevalley commutator formula and~\cite[Lemma 2 (2)]{LS}, for any $u\in V_{2\alpha}\otimes_A A[X]$,
there are $v\in V_{2\alpha+\beta}\otimes_A A[X]$, $w\in V_{-\beta}\otimes_A A[X]$, and
$c_1,\ldots,c_m\in V_{2(\alpha+\beta)}\otimes_A A[X]$, $d_1,\ldots,d_m\in V_{-\beta}\otimes_A A[X]$,
such that $X_{2\alpha}(X^2u)$ equals
\begin{equation*}
\begin{array}{c}
\bigl[X_{2\alpha+\beta}(Xv),X_{-\beta}(Xw)\bigr]\prod\limits_{i=1}^m\Bigl(\bigl[X_{2(\alpha+\beta)}(c_i),X_{-\beta}(Xd_i)\bigr]
X_{2\alpha+\beta}(-XN_{2(\alpha+\beta),-\beta,1,1}(c_i,d_i))\Bigr).
\end{array}
\end{equation*}
Hence $X_{2\alpha}(V_{2\alpha}\otimes_A X^2A[X])\subseteq Y_{\alpha,\beta}^+$. On the other hand, by~\cite[Lemma 2 (1), case (b)]{LS},
for any $u\in V_\alpha\otimes_A A[X]$ there are $v\in V_{\alpha+\beta}\otimes_A A[X]$, $w\in V_{-\beta}\otimes_A A[X]$,
such that $u=N_{\alpha+\beta,-\beta,1,1}(v,w))$, i.e.
\begin{equation}\label{eq:BC2}
\begin{array}{rcl}
X_\alpha(Xu)&= \bigl[X_{\alpha+\beta}(v), X_{-\beta}(Xw)\bigr] X_{2\alpha+\beta}(-XN_{\alpha+\beta,-\beta,2,1}(v,w))\cdot\\
&\qquad\cdot   X_{2\alpha}(-X^2N_{\alpha+\beta,-\beta,1,2}(v,w)).
\end{array}
\end{equation}
Hence $X_\alpha(V_\alpha\otimes_AXA[X])\subseteq Y_{\alpha,\beta}^+$.

The cases of $X_{-2\alpha}(V_{-2\alpha}\otimes_A A[X])$ and $X_{-\alpha}(V_{-\alpha}\otimes_A A[X])$ are treated
in the same way, using the opposite roots.

The statement (ii) follows from the generalized Chevalley commutator formula.
\end{proof}

\begin{lem}\label{lem:BC-2}
Assume the hypothesis of Lemma~\ref{lem:BC-1}. For any
$$
g\in X_{-2\alpha}\l(V_{-2\alpha}\otimes_AX^{-2}A[X]\r)X_{-\alpha}\l(V_{-\alpha}\otimes_AX^{-1}A[X]\r),
$$
the sets

(i) $gX_{2\alpha+\beta}\l(V_{2\alpha+\beta}\otimes_A X^2A[X]\r)g^{-1}$,

(ii) $gX_{2\alpha}\l(V_{2\alpha}\otimes_A X^3A[X]\r)g^{-1}$,

(iii) $gX_{\alpha}\l(V_\alpha\otimes_A X^2A[X]\r)g^{-1}$

\noindent are all contained in $Y_{\alpha,\beta}^+\cdot X_{-2\alpha}\l(V_{-2\alpha}\otimes_AX^{-1}A[X]\r)$.
\end{lem}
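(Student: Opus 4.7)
The plan is to prove (i)--(iii) by direct expansion using the generalized Chevalley commutator formula together with the $BC_2$-factorizations from the proof of Lemma~\ref{lem:BC-1}(i). Write $g = g_2 g_1$ with $g_1 = X_{-\alpha}(X^{-1} v')$ and $g_2 = X_{-2\alpha}(X^{-2} u')$ for some $u' \in V_{-2\alpha} \otimes_A A[X]$, $v' \in V_{-\alpha} \otimes_A A[X]$, and compute ${}^g y$ by conjugating first by $g_1$ and then by $g_2$. Throughout I use the coordinate realisation $\alpha = e_1$, $\beta = e_2 - e_1$ of the $BC_2$-subsystem of $\Psi$ in order to enumerate root sums.

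Case (i), where $y = X_{2\alpha+\beta}(X^2 c)$ with $c \in V_{2\alpha+\beta}\otimes_A A[X]$: both pairs $(-\alpha, 2\alpha+\beta)$ and $(-2\alpha, 2\alpha+\beta)$ are non-proportional, so the generalized Chevalley commutator formula applies at both stages. Direct enumeration in $BC_2$ shows that $i(-\alpha)+j(2\alpha+\beta)$ lies in $\Psi$ only for $(i,j) \in \{(1,1),(2,1),(2,2)\}$, giving roots $\alpha+\beta, \beta, 2(\alpha+\beta)$ with $X$-exponents $X^{1}, X^{0}, X^{2}$ respectively; similarly $i(-2\alpha)+j(2\alpha+\beta) \in \Psi$ only for $(i,j) \in \{(1,1),(1,2)\}$, giving roots $\beta, 2(\alpha+\beta)$ with $X$-exponents $X^{0}, X^{2}$. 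Each resulting factor lies in the listed generators of $Y^+_{\alpha,\beta}$, so ${}^g y \in Y^+_{\alpha,\beta}$.

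For cases (ii) and (iii), the pair $(-\alpha, \gamma)$ with $\gamma = 2\alpha$ or $\gamma = \alpha$ is proportional, so the Chevalley formula does not apply directly to $[g_1, y]$. I would first apply the factorization from the proof of Lemma~\ref{lem:BC-1}(i), rewriting $X_{2\alpha}(X^{3} a)$ (respectively $X_\alpha(X^{2} d)$) as a product of commutators of root elements in the non-collinear roots $\alpha+\beta, -\beta, 2\alpha+\beta, 2(\alpha+\beta)$, plus corrections in $X_{2\alpha+\beta}$ and $X_{2\alpha}$. The $X$-degrees of the factorization variables $v, w, c_i, d_i$ should be distributed so that after conjugation of each factor by $g_1$ and then $g_2$ through the Chevalley formula, every resulting commutator correction has non-negative $X$-exponent and lies in $Y^+_{\alpha,\beta}$, except possibly for an $X_{-2\alpha}$-factor at exponent $X^{-1}$ (which may genuinely arise, e.g.\ from the $(i,j)=(4,1)$ term of $[g_1, X_{2\alpha}(X^3 \cdot \ast)]$ giving $-2\alpha$ with exponent $X^{-1}$). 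Such $X_{-2\alpha}$-tails commute with $g_2$ (since $-4\alpha \notin \Psi$ forces $V_{-2\alpha}$-valued root elements to be pairwise commuting), and can finally be moved past the remaining $Y^+_{\alpha,\beta}$-factors to the right by Lemma~\ref{lem:BC-1}(ii), yielding the required containment.

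The main obstacle is the pervasive appearance of collinear root pairs $(-\alpha, \alpha)$, $(-\alpha, 2\alpha)$, $(-2\alpha, \alpha)$, $(-2\alpha, 2\alpha)$ that the generalized Chevalley commutator formula of~\cite{PS} does not cover. The uniform workaround is the $BC_2$-factorization of Lemma~\ref{lem:BC-1}(i), which rewrites any $X_\alpha$- or $X_{2\alpha}$-element of sufficient positive $X$-degree as a product of commutators of root elements non-collinear with $\alpha$; selecting the distribution of $X$-powers among the factorization variables so that every intermediate commutator lands in the explicit generator list of $Y^+_{\alpha,\beta}$ (or the admissible $X_{-2\alpha}$-tail) is the only delicate point. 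The rest is a careful but routine bookkeeping of root sums and $X$-exponents in the $BC_2$-subsystem.
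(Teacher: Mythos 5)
Your case (i) is correct and coincides with the paper's argument. For (ii) and (iii), however, there is a genuine gap: the key assertion that the $X$-degrees in the $BC_2$-factorization ``should be distributed so that after conjugation \dots every resulting commutator correction has non-negative $X$-exponent \dots except possibly for an $X_{-2\alpha}$-factor at exponent $X^{-1}$'' is not achievable. In any decomposition of $X_{2\alpha}(X^3u)$ of the form $[X_{-\beta}(X^aw),X_{2\alpha+\beta}(X^bv)]\cdots$ with $a+b=3$, conjugating the factor $X_{-\beta}(X^aw)$ by $g_1=X_{-\alpha}(X^{-1}v')$ produces, from the $(i,j)=(2,1)$ term of the generalized Chevalley commutator formula, a factor $X_{-2\alpha-\beta}(X^{2(-1)+a}c)$; taking $a\ge 2$ to kill it forces $b\le 1$, and then ${}^{g_1}X_{2\alpha+\beta}(X^bv)$ produces $X_{\beta}(X^{-2+b}c')$ with a negative exponent. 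Neither $X_{-2\alpha-\beta}(X^{-1}c)$ nor $X_{\beta}(X^{-1}c')$ lies in the generating set of $Y_{\alpha,\beta}^+$, and neither is an $X_{-2\alpha}$-tail, so your scheme cannot close. (Your heuristic for where the $X^{-1}$-tail comes from is also inconsistent: you invoke a ``$(i,j)=(4,1)$ term of $[g_1,X_{2\alpha}(X^3\cdot)]$'' for the collinear pair $(-\alpha,2\alpha)$, to which the commutator formula does not apply, as you yourself note.)

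The paper's proof resolves exactly this obstruction, and the resolution is the real content of the lemma: it keeps the distribution $a=1$, $b=2$, accepts the unavoidable factor $X_{-2\alpha-\beta}(X^{-1}c_3)$, then re-expands the adjacent factor $X_{2\alpha+\beta}(X^2v)$ a second time via~\cite[Lemma 2 (2)]{LS} into commutators of $X_{\alpha}(X\cdot)$, $X_{\alpha+\beta}(X\cdot)$, $X_{2\alpha}(X^2\cdot)$, $X_{\beta}(\cdot)$, $X_{2(\alpha+\beta)}(X^2\cdot)$, and verifies that the commutator of $X_{-2\alpha-\beta}(X^{-1}c_3)$ with each of these lands in $Y_{\alpha,\beta}^+\cdot X_{-2\alpha}(V_{-2\alpha}\otimes_AX^{-1}A[X])$; only then, using the normalization statement of Lemma~\ref{lem:BC-1}(ii), can the factor $X_{-2\alpha-\beta}(X^{-1}c_3)$ be cancelled against its inverse. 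A parallel cancellation is needed in case (iii) for the $X_{-2\alpha-\beta}(X^{-1}\cdot)$ term arising from ${}^{g_1}X_{-\beta}(Xw)$. Without this cancellation step your argument does not go through, so what you dismiss as ``careful but routine bookkeeping'' is precisely the missing idea.
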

\begin{proof}
Set $g=g_2g_1$, where $g_1\in X_{-\alpha}(V_{-\alpha}\otimes_AX^{-1}A[X])$, $g_2\in X_{-2\alpha}(V_{-2\alpha}\otimes_AX^{-2}A[X])$.

(i) Using the generalized Chevalley commutator formula, we deduce that
the $g_1$-conjugate of $X_{2\alpha+\beta}(V_{2\alpha+\beta}\otimes_A X^2A[X])$ is contained in
the product
$$
\begin{array}{l}
X_\beta(V_\beta\otimes_A A[X])X_{\alpha+\beta}(V_{\alpha+\beta}\otimes_A XA[X])X_{2(\alpha+\beta)}(V_{2(\alpha+\beta)}\otimes_A X^2A[X])\cdot\\
\qquad\qquad\cdot
X_{2\alpha+\beta}(V_{2\alpha+\beta}\otimes_A X^2A[X]),
\end{array}
$$
and that the $g_2$-conjugate of this product is contained in $Y_{\alpha,\beta}^+$.

(ii) To compute $gX_{2\alpha}(V_{2\alpha}\otimes_A X^3A[X])g^{-1}$, we use~\cite[Lemma 2 (2)]{LS} to conclude
that for any $u\in V_{2\alpha}\otimes_A A[X]$, there are $w\in V_{-\beta}\otimes_A A[X]$,
$v\in V_{2\alpha+\beta}\otimes_A A[X]$, and $w_i\in V_{-\beta}\otimes_A A[X]$, $v_i\in V_{2\alpha+2\beta}\otimes_A A[X]$,
$1\le i\le k$, such that
$$
u=N_{-\beta,2\alpha+\beta,1,1}(w,v)+\sum\limits_{i=1}^kN_{-\beta,2\alpha+2\beta,2,1}(w_i,v_i).
$$
Therefore, one has
\begin{equation}\label{eq:2aX3-pres}
\begin{array}{rl}
X_{2\alpha}(X^3u)&=\bigl[X_{-\beta}(Xw),X_{2\alpha+\beta}(X^2v)\bigr]\cdot\\
&\qquad\cdot\prod\limits_{i=1}^k\Bigl(\bigl[X_{-\beta}(Xw_i),X_{2\alpha+2\beta}(Xv_i)\bigr]
X_{2\alpha+\beta}(X^2s_i)
\Bigr),
\end{array}
\end{equation}
where $s_i=-N_{-\beta,2\alpha+2\beta,1,1}(w_i,v_i)$, $1\le i\le k$. First we treat the first commutator in~\eqref{eq:2aX3-pres}.
We have
\begin{equation}\label{eq:2aX3}
\begin{array}{rl}\
{}^{g_1}\l[X_{-\beta}(Xw),X_{2\alpha+\beta}(X^2v)\r]
&=\bigl[X_{-\beta}(Xw)X_{-\alpha-\beta}(c_1)X_{-2(\alpha+\beta)}(c_2)X_{-2\alpha-\beta}(X^{-1}c_3),\\
&\qquad X_{2\alpha+\beta}(X^2v)X_\beta(c_4)X_{\alpha+\beta}(Xc_5)X_{2(\alpha+\beta)}(X^2c_6)
\bigr],
\end{array}
\end{equation}
where $c_1\in V_{-\alpha-\beta}\otimes_A A[X]$, $c_2\in V_{-2(\alpha+\beta)}\otimes_A A[X]$, etc. One readily
sees that conjugating this expression by $g_2$ does not change its shape, so we can skip this operation.
Now, using~\cite[Lemma 2 (2)]{LS} again, we can express the factor $X_{2\alpha+\beta}(X^2v)$ in the right-hand side
of~\eqref{eq:2aX3} as a product
$$
\begin{array}{l}
\bigl[X_{\alpha}(Xv_1),X_{\alpha+\beta}(Xv_2)\bigr] \cdot \bigl[X_{2\alpha}(X^2v_3),X_\beta(v_4)\bigr]\cdot
X_{2\alpha+2\beta}\l(-X^2N_{2\alpha,\beta,1,2}(v_3,v_4)\r)\cdot\\
\qquad \cdot
\prod\limits_{i=1}^m\Bigl(\bigl[X_\alpha(Xc_i),X_\beta(d_i)\bigr] X_{\alpha+\beta}\l(-XN_{\alpha,\beta,1,1}(c_i,d_i)\r)
X_{2(\alpha+\beta)}\l(-X^2N_{\alpha,\beta,2,2}(c_i,d_i)\r)\Bigr)
,
\end{array}
$$
for some elements $v_1,c_1,\ldots,c_m\in V_{\alpha}\otimes_A A[X]$, $v_2\in V_{\alpha+\beta}\otimes_A A[X]$,
$v_3\in V_{2\alpha}\otimes_A A[X]$, $v_4,d_1,\ldots,d_m\in V_\beta\otimes_A A[X]$. Now we observe that
the commutator of $X_{-2\alpha-\beta}(X^{-1}c_3)$ with an element
of any of the sets
$$
\begin{array}{l}
X_{\alpha}(V_\alpha\otimes_A XA[X])^{\pm 1},\ X_{\alpha+\beta}(V_{\alpha+\beta}\otimes_A XA[X])^{\pm 1},\
X_\beta(V_\beta\otimes_A A[X]),\\
X_{2\alpha}(V_{2\alpha}\otimes_A X^2A[X]),\  X_{2(\alpha+\beta)}(V_{2(\alpha+\beta)}\otimes_A X^2A[X])
\end{array}
$$
belongs to the product
$Y_{\alpha,\beta}^+\cdot X_{-2\alpha}(V_{-2\alpha}\otimes_AX^{-1}A[X])$. Therefore,
since $Y_{\alpha,\beta}^+$ is normalized by
$X_{-2\alpha}(V_{-2\alpha}\otimes_AX^{-1}A[X])$ by Lemma~\ref{lem:BC-1} (ii),
we can cancel
the factor $X_{-2\alpha-\beta}(X^{-1}c_3)$ in~\eqref{eq:2aX3} with its inverse, so that the resulting expression belongs to
$Y_{\alpha,\beta}^+\cdot X_{-2\alpha}(V_{-2\alpha}\otimes_AX^{-1}A[X])$.
Consequently,  we have
$$
{}^g\l[X_{-\beta}(Xw),X_{2\alpha+\beta}(X^2v)\r]\in Y_{\alpha,\beta}^+\cdot X_{-2\alpha}(V_{-2\alpha}\otimes_AX^{-1}A[X]).
$$

Another type of factors occuring in~\eqref{eq:2aX3-pres} are factors $X_{2\alpha+\beta}(X^2s_i)$, $1\le i\le k$.
By the case (i) we have $gX_{2\alpha+\beta}(X^2s_i)g^{-1}\in Y_{\alpha,\beta}^+$.

Finally, consider ${}^g\l[X_{-\beta}(Xw_i),X_{2\alpha+2\beta}(Xv_i)\r]$, $1\le i\le k$. By the generalized Chevalley
commutator formula, exactly as above we have
$$
\begin{array}{rl}
{}^g\bigl[X_{-\beta}(Xw_i),X_{2\alpha+2\beta}(Xv_i)\bigr]&=
\bigl[X_{-\beta}(Xw_i)X_{-\alpha-\beta}(c_{1i})X_{-2(\alpha+\beta)}(c_{2i})X_{-2\alpha-\beta}(X^{-1}c_{3i}),\\
&\qquad X_{2\alpha+2\beta}(Xv_i)\bigr]\\
\end{array}
$$
for some $c_{1i}\in V_{-\alpha-\beta}\otimes_A A[X]$, $c_{2i}\in V_{-2(\alpha+\beta)}\otimes_A A[X]$, etc.
Note that
$$
\begin{array}{rl}
[X_{-2\alpha-\beta}(X^{-1}c_{3i}),X_{2\alpha+2\beta}(Xv_i)]&=
X_{\beta}\l(N_{-2\alpha-\beta,2\alpha+2\beta,1,1}(c_{3i},v_i)\r)\cdot\\
&\qquad\cdot X_{-2\alpha}\l(X^{-1}N_{-2\alpha-\beta,2\alpha+2\beta,2,1}(c_{3i},v_i)\r).
\end{array}
$$
Hence, by Lemma~\ref{lem:BC-1} we conclude that
$$
g[X_{-\beta}(Xw_i),X_{2\alpha+2\beta}(Xv_i)]g^{-1}\in Y_{\alpha,\beta}^+\cdot X_{-2\alpha}(V_{-2\alpha}\otimes_A X^{-1}A[X]).
$$

Applying all these results to the expression in~\eqref{eq:2aX3-pres}, we deduce that
$$
gX_{\alpha}(V_\alpha\otimes_A X^2A[X])g^{-1}\subseteq Y_{\alpha,\beta}^+\cdot X_{-2\alpha}(V_{-2\alpha}\otimes_A X^{-1}A[X]).
$$

(iii) To compute $gX_{\alpha}(V_\alpha\otimes_A X^2A[X])g^{-1}$, we use the same relations as in~\eqref{eq:BC2} to decompose
$X_{\alpha}(V_\alpha\otimes_A X^2A[X])$:
$$
\begin{array}{rl}
X_\alpha(V_\alpha\otimes_A X^2A[X])&\subseteq \bigl[X_{\alpha+\beta}(V_{\alpha+\beta}\otimes_A XA[X]),
  X_{-\beta}(V_{-\beta}\otimes_A XA[X])\bigr]\cdot \\
&\qquad\cdot X_{2\alpha+\beta}(V_{2\alpha+\beta}\otimes_A X^3A[X])\cdot
  X_{2\alpha}(V_{2\alpha}\otimes_A X^4A[X]).\\
\end{array}
$$
By the previous results, we only need to consider $g$-conjugates of the commutator. One readily sees that
$$
{}^{g_1}[X_{\alpha+\beta}(V_{\alpha+\beta}\otimes_A\! XA[X]),
  X_{-\beta}(V_{-\beta}\otimes_A\! XA[X])]
$$
is contained in the commutator
\begin{equation}\label{eq:-a+a}
\begin{array}{l}
 \Bigl[X_{\alpha+\beta}(V_{\alpha+\beta}\otimes_A XA[X])X_\beta(V_\beta\otimes_A A[X]),\\
\qquad X_{-\beta}(V_{-\beta}\otimes_A XA[X])
X_{-\alpha-\beta}(V_{-\alpha-\beta}\otimes_A\! A[X])X_{-2(\alpha+\beta)}(V_{-2(\alpha+\beta)}\otimes_A\! A[X])\cdot\\
\qquad\qquad
\cdot X_{-2\alpha-\beta}(V_{-2\alpha-\beta}\otimes_A\! X^{-1}A[X])\Bigr].
\end{array}
\end{equation}
Again, conjugating this expression by $g_2$ does not change its shape, so it remains to note that
$$
\l[X_{\alpha+\beta}(V_{\alpha+\beta}\otimes_A XA[X])X_\beta(V_\beta\otimes_A A[X]),
X_{-2\alpha-\beta}(V_{-2\alpha-\beta}\otimes_A X^{-1}A[X])\r]
$$
is contained in the group generated by
$$
\begin{array}{l}
X_{\alpha+\beta}(V_{\alpha+\beta}\otimes_A XA[X]),\ X_\beta(V_\beta\otimes_A A[X]),\
X_{-\alpha}(V_{-\alpha}\otimes_A A[X]),\\
X_{-2\alpha}(V_{-2\alpha}\otimes_A X^{-1}A[X]),
\end{array}
$$
and hence is contained in $Y_{\alpha,\beta}^+\cdot X_{-2\alpha}(V_{-2\alpha}\otimes_A X^{-1}A[X])$.
Then~\eqref{eq:-a+a} is contained in
$Y_{\alpha,\beta}^+\cdot X_{-2\alpha}(V_{-2\alpha}\otimes_A X^{-1}A[X])$ as well.
\end{proof}

\begin{lem}\label{lem:BC-tauX}
Assume that $\Psi=BC_l$, $l\ge 2$. Let $\alpha\in\Psi$ be an extra-short root.
Then
$$
\tau_\alpha^{\pm 1}(E_\alpha(A[X],XA[X]))\subseteq (G'_\alpha(A[X])\cap E(A[X]))\cdot
X_{\mp 2\alpha}(X^{-1}V_{\mp 2\alpha}).
$$

More precisely, if $\beta\in\Psi$ is another root such that $\alpha,\beta$ form a system of simple roots for a subsystem
of $\Psi$ of type $BC_2$, we have
$$
\tau_\alpha^{\pm 1}(E_\alpha(A[X],XA[X]))\subseteq (G'_\alpha(A[X])\cap Y_{\pm\alpha,\pm\beta}^+)\cdot
X_{\mp 2\alpha}(X^{-1}V_{\mp 2\alpha}),
$$
where the subgroup $Y_{\alpha,\beta}^+$ is defined as in Lemma~\ref{lem:BC-1}.
\end{lem}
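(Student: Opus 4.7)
The plan is to mimic the strategy of Lemma~\ref{lem:Abe-tauX}, replacing the single root subgroup by the two-step unipotent radical of $G'_\alpha$ generated by $X_\alpha$ and $X_{2\alpha}$, and invoking the dedicated technical results of Lemmas~\ref{lem:BC-1} and~\ref{lem:BC-2} in place of the ad hoc Chevalley-commutator manipulations of the non-multipliable case.

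First I would reduce to positive-root generators. By Lemma~\ref{lem:Abe-z(a,u)} together with Lemma~\ref{lem:Abe-sect}, $E_\alpha(A[X],XA[X])$ is generated by the elements $Z_{\pm\alpha}(a,u_1,u_2)$ with $a\in E_\alpha(A)$ and $u_i\in V_{\pm i\alpha}\otimes_A XA[X]$; the Weyl-type element $n_\alpha\in E_\alpha(A)$ supplied by Lemma~\ref{lem:n_alpha} applied inside $G'_\alpha$ then converts each $Z_{-\alpha}$-generator into a $Z_\alpha$-generator, exactly as at the start of the proof of Lemma~\ref{lem:Abe-tauX}.

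Next, Gauss decomposition in $G'_\alpha(A)$ (valid since $A$ is local) gives
\begin{equation*}
a=X_\alpha(a_1)X_{2\alpha}(a_2)\cdot X_{-\alpha}(b_1)X_{-2\alpha}(b_2)\cdot l\cdot X_\alpha(c_1)X_{2\alpha}(c_2)
\end{equation*}
with $l\in L_\alpha(A)$, so that $\tau_\alpha(Z_\alpha(a,Xf_1,Xf_2))$ equals $\tau_\alpha(a)\cdot X_\alpha(X^2f_1)X_{2\alpha}(X^3f_2)\cdot\tau_\alpha(a)^{-1}$. The $\tau_\alpha$-images of the outer $U_{(\alpha)}(A)$-factors lie in $U_{(\alpha)}(XA[X])$, hence in $G'_\alpha(A[X])\cap Y^+_{\alpha,\beta}$ by Lemma~\ref{lem:BC-1}(i); and $l$ acts on the middle piece by property~(ii) of relative root subschemes, producing another product of the same shape (the $\varphi^i_{l,k\alpha}$ are homogeneous of degree $i$, so the $X$-powers only grow). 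The core of the argument is the conjugation by the remaining ``bad'' factor $g=X_{-\alpha}(X^{-1}b_1)X_{-2\alpha}(X^{-2}b_2)$, which is precisely of the form required by Lemma~\ref{lem:BC-2}: parts~(ii) and~(iii) of that lemma yield
\begin{equation*}
g\,X_\alpha(X^2\tilde f_1)\,g^{-1},\ \ g\,X_{2\alpha}(X^3\tilde f_2)\,g^{-1}\in Y^+_{\alpha,\beta}\cdot X_{-2\alpha}(X^{-1}V_{-2\alpha}).
\end{equation*}
Multiplying the two outputs, I commute the intermediate $X_{-2\alpha}(X^{-1}\,\cdot\,)$-factor past the following $Y^+_{\alpha,\beta}$-piece via Lemma~\ref{lem:BC-1}(ii), and merge the two $X_{-2\alpha}$-tails using that $X_{-2\alpha}$ is additive. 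Membership of the resulting left factor in $G'_\alpha(A[X])$ then follows because $\tau_\alpha(Z_\alpha(\cdots))\in G'_\alpha(A[X,X^{-1}])$ and the right factor lies in $G'_\alpha$, so that the left factor lies in $G'_\alpha(A[X,X^{-1}])\cap Y^+_{\alpha,\beta}$ with all $X^{-1}$-content already absorbed into the right factor. The case of $\tau_\alpha^{-1}$ follows by the formal substitution $X\leftrightarrow X^{-1}$, which interchanges $Y^+_{\alpha,\beta}$ with $Y^+_{-\alpha,-\beta}$ and $X_{-2\alpha}$ with $X_{2\alpha}$.

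The main obstacle will be the bookkeeping in the reordering step: after applying Lemma~\ref{lem:BC-2} separately to the $X_\alpha$- and $X_{2\alpha}$-factors one must commute the intermediate $X_{-2\alpha}(X^{-1}\,\cdot\,)$-tail past a $Y^+_{\alpha,\beta}$-piece and through the outer $U_{(\alpha)}$- and $l$-conjugations, without losing the property that the remaining left factor sits inside $G'_\alpha(A[X])$ rather than merely in $G'_\alpha(A[X,X^{-1}])$. Verifying that the normalization of Lemma~\ref{lem:BC-1}(ii) is compatible with all these moves is where the argument will be most delicate.
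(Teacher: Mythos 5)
Your proposal is correct and follows essentially the same route as the paper's proof: reduction to $Z_\alpha$-generators via $n_\alpha$, Gauss decomposition of $a$ in $G'_\alpha(A)$, conjugation of $X_\alpha(X^2\cdot)X_{2\alpha}(X^3\cdot)$ by the negative unipotent factor via Lemma~\ref{lem:BC-2}\,(ii),(iii), and absorption of the outer positive factor and the $X_{-2\alpha}(X^{-1}\cdot)$-tail using Lemma~\ref{lem:BC-1}\,(i),(ii). The reordering step you flag as delicate is handled in the paper exactly as you anticipate, by the normalization statement of Lemma~\ref{lem:BC-1}\,(ii) together with the fact that the outer $U_{(\alpha)}$-factor already lies in $Y^+_{\alpha,\beta}$.
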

\begin{proof}
The cases of $\tau_\alpha$ and $\tau_\alpha^{-1}=\tau_{-\alpha}$ are symmetric, so it is enough to consider $\tau_\alpha$.
As in the first paragraph of the proof of Lemma~\ref{lem:Abe-tauX}, we conclude that $E_\alpha(A[X],XA[X])$ is generated by the
elements of
$$
aX_\alpha(V_\alpha\otimes_A XA[X])X_{2\alpha}(V_{2\alpha}\otimes_A XA[X])a^{-1},\quad a\in E_\alpha(A).
$$
 By Gauss decomposition in $E_\alpha(A)$, we have
$\tau_\alpha(a)=xyzh$, where
$$
\begin{array}{l}
x,z\in X_{\alpha}(V_{\alpha}\otimes_A XA[X])X_{2\alpha}(V_{2\alpha}\otimes_AX^2A[X]),\\
y\in X_{-\alpha}(V_{-\alpha}\otimes_AX^{-1}A[X])X_{-2\alpha}(V_{-2\alpha}\otimes_AX^{-2}A[X]),
\end{array}
$$
and $h\in L_\alpha(A)$. Clearly, conjugation by $zh$ preserves the group
$$
\textstyle \tau_\alpha\bigl(X_\alpha(V_\alpha\otimes_A XA[X])X_{2\alpha}(V_{2\alpha}\otimes_A XA[X])\bigr)=
X_\alpha(V_\alpha\otimes_A X^2A[X])X_{2\alpha}(V_{2\alpha}\otimes_A X^3A[X]).
$$
By Lemma~\ref{lem:BC-2} both
$yX_\alpha(V_\alpha\otimes_A X^2A[X])y^{-1}$ and
$yX_{2\alpha}(V_{2\alpha}\otimes_A X^3A[X])y^{-1}$ are contained in the group
$Y_{\alpha,\beta}^+\cdot X_{-2\alpha}(V_{-2\alpha}\otimes_AX^{-1}A[X])$.
Note that by Lemma~\ref{lem:BC-1} (ii)
$X_{-2\alpha}(V_{-2\alpha}\otimes_AX^{-1}A[X])$ normalizes $Y_{\alpha,\beta}^+$.
By Lemma~\ref{lem:BC-1} (i) $x\in Y_{\alpha,\beta}^+$,
hence
$$
x\cdot Y_{\alpha,\beta}^+\cdot X_{-2\alpha}(V_{-2\alpha}\otimes_AX^{-1}A[X])\cdot x^{-1}\subseteq Y_{\alpha,\beta}^+\cdot
X_{-2\alpha}(V_{-2\alpha}\otimes_AX^{-1}A[X]).
$$
This completes the proof, since $Y_{\alpha,\beta}^+\subseteq E(A[X])$.
\end{proof}

\subsection{The action of $\sigma$ on $E(A[X],X\cdot A[X])$.}\label{ssec:sigma}

\begin{lem}\label{lem:sigma(EX)-1}
Assume that $m_1(\ha)=1$, that is, $\Psi\neq G_2,F_4,E_8,BC_n$. Then
$$
\sigma^{\pm 1}\l(E(A[X],XA[X])\r)\subseteq U_1^{\mp}(A)EL_1(A)E(A[X],XA[X]).
$$
\end{lem}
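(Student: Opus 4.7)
First observe that the target set $U_1^-(A) EL_1(A) E(A[X],XA[X])$ is a subgroup of $E(A[X])$: the subset $H = U_1^-(A) EL_1(A)$ is a subgroup of $E(A)$ because $L_1(A)$ normalizes the unipotent radical $U_1^-(A)$, and $H$ normalizes the normal subgroup $E(A[X],XA[X]) \trianglelefteq E(A[X])$. The analogous statement holds for the $\sigma^{-1}$ target. Hence it suffices to check the inclusion on generators of $E(A[X],XA[X])$ given by Lemma~\ref{lem:Abe-z(a,u)}, namely on elements $Z_\alpha(a,Xf)$ with $\alpha\in\Psi$, $a\in E_\alpha(A)$, $f\in V_\alpha\otimes_A A[X]$ (all roots are non-multipliable since $\Psi\neq BC_n$). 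Using Lemma~\ref{lem:n_alpha} to conjugate by $n_\alpha$ when necessary, we may assume $m_1(\alpha)\ge 0$; the hypothesis $m_1(\ha)=1$ then leaves only the cases $m_1(\alpha)\in\{0,1\}$.

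If $m_1(\alpha)=0$ then $\sigma^{\pm 1}$ fixes $E_\alpha(A[X,X^{-1}])$ pointwise and $\sigma^{\pm 1}(Z_\alpha(a,Xf))\in E(A[X],XA[X])$ trivially, settling that case. Thus the crucial case is $m_1(\alpha)=1$, where $\sigma|_{E_\alpha(A[X,X^{-1}])}=\tau_\alpha$ and Lemma~\ref{lem:Abe-tauX} already yields $\sigma(Z_\alpha(a,Xf))\in G'_\alpha(A[X])\cap E(A[X])$. Since $E(A[X])=E(A)\cdot E(A[X],XA[X])$ via the split evaluation $X\mapsto 0$ (the kernel is $E^*(A[X],XA[X])=E(A[X],XA[X])$ by Lemma~\ref{lem:Abe-sect}(i)), it suffices to show that the $E(A)$-component lies in $U_1^-(A) EL_1(A)$. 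Using the Gauss decomposition $a=lX_\alpha(a_1)X_{-\alpha}(b)X_\alpha(a_2)$ in $G'_\alpha(A)$ with $l\in L(A)\cap E(A)\subseteq EL_1(A)$, and exploiting the abelianness of $X_\alpha$ for non-multipliable $\alpha$, one reduces to showing that the $X=0$ evaluation of the core expression ${}^{X_{-\alpha}(X^{-1}b)}X_\alpha(X^2 f)$ lies in $U_1^-(A) EL_1(A)$; the outer conjugation by $l$ preserves $U_1^-(A) EL_1(A)$ since $L$ normalizes both $U_1^-$ and $L_1$.

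The key structural observation is a uniform principle valid across the $A_2$, $B_2$-long, and $B_2$-short subcases of the proof of Lemma~\ref{lem:Abe-tauX}. The factor $X_\alpha(X^2 f)$ is decomposed into a product of commutators $[X_{\beta}(Xu), X_{\gamma}(Xv)]$ with $\beta+\gamma=\alpha$ (or the $B_2$-type analogues involving $X_{\alpha+\beta},X_{-\beta},X_{\alpha+2\beta}$ etc.), and the generalized Chevalley commutator formula~\eqref{eq:Chev} is applied to $X_{-\alpha}(X^{-1}b)$ against each factor. Every $X$-independent residue produced is of the form $X_{-\alpha+\mu}\bigl(N_{-\alpha,\mu,1,1}(b,\cdot)\bigr)$, arising precisely when the $X^{-1}$ of $X_{-\alpha}(X^{-1}b)$ is balanced by exactly one $X$ from a factor $X_\mu(X\cdot)$; higher-order terms in the bracket contribute factors with positive $X$-weight and hence stay inside $E(A[X],XA[X])$. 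Since any such $\mu$ is either a positive summand of $\alpha$ (so $m_1(\mu)\in\{0,1\}$) or a negative root occurring in the $B_2$ decompositions with $m_1(\mu)\le 0$, one has $m_1(-\alpha+\mu)\le 0$ throughout, so each residue lies in a root subgroup contained in $U_1^-$ or $L_1$, and the total $X=0$ contribution lies in $U_1^-(A) EL_1(A)$. The $\sigma^{-1}$ inclusion follows by the $X\leftrightarrow X^{-1}$ symmetry, which swaps $U_1^+$ and $U_1^-$. The main obstacle is the explicit case-by-case verification that no intermediate $X^{-1}$-term in the $B_2$-short subcase escapes into a subgroup outside $U_1^-(A) EL_1(A) E(A[X],XA[X])$; the $A_2$ and $B_2$-long cases are comparatively immediate from the commutator formula.
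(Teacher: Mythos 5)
Your opening reductions are sound: the target set is a subgroup of $E(A[X])$, the case $m_1(\alpha)=0$ is trivial, and for $m_1(\alpha)=1$ Lemma~\ref{lem:Abe-tauX} together with the splitting $E(A[X])=E(A)\cdot E(A[X],XA[X])$ correctly reduces the problem to showing that the value at $X=0$ of ${}^{X_{-\alpha}(bX^{-1})}X_\alpha(X^2f)$ lies in $U_1^-(A)EL_1(A)$. The gap is in the ``uniform principle'' you invoke to finish: it is false as stated. In the $B_2$-long subcase the decomposition of $X_\alpha(X^2f)$ contains factors $X_{\alpha+2\beta}(c_i)$ with $c_i\in V_{\alpha+2\beta}\otimes_A A[X]$ carrying no power of $X$; conjugation by $X_{-\alpha}(bX^{-1})$ leaves them untouched (since $-\alpha+(\alpha+2\beta)$ is not a root), so they survive evaluation at $X=0$ as $X_{\alpha+2\beta}(c_i(0))$. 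This is not of the form $X_{-\alpha+\mu}\bigl(N_{-\alpha,\mu,1,1}(b,\cdot)\bigr)$, and when $m_1(\beta)=0$ (e.g.\ $\alpha=e_1-e_2$, $\beta=e_2$ inside $B_n$) one has $m_1(\alpha+2\beta)=1$, so this residue lies in $U_1^+(A)$ --- exactly the subgroup you must exclude. It is rescued only because it sits inside the commutator $[X_{\alpha+2\beta}(c_i),X_{-\beta}(Xd_i)]$, whose evaluation at $X=0$ after conjugation is $[X_{\alpha+2\beta}(c_i(0)),X_{-\alpha-\beta}(\cdot)]$ and collapses by the Chevalley formula into $X_\beta(\cdot)X_{-\alpha}(\cdot)$. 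So the $B_2$-long case is not ``comparatively immediate'', and the bookkeeping must be done commutator-by-commutator rather than factor-by-factor. (Your claim that all higher-order bracket terms have positive $X$-weight also fails for the $N_{2,1}(bX^{-1},uX)$ terms of weight $-1$, which you acknowledge only for $B_2$-short.) The conclusion you want is true, but the crux of the hard case rests on an incorrect principle and the actual verification is deferred.

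The paper avoids this entire computation by an indirect argument worth comparing with yours. After the same first step ($\sigma^{\pm1}(E(A[X],XA[X]))\subseteq E(A[X])$ from Lemma~\ref{lem:Abe-tauX}), take $g\in\sigma\l(E(A[X],XA[X])\r)$, write $g=g(0)g_1$ with $g_1\in E(A[X],XA[X])$, and Gauss-decompose $g(0)=u_1hvu_2$ with $u_1,u_2\in U_1^-(A)$, $h\in EL_1(A)$, $v\in U_1^+(A)$. Applying $\sigma^{-1}$, one has $\sigma^{-1}(U_1^-(A))\subseteq U_1^-(A[X],XA[X])$ and $\sigma^{-1}(g_1)\in E(A[X])$, hence $\sigma^{-1}(g)\in E(A[X],XA[X])\,h\,\sigma^{-1}(v)\,E(A[X])$ with $\sigma^{-1}(v)\in U_1^+(A[X^{-1}],X^{-1}A[X^{-1}])$. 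But $\sigma^{-1}(g)\in E(A[X],XA[X])$ by hypothesis, which forces $\sigma^{-1}(v)\in U_1^+(A[X^{-1}],X^{-1}A[X^{-1}])\cap E(A[X])=\{1\}$, i.e.\ $v=1$; then $g=u_1(hu_2h^{-1})hg_1$ lies in the target. No further root-by-root analysis is needed. If you wish to keep your direct approach, redo the key step at the level of whole commutators; otherwise the paper's bootstrapping via $\sigma^{-1}$ is both shorter and safer.
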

\begin{proof}
It is enough to consider the case of $\sigma$. For any $\alpha\in\Psi$, the restriction $\sigma|_{E_{\alpha}(A[X,X^{-1}])}$
coincides with $\tau_\alpha$, $\tau_\alpha^{-1}=\tau_{-\alpha}$, or is trivial.
Hence by Lemma~\ref{lem:Abe-tauX} we have
$$
\sigma^{\pm 1}(E(A[X],XA[X])\subseteq E(A[X]).
$$
 Write $g\in \sigma\l(E(A[X],XA[X])\r)$ as $g=g_0g_1$,
$g_0=g(0)\in E(A)$, $g_1=g(0)^{-1}g\in E(A[X],XA[X])$. Using Gauss decomposition in $G(A)$,
write $g_0=u_1hvu_2$, where $u_1,u_2\in U_1^-(A)$, $v\in U_1^+(A)$, $h\in L_1(A)\cap E(A)=EL_1(A)$. One has
$$
\sigma^{-1}(U_1^-(A))\subseteq U_1^-\l(A[X],XA[X]\r)\subseteq E(A[X],XA[X]).
$$
Then we deduce
$$
\sigma^{-1}(g)=\sigma^{-1}(g_0)\sigma^{-1}(g_1)\in E(A[X],XA[X])h\sigma^{-1}(v)E(A[X]),
$$
where $\sigma^{-1}(v)\in U_1^+\l(A[X^{-1}],X^{-1}A[X^{-1}]\r)$. On the other hand, by the assumption
$g\in \sigma\l(E(A[X],XA[X])\r)$, we have
$\sigma^{-1}(g)\in E(A[X],XA[X])$. Consequently,
$$
\sigma^{-1}(v)\in U_1^+\l(A[X^{-1}],X^{-1}A[X^{-1}]\r)\cap E(A[X])=\{1\}.
$$
Therefore, $v=1$, and we are done.
\end{proof}

\begin{lem}~\label{lem:tau^2}
Assume that $\Psi=G_2$, $F_4$, or $E_8$. Then
$$
\sigma^{\pm 1}\l(E_\ha(A[X],XA[X])\r)\subseteq
E(A[X])X_{\mp\ha}(X^{-1}V_{\mp\ha}).
$$
\end{lem}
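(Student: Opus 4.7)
The proof proceeds by reducing to an explicit computation inside the split $A_2$-subgroup produced by Lemma~\ref{lem:G_alpha}. First I would observe that, by the symmetry $X\leftrightarrow X^{-1}$ which interchanges $\sigma$ with $\sigma^{-1}$, it suffices to handle the $\sigma^{+1}$ case. Combining Lemma~\ref{lem:Abe-z(a,u)} with Lemma~\ref{lem:Abe-sect}, the congruence subgroup $E_\ha(A[X],XA[X])$ is generated as an abstract group by the elements $Z_{\pm\ha}(a,Xf)=aX_{\pm\ha}(Xf)a^{-1}$ with $a\in E_\ha(A)$ and $f\in V_{\pm\ha}\otimes_A A[X]$; by invoking the Weyl-type element $n_\ha\in E_\ha(A)$ of Lemma~\ref{lem:n_alpha} exactly as at the start of the proof of Lemma~\ref{lem:Abe-tauX}, the $Z_{-\ha}$-generators are rewritten as $Z_\ha$-ones, so only a single $Z_\ha(a,Xf)$ needs to be considered.

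Since $\Psi=G_2,F_4,E_8$ and $\ha$ is a long root, Lemma~\ref{lem:G_alpha} furnishes a split $A_2$-subgroup $G_{\Theta_\ha}\subseteq G$ containing $G'_\ha$, together with the fact that $V_\ha$ has rank $1$; I fix $V_\ha\cong A$ and a basis $\{\gamma,\delta\}$ of $\Theta_\ha$ with $\gamma+\delta=\ha$. Using the Gauss decomposition inside the rank-one group $G'_\ha$ over the local ring $A$, I write $a=l\cdot X_\ha(a_1)X_{-\ha}(b)X_\ha(a_2)$ with $l\in L_P(A)\cap G'_\ha(A)$ and $a_1,a_2,b\in A$. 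Because $2\ha\notin\Psi$, the subgroup $X_\ha$ is commutative, and applying $\sigma$ collapses the outer $X_\ha$-factors to give
\[
\sigma\bigl(Z_\ha(a,Xf)\bigr)=l\cdot X_\ha(X^2a_1)\cdot\bigl({}^{X_{-\ha}(X^{-2}b)}X_\ha(X^3f)\bigr)\cdot X_\ha(-X^2a_1)\cdot l^{-1}.
\]
Since $l\in L_P(A)\subseteq E(A)$ and $X_\ha(X^2a_1)\in E(A[X])$, the task reduces to showing
\[
{}^{X_{-\ha}(X^{-2}b)}X_\ha(X^3f)\in E(A[X])\cdot X_{-\ha}\bigl(X^{-1}V_{-\ha}\bigr).
\]

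To handle this, I split $f=f_0+X\tilde f$ with $f_0\in V_\ha\cong A$ and $\tilde f\in V_\ha\otimes_A A[X]$; by commutativity of $X_\ha$ one has $X_\ha(X^3f)=X_\ha(X^3f_0)\cdot X_\ha(X^4\tilde f)$. For the \emph{higher part} I write $X_\ha(X^4\tilde f)=\pm[X_\gamma(X^2),X_\delta(X^2\tilde f)]$; then the generalized Chevalley commutator formula together with $\gamma-\ha=-\delta$ and $\delta-\ha=-\gamma$ yields
\[
{}^{X_{-\ha}(X^{-2}b)}X_\gamma(X^2)=X_{-\delta}(\pm b)X_\gamma(X^2),\quad {}^{X_{-\ha}(X^{-2}b)}X_\delta(X^2\tilde f)=X_{-\gamma}(\pm b\tilde f)X_\delta(X^2\tilde f),
\]
so the conjugated commutator is a commutator of elements of $E(A[X])$ (the $X^2$ cancels the $X^{-2}$), hence itself in $E(A[X])$. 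For the \emph{lower part} I write $X_\ha(X^3f_0)=\pm[X_\gamma(X^2f_0),X_\delta(X)]$ and expand ${}^{X_{-\ha}(X^{-2}b)}[X_\gamma(X^2f_0),X_\delta(X)]$ via the identity $[AB,CD]={}^A[B,CD]\cdot[A,CD]$. The only root factor carrying a negative power of $X$ is $X_{-\gamma}(\pm X^{-1}b)$, arising from ${}^{X_{-\ha}(X^{-2}b)}X_\delta(X)$, and the $A_2$-Chevalley relation $[X_{-\delta}(bf_0),X_{-\gamma}(X^{-1}b)]=X_{-\ha}(\pm X^{-1}b^2f_0)$ forces this $X^{-1}$ to coalesce with $X_{-\delta}(bf_0)$ into a single distinguished factor in $X_{-\ha}(X^{-1}V_{-\ha})$.

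The main difficulty will be this last step: the careful bookkeeping through the Chevalley commutator expansion to verify that, after extracting the distinguished $X_{-\ha}(X^{-1}V_{-\ha})$-factor, every remaining piece lies in $E(A[X])$. This is tractable because $\Theta_\ha$ is of type $A_2$ with $\pm 1$ structure constants, leaving only a handful of commutator relations in play, and the $X^{-1}$ contributions only appear through $X_{-\gamma}(X^{-1}\cdot)$; a direct matrix computation inside the split $A_2\cong SL_3$ subgroup $G_{\Theta_\ha}$ will confirm that after factoring out $X_{-\ha}(-X^{-1}b^2f_0)$ on the right, the residue has entries in $A[X]$ and decomposes as a product of Chevalley-type commutators of elements of $E(A[X])$. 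The $\sigma^{-1}$ statement follows by applying the same reasoning after the substitution $X\leftrightarrow X^{-1}$, producing the factor $X_\ha(X^{-1}V_\ha)$ in place of $X_{-\ha}(X^{-1}V_{-\ha})$.
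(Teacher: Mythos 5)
Your overall strategy (reduce to the split $A_2$-subgroup of Lemma~\ref{lem:G_alpha} and compute explicitly) is in the right spirit, but the reduction steps contain a genuine gap that the paper's argument is specifically designed to avoid. The target set $E(A[X])X_{\mp\ha}(X^{-1}V_{\mp\ha})$ is \emph{not a subgroup}, so it does not suffice to verify the containment on a single generator $Z_\ha(a,Xf)$: a general element of $E_\ha(A[X],XA[X])$ is a product of such generators, and to collapse a product of factors $e_iX_{-\ha}(X^{-1}v_i)$ into a single coset $E(A[X])X_{-\ha}(X^{-1}V_{-\ha})$ you would need a commutation statement of the shape $X_{-\ha}(X^{-1}V_{-\ha})E(A[X])\subseteq E(A[X])X_{-\ha}(X^{-1}V_{-\ha})$ --- which is essentially Lemma~\ref{lem:ha-comm}, itself a consequence of Lemma~\ref{lem:sigma(EX)}, which depends on the present lemma; the argument becomes circular. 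The same problem recurs inside your treatment of one generator: after writing $\sigma(Z_\ha(a,Xf))=lX_\ha(X^2a_1)\,W\,X_\ha(-X^2a_1)l^{-1}$ with $W={}^{X_{-\ha}(X^{-2}b)}X_\ha(X^3f)$, knowing $W\in E(A[X])X_{-\ha}(X^{-1}V_{-\ha})$ does not finish the job, because conjugation by $X_\ha(X^2a_1)$ does not preserve this set (only $E(A[X])$ is normal): one has ${}^{X_\ha(X^2a_1)}X_{-\ha}(X^{-1}v)=M\cdot X_{-\ha}(X^{-1}v)$ for an element $M\in G'_\ha(A[X])$ whose membership in $E(A[X])$ is not obvious and is not addressed. (Also, $L_P(A)\not\subseteq E(A)$ in general, though that particular point is harmless by normality of $E(A[X])$.) Finally, the decisive computation for the ``lower part'' is only announced (``a direct matrix computation \dots will confirm''), not carried out, and it is exactly there that the stray negative powers of $X$ must be controlled.

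The paper circumvents all of this with a structural argument rather than a computation: since $m_1(\ha)=2$, one has $\sigma|_{E_\ha}=\tau_\ha^2$, and $\tau_\ha$ is the restriction of the automorphism $\sigma_\Theta$ of the split $A_2$-subgroup $G_{\Theta_\ha}$ built with $\ha$ as the distinguished simple root; in $\Theta_\ha$ the highest root has coefficient $1$ at $\ha$, so Lemma~\ref{lem:sigma(EX)-1} applies to $G_{\Theta_\ha}$. Applying that lemma once, intersecting with $G'_\ha(A[X,X^{-1}])$ to discard the unwanted unipotent factors, and then applying $\sigma_\Theta$ a second time yields $\tau_\ha^2\l(E_\ha(A[X],XA[X])\r)\subseteq E_\Theta(A[X])X_{-\ha}(X^{-1}V_{-\ha})$. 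The point is that Lemma~\ref{lem:sigma(EX)-1} is a statement about the \emph{whole} congruence subgroup at once (proved by evaluating at $0$, Gauss-decomposing the constant term, and using $U_1^+(A[X^{-1}],X^{-1}A[X^{-1}])\cap E(A[X])=\{1\}$), which is precisely what handles arbitrary products of generators. If you want to salvage your computational route, you should first prove a single-generator statement whose target is an actual group (as in Lemma~\ref{lem:Abe-tauX}), and only afterwards extract the lone $X_{-\ha}(X^{-1}V_{-\ha})$ factor by an evaluation-at-zero and Gauss-decomposition argument applied to the full group element.
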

\begin{proof}
It is enough to consider the case of $\sigma$. We have $\sigma|_{E_\ha(E[X,X^{-1}])}=\tau_\ha^2$.
By Lemma~\ref{lem:G_alpha} there is a root $\alpha\in\Psi$, such that
$\ha,\alpha$ form a basis of a root subsystem $\Theta=\Theta_\ha$ of type $A_2$ in $\Psi$, and
the group scheme $G$ contains a split simply connected absolutely almost simple subgroup $G_{\Theta}$,
such that $X_{\beta}$, $\beta\in\Theta$, are root subgroups of $G_{\Theta}$. Then $\tau_\ha$
is also the restriction to $E_\ha(A[X,X^{-1}])$ of the automorphism $\sigma_\Theta$ of $G_{\Theta}$,
defined in the same way as $\sigma$ with respect to $\alpha_1=\ha$. Let $L_\Theta$ denote
the $\sigma_\Theta$-invariant Levi subgroup of the parabolic subgroup of $G_\Theta$ corresponding to $\alpha$
(the analogue of $L_1$ in $G$). Let $E_\Theta(-)$ denote the elementary
subgroup of $G_\Theta(-)$.
Applying the result of Lemma~\ref{lem:sigma(EX)-1}
to $G_{\Theta}$ instead of $G$, with both sides of the inclusion inverted, we deduce that
$$
\tau_\ha(E_\ha(A[X],XA[X]))\subseteq
E_\Theta(A[X],XA[X])\cdot
(L_\Theta(A)\cap E_\Theta(A))\cdot X_{-\ha}(A)X_{-\ha-\alpha}(A). 
$$
Note that, by the very definition,
$$
\tau_\ha\l(E_\ha(A[X],XA[X])\r)\subseteq E_\ha(A[X,X^{-1}])\subseteq G_\ha'(A[X,X^{-1}]).
$$
Recall that by Lemma~\ref{lem:G_alpha}, the group $G_\ha'$ is just the rank 1 split semisimple subgroup of $G_\Theta$
corresponding to the root subsystem $\{\pm\ha\}$.
Then, clearly, $L_\Theta X_{-\ha}X_{-\ha-\alpha}\cap G_\ha'=(L_\Theta\cap G_\ha')X_{-\ha}$.
Summing up, we have
$$
\tau_\ha\l(E_\ha(A[X],XA[X])\r)\subseteq
E_\Theta(A[X],XA[X])\cdot
(L_\Theta(A)\cap E_\Theta(A))\cdot X_{-\ha}(V_{-\ha}).
$$
Then, again by Lemma~\ref{lem:sigma(EX)-1},
$$
\begin{array}{rl}
\tau_\ha^2(E_\ha(A[X],XA[X]))&\subseteq \sigma_\Theta\Bigl(
E_\Theta(A[X],XA[X])\cdot (L_\Theta(A)\cap E_\Theta(A))\cdot X_{-\ha}(V_{-\ha})
\Bigr)\\
&\subseteq E_\Theta(A[X])X_{-\ha}(X^{-1}V_{-\ha}).
\end{array}
$$
Since $E_\Theta(A[X])\subseteq E(A[X])$, we are done.
\end{proof}

\begin{lem}\label{lem:sigma(EX)}
One has
$$
\sigma^{\pm 1}\l(E(A[X],XA[X])\r)\subseteq U_1^{\mp}(A)EL_1(A)E(A[X],XA[X]) X_{\mp\ha}(X^{-1}V_{\mp\ha}).
$$
\end{lem}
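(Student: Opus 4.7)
The plan is to split on $m_1(\ha)\in\{1,2\}$. When $m_1(\ha)=1$, i.e.\ $\Psi\not\in\{G_2,F_4,E_8,BC_n\}$, Lemma~\ref{lem:sigma(EX)-1} already establishes the stronger inclusion in which the factor $X_{\mp\ha}(X^{-1}V_{\mp\ha})$ is simply absent, so nothing further is required. Assume from now on $m_1(\ha)=2$; I treat $\sigma$ and leave $\sigma^{-1}$ to the symmetry $X\leftrightarrow X^{-1}$. Fix $x\in E(A[X],XA[X])$ and apply Lemma~\ref{lem:a-nea} with the non-divisible relative root $\alpha=\ha$ if $\Psi\in\{G_2,F_4,E_8\}$ and $\alpha=\ha/2$ if $\Psi=BC_n$. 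This yields $x=x_1x_2$, where $x_1\in E_\alpha(A[X],XA[X])$ and $x_2$ is a product of generators $Z_\beta(a,u_1,\ldots,u_{m_\beta})$ over relative roots $\beta$ non-collinear to $\alpha$. Because $\ha$ is the unique root with $m_1=2$ and, in the $BC_n$ case, every extra-short root non-collinear to $e_1$ has $m_1=0$, each such $\beta$ satisfies $m_1(\beta)\in\{0,1\}$ and is non-multipliable whenever $m_1(\beta)=1$.

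I then compute $\sigma(x_1)$ and $\sigma(x_2)$ separately. Lemma~\ref{lem:tau^2} (for $G_2,F_4,E_8$) or Lemma~\ref{lem:BC-tauX} (for $BC_n$, noting that $2e_1=\ha$) gives $\sigma(x_1)\in E(A[X])\cdot X_{-\ha}(X^{-1}V_{-\ha})$. For each factor $Z_\beta(a,u_i)$ of $x_2$: if $m_1(\beta)=0$, then $\sigma$ acts trivially on all $X_{k\beta}(-)$, so the image stays in $E_\beta(A[X],XA[X])$; if $m_1(\beta)=1$, then $\sigma|_{E_\beta(A[X,X^{-1}])}=\tau_\beta$ and Lemma~\ref{lem:Abe-tauX} places the image in $G_\beta'(A[X])\cap E(A[X])$. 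Hence $\sigma(x_2)\in E(A[X])$, and altogether $\sigma(x)\in E(A[X])\cdot X_{-\ha}(X^{-1}V_{-\ha})\cdot E(A[X])$.

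It remains to normalize this product. Using a Chevalley-type commutation estimate (the forthcoming Lemma~\ref{lem:ha-comm}) which exploits the fact that $\ha$ is the highest relative root, so that no positive multiple $k\ha$, $k\ge 2$, can occur in the commutators of $X_{-\ha}$ with generators $X_\gamma(f)$, $f\in V_\gamma\otimes_A A[X]$, one may rewrite $X_{-\ha}(X^{-1}V_{-\ha})\cdot E(A[X])\subseteq E(A[X])\cdot X_{-\ha}(X^{-1}V_{-\ha})$. This produces $\sigma(x)=g'y$ with $g'\in E(A[X])$ and $y\in X_{-\ha}(X^{-1}V_{-\ha})$. Since $m_1(\ha)=2$, a direct computation gives $\sigma^{-1}(y)\in X_{-\ha}(XV_{-\ha})\subseteq E(A[X],XA[X])$, whence $\sigma^{-1}(g')=x\cdot\sigma^{-1}(y)^{-1}\in E(A[X],XA[X])$. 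I then mimic the proof of Lemma~\ref{lem:sigma(EX)-1}: write $g'=g'(0)\cdot g'_1$ with $g'_1\in E(A[X],XA[X])$, Gauss-decompose $g'(0)=u_1hvu_2$ in $G(A)$ with $u_1,u_2\in U_1^-(A)$, $v\in U_1^+(A)$, $h\in EL_1(A)$, and observe that $\sigma^{-1}(v)\in U_1^+(A[X^{-1}],X^{-1}A[X^{-1}])$ carries $X^{-1}$-components in the $U_1^+$-direction that cannot be cancelled by $\sigma^{-1}(u_1),h,\sigma^{-1}(u_2),\sigma^{-1}(g'_1)$; thus $v=1$, forcing $g'\in U_1^-(A)\,EL_1(A)\,E(A[X],XA[X])$ and giving the desired inclusion.

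The principal obstacle is the commutation estimate used to push $X_{-\ha}(X^{-1}V_{-\ha})$ to the far right of the expression for $\sigma(x)$: one has to ensure that the Chevalley-type commutator terms never reintroduce negative powers of $X$ on roots other than $-\ha$, or equivalently, that all excess $X^{-1}$-factors can be absorbed into the single tail $X_{-\ha}(X^{-1}V_{-\ha})$. This requires a rank-two root-system analysis very much in the spirit of the case-by-case computations carried out for $\tau_\alpha$ in subsection~\ref{ssec:E_alpha}.
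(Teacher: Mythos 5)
Your skeleton (case split on $m_1(\ha)$, the decomposition via Lemma~\ref{lem:a-nea}, the $\tau_\alpha$-lemmas, and the final Gauss-decomposition argument killing the $U_1^+$-component) matches the paper's, but there is a genuine gap at the central step. You order the decomposition as $x=x_1x_2$ with $x_1\in E_\alpha(A[X],XA[X])$ \emph{first}, so after applying $\sigma$ you land in $E(A[X])\,X_{-\ha}(X^{-1}V_{-\ha})\,E(A[X])$ and must then push the middle factor to the right. The inclusion you invoke for this, $X_{-\ha}(X^{-1}V_{-\ha})\cdot E(A[X])\subseteq E(A[X])\cdot X_{-\ha}(X^{-1}V_{-\ha})$, is false: already in the rank-one subgroup $E_\ha$, the product $X_{-\ha}(X^{-1}u)X_{\ha}(w)$ with $w\in V_\ha$ constant cannot be rewritten in that form (in $\SL_2$ terms the $(2,2)$-entry $1+X^{-1}uw$ is not a polynomial), and for non-collinear $\gamma$ the commutator terms $N_{\ldots}(X^{-1}u,f)$ with constant part of $f$ reintroduce $X^{-1}$ on roots other than $-\ha$ --- exactly the obstacle you name but do not overcome. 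Moreover your appeal to Lemma~\ref{lem:ha-comm} is circular: that lemma is stated and proved \emph{after} the present one and its proof begins by invoking Lemma~\ref{lem:sigma(EX)}; and even taken at face value it only concerns commutation past $E(A[X],XA[X])$ (not all of $E(A[X])$) and produces extra factors $U_1^{\pm}(A)EL_1(A)$ and a tail $X_{\mp\ha}(XV_{\mp\ha})$, not the clean swap you need.

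The paper sidesteps this entirely by choosing the opposite order in the factorization: $x=x_1x_2$ with $x_1$ the product over roots $\beta$ non-collinear to $\ha$ and $x_2\in E_{\ha}(A[X],XA[X])$ (resp.\ $E_{\frac12\ha}$) on the \emph{right} (this reordered form follows from Lemma~\ref{lem:a-nea} applied to $x^{-1}$). Then $\sigma(x_1)\in E(A[X])$ by Lemma~\ref{lem:Abe-tauX} and $\sigma(x_2)\in E(A[X])X_{-\ha}(X^{-1}V_{-\ha})$ by Lemmas~\ref{lem:tau^2} and~\ref{lem:BC-tauX}, so the exceptional factor is already at the far right and no commutation past $E(A[X])$ is ever needed. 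The remaining work is then concentrated where it is actually tractable: in computing $\sigma^{-1}(g)$ one only has to move $X_{\ha}(X^{-1}c)$ past $\sigma^{-1}(u_2)$ with $u_2\in U_1^-(A)$, which is handled by the inclusion~\eqref{eq:tauX-main} and Lemma~\ref{lem:BC-1} together with the fact that $\ha$ is of maximal height. You should restructure your argument accordingly; as written, the middle step does not go through.
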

\begin{proof}
It is enough to consider the case of $\sigma$.
If $m_1(\ha)=1$, the claim follows from Lemma~\ref{lem:sigma(EX)-1}. Assume $m_1(\ha)=2$.
By Lemma~\ref{lem:a-nea},
any $x\in E(A[X],XA[X])$ can be presented as a product $x=x_1x_2$, where $x_1$ is a product of elements
of the groups $E_\beta(A[X],XA[X])$, where $\beta\in\Psi$ is non-collinear to $\ha$; and $x_2$ belongs to
$E_{\ha}(A[X],XA[X])$, or, respectively, $E_{\frac 12\ha}(A[X],XA[X])$,
if $\frac 12\ha$ is a root. Note that
$\ha$ is the only root $\alpha$ such that $m_1(\alpha)=2$ and $\sigma$
acts non-trivially on $X_{\pm\alpha}$; and, if $\Psi=BC_l$, the root $\frac 12\ha=\alpha$ is the only extra-short
root such that $\sigma$ acts non-trivially on $X_{\pm\alpha}$.
Then by Lemmas~\ref{lem:Abe-tauX},~\ref{lem:tau^2},
and~\ref{lem:BC-tauX}, we have
\begin{equation}\label{eq:sigma-prel}
\begin{array}{rl}
\sigma^{\pm 1}\l(E(A[X],XA[X])\r)&\subseteq E(A[X])X_{\mp\ha}(X^{-1}V_{\mp\ha})\\
&=E(A)E(A[X],XA[X])X_{\mp\ha}(X^{-1}V_{\mp\ha}).
\end{array}
\end{equation}
Write an element $g\in\sigma(E(A[X],XA[X]))$ as
$$
g=g_0g_1g_2,\qquad g_0\in E(A),\ g_1\in E(A[X],XA[X]),\ g_2\in X_{-\ha}(X^{-1}V_{-\ha}).
$$
By Gauss decomposition, we have $g_0=u_1hvu_2$, where $u_1,u_2\in U_1^-(A)$, $v\in U_1^+(A)$, $h\in EL_1(A)$.
We will compute $\sigma^{-1}(g)$ using this factorization. Inverting both sides of~\eqref{eq:sigma-prel}, we deduce that
$\sigma^{-1}(g_1)\in X_{\ha}(X^{-1}c)E(A[X])\ \mbox{for some}\ c\in V_{\ha}$. Clearly,
$\sigma^{-1}(g_2)\in E(A[X])$, since $m_1(-\ha)=-2$. Hence
$$
\sigma^{-1}(g_1g_2)\in X_{\ha}(X^{-1}c)E(A[X]).
$$
Write
$$
u_2=\biggl(\prod_{\begin{array}{c}
\vspace{-3pt}\st m_1(\alpha)=-1,\\
\st \alpha\neq -\frac\ha 2
\end{array}}\hspace{-15pt} X_\alpha(c_\alpha)\biggr) X_{-\frac\ha 2}(d)X_{-\ha}(e),
$$ where
$c_\alpha$, $d$, $e$ belong to the respective root modules. Here and below, the factor from $X_{-\frac\ha 2}$
should be omitted if $\Psi\neq BC_l$. Then
\begin{equation}\label{eq:u_2}
\sigma^{-1}(u_2)=\biggl(\prod_{
\begin{array}{c}
\vspace{-3pt}\st m_1(\alpha)=-1,\\
\st \alpha\neq -\frac\ha 2
\end{array}}\hspace{-15 pt}X_\alpha(Xc_\alpha)\biggr) X_{-\frac\ha 2}(Xd)X_{-\ha}(X^2e).
\end{equation}
Now, applying Lemma~\ref{lem:BC-1} if $\Psi=BC_n$, or, respectively, the inclusion~\eqref{eq:tauX-main}
from the proof of Lemma~\ref{lem:Abe-tauX} if $\Psi=G_2,F_4,E_8$,
we obtain
$$
\begin{array}{rl}
X_{-\frac\ha 2}(Xd)X_{-\ha}(X^2e)X_{\ha}(X^{-1}c)&=X_{\ha}(X^{-1}c)\cdot {\strut}^{X_{\ha}(-X^{-1}c)}
\!\l(X_{-\frac\ha 2}(Xd)X_{-\ha}(X^2e)\r)\\
&\in X_{\ha}(X^{-1}c)E(A[X]).
\end{array}
$$
By the generalized Chevalley commutator formula, since $\ha\in\Psi$ is a root of maximal length, we have
$$
\biggl(\prod_{
\begin{array}{c}
\vspace{-3pt}\st m_1(\alpha)=-1,\\
\st \alpha\neq -\frac\ha 2
\end{array}}\hspace{-15pt}X_\alpha(Xc_\alpha)\biggr)X_{\ha}(X^{-1}c)\in X_{\ha}(X^{-1}c)E(A[X]).
$$
Summing up, we have
$$
\sigma^{-1}(u_2g_1g_2)\in X_{\ha}(X^{-1}c)E(A[X]),
$$
and, consequently,
$$
\sigma^{-1}(g)=\sigma^{-1}(u_1h)\sigma^{-1}(v)\sigma^{-1}(u_2g_1g_2)\in E(A[X])\sigma^{-1}(v)X_{\ha}(X^{-1}c)E(A[X]),
$$
where, moreover, $\sigma^{-1}(v)\in U_1^+(A[X^{-1}],X^{-1}A[X^{-1}])$. Recall that, on the other hand,
$\sigma^{-1}(g)$ belongs to $E(A[X],XA[X])$ by the definition of $g$. Then we have
\begin{equation}\label{eq:v-ha}
\sigma^{-1}(v)X_{\ha}(X^{-1}c)\in E(A[X^{-1}],X^{-1}A[X^{-1}])\cap E(A[X])=\{1\}.
\end{equation}
Write
$$
v=\biggl(\hspace{2pt}\prod_{\st m_1(\alpha)=1}
\hspace{-5pt} X_\alpha(f_\alpha)\biggr) X_{\ha}(f),
$$
where $f_\alpha$, $f$ belong to the respective root modules. Then
\begin{equation}\label{eq:u_2}
\sigma^{-1}(v)=\biggl(\hspace{2pt}\prod_{
\st m_1(\alpha)=1}\hspace{-5 pt}X_\alpha(X^{-1}f_\alpha)\biggr) X_{\ha}(X^{-2}f).
\end{equation}
Then, clearly,~\eqref{eq:v-ha} implies that $\sigma^{-1}(v)=X_{\ha}(X^{-1}c)=1$.
Consequently, $v=1$ and hence
$$
g\in U_1^-(A)EL_1(A)E(A[X],XA[X])X_{-\ha}(X^{-1}V_{-\ha}).
$$
\end{proof}

Recall that $\rho:A\to A/I=l$ is the reduction modulo the maximal ideal map. We denote by the same letter the
associated maps on the groups of points of $G$. Note that Lemma~\ref{lem:sigma-def} implies that
$$
\sigma\circ\rho=\rho\circ\sigma.
$$

\begin{lem}\label{lem:sigma-M}
One has
$$
\begin{array}{c}
\sigma^{\pm 1}\bigl(E(A[X],XA[X])\cap\ker\rho\bigr)\subseteq X_{\mp\ha}(X^{-1}IV_{\mp\ha})M_+^*;\\
\sigma^{\pm 1}\bigl(E(A[X^{-1}],X^{-1}A[X^{-1}])\cap\ker\rho\bigr)\subseteq X_{\pm\ha}(XIV_{\pm\ha})M_-^*.
\end{array}
$$
Consequently, $\sigma(M_+^*M_-^*)=M_+^*M_-^*$.
\end{lem}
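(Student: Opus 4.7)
The plan is to bootstrap Lemma~\ref{lem:sigma(EX)} by incorporating the constraint $\rho(g)=1$ and reading off from the reduction modulo $I$ that each factor in the Lemma~\ref{lem:sigma(EX)}-factorization already lies in $\ker\rho$. Take $g\in E(A[X],XA[X])\cap\ker\rho$; Lemma~\ref{lem:sigma(EX)} produces
\[
\sigma(g)=u_1\,h\,g_1\,X_{-\ha}(X^{-1}c)
\]
with $u_1\in U_1^-(A)$, $h\in EL_1(A)$, $g_1\in E(A[X],XA[X])$ and $c\in V_{-\ha}$. Because $\sigma$ is the identity on $L_1$ and multiplies the argument of each relative root subscheme $X_\alpha$ by a power of $X$, it commutes with $\rho$. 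Applying $\rho$ therefore yields
\[
\rho(u_1)\rho(h)\rho(g_1)=X_{-\ha}(-X^{-1}\rho(c))
\]
in $G(l[X,X^{-1}])$. The left-hand side is polynomial in $X$, the right-hand side polynomial in $X^{-1}$, so both are constant in $G(l)$; then $X_{-\ha}(X^{-1}\rho(c))\in G(l)$ forces $\rho(c)=0$, i.e.\ $c\in I\cdot V_{-\ha}$. Evaluating what remains at $X=0$ and using uniqueness of the Levi factorization $U_1^-\times L_1\xrightarrow{\sim} P_1^-$ gives $\rho(u_1)=\rho(h)=1$, hence $\rho(g_1)=1$ as well.

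It then follows that $u_1\in U_1^-(I)\subseteq E(A,I)$, $h\in L_1(A,I)\cap EL_1(A)\subseteq E(A,I)$ by Lemma~\ref{lem:Abe-sect}, and $g_1\in E(A[X],XA[X])\cap\ker\rho$; so $u_1hg_1\in M_+^*$ and $\sigma(g)\in M_+^*\cdot X_{-\ha}(X^{-1}I\cdot V_{-\ha})$. The order asked by the statement is obtained by moving $X_{-\ha}(X^{-1}c)$ leftwards past the $M_+^*$-factor: the conjugate stays in $\ker\rho$, and the commutator calculus used in Lemma~\ref{lem:sigma(EX)} --- based on $\ha$ being of maximal height --- keeps it inside $E(A[X])$. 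The three symmetric inclusions follow by swapping $\sigma\leftrightarrow\sigma^{-1}$ and $X\leftrightarrow X^{-1}$.

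For the consequence $\sigma(M_+^*M_-^*)=M_+^*M_-^*$, I would combine the four main inclusions with the analysis of $\sigma$ on $E(A,I)$. By Lemma~\ref{lem:E*(A,I)}, every $g_0\in E(A,I)$ admits a Gauss decomposition $g_0=u_+\ell u_-$ with $u_\pm\in U_1^\pm(I)$ and $\ell\in L_1(A,I)\cap EL_1(A)$; since $\sigma|_{L_1}=\id$,
\[
\sigma(g_0)=\sigma(u_+)\,\ell\,\sigma(u_-)\in E(XI\cdot A[X])\cdot E(A,I)\cdot E(X^{-1}I\cdot A[X^{-1}])\subseteq M_+^*\cdot M_-^*.
\]
Writing $M_\pm^*=E(A,I)\cdot\bigl(E(A[X^{\pm 1}],X^{\pm 1}A[X^{\pm 1}])\cap\ker\rho\bigr)$ as in Lemma~\ref{lem:Abe-sect}, applying $\sigma$ factor by factor, and using the normality of $E(A[X^{\pm 1}],X^{\pm 1}A[X^{\pm 1}])\cap\ker\rho$ inside $M_\pm^*$ to regroup, one gets $\sigma(M_+^*M_-^*)\subseteq M_+^*M_-^*$; the reverse inclusion follows by repeating the argument for $\sigma^{-1}$.

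The main obstacle I anticipate is precisely this last reordering: I cannot appeal to $M_+^*M_-^*$ being a subgroup, since that is exactly the statement Corollary~\ref{cor:XX^-1} aims to establish. Instead, the collapsing of iterated products such as $M_+^*M_-^*\cdot M_+^*M_-^*$ back into $M_+^*M_-^*$ must be carried out by direct commutator computations with $X_{\pm\ha}$, treating the cases $m_1(\ha)=1$ and $m_1(\ha)=2$ separately, in the same spirit as the case analysis underlying Lemma~\ref{lem:sigma(EX)}.
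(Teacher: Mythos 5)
Your derivation of the first two inclusions is essentially the paper's: apply Lemma~\ref{lem:sigma(EX)}, use $\sigma\circ\rho=\rho\circ\sigma$, and conclude $\rho(c)=0$ from the fact that $G(l[X])\cap X_{-\ha}(X^{-1}V_{-\ha}\otimes l)$ is trivial. But your final reordering step is wrong as stated. To pass from $\sigma(g)\in M_+^*\cdot X_{-\ha}(X^{-1}IV_{-\ha})$ to $X_{-\ha}(X^{-1}IV_{-\ha})\cdot M_+^*$ you invoke conjugation and claim the commutator calculus "keeps it inside $E(A[X])$". It does not: $M_+^*$ contains elements $X_\beta(a)$ with constant $a$, and $[X_{-\ha}(X^{-1}c),X_\beta(a)]$ already produces factors $X_{-\ha+\beta}\bigl(X^{-1}N_{-\ha,\beta,1,1}(c,a)\bigr)$ involving $X^{-1}$. (The assertion that $X_{-\ha}(X^{-1}V_{-\ha})$ can be moved past $E(A[X])$ is essentially Lemma~\ref{lem:M_+^*}, which is proved \emph{after} and \emph{using} the present lemma, so this route is circular.) The correct move is trivial: apply your inclusion to $g^{-1}$, which also lies in $E(A[X],XA[X])\cap\ker\rho$, and invert, using that $\ha$ is non-multipliable so $X_{-\ha}(X^{-1}IV_{-\ha})$ is closed under inversion.

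The genuine gap is the ``consequently'' half, which is the substantive part of the lemma and which you leave open. The repair you sketch --- direct commutator computations between $X_{\ha}$ and $X_{-\ha}$ --- cannot work, because these are opposite roots and the generalized Chevalley commutator formula does not apply to them. The paper's mechanism is different and avoids ever having to show that $M_+^*M_-^*$ is closed under multiplication. Write $y=y_1y_0$ with the constant $y_0=y(0)$ on the \emph{right} and $z=z_0z_1$ with the constant $z_0=z(\infty)$ on the \emph{left}, so that $yz=y_1(y_0z_0)z_1$ has a single constant block in the middle. By Lemma~\ref{lem:E*(A,I)} --- the big-cell decomposition $G(A,I)=U_1^-(I)L_1(A,I)U_1^+(I)$, valid precisely because $I$ is the maximal ideal of the local ring $A$ --- one writes $y_0z_0=x^-tx^+$ and, since $E(A)$ normalizes $E(A[X],XA[X])$ and $E(A[X^{-1}],X^{-1}A[X^{-1}])$, one gets $yz=x^-t\,y_1'z_1'\,x^+$. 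Applying $\sigma^{-1}$, the outer factors land in $U_1^-(XIA[X])\subseteq M_+^*$ and $U_1^+(X^{-1}IA[X^{-1}])\subseteq M_-^*$, and the first part of the lemma reduces the middle to $M_+^*\,X_{\ha}(X^{-1}IV_{\ha})X_{-\ha}(XIV_{-\ha})\,M_-^*$. The single remaining obstruction is resolved by the same big-cell argument in the rank-one subgroup: $X_{\ha}(IV_{\ha})X_{-\ha}(IV_{-\ha})\subseteq X_{-\ha}(IV_{-\ha})\bigl(L_1(A,I)\cap E(A)\bigr)X_{\ha}(IV_{\ha})$ by Lemma~\ref{lem:E*(A,I)}, and applying $\tau_{\ha}^{-1}$ to this inclusion sends the first factor into $M_+^*$ and the last into $M_-^*$. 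Everything then collapses using only that $M_+^*$ and $M_-^*$ are separately groups. So the missing idea is not commutator calculus but the Gauss decomposition of the congruence subgroup modulo the maximal ideal, applied twice, together with the specific ordering $yz=y_1(y_0z_0)z_1$ that leaves only one $X_{\ha}X_{-\ha}$ pair to be swapped.
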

\begin{proof}
To prove the first inclusion, recall that for any $g\in E(A[X],XA[X])$ we have
$\sigma(g)=X_{-\ha}(X^{-1}u)h$ for some $u\in V_{-\ha}$, $h\in E(A[X])$, by Lemma~\ref{lem:sigma(EX)}. If, moreover,
$\rho(g)=1$, then $\rho(\sigma(g))=\sigma(\rho(g))=1$ as well. Hence,
$$\rho(h)^{-1}=\rho(X_{-\ha}(X^{-1}u))=X_{-\ha}(X^{-1}\rho(u))
$$
belongs
to $E(l[X])$. This implies that $\rho(u)=0$, or $u\in IV_{-\ha}$. Automatically, $h\in M_+^*$. The first inclusion is proved.
The second inclusion follows by symmetry.

Now take any $y\in M_+^*$ and $z\in M_-^*$. We can write $y=y_1y_0$, $y_0=y(0)\in E(A)$, and $y_1=yy_0^{-1}\in E(A[X],XA[X])$.
Similarly, $z=z_0z_1$, where $z_0=z(\infty)\in E(A)$, and $z_1=z(\infty)^{-1}z\in E(A[X^{-1}],X^{-1}A[X^{-1}])$.
Then $yz=y_1y_0z_0z_1$.
Clearly,
$$
\rho(y_0)=\rho(y_1)=\rho(z_0)=\rho(z_1)=1.
$$
By Lemma~\ref{lem:E*(A,I)} the product $x=y_0z_0$ belongs to $U_1^-(I)\l(L_1(A,I)\cap E(A)\r)U_1^+(I)$; write
$x=x^-tx^+$. Since $E(A)$ normalizes $E(A[X],XA[X])$ and
$E(A[X^{-1}],X^{-1}A[X^{-1}])$, we can rewrite
\begin{equation}
\label{eq:zzz}
yz=x^-ty_1z_1x^+,
\end{equation}
for some new $y_1\in E(A[X],XA[X])\cap \ker\rho$ and $z_1\in E(A[X^{-1}],X^{-1}A[X^{-1}])\cap \ker\rho$.

Applying the first part of the lemma, together with the inversion of both sides where needed, we deduce
$$
\sigma^{-1}(y_1z_1)\in M_+^*X_{\ha}(X^{-1}IV_{\mp\ha})X_{-\ha}(XIV_{\pm\ha})M_-^*.
$$
Again by Lemma~\ref{lem:E*(A,I)}, we have
$$
\begin{array}{rl}
X_{\ha}(X^{-1}IV_{\mp\ha})X_{-\ha}(XIV_{\pm\ha})&=\tau_{\ha}^{-1}\bigl(X_{\ha}(IV_{\mp\ha})X_{-\ha}(IV_{\pm\ha})\bigr)\vspace{2pt}\\
&\vspace{2pt}\subseteq\tau_{\ha}^{-1}\Bigl(X_{-\ha}(IV_{\pm\ha})\bigl(L_1(A,I)\cap E(A)\bigr)X_{\ha}(IV_{\mp\ha})\Bigr)\\
&=X_{-\ha}(IXV_{\pm\ha})\bigl(L_1(A,I)\cap E(A)\bigr)X_{\ha}(IX^{-1}V_{\mp\ha}).
\end{array}
$$
Consequently, $\sigma^{-1}(y_1z_1)\in M_+^*M_-^*$.
Now~\eqref{eq:zzz} readily implies that $\sigma^{-1}(yz)\in M_+^*M_-^*$. Thus, we have proved
$\sigma^{-1}(M_+M_-^*)\subseteq M_+^*M_-^*$.
By symmetry, $\sigma(M_+M_-^*)\subseteq M_+^*M_-^*$. Hence
$\sigma(M_+^*M_-^*)=M_+^*M_-^*$.
\end{proof}

\begin{lem}\label{lem:M_+^*}
One has $M_-^*E(A[X])\subseteq E(A[X])M_-^*$.
\end{lem}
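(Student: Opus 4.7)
The plan is to combine a natural decomposition of $M_-^*$ with the generalized Chevalley commutator formula~\eqref{eq:Chev} and the structural input from Lemma~\ref{lem:sigma-M}.

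First, I would decompose elements of $M_-^*$ via evaluation at $X^{-1} = 0$: for $m \in M_-^*$, one has $m(\infty) \in E(A)$, and since $\rho(m) = 1$ forces $\rho(m(\infty)) = 1$, we get $m(\infty) \in E^*(A, I)$ and $m = m(\infty) \cdot h$ with $h = m(\infty)^{-1} m \in E(A[X^{-1}], X^{-1}A[X^{-1}]) \cap \ker\rho$. Since $m(\infty) \in E(A) \subseteq E(A[X])$, the factor $m(\infty)$ can be absorbed into the $E(A[X])$-factor on the left; this reduces the claim to showing that for every $h \in E(A[X^{-1}], X^{-1}A[X^{-1}]) \cap \ker\rho$ and every $e \in E(A[X])$, we have $he \in E(A[X]) M_-^*$.

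Next, by Lemma~\ref{lem:Abe-sect} (applied to the section $A \hookrightarrow A[X^{-1}]$ of the evaluation map) and Lemma~\ref{lem:Abe-z(a,u)}, the element $h$ is a product of generators of the form $X_\beta(v X^{-k})$ and $E(A)$-conjugates thereof (the latter absorbed as above). It then suffices, for a single generator $e = X_\alpha(u X^l)$ of $E(A[X])$, to compute the commutator $[X_\beta(v X^{-k}), X_\alpha(u X^l)]$ and conclude via $he = [h, e] \cdot eh$. Expanding the commutator by~\eqref{eq:Chev} yields a product of factors $X_{i\beta + j\alpha}(X^{jl - ki} N_{ij}(v, u))$ with $i, j \ge 1$: factors with non-negative $X$-exponent lie in $E(A[X])$, while factors with negative $X$-exponent lie in $E(A[X^{-1}])$.

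Finally, these mixed factors must be rearranged into the required form $E(A[X]) \cdot M_-^*$. This is done by induction on the height of roots in $\NN \beta + \NN \alpha$: swapping adjacent mixed-sign factors produces new commutators in strictly higher-height root subgroups (again by~\eqref{eq:Chev}), to which the inductive hypothesis applies. The $E(A[X^{-1}])$-part of the resulting product lies in $\ker\rho$, since $\rho(h) = 1$ is preserved through the rearrangement. The main obstacle will be the bookkeeping of reductions modulo $I$ through the rearrangement, together with the special handling of roots $\gamma$ with $m_1(\gamma) = 0$ (on which $\sigma$ acts trivially): for these I would appeal to the Gauss-type decomposition of Lemma~\ref{lem:E*(A,I)} to separate out the $L_1$-contributions, and invoke Lemma~\ref{lem:sigma-M} to certify that the resulting decomposition is compatible with the $M_+^* M_-^*$-structure already produced in the $\sigma$-analysis of subsection~\ref{ssec:sigma}.
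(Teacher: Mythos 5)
Your reduction to $h\in E(A[X^{-1}],X^{-1}A[X^{-1}])\cap\ker\rho$ is fine and matches the first step of the paper's proof, but the core of your argument has a genuine gap. You propose to handle $he$ for a generator $e=X_\alpha(uX^l)$ by expanding $[X_\beta(vX^{-k}),X_\alpha(uX^l)]$ via the generalized Chevalley commutator formula~\eqref{eq:Chev}. That formula is only available when $m\alpha\neq -k'\beta$ for all $m,k'\ge 1$; it says nothing about anti-collinear pairs, and the pair $\beta=-\alpha$ (or $\beta=-2\alpha$ in type $BC$) is unavoidable and is precisely the hard case: the commutator $[X_{-\alpha}(vX^{-k}),X_\alpha(uX^l)]$ lives in the rank-one group $G'_\alpha(A[X,X^{-1}])$ and admits no closed root-factor expansion. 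Controlling exactly this interaction is what forced the paper to develop Lemmas~\ref{lem:Abe-tauX}, \ref{lem:BC-tauX}, \ref{lem:tau^2}, \ref{lem:sigma(EX)} and~\ref{lem:sigma-M}; your plan cites none of that machinery at the point where it is needed. In addition, your proposed induction ``on the height of roots in $\NN\beta+\NN\alpha$'' is not well-founded when $\beta$ is negative and $\alpha$ is positive: the roots $i\beta+j\alpha$ produced by a swap need not have larger height than $\beta$ and $\alpha$, so the rearrangement process has no termination argument.

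The paper's actual proof is organized quite differently and avoids these problems: it inducts on the exponent $k$ in the generator $X_\alpha(X^ku)$ of $E(A[X])$ (with $m_1(\alpha)\neq 0$), writes $X_\alpha(X^ku)\,z\,X_\alpha(X^ku)^{-1}=\sigma(y)$ with $y=X_\alpha(X^{k-m_1(\alpha)}u)\,\sigma^{-1}(z)\,X_\alpha(X^{k-m_1(\alpha)}u)^{-1}$, uses the first part of Lemma~\ref{lem:sigma-M} to get $\sigma^{-1}(z)\in X_{-\ha}(IXV_{-\ha})M_-^*$, applies the induction hypothesis (or, in the extraspecial case $k=1$, $m_1(\alpha)=2$, the normality of $M_-^*$ in $E(A[X^{-1}])$ together with Lemma~\ref{lem:E*(A,I)}) to conclude $y\in M_+^*M_-^*$, and finishes with $\sigma(M_+^*M_-^*)=M_+^*M_-^*$ from the second part of Lemma~\ref{lem:sigma-M}. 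If you want to salvage your approach, you would have to replace the commutator expansion for anti-collinear pairs by an argument of this degree-shifting type; as written, the proposal does not go through.
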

\begin{proof}
The group $E(A[X])$ is generated by $U_1^\pm(A[X])$ by the main theorem of~\cite{PS}.
Hence any element of this group is a product
of elements of the form $X_\alpha(X^ku)$, for $\alpha\in\Psi$ such that $m_1(\alpha)\neq 0$, and $u\in V_\alpha$, $k\ge 0$.

We show by induction on $k$ that $X_\alpha(X^ku)zX_\alpha(X^ku)^{-1}\in M_+^*M_-^*$ for any $z\in M_-^*$ and any $k\ge 0$.
We can assume that
$$
z\in E(A[X^{-1}],X^{-1}A[X^{-1}])\cap \ker\rho,
$$
since $X_\alpha(X^ku)z(0)X_\alpha(X^ku)^{-1}\in E(A[X])\cap\ker\rho=M_+^*$.  By symmetry, we can also assume
$\alpha\in\Psi^+$, i.e. $m_1(\alpha)>0$.
We have either $m_1(\alpha)=1$, or $m_1(\alpha)=2$, $\alpha=\ha$.

Since $M_-^*$ is normalized by $E(A)$, the case $k=0$ is clear. Take $k>0$.
Set $y=X_\alpha(X^{k-m_1(\alpha)}u)\sigma^{-1}(z)X_\alpha(X^{k-m_1(\alpha)}u)^{-1}$,
so that
$$
X_\alpha(X^ku)zX_\alpha(X^ku)^{-1}=\sigma(y).
$$
We will show that $y\in M_+^*M_-^*$. Then the proof is finished by the second part of Lemma~\ref{lem:sigma-M},
which says that $\sigma(M_+^*M_-^*)\subseteq M_+^*M_-^*$.
Note that, given the assumptions on $z$, by the first part of Lemma~\ref{lem:sigma-M} we have
$\sigma^{-1}(z)\in X_{-\ha}(IXV_{-\ha})M_-^*$.

Assume first that $k-m_1(\alpha)\ge 0$, that is, $m_1(\alpha)=1$ or $k>1$.
Clearly, in this case
$$
X_\alpha(X^{k-m_1(\alpha)}u)X_{-\ha}(IXV_{-\ha})X_\alpha(X^{k-m_1(\alpha)}u)^{-1}\subseteq M_+^*.
$$
Since $k-m_1(\alpha)<k$, combining the above with the induction hypothesis, we deduce
$$
y\in X_\alpha(X^{k-m_1(\alpha)}u)X_{-\ha}(IXV_{-\ha})X_\alpha(X^{k-m_1(\alpha)}u)^{-1}M_+^*M_-^*\subseteq M_+^*M_-^*.
$$

Now consider the case where $m_1(\alpha)=2$ and $k=1$. Then, actually, $\alpha=\ha$ and
$X_\alpha(X^{k-m_1(\alpha)}u)=X_{\ha}(X^{-1}u)$. In this case, clearly,
$$
X_\alpha(X^{k-m_1(\alpha)}u)M_-^*X_\alpha(X^{k-m_1(\alpha)}u)^{-1}\subseteq M_-^*,
$$
since $M_-^*$ is normal in $E(A[X^{-1}])$.
We also deduce, using Lemma~\ref{lem:E*(A,I)} at the last step, that
\begin{multline*}
\quad X_\alpha(X^{k-m_1(\alpha)}u)X_{-\ha}(IXV_{-\ha})X_\alpha(X^{k-m_1(\alpha)}u)^{-1}=\\
\hbox{$\begin{array}{l}
 =X_\ha(X^{-1}u)X_{-\ha}(IXV_{-\ha})X_\ha(X^{-1}u)^{-1}\\
 =\tau_\ha^{-1}\bigl(X_\ha(u)X_{-\ha}(IV_{-\ha})X_\ha(u)^{-1}\bigr)\\
 \subseteq \tau_\ha^{-1}(E_\ha(A,I))\subseteq M_+^*M_-^*.
\end{array}$}\quad
\end{multline*}
Combining these results, we again have $y\in M_+^*M_-^*$.
\end{proof}

\subsection{Main lemmas.}

\begin{lem}\label{lem:ha-comm}
Assume that $m_1(\ha)=2$. Then for any $u\in V_{\pm\ha}$, one has
$$
X_{\pm\ha}(X^{-1}u)E(A[X],XA[X])\subseteq U_1^\pm(A)EL_1(A)E(A[X],XA[X])X_{\pm\ha}(X^{-1}V_{\pm\ha})
X_{\mp\ha}(XV_{\mp\ha}).
$$
\end{lem}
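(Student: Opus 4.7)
We treat only the $+\ha$ case; the $-\ha$ case follows by the symmetric argument, replacing $X$ by $X^{-1}$ throughout. The crucial observation is that because $m_1(\ha)=2$, one has
\[
\sigma\bigl(X_\ha(X^{-1}u)\bigr)=X_\ha\bigl(X^{2}\cdot X^{-1}u\bigr)=X_\ha(Xu)\in E(A[X],XA[X]),
\]
so $X_\ha(X^{-1}u)=\sigma^{-1}(X_\ha(Xu))$, and for any $g\in E(A[X],XA[X])$,
\[
X_\ha(X^{-1}u)\cdot g=\sigma^{-1}\bigl(X_\ha(Xu)\cdot\sigma(g)\bigr).
\]
The plan is to bound the argument of $\sigma^{-1}$ using Lemma~\ref{lem:sigma(EX)} and the generalized Chevalley commutator formula, and then apply $\sigma^{-1}$ factor by factor.

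By Lemma~\ref{lem:sigma(EX)} we already have $\sigma(g)\in U_1^-(A)\,EL_1(A)\,E(A[X],XA[X])\,X_{-\ha}(X^{-1}V_{-\ha})$. The next step is to commute $X_\ha(Xu)$ past the initial factors $U_1^-(A)\,EL_1(A)$. For any root $\beta$ with $m_1(\beta)<0$ and any $v\in V_\beta$, the generalized Chevalley commutator formula yields
\[
[X_\ha(Xu),X_\beta(v)]=\prod_{\st i,j\ge 1,\,i\ha+j\beta\in\Psi}X_{i\ha+j\beta}\bigl(X^{i}\,N_{\ha\beta ij}(u,v)\bigr),
\]
and every factor carries $X^{i}$ with $i\ge 1$, hence lies in $E(A[X],XA[X])$. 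Combining this with the $L_1$-action on $X_\ha$ (which sends $X_\ha(Xu)$ into $X_\ha(XV_\ha)\subseteq E(A[X],XA[X])$) and with the normality of $E(A[X],XA[X])$ in $E(A[X])$, I obtain
\[
X_\ha(Xu)\cdot U_1^-(A)\,EL_1(A)\subseteq U_1^-(A)\,EL_1(A)\,E(A[X],XA[X]),
\]
so that
\[
X_\ha(Xu)\cdot\sigma(g)\subseteq U_1^-(A)\,EL_1(A)\,E(A[X],XA[X])\,X_{-\ha}(X^{-1}V_{-\ha}).
\]

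Finally, I apply $\sigma^{-1}$ to this inclusion and evaluate it piece by piece: $\sigma^{-1}(U_1^-(A))\subseteq U_1^-(XA[X])\subseteq E(A[X],XA[X])$, since every root in $U_1^-$ has $m_1(\beta)<0$; $\sigma^{-1}(EL_1(A))=EL_1(A)$ because $\sigma$ fixes $L_1$; $\sigma^{-1}(E(A[X],XA[X]))\subseteq U_1^+(A)\,EL_1(A)\,E(A[X],XA[X])\,X_\ha(X^{-1}V_\ha)$ by Lemma~\ref{lem:sigma(EX)} applied to $\sigma^{-1}$; and $\sigma^{-1}(X_{-\ha}(X^{-1}V_{-\ha}))=X_{-\ha}(XV_{-\ha})$ because $m_1(-\ha)=-2$. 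Concatenating these bounds and repeatedly using that $E(A[X],XA[X])\lhd E(A[X])$ (hence commutes through every factor from $E(A[X])$) together with the fact that $L_1$ normalizes $U_1^+$, the resulting string of seven factors collapses precisely to
\[
U_1^+(A)\,EL_1(A)\,E(A[X],XA[X])\,X_\ha(X^{-1}V_\ha)\,X_{-\ha}(XV_{-\ha}),
\]
which is the desired right-hand side.

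The principal technical obstacle is the commutator step: one must verify that every contribution from $[X_\ha(Xu),X_\beta(v)]$ carries a non-negative power of $X$, which relies on $X_\ha(Xu)$ entering with weight $X^{i}$ in the degree-$i$ polynomial map $N_{\ha\beta ij}$; this is in turn why the assumption $m_1(\ha)=2$ (as opposed to $m_1(\ha)=1$) is precisely what makes the shifted element $X_\ha(X^{-1}u)$ the preimage under $\sigma$ of an element of $E(A[X],XA[X])$. Once this is in hand, the remaining bookkeeping is a routine normality computation.
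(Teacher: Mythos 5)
Your proof follows the paper's argument essentially step for step: conjugate by $\sigma$ using $\sigma(X_\ha(X^{-1}u))=X_\ha(Xu)$, invoke Lemma~\ref{lem:sigma(EX)} for both $\sigma(E(A[X],XA[X]))$ and $\sigma^{-1}(E(A[X],XA[X]))$, evaluate $\sigma^{-1}$ on the remaining factors, and collapse the resulting string by normality. The only place you diverge is the absorption of $X_\ha(Xu)$ into the product $U_1^-(A)EL_1(A)E(A[X],XA[X])X_{-\ha}(X^{-1}V_{-\ha})$, and there your explicit computation is not valid as stated: for $\beta\in\Psi$ with $m_1(\beta)<0$ you apply the generalized Chevalley commutator formula to the pair $(\ha,\beta)$, but $U_1^-$ contains $X_{-\ha}$ itself (and $X_{-\ha/2}$ in type $BC_l$), and for $\beta$ collinear to $-\ha$ the hypothesis $m\ha\neq -k\beta$ of that formula fails, so no such product expansion of the commutator exists. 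Fortunately this detour is unnecessary: $X_\ha(Xu)$ already lies in $E(A[X],XA[X])$, which is by construction normal in $E(A[X])\supseteq E(A)\supseteq U_1^-(A)EL_1(A)$, so conjugating it past those factors keeps it inside $E(A[X],XA[X])$ with no case analysis --- this is exactly the one-line justification the paper gives (``since $E(A)$ normalizes $E(A[X],XA[X])$''), and you in fact cite this normality yourself. With that substitution your argument coincides with the paper's; the rest of the bookkeeping (in particular the final collapse using that $EL_1(A)$ normalizes $U_1^+(A)$) is correct.
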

\begin{proof}
Clearly, it is enough to consider the case of $X_{\ha}(X^{-1}u)$.
By Lemma~\ref{lem:sigma(EX)}, we have
$$
\begin{array}{l}
X_{\ha}(X^{-1}u)E(A[X],XA[X])=\sigma^{-1}\Bigl(X_{\ha}(Xu)\sigma(E(A[X],XA[X]))\Bigr)\\
\qquad\subseteq \sigma^{-1}\Bigl(X_{\ha}(Xu)U_1^-(A)EL_1(A)E(A[X],XA[X])X_{-\ha}(X^{-1}V_{-\ha})\Bigr).
\end{array}
$$
Since $E(A)$ normalizes $E(A[X],XA[X])$, the last expression can be rewritten as
$$
\sigma^{-1}\Bigl(U_1^-(A)EL_1(A)E(A[X],XA[X])X_{-\ha}(X^{-1}V_{-\ha})\Bigr).
$$
We have $\sigma^{-1}\bigl(X_{-\ha}(X^{-1}V_{-\ha})\bigr)=X_{-\ha}(XV_{-\ha})$, and
$$
\sigma^{-1}\bigl(U_1^-(A)EL_1(A)\bigr)=\sigma^{-1}\bigl(U_1^-(A)\bigr)EL_1(A)\subseteq E(A[X],XA[X])EL_1(A).
$$
Applying Lemma~\ref{lem:sigma(EX)} to $\sigma^{-1}\bigl(E(A[X],XA[X])\bigr)$, and using the fact that $E(A)$
normalizes $E(A[X],XA[X])$ again, as well as that $EL_1(A)$ normalizes $U_1^+(A)$, we deduce
$$
\begin{array}{l}
\sigma^{-1}\Bigl(U_1^-(A)EL_1(A)E(A[X],XA[X])X_{-\ha}(X^{-1}V_{-\ha})\Bigr)\subseteq\\
\qquad\subseteq E(A[X],XA[X])EL_1(A)U_1^+(A)E(A[X],XA[X])X_{\ha}(X^{-1}V_\ha)X_{-\ha}(XV_{-\ha})\\
\qquad\subseteq U_1^+(A)EL_1(A)E(A[X],XA[X])X_{\ha}(X^{-1}V_\ha)X_{-\ha}(XV_{-\ha}).
\end{array}
$$
\end{proof}

\begin{lem}\label{lem:sigma-Z}
The product
$$
E(A[X])E(A[X^{-1}])E(A[X])
$$
is invariant under $\sigma^\pm$.
\end{lem}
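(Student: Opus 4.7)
The plan is to prove $\sigma(Z)\subseteq Z$ where $Z:=E(A[X])E(A[X^{-1}])E(A[X])$; invariance under $\sigma^{-1}$ then follows by the substitution $X\leftrightarrow X^{-1}$, which interchanges $\sigma$ with $\sigma^{-1}$ and swaps $N:=E(A[X],XA[X])$ with $N':=E(A[X^{-1}],X^{-1}A[X^{-1}])$.

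The first step is to rewrite $Z$ in a convenient form. Using the factorizations $E(A[X])=E(A)\cdot N$ and $E(A[X^{-1}])=E(A)\cdot N'$, together with the fact that $E(A)$ normalizes both $N$ and $N'$ (they are normal in the respective ambient groups $E(A[X^{\pm 1}])$), one can sweep all $E(A)$-factors to one end and collapse the product to
\[
Z = E(A) \cdot N \cdot N' \cdot N.
\]
Any $g\in Z$ thus admits a presentation $g=a\cdot n_1\cdot n_2'\cdot n_3$ with $a\in E(A)$, $n_1,n_3\in N$, $n_2'\in N'$, and $\sigma(g)=\sigma(a)\sigma(n_1)\sigma(n_2')\sigma(n_3)$. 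It therefore suffices to describe each factor and then reassemble.

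For $\sigma(E(A))$ I would apply the Gauss decomposition $G(A)=U_1^+(A)U_1^-(A)L_1(A)U_1^+(A)$ from~\cite[Exp. XXVI, Th. 5.1]{SGA} with respect to the maximal parabolic $P_1$: for $a\in E(A)$ write $a=v_1uhv_2$ with $v_1,v_2\in U_1^+(A)$, $u\in U_1^-(A)$, $h\in EL_1(A)$. Since $\sigma$ fixes $EL_1(A)$ pointwise, sends $U_1^+(A)$ into $U_1^+(A[X],XA[X])\subseteq N$, and sends $U_1^-(A)$ into $U_1^-(A[X^{-1}],X^{-1}A[X^{-1}])\subseteq N'$, grouping the image as $\sigma(a)=\sigma(v_1)\cdot\sigma(u)\cdot(h\sigma(v_2))$ exhibits $\sigma(a)$ as an element of $E(A[X])\cdot E(A[X^{-1}])\cdot E(A[X])=Z$. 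For $\sigma(N)$ and $\sigma(N')$, I invoke Lemma~\ref{lem:sigma(EX)} and its symmetric version to obtain
\[
\sigma(N)\subseteq E(A[X])\cdot X_{-\ha}(X^{-1}V_{-\ha}), \qquad \sigma(N')\subseteq E(A[X^{-1}])\cdot X_{\ha}(XV_{\ha}),
\]
where the extra factors lie in $N'$ and $N$ respectively; when $m_1(\ha)=1$, the sharper Lemma~\ref{lem:sigma(EX)-1} eliminates these extra factors altogether.

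To conclude I would substitute these bounds into $\sigma(g)=\sigma(a)\sigma(n_1)\sigma(n_2')\sigma(n_3)$, producing a product of factors typed as elements of $E(A)$, $N$, $N'$, or the distinguished subsets $X_{\pm\ha}(X^{\mp 1}V_{\pm\ha})$, and reorganize into the pattern $E(A)\cdot N\cdot N'\cdot N$ by using (i) that $E(A)$ normalizes $N$ and $N'$, hence commutes through them and collapses consecutive factors of the same type; and (ii), in the case $m_1(\ha)=2$, Lemma~\ref{lem:ha-comm}, which pushes each $X_{\pm\ha}(X^{\mp 1}v)$-factor past an adjacent $N$-factor at the cost of an $E(A)$-factor and a compensating $X_{\mp\ha}(X^{\pm 1}v')$-factor that gets absorbed into a neighbouring $N$- or $N'$-factor. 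The main obstacle is precisely this combinatorial bookkeeping in the $m_1(\ha)=2$ case: Lemma~\ref{lem:ha-comm} requires each $X_{\pm\ha}$-factor to stand immediately to the left of an $N$-factor, so one must avoid sweeping an $E(A)$-factor across a $X_{\pm\ha}$-factor—doing so would conjugate it into a generic element of $N'$ and destroy the structure needed. The correct order is therefore to first pair each $X_{\pm\ha}$-factor with an adjacent $N$-factor via Lemma~\ref{lem:ha-comm}, and only afterwards absorb the residual $E(A)$- and $N$-factors into a final expression of the form $E(A)\cdot N\cdot N'\cdot N$.
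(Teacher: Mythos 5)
Your setup is sound and follows the paper up to a point: the rewriting of $Z$ using the normality of $E(A[X],XA[X])$ and $E(A[X^{-1}],X^{-1}A[X^{-1}])$ under $E(A)$, the Gauss decomposition of $E(A)$ with respect to $P_1$, and the invocation of Lemma~\ref{lem:sigma(EX)-1} in the case $m_1(\ha)=1$ all match the paper's argument, and that case is complete. The gap is in the case $m_1(\ha)=2$, which is the actual content of the lemma, and your ``combinatorial bookkeeping'' does not close it. After substituting the bounds $\sigma(n_1),\sigma(n_3)\in E(A[X])X_{-\ha}(X^{-1}V_{-\ha})$ and $\sigma(n_2')\in E(A[X^{-1}])X_{\ha}(XV_{\ha})$, the stray factors sit in the pattern
$$
\cdots X_{-\ha}(X^{-1}V_{-\ha})\cdot E(A[X^{-1}])X_{\ha}(XV_{\ha})\cdot E(A[X])\,X_{-\ha}(X^{-1}V_{-\ha}),
$$
and since $X_{-\ha}(X^{-1}V_{-\ha})\subseteq E(A[X^{-1}])$ and $X_{\ha}(XV_{\ha})\subseteq E(A[X])$, naive absorption yields a \emph{four}-fold alternation $E(A[X])E(A[X^{-1}])E(A[X])E(A[X^{-1}])$, not $Z$. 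Your proposed fix --- ``first pair each $X_{\pm\ha}$-factor with an adjacent $N$-factor via Lemma~\ref{lem:ha-comm}'' --- does not work as stated: the factor coming from $\sigma(n_1)$ is not adjacent to an $N$-factor at all (it is followed by the $\sigma(n_2')$-block), and each application of Lemma~\ref{lem:ha-comm} spawns a new compensating factor $X_{\mp\ha}(X^{\pm1}v')$ of the opposite sign sitting in exactly the same problematic position, so the procedure does not visibly terminate.

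The paper resolves this with two maneuvers that are absent from your proposal. First, it keeps $\sigma\bigl(E(A[X^{-1}],X^{-1}A[X^{-1}])U_1^-(A)\bigr)$ as an unexpanded middle block and uses Lemma~\ref{lem:ha-comm} (applied inside the argument of $\sigma$, with both sides inverted) to move the right-hand stray factor leftward \emph{past that whole block}, so that the three stray factors collect into the single product $X_{-\ha}(X^{-1}V_{-\ha})X_{\ha}(XV_{\ha})X_{-\ha}(X^{-1}V_{-\ha})$. Second --- and this is the essential missing idea --- it applies the Gauss decomposition in the rank-one group $E_{\ha}(A)$, transported by $\tau_{\ha}$, to convert this ``$-,+,-$'' product into $(L_{\ha}(A)\cap E_{\ha}(A))\,X_{\ha}(XV_{\ha})X_{-\ha}(X^{-1}V_{-\ha})X_{\ha}(XV_{\ha})$, i.e.\ into the correct ``$+,-,+$'' order; the trailing $X_{\ha}(XV_{\ha})$ is then pushed through the middle block by one more application of Lemma~\ref{lem:sigma(EX)}, using $X_{\ha}(X^{-1}V_{\ha})\subseteq E(A[X^{-1}],X^{-1}A[X^{-1}])$. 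Without this rank-one reordering step your argument cannot reduce the alternation length, so the case $m_1(\ha)=2$ remains unproved.
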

\begin{proof}
We consider the case of $\sigma$, the case of $\sigma^{-1}$ is symmetric. Set
$$
Z=E(A[X])E(A[X^{-1}])E(A[X]).
$$

Since $E(A)$ normalizes
$E(A[X],XA[X])$ and $E(A[X^{-1}],X^{-1}A[X^{-1}])$, we conclude that
$$
Z=E(A[X],XA[X])E(A)E(A[X^{-1}],X^{-1}A[X^{-1}])E(A[X],XA[X]).
$$
By Gauss decomposition~\cite[Th\'eor\`eme 5.1]{SGA} we have
$$
E(A)=U_1^+(A)U_1^-(A)EL_1(A)U_1^+(A)=U_1^+(A)EL_1(A)U_1^-(A)U_1^+(A).
$$
Then we can rewrite $Z$ as
\begin{equation}\label{eq:Z-reordered}
\begin{array}{l}
Z=U_1^+(A)EL_1(A)E(A[X],XA[X])E(A[X^{-1}],X^{-1}A[X^{-1}])\cdot\\
\qquad\quad\cdot U_1^-(A)E(A[X],XA[X])U_1^+(A).
\end{array}
\end{equation}

Now we consider two cases. First assume that $m_1(\ha)=1$. In this case by Lemma~\ref{lem:sigma(EX)-1}
we have
$$
\sigma\bigl(E(A[X],XA[X])\bigr)\subseteq E(A[X]),\qquad \sigma\bigl(E(A[X^{-1}],X^{-1}A[X^{-1}])\bigr)\subseteq E(A[X^{-1}]).
$$
Here the second inclusion actually follows from the case of $\sigma^{-1}$ in Lemma~\ref{lem:sigma(EX)-1} after interchanging
$X$ and $X^{-1}$. Using these inclusions and the expression~\eqref{eq:Z-reordered} for $Z$, one readily sees
that $\sigma(Z)\subseteq Z$.

From now until the end of the proof, we assume that $m_1(\ha)=2$. In this case we use Lemma~\ref{lem:sigma(EX)}
instead of Lemma~\ref{lem:sigma(EX)-1}. We see from the expression~\eqref{eq:Z-reordered},
that $\sigma(Z)$ is contained in
\begin{equation}\label{eq:s(Z)-1}
 E(A[X])X_{-\ha}(X^{-1}V_{-\ha})\cdot \sigma\Bigl(E(A[X^{-1}],X^{-1}A[X^{-1}])U_1^-(A)\Bigr)\cdot
X_{-\ha}(X^{-1}V_{-\ha}) E(A[X]).
\end{equation}
Using Lemma~\ref{lem:ha-comm}, we compute
$$
\begin{array}{l}
\sigma\Bigl(E(A[X^{-1}],X^{-1}A[X^{-1}])U_1^-(A)\Bigr)
X_{-\ha}(X^{-1}V_{-\ha})\\
\qquad=\sigma\Bigl(E(A[X^{-1}],X^{-1}A[X^{-1}])U_1^-(A)X_{-\ha}(XV_{-\ha})\Bigr)\\
\qquad=\sigma\Bigl(E(A[X^{-1}],X^{-1}A[X^{-1}])X_{-\ha}(XV_{-\ha})U_1^-(A)\Bigr)\\
\qquad\subseteq\sigma\Bigl(X_{\ha}(X^{-1}V_\ha)X_{-\ha}(XV_{-\ha})E(A[X^{-1}],X^{-1}A[X^{-1}])EL_1(A)U_1^-(A)\Bigr)\\
\qquad\subseteq X_{\ha}(XV_\ha)X_{-\ha}(X^{-1}V_{-\ha}) \sigma\Bigl(E(A[X^{-1}],X^{-1}A[X^{-1}])U_1^-(A)\Bigr)EL_1(A).\\
\end{array}
$$
Note that we have used the inclusion from Lemma~\ref{lem:ha-comm} with both sides inverted; we can do that since
$\ha$ is a non-multipliable root, and therefore $\l(X_{\ha}(XV_\ha)\r)^{-1}=X_{\ha}(XV_\ha)$.

Substituting the previous computation into~\eqref{eq:s(Z)-1}, we obtain
\begin{equation}\label{eq:s(Z)-2}
\begin{array}{l}
\sigma(Z)\subseteq E(A[X])\cdot X_{-\ha}(X^{-1}V_{-\ha})X_{\ha}(XV_\ha)X_{-\ha}(X^{-1}V_{-\ha})\cdot\\
\qquad \cdot\sigma\Bigl(E(A[X^{-1}],X^{-1}A[X^{-1}])U_1^-(A)\Bigr)\cdot
E(A[X]).
\end{array}
\end{equation}
By Gauss decomposition in $E_\ha(A)$, we have
$$
\begin{array}{l}
X_{-\ha}(X^{-1}V_{-\ha})X_{\ha}(XV_\ha)X_{-\ha}(X^{-1}V_{-\ha})=\tau_\ha\Bigl(
X_{-\ha}(V_{-\ha})X_{\ha}(V_\ha)X_{-\ha}(V_{-\ha})\Bigr)\\
\qquad\qquad\subseteq
\tau_\ha\Bigl((L_\ha(A)\cap E_\ha(A))X_{\ha}(V_{\ha})X_{-\ha}(V_{-\ha})X_{\ha}(V_{\ha})\Bigr)\\
\qquad\qquad=(L_\ha(A)\cap E_\ha(A))X_{\ha}(XV_{\ha})X_{-\ha}(X^{-1}V_{-\ha})X_{\ha}(XV_{\ha}).
\end{array}
$$
Subtituting this result into~\eqref{eq:s(Z)-2}, we see that to complete the proof of the lemma, it is enough to show
that
$$
X_{\ha}(XV_{\ha})\sigma\Bigl(E(A[X^{-1}],X^{-1}A[X^{-1}])U_1^-(A)\Bigr)\subseteq E(A[X^{-1}])E(A[X]).
$$
We obtain this inclusion using Lemma~\ref{lem:sigma(EX)} again:
$$
\begin{array}{l}
X_{\ha}(XV_{\ha})\sigma\Bigl(E(A[X^{-1}],X^{-1}A[X^{-1}])U_1^-(A)\Bigr)\\
\qquad\qquad =\sigma\Bigl(X_{\ha}(X^{-1}V_{\ha})E(A[X^{-1}],X^{-1}A[X^{-1}])U_1^-(A)\Bigr)\\
\qquad\qquad=\sigma\Bigl(U_1^-(A)E(A[X^{-1}],X^{-1}A[X^{-1}])\Bigr)\subseteq
E(A[X^{-1}])X_{\ha}(XV_\ha).
\end{array}
$$
\end{proof}

\begin{lem}\label{lem:EXX-1}
One has $E(A[X,X^{-1}])=E(A[X])E(A[X^{-1}])E(A[X])$.
\end{lem}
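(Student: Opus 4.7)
The plan is to verify $E(A[X,X^{-1}])=Z$, where $Z=E(A[X])E(A[X^{-1}])E(A[X])$. The inclusion $Z\subseteq E(A[X,X^{-1}])$ is immediate from the natural ring inclusions $A[X^{\pm1}]\hookrightarrow A[X,X^{-1}]$. For the converse, I would exploit Theorem~\ref{th:PS-normality}~(ii): since $G$ satisfies \ee, the elementary subgroup $E(A[X,X^{-1}])=E_{P_1}(A[X,X^{-1}])$ is generated by $U_1^+(A[X,X^{-1}])$ and $U_1^-(A[X,X^{-1}])$, i.e.\ by the set of elements $X_\alpha(v)$ for $\alpha\in\Psi$ with $m_1(\alpha)\neq 0$ and $v\in V_\alpha\otimes_A A[X,X^{-1}]$. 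Thus it is enough to show that right multiplication by any such generator preserves $Z$, since $1\in Z$ will then give $E(A[X,X^{-1}])\subseteq Z$.

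The mechanism is Lemma~\ref{lem:sigma-Z}, which ensures $\sigma^{\pm N}(Z)\subseteq Z$ for every $N\geq 0$. Fix a generator $X_\alpha(v)$ with $m_1(\alpha)\neq 0$; by symmetry assume $m_1(\alpha)>0$ (the case $m_1(\alpha)<0$ is handled identically with $\sigma$ and $\sigma^{-1}$ interchanged). Because $v$ has only finitely many Laurent monomials, we can choose $N$ so large that $X^{Nm_1(\alpha)}v\in V_\alpha\otimes_A A[X]$. By the defining property of $\sigma$,
$$
\sigma^N\bigl(X_\alpha(v)\bigr)=X_\alpha(X^{Nm_1(\alpha)}v)\in E(A[X]).
$$
For any $z\in Z$ we then have
$$
\sigma^N\bigl(zX_\alpha(v)\bigr)=\sigma^N(z)\cdot X_\alpha(X^{Nm_1(\alpha)}v)\in Z\cdot E(A[X]) = E(A[X])E(A[X^{-1}])E(A[X])=Z,
$$
where in the last equality we use that $E(A[X])$ is a subgroup, so $E(A[X])\cdot E(A[X])=E(A[X])$. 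Applying $\sigma^{-N}$ and invoking $\sigma^{-N}(Z)\subseteq Z$ one more time yields $zX_\alpha(v)\in Z$, as required.

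The main obstacle has already been cleared: Lemma~\ref{lem:sigma-Z} encapsulates the entire delicate case-by-case study of how $\sigma$ acts on the various congruence pieces $E(A[X],XA[X])$, $E(A[X^{-1}],X^{-1}A[X^{-1}])$, and $E(A)$, including the subtle $m_1(\ha)=2$ situation handled via Lemma~\ref{lem:ha-comm} and the Gauss decomposition in $E_{\ha}(A)$. Once $\sigma^{\pm 1}(Z)\subseteq Z$ is granted, the remaining step is the short formal argument above, which leverages only the scaling property $\sigma(X_\alpha(u))=X_\alpha(X^{m_1(\alpha)}u)$ and the fact that $U_1^{\pm}$ root subschemes generate $E$.
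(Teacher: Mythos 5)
Your argument is correct and is essentially the paper's own proof: both reduce to the generators $X_\alpha(v)$ with $m_1(\alpha)\neq 0$ via Theorem~\ref{th:PS-normality}, apply a high power of $\sigma$ to push the generator into $E(A[X])$, absorb it into $Z$, and return via the $\sigma^{\pm1}$-invariance of $Z$ from Lemma~\ref{lem:sigma-Z}. The only (immaterial) difference is that you multiply $Z$ on the right by the generator while the paper multiplies on the left.
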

\begin{proof}
Set $Z=E(A[X])E(A[X^{-1}])E(A[X])$.
By Theorem~\ref{th:PS-normality} the elementary subgroup $E(A[X,X^{-1}])$ is generated by the points of the unipotent radicals
of any two opposite parabolic $A$-subgroups of $G$; we take $U_1^+(A[X,X^{-1}])$ and $U_1^-(A[X,X^{-1}])$. In other
words, $E(A[X,X^{-1}])$ is generated by the elements of $X_\alpha(V_\alpha\otimes_A A[X,X^{-1}])$
for all $\alpha\in\Psi$ such that $m_1(\alpha)\neq 0$. Since, clearly, $Z\subseteq E(A[X,X^{-1}])$, it is enough
to show that
$$
X_\alpha(u)Z\subseteq Z\quad\mbox{for any $\alpha\in\Psi$ such that $m_1(\alpha)\neq 0$, $u\in V_\alpha\otimes_A A[X,X^{-1}]$}.
$$
By symmetry of positive and negative roots, it is enough to consider only $\alpha\in\Psi$ with $m_1(\alpha)>0$.
Clearly, for any $u\in V_\alpha\otimes_A A[X,X^{-1}]$ there is $k\ge 0$ such that $X^ku\in V_\alpha\otimes_A A[X]$.
Then, since $Z$ is $\sigma^{\pm 1}$-invariant by Lemma~\ref{lem:sigma-Z}, we have
$$
\begin{array}{rl}
X_\alpha(u)Z&=\sigma^{-k}\bigl(\sigma^k(X_\alpha(u)Z)\bigr)\subseteq \sigma^{-k}\bigl(X_\alpha(X^{km_1(\alpha)}u)Z\bigr)\\
& \subseteq \sigma^{-k}\bigl(E(A[X])Z\bigr)=\sigma^{-k}(Z)\subseteq Z.
\end{array}
$$
\end{proof}

\begin{lem}\label{lem:M*}
One has $M^*=M_+^*M_-^*$.
\end{lem}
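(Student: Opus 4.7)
The plan is to prove the non-trivial inclusion $M^* \subseteq M_+^* M_-^*$; the reverse inclusion is immediate from the fact that both $M_+^*$ and $M_-^*$ sit inside $M^*$ and $M^*$ is a subgroup of $E(A[X,X^{-1}])$.

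Given $g \in M^*$, I would first apply Lemma~\ref{lem:EXX-1} to write $g = abc$ with $a,c \in E(A[X])$ and $b \in E(A[X^{-1}])$. Next, Lemma~\ref{lem:Abe-sect} allows one to split off the constant terms: $a = a_0 a_+$, $c = c_0 c_+$ with $a_0 = a(0),\, c_0 = c(0) \in E(A)$ and $a_+, c_+ \in E(A[X], XA[X])$, and similarly $b = b_0 b_-$ with $b_0 = b(\infty) \in E(A)$ and $b_- \in E(A[X^{-1}], X^{-1}A[X^{-1}])$. Using that $E(A)$ normalizes both $E(A[X], XA[X])$ and $E(A[X^{-1}], X^{-1}A[X^{-1}])$, one gathers the constant factors on the left to rewrite
$$g = e_0 \cdot \alpha \cdot b'_- \cdot c_+,$$
where $e_0 = a_0 b_0 c_0 \in E(A)$, $\alpha \in E(A[X], XA[X])$, and $b'_-$ is a conjugate of $b_-$ still lying in $E(A[X^{-1}], X^{-1}A[X^{-1}])$.

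The key step is to use the congruence condition $\rho(g)=1$ to force $b'_- \in M_-^*$. Solving $\rho(e_0)\rho(\alpha)\rho(b'_-)\rho(c_+) = 1$ yields
$$\rho(b'_-) = \rho(\alpha)^{-1}\rho(e_0)^{-1}\rho(c_+)^{-1} \in G(l[X]),$$
but by construction $\rho(b'_-) \in G(l[X^{-1}])$, so $\rho(b'_-) \in G(l[X]) \cap G(l[X^{-1}]) = G(l)$. Evaluating at $X^{-1} = 0$, where $b'_-$ specializes to $1$, shows $\rho(b'_-)=1$, hence $b'_- \in M_-^*$. Now Lemma~\ref{lem:M_+^*} furnishes $\eta \in E(A[X])$ and $\mu \in M_-^*$ such that $b'_- c_+ = \eta \mu$, so $g = (e_0 \alpha \eta) \mu$. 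Applying $\rho$ once more gives $\rho(e_0 \alpha \eta) = \rho(g)\rho(\mu)^{-1} = 1$, so $e_0 \alpha \eta \in E(A[X]) \cap \ker\rho = M_+^*$, and we conclude $g \in M_+^* M_-^*$.

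The substantive technical work has already been encoded into Lemma~\ref{lem:EXX-1} (the Gauss-like decomposition over Laurent polynomials) and Lemma~\ref{lem:M_+^*} (the congruence version, which itself relied on the intricate analysis of the automorphism $\sigma$ culminating in Lemma~\ref{lem:sigma-M}). What remains is essentially a diagram chase; the one point requiring care is the passage from $\rho(b'_-) \in G(l[X]) \cap G(l[X^{-1}])$ to $\rho(b'_-)=1$, which uses that $G$ is affine and that $b'_-$ specializes to the identity at $X^{-1}=0$.
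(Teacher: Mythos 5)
Your argument is correct and follows essentially the same route as the paper's proof: decompose via Lemma~\ref{lem:EXX-1}, use the congruence condition $\rho(g)=1$ together with $G(l[X])\cap G(l[X^{-1}])=G(l)$ to place the $X^{-1}$-factor in $M_-^*$, and then push it past $E(A[X])$ using Lemma~\ref{lem:M_+^*}. The only cosmetic difference is that you split off and regroup the constant terms explicitly, whereas the paper keeps the middle factor $y$ whole and observes $\rho(y)\in E(l)$, hence $y\in E(A)M_-^*$; both variants are fine.
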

\begin{proof}
Let $x\in M^*$. By Lemma~\ref{lem:EXX-1} we have $x=x_1yx_2$, where $x_1,x_2\in E(A[X])$, $y\in E(A[X^{-1}])$.
Since $\rho(x)=1$, we have
$$
\rho(y)=\rho(x_1)^{-1}\rho(x_2)^{-1}\in E(l[X])\cap E(l[X^{-1}])=E(l).
$$
Then $y\in E(A)M_-^*$. By Lemma~\ref{lem:M_+^*} we have $M_-^*E(A[X])\subseteq E(A[X])M_-^*$,
hence $yx_2$ belongs to $E(A[X])M_-^*$, and thus $x=x_1yx_2$ belongs to $E(A[X])M_-^*$. Since $\rho(x)=1$,
we have $x\in M_+^*M_-^*$.
Hence $M^*=M_+^*M_-^*$.
\end{proof}

\section{Proof of the main theorems}

\subsection{$\PP^1$-gluing theorem.}


\begin{proof}[Proof of Theorem~\ref{th:P^1}]
We only need to prove the exactness at the third term. It is equivalent to showing that,
for any $x\in G(A[X],XA[X])$, if there exists an element $y\in G(A[X^{-1}])$ such that $xy^{-1}\in E(A[X,X^{-1}])$, then
$x\in E(A[X])$. We prove this claim.

By Suslin's local-global principle Lemma~\ref{lem:PS-17} we can assume that $A$ is local. Let $I$ be the maximal ideal of $A$,
$l=A/I$, and $\rho:G(A[X,X^{-1}])\to G(l[X,X^{-1}])$ the natural map. By the Margaux---Soul\'e Theorem~\ref{th:MS-red} we have
 $G(l[X])=G(l)E(l[X])$.
Since $x\in G(A[X],XA[X])$, we have $\rho(x)\in E(l[X])$, and hence $x\in E(A[X])G(A[X],I\cdot A[X])$. Therefore, we can
assume $x\in G(A[X],I\cdot A[X])$ from the start.

Then, by the assumption of the theorem, $\rho(y)\in E(l[X,X^{-1}])$ and hence,
using~the Margaux---Soul\'e Theorem~\ref{th:MS-red} again, we obtain
$$
\rho(y)\in G(l[X^{-1}])\cap E(l[X,X^{-1}])=G(l)E(l[X^{-1}])\cap E(l[X,X^{-1}]).
$$
Since $G(l)\cap E(l[X,X^{-1}])=E(l)$, we have $\rho(y)\in E(l)E(l[X^{-1}])=E(l[X^{-1}])$,
and hence
$$
y\in E(A[X^{-1}])G(A[X^{-1}],I\cdot A[X^{-1}]).
$$
Therefore, we can assume that $y\in G(A[X^{-1}],I\cdot A[X^{-1}])$ from the start. Then
$$
xy^{-1}\in G(A[X,X^{-1}],I\cdot A[X,X^{-1}])\cap E(A[X,X^{-1}])=E^*(A[X,X^{-1}],I\cdot A[X,X^{-1}]).
$$
By Corollary~\ref{cor:XX^-1} we have $xy^{-1}=x_+x_-$ for some $x_+\in E(A[X])$, $x_-\in E(A[X^{-1}])$. Therefore,
$x_+^{-1}x=x_-y\in G(A[X])\cap G(A[X^{-1}])=G(A)$. Hence $x\in G(A)E(A[X])$, and
thus $x\in E(A[X])$.
\end{proof}


The following Lemma extends~\cite[Corollary 5.7]{Sus} for $\GL_l$, and~\cite[Prop. 3.3]{Abe} for Chevalley groups.
It will be used in the proof of Theorem~\ref{th:whitehead-field}.

\begin{lem}\label{lem:E_f}
Let $A$, $G$ be as in Theorem~\ref{th:P^1}. Let $f\in A[X]$ be a monic polynomial.
For any $x\in G(A[X],XA[X])$ such that $F_f(x)\in E(A[X]_f)$, one has $x\in E(A[X])$.
\end{lem}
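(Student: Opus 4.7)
The approach is to reduce to the case of a local base ring and then exploit the $\PP^1$-gluing theorem by introducing an auxiliary variable and a change of coordinates.

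\textbf{Step 1 (Local reduction).} I apply Suslin's local-global principle (Lemma~\ref{lem:PS-17}) to $x$: since $x(0)=1\in E(A)$, it suffices to show $F_m(x)\in E(A_m[X])$ for each maximal ideal $m\subseteq A$. The hypothesis $F_f(x)\in E(A[X]_f)$ descends to $F_{F_m(f)}(F_m(x))\in E(A_m[X]_{F_m(f)})$, and $F_m(f)$ remains monic in $A_m[X]$. Thus I may assume $A$ is local with maximal ideal~$I$ and residue field $l$.

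\textbf{Step 2 (Auxiliary variable and change of coordinates).} I introduce a new indeterminate $T$ and set
\[
y(X,T)\;=\;x(XT)\;\in\; G(A[X,T]).
\]
Then $y(X,0)=x(0)=1$ and $y(X,1)=x(X)$, so proving $y\in E(A[X,T])$ and specializing at $T=1$ will yield $x\in E(A[X])$. After inverting $T$, the change of variables $X'=XT$ gives $A[X,T,T^{-1}]=A[X',T,T^{-1}]$ and, under this identification, $y=x(X')$. Consequently the hypothesis $F_f(x)\in E(A[X]_f)$ yields
\[
y\;\in\; E\bigl(A[X',T,T^{-1}]_{f(X')}\bigr)\;=\; E\bigl(A[X,T,T^{-1}]_{f(XT)}\bigr).
\]

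\textbf{Step 3 ($\PP^1$-gluing).} I apply Theorem~\ref{th:P^1} to the base ring $A[X]$ with auxiliary variable $T$ (the isotropy hypotheses on $G$ transfer to the local rings of $A[X]$), obtaining the exact sequence
\[
1\to K_1^G(A[X])\to K_1^G(A[X][T])\times K_1^G(A[X][T^{-1}])\to K_1^G(A[X][T,T^{-1}]).
\]
The plan is to split $y$ across this sequence, exploiting Corollary~\ref{cor:XX^-1} to decompose the congruence subgroup $E^*(A[X,T,T^{-1}],IA[X,T,T^{-1}])$ as a product $E^*(A[X,T],IA[X,T])\cdot E^*(A[X,T^{-1}],IA[X,T^{-1}])$. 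Combining this with the vanishing $y(X,0)=1$ and with Margaux--Soul\'e (Theorem~\ref{th:MS-red}) applied in the residue field to handle the image $\rho(y)\in G(l[X,T])$, the descent of $y$ to $E(A[X,T])$ can be completed.

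\textbf{Main obstacle.} The hardest step is Step~3: converting the localized elementariness over $A[X,T,T^{-1}]_{f(XT)}$ into elementariness over the full ring $A[X,T]$. The pair of natural denominators $(f(X),f(XT))$ fails to generate the unit ideal of $A[X,T^{\pm1}]$, since the two polynomials coincide on the diagonal $T=1$; consequently the factorization Lemma~\ref{lem:A_fg} cannot be applied verbatim to patch. The route around this is to exploit the monicity of $f$ to make the change of variables $X'=XT$ globally meaningful, and then to use Corollary~\ref{cor:XX^-1} in the $(X',T)$-coordinates together with the local-global principle in the $T$-variable to finish the descent; marshalling these ingredients correctly is the technical heart of the argument.
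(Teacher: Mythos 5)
Your proposal has a genuine gap at its central step, and you essentially say so yourself: Step~3 is announced as a ``plan'' whose ``technical heart'' is left unexecuted. The paper's proof (following Suslin and Abe) hinges on a specific trick that your substitution $X\mapsto XT$ does not reproduce. Namely, one sets $Y=X^{-1}$ and $g(Y)=Y^{\deg f}f(1/Y)$; because $f$ is \emph{monic}, $g\equiv 1\pmod{YA[Y]}$, so $Y$ and $g$ are comaximal in $A[Y]$, while $A[X]_{Xf}=A[Y]_{Yg}$. The hypothesis places the image of $x$ in $E(A[Y]_{Yg})$, and Lemma~\ref{lem:A_fg} (which requires exactly a comaximal pair) factors it as $x=F_g(u)F_Y(v)$ with $u\in E(A[Y]_Y)=E(A[X,X^{-1}])$ and $v\in E(A[Y]_g)$. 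Then $u^{-1}x$ is defined over both $A[Y]_Y$ and $A[Y]_g$, hence over $A[Y]=A[X^{-1}]$; this is precisely the element $y\in G(A[X^{-1}])$ with $xy^{-1}\in E(A[X,X^{-1}])$ that the proof of Theorem~\ref{th:P^1} requires, and $x(0)=1$ then forces $x\in E(A[X])$. This is where monicity actually enters: it makes the reciprocal polynomial comaximal with $Y$.

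Your route replaces this with $y(X,T)=x(XT)$ and an application of Theorem~\ref{th:P^1} over the base $A[X]$ in the variable $T$. But to invoke that theorem you would need a pair $(g_1,g_2)\in G(A[X][T])\times G(A[X][T^{-1}])$ whose difference is elementary over $A[X][T,T^{-1}]$, whereas all you have is elementariness after the additional localization at $f(XT)$ --- and removing that localization is the entire content of the lemma. You correctly observe that $(f(X),f(XT))$ is not a comaximal pair, so Lemma~\ref{lem:A_fg} cannot be applied, but the proposed workaround is never carried out: the appeal to Corollary~\ref{cor:XX^-1} and Margaux--Soul\'e amounts to re-running the proof of Theorem~\ref{th:P^1} rather than supplying the missing patching datum, and the monicity of $f$ is never used in a way that matters (the change of variables $X'=XT$ is ``globally meaningful'' for any $f$). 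As written, the argument does not close.
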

\begin{proof}
The proof literally repeats that of~\cite[Proposition 3.3]{Abe} (or~\cite[Corollary 5.7]{Sus}), using Lemma~\ref{lem:A_fg}
instead of~\cite[Lemma 3.2]{Abe} and Theorem~\ref{th:P^1} instead of~\cite[Theorem 2.16]{Abe}.
\end{proof}

\subsection{$K_1^G$ of the ring of polynomials over a field.}



\begin{proof}[Proof of Theorem~\ref{th:whitehead-field}]
The proof goes by induction on $n$. The case $n=1$ is the Margaux---Soul\'e Theorem~\ref{th:MS-red}.

Assume that $n\ge 2$, and the claim is true for any number of variables less than $n$, for a fixed field $k$.
By Lemma~\ref{lem:sc-red} it is enough to prove the equality
$G(k[X_1,\ldots,X_n])=G(k)E(k[X_1,\ldots,X_n])$
in the case where $G$ is a simply connected semisimple group.
Moreover, exactly as in the proof of Theorem~\ref{th:MS-red},
we reduce to the case where $G$ is a simply connected absolutely almost simple group.

Set $A=k[X_3,\ldots,X_n]$, so that $G(k[X_1,\ldots,X_n])=G(A[X_1,X_2])$.
For any element $x(X_1,X_2)\in G(A[X_1,X_2])$, consider the product
\begin{equation}\label{eq:y(X1,X2)}
y(X_1,X_2)=x(X_1,X_2)x(0,X_2)^{-1}x(0,0)x(X_1,0)^{-1}.
\end{equation}
Clearly, $y(X_1,0)=y(0,X_2)=1$.
Consider the inclusion
$$
G(A[X_1,X_2])\subseteq G\bigl(k(X_1)[X_2,\ldots,X_n]\bigr).
$$
By the induction hypothesis, we have
$$
G\bigl(k(X_1)[X_2,\ldots,X_n]\bigr)=G(k(X_1))E\bigl(k(X_1)[X_2,\ldots,X_n]\bigr).
$$
By~\cite[Th\'eor\`eme 5.8]{Gil} we have $G(k(X_1))=G(k)E(k(X_1))$, hence
$$
G\bigl(k(X_1)[X_2,\ldots,X_n]\bigr)=G(k)E\bigl(k(X_1)[X_2,\ldots,X_n]\bigr).
$$
Then we can write $y(X_1,X_2)=y_0\cdot y_1(X_1,X_2)$, where $y_0\in G(k)$, and
$$
y_1(X_1,X_2)\in E\bigl(k[X_1]_f[X_2,\ldots,X_n]\bigr)=E(A[X_2][X_1]_f),
$$
for a monic polynomial $f(X_1)\in k[X_1]$. Since $y(X_1,0)=1$, we have in fact
$y_0=y_1(X_1,0)^{-1}\in E(A[X_1]_f)$, therefore, $y(X_1,X_2)\in E(A[X_2][X_1]_f)$.
Since $y(X_1,X_2)$ belongs to $G(A[X_1,X_2],X_1A[X_1,X_2])$, by Lemma~\ref{lem:E_f} this implies that
$y(X_1,X_2)\in E(A[X_1,X_2])$. Applying the induction hypothesis to the elements $x(X_1,0)\in G(A[X_1])$,
$x(0,X_2)\in G(A[X_2])$, and $x(0,0)\in G(A)$,
we see that~\eqref{eq:y(X1,X2)} implies $x(X_1,X_2)\in G(k)E(A[X_1,X_2])$.
\end{proof}

\begin{cor}\label{cor:Laurent}
Let $G$ be a simply connected semisimple group scheme over a field $k$, such that every semisimple normal subgroup of
$G$ contains $(\Gm)^2$. For any $m,n\ge 0$, there are natural isomorphisms
$$
K_1^G(k)\cong K_1^G\l(k[Y_1,\ldots,Y_m,X_1,X_1^{-1},\ldots,X_n,X_n^{-1}]\r)
\cong K_1^G\bigl(k(Y_1,\ldots,Y_m,X_1,\ldots,X_n)\bigr).
$$
\end{cor}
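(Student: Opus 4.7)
The argument combines Gille's isomorphism $K_1^G(F)\xrightarrow{\sim}K_1^G(F(T))$ from \cite[Th.~5.8]{Gil}, valid for any simply connected semisimple $G/F$ satisfying our isotropy hypothesis, with Theorem~\ref{th:P^1}, Theorem~\ref{th:whitehead-field}, and Lemma~\ref{lem:E_f}, via an induction on $n$ patterned after the proof of Theorem~\ref{th:whitehead-field}.

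Set $R=k[Y_1,\ldots,Y_m,X_1^{\pm 1},\ldots,X_n^{\pm 1}]$ and $K=k(Y_1,\ldots,Y_m,X_1,\ldots,X_n)$.  Iterating Gille's isomorphism along the tower $k\subset k(Y_1)\subset\cdots\subset K$ (the isotropy hypothesis is preserved under any field extension) gives $K_1^G(k)\xrightarrow{\sim}K_1^G(K)$.  This isomorphism factors through the natural maps $K_1^G(k)\to K_1^G(R)\to K_1^G(K)$; moreover, the first of these is split injective via the $k$-point $Y_j\mapsto 0$, $X_i\mapsto 1$ of $\Spec R$.  Both asserted isomorphisms therefore follow as soon as one establishes that $K_1^G(k)\to K_1^G(R)$ is surjective.

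I would prove surjectivity by induction on $n$, with base case $n=0$ furnished by Theorem~\ref{th:whitehead-field}.  For the inductive step, write $R=R'[X_n,X_n^{-1}]$ with $R'=k[Y_1,\ldots,Y_m,X_1^{\pm 1},\ldots,X_{n-1}^{\pm 1}]$, and take $g\in G(R)$.  Evaluation at $X_n=1$ gives $g(1)\in G(R')\subseteq G(k)\,E(R')\subseteq G(k)\,E(R)$ by the induction hypothesis, so after multiplying $g$ by a representative from $G(k)\,E(R)$ we may assume $g(1)=1$, i.e.\ $g\in G(R,(X_n-1)R)$.  Now view $g$ inside $G(K'(X_n))$ with $K'=\mathrm{Frac}(R')$, and apply \cite[Th.~5.8]{Gil} over $K'$ together with iterated Gille to rewrite $G(K')=G(k)\,E(K')$.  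This yields $g=g_0\cdot e$ with $g_0\in G(k)$ and $e\in E(K'(X_n))$; since Gille's isomorphism $K_1^G(K')\cong K_1^G(K'(X_n))$ is inverted by evaluation at any $K'$-point, and $g(1)=1$, the class of $g_0$ in $K_1^G(K')$ is trivial, whence $g_0\in E(k)$ by injectivity of $K_1^G(k)\to K_1^G(K')$.  Absorbing $g_0$ into $E(R)$, we reduce to showing that any $g\in G(R,(X_n-1)R)\cap E(K'(X_n))$ lies in $E(R)$.

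Clearing denominators in the expression of $g$ as an elementary element of $G(K'(X_n))$, there exist a nonzero $s\in R'$ and a monic $f\in R'_s[X_n]$ (which we arrange to be coprime to $X_n$, i.e.\ $f(0)\in(R'_s)^\times$) such that $g\in E(R'_s[X_n]_{fX_n})$.  The endgame is a two-step descent:  Lemma~\ref{lem:A_fg}, applied to the coprime pair $(f,X_n)$ in $R'_s[X_n]$, factors $g$ as $g=y_1y_2$ with $y_1\in E(R'_s[X_n]_f)$ and $y_2\in E(R'_s[X_n,X_n^{-1}])$; Lemma~\ref{lem:E_f}, applied to $y_1$ after trivializing its value at $X_n=1$, absorbs the monic factor $f$; a final patching argument on $\Spec R'$, again via Lemma~\ref{lem:A_fg} with a partition of unity involving $s$, eliminates the denominator $s$.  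The main technical obstacle is precisely this last descent: Lemma~\ref{lem:E_f} requires membership in $G$ over a polynomial ring in $X_n$, while $g$ sits a priori only in the Laurent polynomial ring $R$, so the two lemmas must be interleaved rather than applied sequentially.  The remaining ingredients — iterated Gille, the reduction $g(1)=1$, and the inductive bookkeeping — are formal consequences of material already assembled in the paper.
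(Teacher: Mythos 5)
Your overall architecture --- induction on $n$, reduction to surjectivity of $K_1^G(k)\to K_1^G(R)$ via the split section and iterated Gille, normalization $g(1)=1$, and the use of Lemma~\ref{lem:A_fg} to split off the part of $g$ supported along $X_n=0$ --- is the same as the paper's, but your inductive step localizes too aggressively, and this creates a gap that you yourself flag as ``the main technical obstacle'' without closing it. By passing to $K'(X_n)$ with $K'=\mathrm{Frac}(R')$ you are forced, when clearing denominators of the elementary factor, to invert an element $s\in R'$; your proposed remedy (``a final patching argument on $\Spec R'$ \dots\ with a partition of unity involving $s$'') is not an argument. Lemma~\ref{lem:A_fg} needs a comaximal pair $fA+gA=A$, and a single denominator $s$ produced by clearing fractions comes with no comaximal partner; to manufacture one you would need to know that the set of admissible $s$ generates the unit ideal of $R'$, i.e.\ a local-global statement over the Laurent polynomial ring $R'$, which is not among the tools established in the paper (Lemma~\ref{lem:PS-17} concerns $A[X]$ and elements trivial at $X=0$, not $A[X,X^{-1}]$). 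There is also a smaller looseness in your use of Gille's theorem: the factor $e\in E(K'(X_n))$ need not be regular at $X_n=1$, so ``evaluation at a $K'$-point inverts Gille's isomorphism'' requires choosing a specialization point avoiding the poles of $e$ (possible since $K'$ is infinite), and as written the deduction $g_0\in E(k)$ is incomplete.

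The paper avoids the denominator $s$ entirely by invoking the inductive hypothesis over the field $k(X_1)$ rather than over $\mathrm{Frac}(R')$: it reduces to proving injectivity of $K_1^G(R)\to K_1^G\bigl(k(X_1)[Y_1,\ldots,Y_m,X_2^{\pm1},\ldots,X_n^{\pm1}]\bigr)$, the target being computed by the inductive hypothesis applied to the base field $k(X_1)$. Clearing denominators then produces only a monic $f\in k[X_1]$ with $X_1\nmid f$, hence $f(0)\in k^\times$, so $f$ and $X_1$ are comaximal in $B[X_1]$ for the \emph{whole} ring $B=k[Y_1,\ldots,Y_m,X_2^{\pm1},\ldots,X_n^{\pm1}]$. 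A single application of Lemma~\ref{lem:A_fg} to the pair $(f,X_1)$ writes $g=g_1g_2$ with $g_1\in E(B[X_1]_{X_1})$ killed in $K_1^G(B[X_1,X_1^{-1}])$ and $g_2\in G(B[X_1]_f)\cap G(B[X_1]_{X_1})=G(B[X_1])$, and the inductive hypothesis finishes the proof; neither Lemma~\ref{lem:E_f} nor any removal of denominators from $R'$ is needed. If you keep your framework but replace $K'$ by $k(X_n)$ --- that is, only invert nonzero polynomials in the distinguished variable with coefficients in $k$ --- the gap disappears and your argument collapses to the paper's.
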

\begin{proof}
We prove the first isomorphism by induction on $n$, reducing to the case $n=0$, which is contained in
Theorem~\ref{th:whitehead-field}. After that, the second isomorphism follows from~\cite[Th\'eor\`eme 5.8]{Gil}.

For shortness, we set ${\bf Y}=\{Y_1,\ldots,Y_m\}$.
By the inductive hypothesis and~\cite[Th\'eor\`eme 5.8]{Gil}, we have
$$
K_1^G(k)\cong K_1^G(k(X_1))\cong K_1^G(k(X_1)[{\bf Y},X_2,X_2^{-1},\ldots,X_n,X_n^{-1}]).
$$
Therefore, it is enough to prove that the natural map
$$
K_1^G\l(k[{\bf Y},X_1,X_1^{-1},\ldots,X_n,X_n^{-1}]\r)\to K_1^G\l(k(X_1)[{\bf Y},X_2,X_2^{-1},\ldots,X_n,X_n^{-1}]\r)
$$
 is injective. Set $B=k\l[{\bf Y},X_2,X_2^{-1},\ldots,X_n,X_n^{-1}\r]$. Assume that
$g\in G\l(B[X_1,X_1^{-1}]\r)$ is mapped into $E\l(k(X_1)[{\bf Y},X_2,X_2^{-1},\ldots,X_n,X_n^{-1}]\r)$. Then there exists
a monic polynomial $f\in k[X_1]$ such that
$$
g\in E\l(k[X_1]_{fX_1}[{\bf Y},X_2,X_2^{-1},\ldots,X_n,X_n^{-1}]\r)=
E\l(B[X_1]_{X_1f}\r).
$$
Clearly, we can assume that $f$ is not divided by $X_1$. Then by Lemma~\ref{lem:A_fg} there exist
$g_1\in E(B[X_1]_{X_1})$ and $g_2\in E(B[X_1]_f)$ such that $g=g_1g_2$. The class of $g_1$ in
$K_1^G(B[X_1]_{X_1})=K_1^G(B[X_1,X_1^{-1}])$ is trivial, hence we can assume $g=g_2$.
Since $B[X_1]_{X_1}\cap B[X_1]_{f}=B[X_1]$, we have $g\in G(B[X_1])$.
Then by the inductive hypothesis $g\in E(B[X_1])$.
\end{proof}

\subsection{Homotopy invariance theorem.}\label{sec:geom}

Now we are ready to prove Theorem~\ref{th:whitehead-geometric}.
First we prove it for regular rings essentially of finite type.

\begin{lem}\label{lem:ess-fin}
Let $k,A,G$ be as in Theorem~\ref{th:whitehead-geometric}. Assume in addition that $A$ is of essentially finite type over $k$.
 Then $$
K_1^G(A)\xrightarrow{\cong} K_1^G(A[X]).
$$
\end{lem}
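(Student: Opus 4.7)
Since the map $K_1^G(A)\to K_1^G(A[X])$ is split injective via the evaluation $X\mapsto 0$, it suffices to prove surjectivity, that is, $G(A[X])=G(A)E(A[X])$. By Suslin's local-global principle (Lemma~\ref{lem:PS-17}) combined with Lemma~\ref{lem:Abe-3.6}, this reduces to the case where $A$ is a regular local ring essentially of finite type over $k$. After replacing $g(X)\in G(A[X])$ by $g(X)g(0)^{-1}$, the goal becomes: every $g(X)\in G(A[X])$ with $g(0)=1$ lies in $E(A[X])$. I would argue by induction on $d=\dim A$; the base case $d=0$ (so that $A$ is a field) is covered by Theorem~\ref{th:whitehead-field} applied to the field $A$ in place of $k$.

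For the inductive step, I would invoke Lindel's lemma, which applies since $k$ is perfect. It provides a localization $B=k[Y_1,\ldots,Y_d]_{\mathfrak n}$ of a polynomial $k$-algebra at a maximal ideal, a regular parameter $f\in B$ (so that $B/fB$ is regular local of dimension $d-1$), and an \'etale ring homomorphism $B\to A$ inducing an isomorphism $B/fB\xrightarrow{\cong}A/fA$. Base-changing along $-\otimes_kk[X]$ yields a distinguished Nisnevich square
$$
\xymatrix@R=12pt@C=18pt{
\Spec A_f[X]\ar[r]\ar[d] & \Spec A[X]\ar[d]\\
\Spec B_f[X]\ar[r] & \Spec B[X],
}
$$
to which Corollary~\ref{cor:Nis-square} applies, giving the exact sequence
$$
K_1^G(B[X])\to K_1^G(B_f[X])\times K_1^G(A[X])\to K_1^G(A_f[X]).
$$
Since $B$ and $B_f$ are localizations of $k[Y_1,\ldots,Y_d]$, Theorem~\ref{th:whitehead-field} with Lemma~\ref{lem:Abe-3.6} gives $\Aff^1$-invariance of $K_1^G$ at both $B$ and $B_f$.

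The next step is a reduction modulo $f$. The image $\bar g(X)\in G((B/f)[X])=G((A/f)[X])$ satisfies $\bar g(0)=1$, so by the inductive hypothesis $\bar g(X)\in E((B/f)[X])$. Lifting a product presentation of $\bar g(X)$ as relative root elements term by term to $E(A[X])$ produces $e(X)\in E(A[X])$ with $e(X)\equiv\bar g(X)\pmod f$; setting $h(X)=g(X)e(X)^{-1}$ and then multiplying by $h(0)^{-1}\in E^*(A,fA)\subseteq E(A)$ (using Lemma~\ref{lem:Abe-sect}), I obtain an element $h(X)\in G(A[X],fA[X])$ with $h(0)=1$ and $[h(X)]=[g(X)]$ in $K_1^G(A[X])$. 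It therefore suffices to show $F_f(h(X))\in E(A_f[X])$: the pair $(1,[h(X)])\in K_1^G(B_f[X])\times K_1^G(A[X])$ would then have trivial image in $K_1^G(A_f[X])$, hence lift via the Nisnevich exact sequence to a class $c\in K_1^G(B[X])\cong K_1^G(B)$, which is necessarily constant; pulling back along $\phi$ shows that $[h(X)]$ is a constant class in $K_1^G(A[X])$, and $h(0)=1$ then forces $[h(X)]=1$.

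The main obstacle will be establishing that $F_f(h(X))\in E(A_f[X])$ for $h\in G(A[X],fA[X])$ with $h(0)=1$. Here one exploits the congruence $h(X)\equiv 1\pmod f$ together with the identification $(A/fA)[X]\cong(B/fB)[X]$ to descend $F_f(h(X))$ modulo $E(A_f[X])$ along the \'etale map $\phi$: the Nisnevich sheaf property of $G$ applied to the matching datum $(h(X),1)\in G(A[X])\times G(B[X])$ over $G((A/f)[X])$, combined with $\Aff^1$-invariance of $K_1^G$ at $B_f$, identifies $[F_f(h(X))]$ with the image of a constant class from $K_1^G(B_f)$ whose value at $X=0$ is trivial, forcing $F_f(h(X))\in E(A_f[X])$ as required.
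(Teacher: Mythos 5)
Your overall architecture matches the paper's: induction on $\dim A$, reduction to the local case by Suslin's local-global principle, Lindel's lemma producing $B$ and $f$, and the Nisnevich-type gluing between $A$ and the localized polynomial ring $B$ (the paper uses Lemma~\ref{lem:Abe-3.7}~(i) directly rather than the packaged Corollary~\ref{cor:Nis-square}, but that is cosmetic). However, there is a genuine gap at the step you yourself flag as ``the main obstacle'': the claim $F_f(h(X))\in E(A_f[X])$ is never actually established. The argument you sketch for it does not work. The Nisnevich sheaf property for the square glues sections over $A[X]$ and $B_f[X]$ that agree over the \emph{open} part $A_f[X]$; a ``matching datum over $G((A/f)[X])$'' is input for a Milnor/conductor-square patching, which is a different (non-cartesian, in this setting) square and is not available here. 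Moreover, the congruence $h(X)\equiv 1\pmod f$ carries no information over $A_f$, where $f$ is a unit, so it cannot force anything about the class of $F_f(h(X))$.

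The missing ingredient is a second application of your own inductive hypothesis, this time to $A_f$: since $A$ is regular local, catenary and essentially of finite type over $k$, the ring $A_f$ is regular, essentially of finite type, and $\dim A_f<\dim A$ (every prime of $A_f$ avoids the maximal ideal of $A$). Hence $G(A_f[X])=G(A_f)E(A_f[X])$ by induction, and $h(0)=1$ (indeed already $g(0)=1$) forces $F_f(h(X))\in E(A_f[X])$. This is exactly where the paper puts the dimension drop. Once you do this, your entire detour through $A/fA$ --- reducing mod $f$, invoking the inductive hypothesis for $(A/f)[X]$, lifting a word in relative root elements to produce $e(X)$, and passing to $h(X)\in G(A[X],fA[X])$ --- becomes superfluous: you can feed the pair $(1,[g(X)])$ directly into the exact sequence of Corollary~\ref{cor:Nis-square}, lift to a class in $K_1^G(B[X])$, and conclude from $\Aff^1$-invariance at $B$ (Theorem~\ref{th:whitehead-field} plus Lemma~\ref{lem:Abe-3.6}) together with $g(0)=1$ and Lemma~\ref{lem:Abe-sect}~(ii). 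With that correction your proof closes and coincides in substance with the paper's.
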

\begin{proof}
The proof is basically the same as the one for $\GL_l$ in~\cite[Theorem 3.1]{Vo}, using the facts
about isotropic groups we have proved above.
Namely, one proceeds by induction on $\dim A$. By Suslin's local-global principle Lemma~\ref{lem:PS-17} we can assume $A$ is
local. If $\dim A=0$, we are in the setting of Theorem~\ref{th:whitehead-field}. Assume $\dim A\ge 1$.
By Lindel's lemma~\cite[Lemma and Proposition 2]{L}
there exists a subring $B$ of $A$ and an element $h\in B$ such that
$B=k[X_1,\ldots,X_n]_p$ for some $n\ge 0$ and a prime ideal $p$ of $k[X_1,\ldots,X_n]$, and $h$ satisfies
$Ah+B=A$, $Ah\cap B=Bh$.

We need to show that any element $a(X)\in G(A[X])$ belongs to $G(A)E(A[X])$.
We can assume from the start that
$a(0)=1$. Since $\dim A_h<\dim A$, the element
$a(X)$ belongs to $G(A_h)E(A_h[X])$. Since $a(0)=1$, we have in fact $a(X)\in E(A_h[X])$.
 Since $A$ is regular, we know that $h$ is not a zero divisor in $A[X]$; hence by Lemma~\ref{lem:Abe-3.7} (i) we have
\begin{equation}\label{eq:a(X)=}
a(X)=y(X)z(X)
\end{equation}
for some $y(X)\in E(A[X])$ and $z(X)\in G(B_h[X])$.
Note that~\eqref{eq:a(X)=} implies that
$$
z(X)\in G(B_h[X])\cap G(A[X])=G(B[X]).
$$
Clearly, we can assume that $z(0)=1$ as well. Since $B$ is a localization of a polynomial ring over $k$,
by Lemma~\ref{lem:Abe-3.6} and Theorem~\ref{th:whitehead-field} we have $z(X)\in E(B[X])$. Therefore,
$a(X)\in E(A[X])$.
\end{proof}

\begin{proof}[Proof of Theorem~\ref{th:whitehead-geometric}]
The embedding $k\to A$ is geometrically regular, since $k$ is perfect~\cite[(28.M), (28.N)]{Mats}.
Then by Popescu's theorem~\cite{Po90,Swan} $A$ is a filtered direct limit of regular $k$-algebras essentially of finite type.
Since the group scheme $G$ and the unipotent radicals of its parabolic subgroups are finitely presented over $k$,
the functors $G(-)$ and $E(-)$ commute with filtered direct limits. Hence the claim of the theorem follows from~Lemma~\ref{lem:ess-fin}.
\end{proof}

\subsection{Injectivity theorem}

\begin{proof}[Proof of Theorem~\ref{th:inj}]
If $k$ is perfect, we can reduce to the case where $A$ is a local ring of a smooth algebraic variety over $k$ in the same way
as in the proof of Theorem~\ref{th:whitehead-geometric}, using Popescu's theorem. To show that $K_1^G(A)\to K_1^G(K)$ is injective for any $A$
of the latter kind, it is enough to check that the functor $K_1^G$ on the category of commutative $k$-algebras satisfies the
conditions of~\cite[Th\'eor\`eme 1.1]{CTO}.
The condition (P1) of~\cite[Th\'eor\`eme 1.1]{CTO} is that $K_1^G$ commutes with filtered direct limits; this is clear.

The condition (P2) requires checking that $K_1^G(L[X_1,\ldots,X_n])\to K_1^G(L(X_1,\ldots,X_n))$ has trivial
kernel for any field $L$ containing $k$ and any $n\ge 1$. By Theorem~\ref{th:whitehead-field} and induction on $n$ it is
enough to check that for any field $L$ as above, if $g\in G(L)$ is mapped into $E(L(X))$, then it belongs to $E(L)$.
There is a polynomial $f\in L[X]$ such that $g\in E(L[X]_f)$. Since $k$ is an infinite field, there is
$a\in L$ such that $f(a)\neq 0$. The evaluation at $X=a$ then shows that $g\in E(L)$.

The condition (P3) is contained in Lemma~\ref{lem:Abe-3.7} (ii).
\end{proof}






\renewcommand{\refname}{References}


\begin{thebibliography}{MMMM}

\bibitem[A]{Abe} E. Abe, {\it Whitehead groups of Chevalley groups over polynomial rings}, Comm. Algebra {\bf 11} (1983),
1271--1307.

\bibitem[BHV]{BHV} A. Bak, R. Hazrat, N. Vavilov, {\it Localizaion---completion strikes again:
Relative $K_1$ is nilpotent by abelian}, J. of Pure and Appl. Algebra {\bf 213} (2009), 1075--1085.

\bibitem[BBR]{BBR} A. Bak, R. Basu, R. A. Rao, {\it Local-global principle for transvection groups},
Proceedings of the AMS {\bf 138} (2010), 1191--1204.




\bibitem[B]{Bass} H.~Bass, {\it K-theory and stable algebra}, Publ. Math. IH\'ES
{\bf 22} (1964), 5--60.











\bibitem[Ba]{Basu} R. Basu, {\it Topics in classical algebraic $K$-theory}, PhD Thesis, 2006.


\bibitem[BT1]{BoTi} A.~Borel, J.~Tits, {\it Groupes r\'eductifs}, Publ. Math. I.H.\'E.S.
{\bf 27} (1965), 55--151.



\bibitem[BT2]{BoTi72}
A.~Borel, J.~Tits, {\it Compl\'{e}ments \`{a} l'article
``Groupes r\'{e}ductifs''}, Publ.~Math.~IH\'ES {\bf 41} (1972) 253--276.

\bibitem[C]{Cohn} P.M. Cohn, {\it On the structure of $GL_2$ of a ring}, Publ.
Math. I.H.\'E.S. {\bf 30} (1966), 365--413.

\bibitem[CTS]{CTS}  J.-L.~Colliot-Th\'el\`ene, J.-J. Sansuc,
{\it Principal homogeneous spaces under flasque tori: applications},
Journal of Algebra {\bf 106} (1987), 148--205.


\bibitem[CTO]{CTO} J.-L. Colliot-Th\'el\`ene, M. Ojanguren, {\it
Espaces Principaux Homog\`enes Localement Triviaux},
Publ.~Math.~IH\'ES {\bf 75}, no.~2 (1992), 97--122.


\bibitem[SGA3]{SGA} M.~Demazure, A.~Grothendieck, {\it Sch\'emas en groupes}, Lecture Notes in
Mathematics, vol. 151--153, Springer-Verlag, Berlin-Heidelberg-New York, 1970.

\bibitem[Ge]{Ge} S. M. Gersten, {\it Higher $K$-theory of Rings}, Lecture Notes in Mathematics, vol. 341,
3--42, Springer-Verlag, Berlin-Heidelberg-New York, 1973.

\bibitem[G]{Gil}  Ph.~Gille, {\it Le probl\`eme de Kneser-Tits}, S\'em. Bourbaki {\bf 983}
(2007), 983-01--983-39.

\bibitem[GMV1]{GMV-Sp} F. Grunewald, J. Mennicke, L. Vaserstein, {\it On symplectic groups over polynomial rings},
Math. Z. {\bf 206} (1991), 35--56.

\bibitem[GMV2]{GMV-SL2} F. Grunewald, J. Mennicke, L. Vaserstein, {\it On the groups $\SL_2(\ZZ[x])$ and
$\SL_2(k[x,y])$}, Israel J. Math. {\bf 86} (1994), 157--193.



\bibitem[J]{J} J.F. Jardine, {\it On the homotopy groups of algebraic groups}, J. Algebra {\bf 81} (1983),
180--201.

\bibitem[K78]{K-stab} V.I. Kopeiko,
{\it Stabilization of symplectic groups over a ring of polynomials} (Russian),
Mat. Sb. (N.S.) {\bf 106 (148)} (1978), no. 1, 94--107.

\bibitem[K95a]{K-reg} V.I. Kopeiko, {\it
On the structure of the symplectic group of polynomial rings over regular rings} (Russian),
Fundam. Prikl. Mat. {\bf 1} (1995), no. 2, 545--548.

\bibitem[K95b]{K-SL-Laur} V.I. Kopeiko, {\it
On the structure of the special linear group over Laurent polynomial rings} (Russian),
 Fundam. Prikl. Mat. {\bf 1} (1995), no. 4, 1111--1114.


\bibitem[K96]{K-lett} V.I. Kopeiko, {\it
Letter to the editors: "On the structure of the symplectic group of polynomial rings over regular rings"{} and
"On the structure of the special linear group over Laurent polynomial rings"{}} (Russian),
Fundam. Prikl. Mat. {\bf 2} (1996), no. 3, 953.

\bibitem[K99]{K-Sp-Laur} V.I. Kopeiko, {\it
Symplectic groups over rings of Laurent polynomials, and patching diagrams} (Russian),
Fundam. Prikl. Mat. {\bf 5} (1999), no. 3, 943--945.

\bibitem[KrMC]{KrMc} S. Krsti\'c, J. McCool, {\it Free quotients of $\SL_2(R[X])$}, Proc. Amer. Math. Soc. {\bf 125}
(1997), 1585--1588.


\bibitem[L]{L} H. Lindel, {\it On the Bass---Quillen conjecture concerning projective
modules over polynomial rings}, Invent. Math. 65 (1981), 319--323.

\bibitem[LSt]{LS} A. Luzgarev, A. Stavrova, {\it Elementary subgroup of an isotropic reductive group is perfect},
St. Petersburg Math. J. {\bf 23} (2012), 881--890.

\bibitem[M]{M} B. Margaux, {\it The structure of the group $G(k[t])$: Variations on a theme of Soul\'e},
Algebra and Number Theory {\bf 3} (2009), 393--409.

\bibitem[Ma]{Mats} H. Matsumura, {\it Commutative algebra}, second ed.,
Math. Lect. Note Series {\bf 56}, Benjamin/Cummings Publishing Co., Inc., Reading, Massachusetts, 1980.

\bibitem[Mo]{Mo} F. Morel, {\it $\Aff^1$-Algebraic topology over a field}, Lecture Notes in Mathematics, vol. 2052, 2012.



\bibitem[PaStV]{PaSV} I. Panin, A. Stavrova, N. Vavilov,
{\it On Grothendieck---Serre's conjecture concerning
principal $G$-bundles over reductive group schemes: I}, preprint,
\href{http://www.arxiv.org/abs/0905.1418}{http://www.arxiv.org/abs/0905.1418}.


\bibitem[PSt1]{PS} V. Petrov, A. Stavrova, {\it Elementary subgroups of isotropic reductive groups},
St. Petersburg Math. J. {\bf 20} (2009), 625--644.

\bibitem[PSt2]{PS-tind} V. Petrov, A. Stavrova, {\it Tits indices over semilocal rings}, Transf. Groups {\bf 16} (2011), 193--217.

\bibitem[Po]{Po90} D. Popescu, {\it Letter to the Editor: General N\'eron desingularization and approximation},
Nagoya Math. J. {\bf 118} (1990), 45--53.

\bibitem[Q]{Q} D.~Quillen, {\it Projective modules over polynomial rings}, Invent. Math. {\bf 36} (1976),
167--171.




\bibitem[Sou]{Sou} C. Soul\'e, {\it Chevalley groups over polynomial rings},
Homological group theory (Proc. Sympos., Durham, 1977), 359--367, London Math. Soc. Lecture Note Ser.
{\bf 36} (1979), Cambridge Univ. Press.

\bibitem[St]{S-thes} A.~Stavrova, Stroenije isotropnyh reduktivnyh grupp, PhD thesis, St. Petersburg
State University, 2009.




\bibitem[Ste]{Step} A. Stepanov, private communication.

\bibitem[Su]{Sus} A.A.~Suslin,
{\it On the structure of the special linear group over polynomial rings}, Math. USSR Izv. {\bf 11}
(1977), 221--238.

\bibitem[SuK]{Sus-K-O1} A.A. Suslin, V.I. Kopeiko, {\it
Quadratic modules and the orthogonal group over polynomial rings},
J. of Soviet Math. {\bf 20} (1982), 2665--2691.

\bibitem[Sw]{Swan} R. G. Swan, {\it  N\'eron-Popescu desingularization}, in Algebra and Geometry (Taipei, 1995),
Lect. Alg. Geom. {\bf 2} (1998), 135--198. Int. Press, Cambridge, MA.

\bibitem[T1]{Tits64} J.~Tits, {\it Algebraic and abstract simple groups}, Ann. of Math. {\bf 80}
(1964), 313--329.


\bibitem[T2]{Tits66} J.~Tits, {\it Classification of algebraic semisimple groups}, Algebraic groups and
discontinuous subgroups, Proc. Sympos. Pure Math. {\bf 9}, Amer. Math. Soc., Providence RI, 1966, 33--62.

\bibitem[vdK]{vdK} W.~van der Kallen, {\it A module structure on certain orbit sets of unimodular rows},
J. Pure Appl. Algebra {\bf 57} (1989), 281--316.

\bibitem[VW]{VW} K. V\"olkel, M. Wendt, {\it On $\Aff^1$-fundamental groups of isotropic reductive groups}, 2012,
\href{http://arxiv.org/abs/1207.2364}{http://arxiv.org/abs/1207.2364}.


\bibitem[V]{Vo} T. Vorst, The general linear group of polynomial rings over regular rings,
Comm. Algebra {\bf 9} (1981), 499--509.

\bibitem[W]{W10} M. Wendt, $\Aff^1$-homotopy of Chevalley groups, J. K-Theory {\bf 5} (2010), 245--287.











\end{thebibliography}
\end{document}